\newcommand{\A}{{\mathcal{A}}}
\newcommand{\B}{{\mathcal{B}}}
\newcommand{\C}{{\mathcal{C}}}
\newcommand{\D}{{\mathcal{D}}}
\newcommand{\F}{{\mathcal{F}}}
\newcommand{\Hh}{{\mathcal{H}}}
\newcommand{\Pp}{{\mathcal{P}}}
\newcommand{\E}{{\mathcal{E}}}
\newcommand{\s}{{\mathcal{S}}}
\newcommand{\K}{{\mathcal{K}}}
\newcommand{\col}{{\rm colim}\,}
\newcommand{\Pl}{{\mathbf{Pl}}}
\title{Sur l'homologie des groupes unitaires à coefficients polynomiaux}
\author{Aur\'elien DJAMENT}
\newtheorem{thi}{Th\'eor\`eme}
\newtheorem{thm}{Th\'eor\`eme}[section]
\newtheorem{pr}[thm]{Proposition}
\newtheorem{cor}[thm]{Corollaire}
\newtheorem{lm}[thm]{Lemme}
\theoremstyle{definition}
\newtheorem{defi}[thm]{D\'efinition}
\newtheorem{nota}[thm]{Notation}
\newtheorem{hyp}[thm]{Hypoth\`ese}
\theoremstyle{remark}
\newtheorem{rem}[thm]{Remarque}
\newtheorem{ex}[thm]{Exemple}
\begin{document}

\maketitle



\tableofcontents

\section*{Introduction}

\paragraph*{Présentation des principaux résultats}

Nous nous proposons de montrer que l'homologie des groupes unitaires sur un anneau $A$ (muni d'une anti-in\-vo\-lu\-tion), {\em à coefficients tordus par un foncteur polynomial} raisonnable (par exemple, une puissance tensorielle de la représentation tautologique), peut se calculer {\em stablement} à partir de l'homologie {\em à coefficients constants} des mêmes groupes et de groupes d'homologie des foncteurs que l'on peut déterminer explicitement dans les cas favorables. Il s'agit d'une généralisation des résultats obtenus précédemment par l'auteur avec C. Vespa dans l'article \cite{DV}, qui traite le cas où $A$ est un corps fini. Signalons aussi qu'A. Touzé a établi dans \cite{Touze} des résultats analogues pour les groupes orthogonaux sur les corps finis {\em vus comme groupes algébriques} ; dans le présent article, on ne traitera que de groupes {\em discrets}.

Précisons un petit peu les choses. Dans un premier temps, limitons-nous aux groupes unitaires {\em hyperboliques} $U_{n,n}(A)$. Sous des hypothèses raisonnables (à la Bass) sur $A$, l'homologie de ces groupes ne dépend plus de $n$ dès lors que cet entier est assez grand devant le degré homologique (cf. l'article \cite{MvdK} de Mirzaii et van der Kallen). Ceci est vrai non seulement pour l'homologie à coefficients constants, mais aussi pour l'homologie tordue par des coefficients polynomiaux. Pour éviter toute hypothèse sur $A$, nous nous intéresserons à la colimite $U_{\infty,\infty}(A)$ de ces groupes et à son homologie (c'est en ce sens qu'on parle d'homologie stable). Le calcul de cette homologie à coefficients constants constitue un problème extrêmement ardu, même lorsque $A$ est un corps, problème que nous n'abordons pas dans le présent travail. Les conjectures de Friedlander-Milnor (cf. \cite{Mil}) attestent de cette difficulté, mais des résultats partiels significatifs sont disponibles (voir notamment les articles \cite{K-an} et \cite{K-inv} de Karoubi, utilisant de puissantes méthodes de $K$-théorie). Si l'on se restreint aux corps de caractéristique nulle (l'un des cas les plus développés, pour les coefficients tordus, dans cet article), c'est d'ailleurs l'homologie {\em rationnelle} du groupe orthogonal ou unitaire infini qui nous intéresse, dont la conjecture de Friedlander-Milnor ne dit rien, mais qui s'avère plus accessible (voir l'appendice de \cite{Mil} et les travaux de Karoubi précités). 

Nous montrons que, d'un autre côté, des groupes d'homologie dans la catégorie $\mathbf{P}(A)$ des $A$-modules à gauche projectifs de type fini ({\em bien plus accessibles} --- on dispose par exemple de nombreuses annulations gratuites en degré homologique strictement positif si $A$ est un corps de caractéristique nulle) permettent d'exprimer l'homologie à coefficients tordus polynomiaux à partir de l'homologie à coefficients constants. Plus précisément (cf. §\,\ref{par-dvg}) :

\begin{thi}\label{thigl}
 Soient $A$ un anneau muni d'une anti-involution, $F$ un foncteur polynomial de la catégorie $\mathbf{P}(A)$ vers les groupes abéliens et $n$ un entier. Il existe un isomorphisme de groupes abéliens 
$$\underset{i\in\mathbb{N}}{\col} H_n(U_{i,i}(A);F(A^{2i}))\simeq\bigoplus_{p+q=n} H_p(U_{\infty,\infty}(A);{\rm Tor}^{\mathbf{P}(A)}_q(\mathbb{Z}[T],F))$$
où $T$ est le foncteur ensembliste contravariant des formes hermitiennes (éventuellement dégénérées) et le groupe $U_{\infty,\infty}(A)=\underset{i\in\mathbb{N}}{\col} U_{i,i}(A)$ opère trivialement sur ${\rm Tor}^{\mathbf{P}(A)}_q(\mathbb{Z}[T],F)$.  
\end{thi}

 À titre d'application, nous obtenons, au corollaire~\ref{cor-azp}, le résultat suivant :

\begin{thi}\label{thie1}
 Soient $A$ un anneau commutatif sans torsion (sur $\mathbb{Z}$) où $2$ est inversible et $d\in\mathbb{N}$. Le groupe abélien gradué
$$\underset{i\in\mathbb{N}}{\col}H_*(O_{i,i}(A);(A^{2i})^{\otimes d})$$
est nul si $d$ est impair et isomorphe, pour $d=2n$, à la somme directe de $\frac{(2n)!}{2^n.n!}$ copies de
$$H_*\big(O_{\infty,\infty}(A);(A\otimes M_*)^{\otimes n}\big)$$
(avec action triviale du groupe orthogonal), où le groupe abélien gradué $M_*$ est l'homologie de Mac Lane de $\mathbb{Z}$ (qu'ont calculée Franjou et Pirashvili).
\end{thi}

Signalons également que, les groupes linéaires sur un anneau $A$ étant des groupes unitaires sur $A^{op}\times A$ (muni de l'anti-involution canonique), nos résultats s'appliquent aussi à l'homologie stable des groupes linéaires à coefficients polynomiaux. Ils nous permettent de retrouver un théorème important de Scorichenko (cf. \cite{Sco}) qui peut s'exprimer par l'isomorphisme entre $K$-théorie stable et homologie des foncteurs pour des coefficients polynomiaux --- l'équivalence entre ce point de vue et celui de l'homologie des groupes qu'on privilégie ici est traitée de manière générale dans \cite{DV}, nous n'y revenons donc pas (nous pourrions traduire nos résultats par des isomorphismes entre des groupes de {\em $K$-théorie stable hermitienne} et d'homologie des foncteurs).

 Quittons maintenant le cas des groupes unitaires hyperboliques. Pour simplifier, supposons que $A$ est un anneau commutatif contenant $1/2$ et qu'on s'intéresse aux groupes orthogonaux $O_n(A)$ associés aux formes $\sum_{i=1}^n X_i^2$ sur $A^n$. Dans certains cas (notamment celui des corps finis étudié dans \cite{DV}, où une telle forme est hyperbolique pour tout $n$ pair), le groupe orthogonal infini $O_\infty(A)$ qu'on obtient par colimite sur $n$ a la même homologie que $O_{\infty,\infty}(A)$ pour des raisons formelles ; néanmoins, en général, leur comportement homologique diffère largement. Par exemple, le problème de la stabilité homologique (même à coefficients constants) pour $O_n(A)$ s'avère plus ardu que pour $O_{n,n}(A)$ : des conditions à la Bass sur $A$ ne suffisent plus à la garantir, même lorsque $A$ est un corps --- on a besoin de conditions arithmétiques (cf. \cite{Vog} et \cite{Col}). Des difficultés similaires adviennent pour traiter d'homologie stable à coefficients polynomiaux dans un tel contexte : l'analogue du théorème précédent est par exemple en défaut, dès le degré homologique $0$, pour $A=\mathbb{Q}[X]$ (cf. §\,\ref{par-h0}). On peut toutefois obtenir des résultats partiels, par exemple :

\begin{thi}\label{tq}
 Soient $A$ un sous-anneau de $\mathbb{Q}$ contenant $1/2$ et $d\in\mathbb{N}$. Le groupe abélien gradué
$$\underset{i\in\mathbb{N}}{\col}H_*(O_i(A);(A^{i})^{\otimes d})$$
est nul si $d$ est impair et isomorphe, pour $d=2n$, à la somme directe de $\frac{(2n)!}{2^n.n!}$ copies de
$$H_*\big(O_\infty(A);(A\otimes M_*)^{\otimes n}\big)$$
(avec action triviale), où $M_*$ est l'homologie de Mac Lane de $\mathbb{Z}$.
\end{thi}

Ce résultat se déduit d'un théorème tout à fait semblable au théorème~\ref{thigl}, comme le théorème~\ref{thie1} se déduit du théorème~\ref{thigl}.

Lorsque $A$ est un corps, nous pouvons nous affranchir de toute hypothèse arithmétique, obtenant par exemple :

\begin{thi}
 Soient $k$ un corps commutatif de caractéristique nulle et $n\in\mathbb{N}^*$. On a
$$\underset{i\in\mathbb{N}}{\col}H_*(O_i(k);\Lambda^n_\mathbb{Q}(k^i))=0$$
où $\Lambda^n_\mathbb{Q}$ désigne la $n$-ème puissance extérieure sur $\mathbb{Q}$.
\end{thi}

Nous citons ce résultat car, lorsque $k$ est le corps des nombres réels, c'est une forme affaiblie d'un théorème de Dupont et Sah (qui inclut également un résultat de stabilité), établi dans \cite{DS}, qui constitue une étape cruciale dans leur approche homologique de la résolution complète du troisième problème de Hilbert (caractérisation des polyèdres euclidiens équivalents par découpage par l'égalité des volumes et des invariants de Dehn). De fait, nos résultats fournissent un grand nombre de calculs stables d'homologie des {\em groupes de Lie rendus discrets} (moyennant la connaissance de leur homologie stable à coefficients constants, dans les cas qui ne se résument pas à de l'annulation), dans un cadre général indépendant des propriétés arithmétiques du corps de base.

\paragraph*{Présentation des méthodes employées}

Le principe général des démonstrations est le même que celui suivi dans \cite{DV} : on commence par relier l'homologie stable des groupes unitaires (hyperboliques ou non) à coefficients tordus par un foncteur $F$ à l'homologie de la catégorie des espaces hermitiens {\em non dégénérés} correspondants à coefficients dans $F$. Cela se fait de façon assez formelle, sans nécessiter d'hypothèse polynomiale sur $F$. Le point plus délicat consiste à montrer que l'inclusion de la catégorie des espaces hermitiens non dégénérés dans la catégorie de tous les espaces hermitiens induit un isomorphisme en homologie à coefficients polynomiaux. Ceci s'obtient en établissant que la deuxième page de la suite spectrale de Grothendieck associée à l'extension de Kan déduite de cette inclusion est nulle hors de la première colonne. Le procédé utilisé dans \cite{DV}, reposant sur l'analyse explicite d'une catégorie de fractions très difficile à généraliser sur un anneau quelconque, a été modifié et simplifié, à l'aide de résultats d'annulation homologique dans les catégories de foncteurs remarquablement simples et efficaces développés par Scorichenko (cf. \cite{Sco}).

Dans le cas des groupes unitaires de type hyperbolique, l'application du critère de Scorichenko s'obtient sans trop de peine. Sur des corps, on peut raisonner de façon directe pour traiter de groupes de type $O_n$ (plutôt que $O_{n,n}$). Les résultats partiels que nous obtenons sur certains anneaux (des localisés d'anneaux d'entiers de corps de nombres) dans le cas non hyperbolique nécessitent un peu plus de travail : on a besoin de plusieurs lemmes établissant {\em stablement} l'existence de morphismes quadratiques vérifiant certaines propriétés lorsque les obstructions évidentes à leur existence une fois les scalaires étendus au corps des fractions sont levées.

\paragraph*{Organisation de l'article} La section~\ref{sec1} rappelle, avec quelques variations, le cadre formel de \cite{DV} pour relier homologie des foncteurs et homologie stable de groupes à coefficients tordus.

La section~\ref{secan} expose les résultats d'annulation de Scorichenko en homologie des foncteurs et met en place de manière abstraite les façons dont nous les appliquerons.

La section~\ref{sec-mon} est également préliminaire : on y établit un résultat de comparaison homologique général mettant en jeu un foncteur en monoïdes abéliens et sa symétrisation. Le résultat semble nouveau, mais repose sur des techniques très classiques en homologie des foncteurs (utilisation de suites spectrales tirées de la construction barre).

Dans la section~\ref{scg}, on présente le cadre dans lequel on va utiliser les outils précédents : les objets hermitiens dans le contexte général d'une petite catégorie additive munie d'un foncteur de dualité ; on y donne un premier résultat de comparaison entre homologie stable des groupes unitaires (dans ce cadre) et homologie des foncteurs.

La section~\ref{seccen}
démontre les résultats annoncés plus haut pour les groupes de type hyperbolique\,\footnote{On parle aussi de situation {\em générique} car tout espace hermitien non dégénéré se plonge dans un espace hermitien hyperbolique.}, y compris l'application aux résultats de Scorichenko sur l'homologie stable des groupes linéaires et quelques exemples de calculs.

La section~\ref{sng} présente nos incursions dans un cadre plus général (non nécessairement hyperbolique) : nous obtenons des résultats complets pour les catégories semi-simples, quasi complets pour le degré homologique nul, ainsi qu'un théorème plus difficile à obtenir (dont le théorème~\ref{tq} se déduit) mais valable dans un cadre arithmétique assez restreint.

L'appendice rappelle quelques définitions, notations et propriétés élémentaires de l'homologie des foncteurs, des extensions de Kan et de l'homologie de Hochschild dans ce contexte. On y énonce également quelques résultats utiles pour mener des calculs explicites.

\paragraph*{Remerciements}

L'auteur est reconnaissant à Jean-Louis Cathelineau et Gaël Collinet d'avoir attiré son attention  sur le troisième problème de Hilbert et l'homologie des groupes de Lie rendus discrets. Il a aussi bénéficié d'échanges avec G. Collinet pour comprendre des phénomènes arithmétiques qui interviennent dans l'étude de certains groupes orthogonaux.

Il remercie Vincent Franjou de lui avoir procuré les travaux non publiés de Scorichenko ainsi que pour des conversations fructueuses sur l'homologie des foncteurs.

Il remercie également Christine Vespa et Antoine Touzé pour plusieurs discussions utiles à la réalisation de ce travail.

L'auteur a bénéficié du contrat ANR BLAN08-2\_338236 (HGRT : {\em nouveaux liens entre la théorie de l'homotopie et la théorie des groupes et des représentations}). Il ne soutient pas pour autant l'ANR, dont il revendique le transfert des moyens aux laboratoires sous forme de crédits récurrents.

\paragraph*{Quelques notations et conventions}
\begin{enumerate}
 \item On note $\mathbb{Z}[-]$ le foncteur adjoint à gauche au foncteur d'oubli des groupes abéliens vers les ensembles --- autrement dit, $\mathbb{Z}[E]=\mathbb{Z}^{\oplus E}$.
\item Tous les produits tensoriels de base non spécifiée sont pris sur $\mathbb{Z}$.
\item Si $A$ est un anneau commutatif et $n$ un entier naturel, on note $S^n_A$, $T^n_A$ et $\Lambda^n_A$ respectivement les endofoncteurs $n$-ème puissance symétrique, tensorielle et extérieure des $A$-modules à gauche. L'omission de l'indice $A$ indique le choix par défaut de l'anneau des entiers.
\item Si $\C$ est une catégorie, on note ${\rm Ob}\,\C$ sa classe d'objets, $\C(a,b)$ l'ensemble des morphismes de source $a$ et de but $b$ dans $\C$ et $\C^{op}$ la catégorie opposée à $\C$.
\item Si $\C$ est une petite catégorie, on note $\C-\mathbf{Mod}$ la catégorie des foncteurs de $\C$ vers la catégorie
$\mathbf{Ab}$ des groupes abéliens et l'on pose $\mathbf{Mod}-\C=\C^{op}-\mathbf{Mod}$.
\item Si $F : \A\to\B$ est un foncteur entre petites catégories, on notera $F^* : \B-\mathbf{Mod}\to\A-\mathbf{Mod}$ le foncteur de précomposition par $F$ ; par abus, on notera de la même façon la précomposition $\mathbf{Mod}-\B\to\mathbf{Mod}-\A$ par le foncteur $\A^{op}\to\B^{op}$ correspondant canoniquement à $F$. 

D'une manière générale, on désignera souvent par une étoile en haut (resp. en bas) l'effet sur un morphisme d'un foncteur contravariant (resp. covariant) clair dans le contexte.
\item\label{nice} Si $\C$ est une catégorie et $T : \C\to\mathbf{Ens}$ un foncteur vers la catégorie des ensembles, on note $\C_T$ la catégorie dont les objets sont les couples $(c,x)$ formés d'un objet $c$ de $\C$ et d'un élément $x$ de $T(c)$, les morphismes $(c,x)\to (d,y)$ étant les morphismes $f : c\to d$ de $\C$ tels que $T(f)(x)=y$ ; on note $\pi_T : \C_T\to\C\quad (c,x)\mapsto c$ le foncteur canonique.

 Si $U$ est un foncteur $\C^{op}\to\mathbf{Ens}$, on note $\C^U=((\C^{op})_U)^{op}$ et l'on désigne par $\pi^U : \C^U\to\C$ le foncteur correspondant à $\pi_U$ entre les catégories opposées.
\item\label{nism} Si $\A$ est une catégorie, on note $\mathbf{M}(\A)$ la sous-catégorie des monomorphismes scindés de $\A$ et $\mathbf{S}(\A)$ la catégorie ayant les mêmes objets que $\A$ et dont les morphismes sont donnés par
$$\mathbf{S}(\A)(a,b)=\{(u,v)\in\A(b,a)\times\A(a,b)\,|\,u\circ v=Id_a\}$$
de sorte que $\mathbf{M}(\A)$ est l'image du foncteur $\mathbf{S}(\A)\to\A$ égal à l'identité sur les objets et associant $v$ à un morphisme $(u,v)$.
\end{enumerate}

\begin{flushleft}
D'autres notations générales sont introduites en appendice.
\end{flushleft}

\section{Un cadre pour l'homologie stable}\label{sec1}

(Ce cadre est essentiellement celui de \cite{DV}, à ceci près qu'on ne se fixe plus de sous-suite cofinale, ce qui motive l'introduction de la notion de tranche ci-après.)

\medskip

Soit $(\C,\oplus,0)$ une petite catégorie monoïdale symétrique dont l'unité $0$ est un objet initial. On dispose
donc, pour tous objets $a$ et $b$ de $\C$, d'un morphisme canonique $a\simeq a\oplus 0\xrightarrow{a\oplus (0\to b)}a\oplus b$
; toute flèche notée $a\to a\oplus b$ sans autre indication désignera par défaut ce morphisme. On fait également l'hypothèse suivante : il existe un foncteur
$${\rm Aut}_\C : \C\to\mathbf{Grp}$$
qui envoie tout objet de $\C$ sur son groupe d'automorphismes, tout isomorphisme de $\C$ sur la conjugaison qu'il
définit, et toute flèche canonique $a\to a\oplus b$ vers le morphisme ${\rm Aut}_\C(a)\to {\rm Aut}_\C(a\oplus b)$
induit par le foncteur $-\oplus b$. On demande enfin à ce foncteur que tout $f\in\C(a,b)$ soit ${\rm Aut}_\C(a)$-équivariant,
où $b$ est muni de l'action déduite de $f_* : {\rm Aut}_\C(a)\to{\rm Aut}_\C(b)$, i.e. que, pour tout $\varphi\in {\rm Aut}_\C(a)$, le diagramme
$$\xymatrix{a\ar[r]^f\ar[d]_\varphi & b\ar[d]^{f_*(\varphi)}\\
a\ar[r]^f & b
}$$
commute.

On suppose aussi vérifiée l'hypothèse suivante :

(transitivité stable de l'action des groupes d'automorphismes) {\em pour tout morphisme $f : a\to b$ de $\C$, il existe un automorphisme $\varphi$ de $a\oplus b$ faisant commuter le diagramme suivant.}
$$\xymatrix{a\ar[r]^f\ar[rrd] & b\ar[r] & a\oplus b\\
& & a\oplus b\ar[u]_\varphi
}$$

Si $F$ est un objet de $\C-\mathbf{Mod}$, on définit
$$H^{st}_*(\C;F):=\underset{c\in\C}{\col} H_*({\rm Aut}_\C(c);F(c))$$
(l'hypothèse d'équivariance précédente permet de définir les flèches). On peut remplacer $\C$ par des catégories plus petites, en particulier remplacer la colimite par une colimite {\em filtrante}. Pour cela, introduisons la définition suivante : 
\begin{defi} On dit qu'une sous-catégorie $\C'$ de $\C$ en est une {\em tranche} si elle vérifie les conditions suivantes.
\begin{enumerate}
 \item Pour tout objet $c$ de $\C$, il existe un morphisme (dans $\C$) de source $c$ et de but dans $\C'$.
 \item La catégorie $\C'$ est un ensemble ordonné cofinal à droite, c'est-à-dire qu'elle est squelettique, qu'il existe au plus un morphisme entre deux objets de $\C'$ et qu'étant donnés deux objets $a$ et $b$ de $\C'$ existe $c\in {\rm Ob}\,\C'$ tel que $\C'(a,c)$ et $\C'(b,c)$ soient non vides.
\end{enumerate}
\end{defi}

On observe que :
\begin{enumerate}
 \item la catégorie $\C$ possède toujours des tranches (par le lemme de Zorn) ;
 \item si $\C'$ est une tranche et $c$ un objet de $\C$, on dispose d'un morphisme de groupes 
 $${\rm Aut}_\C(c)\to\underset{x\in\C'}{\col}{\rm Aut}_\C(x)$$
 bien défini à conjugaison près (utiliser l'hypothèse de transitivité stable) ;
 \item par conséquent, pour toute tranche $\C'$ et tout $F\in {\rm Ob}\,\C-\mathbf{Mod}$, on dispose d'un isomorphisme canonique
 $$H^{st}_*(\C;F)\xrightarrow{\simeq}\underset{x\in\C'}{\col}H_*({\rm Aut}_\C(x);F(x))$$
 dont le but s'identifie à
 $$H_*(\underset{x\in\C'}{\col}{\rm Aut}_\C(x);\underset{x\in\C'}{\col}F(x))$$
 en vertu du caractère filtrant de $\C'$.
\end{enumerate}
(Ces observations permettent d'établir le lien avec le point de vue de \cite{DV}, qui revient essentiellement à choisir une tranche dénombrable à $\C$.)

Supposons qu'on ait fait le choix d'une tranche $\C'$. Posons
$${\rm G}_\infty:=\underset{x\in\C'}{\col}{\rm Aut}_\C(x)$$
et
$$F_\infty:=\underset{x\in\C'}{\col}F(x)$$
(qui ne dépendent, à isomorphisme près, pas de $\C'$, mais l'isomorphisme n'est pas canonique).

On considère par ailleurs les hypothèses suivantes :

\begin{defi} On dit que la catégorie monoïdale $\C$ vérifie l'hypothèse :
\begin{itemize}
 \item  (H faible) si pour tous objets $a$ et $i$ de $\C$, le morphisme canonique ${\rm Aut}_\C(a)\to {\rm Aut}_\C(a\oplus i)$ induit un isomorphisme de ${\rm Aut}_\C(a)$ sur le stabilisateur de $i\to a\oplus i$.
\item
(H forte) si pour tous objets $a$, $b$, $i$ de $\C$, l'application canonique $\C(a,b)\to\C(a\oplus i,b\oplus i)$ induit une bijection de $\C(a,b)$ sur l'ensemble des morphismes rendant commutatif le diagramme
$$\xymatrix{i\ar[r]\ar[rd] & a\oplus i\ar[d]\\
& b\oplus i
}$$
\end{itemize}
\end{defi}

\begin{rem}\label{rq-sthyp}
 Toutes les hypothèses introduites dans cette section sont stables par sous-catégorie pleine monoïdale de $\C$.
\end{rem}

\begin{pr}[Cf. \cite{DV}, §\,2.1]\label{pr-dvks}
Sous l'hypothèse (H faible), il existe un isomorphisme gradué canonique 
 $$H^{st}_*(\C;F)\simeq H_*(\C\times {\rm G}_\infty;F)$$
 où l'on note par abus encore $F$ la composée de ce foncteur avec le foncteur de projection $\C\times {\rm G}_\infty\to\C$.
\end{pr}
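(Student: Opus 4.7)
The plan has two ingredients, both essentially laid down by the observations immediately preceding the statement.

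The first step is a direct unwinding. Fix any tranche $\C'$ of $\C$ (which exists by Zorn, as already noted). Then, combining the cofinality property of $\C'$ inside $\C$ with its filteredness, one obtains natural isomorphisms
\[
H^{st}_*(\C;F) \;\simeq\; \underset{x\in\C'}{\col}\,H_*\bigl({\rm Aut}_\C(x);F(x)\bigr) \;\simeq\; H_*\bigl(G_\infty;F_\infty\bigr),
\]
the last one relying on the fact that filtered colimits commute with group homology. This reformulation is independent of $(H\text{ faible})$ and already delivers one half of the identification; what remains is to identify the right-hand side of the proposition, namely $H_*(\C\times G_\infty;F)$, with $H_*(G_\infty;F_\infty)$.

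For this second step I would analyse $H_*(\C\times G_\infty;F)$ by means of the Grothendieck--Cartan--Leray spectral sequence attached to the projection functor $p\colon \C\times G_\infty\to G_\infty$. Since $F$ on the product is pulled back along the other projection, the derived direct images $L^tp_*F$ are computed fibrewise and assemble into a spectral sequence of the shape
\[
E^2_{s,t} \;=\; H_s\bigl(G_\infty;\,L^tp_*F\bigr) \;\Longrightarrow\; H_{s+t}(\C\times G_\infty;F),
\]
where the $G_\infty$-action on $L^tp_*F$ is the one induced by the stabilisation maps along the tranche $\C'$. The crux is then to prove, using $(H\text{ faible})$, that $L^tp_*F$ vanishes for $t>0$ and coincides with $F_\infty$ (endowed with its natural $G_\infty$-module structure) for $t=0$; geometrically, this expresses that the nerve-type resolution of $\C$ becomes acyclic once restricted along the cofinal tranche.

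The main obstacle, and the only place where $(H\text{ faible})$ really intervenes, is precisely this acyclicity/identification of $L^tp_*F$. The point is that the canonical arrow ${\rm Aut}_\C(a)\to {\rm Aut}_\C(a\oplus i)$ being an isomorphism onto the stabiliser of $i\to a\oplus i$ is exactly the combinatorial input needed to produce, on the simplicial bar complex restricted to $\C'$, an explicit extra degeneracy (or contracting homotopy along the stabilising direction) matching stabilisers of canonical inclusions; this both kills higher homology and identifies the degree-zero term with $F_\infty$ equivariantly. Once this technical lemma is in place, the spectral sequence collapses to its horizontal edge, giving $H_*(\C\times G_\infty;F)\simeq H_*(G_\infty;F_\infty)$, and combined with the first step this yields the canonical graded isomorphism claimed by the proposition.
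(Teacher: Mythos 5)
Your first step is fine (it is observation~3 preceding the proposition), but your second step rests on a claim that is false and that would, if true, trivialise most of the paper. The higher derived direct images of $F$ along the projection $p\colon\C\times{\rm G}_\infty\to{\rm G}_\infty$ are computed on the fibre, which is $\C$ itself: one has $L^tp_*F\simeq H_t(\C;F)$, and the ${\rm G}_\infty$-action on it is \emph{trivial} because $F$ on the product factors through the projection onto $\C$. So your "crux" asserts simultaneously that $H_t(\C;F)=0$ for $t>0$ and that $H_0(\C;F)$ carries the natural (generally nontrivial) ${\rm G}_\infty$-action of $F_\infty$; both are wrong. The groups $H_q(\C;F)$ for $q>0$ are precisely the interesting objects here (see corollaire~\ref{cor-dvks}, where all of them appear, and théorème~\ref{th-dvg}, which identifies them with generally nonvanishing Tor groups), and the correct answer for $H_*(\C\times{\rm G}_\infty;F)$ is the Künneth-type sum $\bigoplus_{p+q=n}H_p({\rm G}_\infty;H_q(\C;F))$ with trivial action, not a collapse onto the bottom row. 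No spectral sequence over ${\rm G}_\infty$ with coefficients pulled back from $\C$ can reconstruct the twisted coefficients $F_\infty$; the whole content of the proposition is that this untwisted product nonetheless computes the twisted stable homology, and that is not a formal collapse.

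The argument the paper actually uses is of a different nature: both sides are homological $\delta$-functors in $F$ commuting with filtered colimits, so it suffices to check the comparison map on the projective generators $F=P^\C_a=\mathbb{Z}[\C(a,-)]$. For such $F$, the right-hand side reduces to $H_*({\rm G}_\infty;\mathbb{Z})$, while the left-hand side is $\underset{c}{\col}\,H_*({\rm Aut}_\C(c);\mathbb{Z}[\C(a,c)])$, which Shapiro's lemma decomposes as a sum of $H_*(Stab)$ over the orbits of $\C(a,c)$. Hypothesis (H~faible) identifies the stabiliser of the canonical orbit $a\to a\oplus i$ with ${\rm Aut}_\C(i)$, and the stable transitivity hypothesis shows that in the colimit every orbit is absorbed into the canonical one; together these give the isomorphism. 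Note that your sketch never uses stable transitivity at all, which is a sign that something essential is missing: (H~faible) alone does not suffice.
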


(Pour éviter le recours à une tranche, on peut écrire le membre de droite sous la forme $\underset{c\in\C}{\col} H_*(\C\times {\rm Aut}_\C(c);F)$)

\begin{proof}(On suit à peu de choses près \cite{DV} dans un cadre plutôt plus simple.)
 
Considérons, pour tout objet $c$ de $\C$, le foncteur ${\rm Aut}_\C(c)\to\C\times {\rm Aut}_\C(c)$ dont les
composantes sont l'inclusion d'image $c$ et l'identité : il induit un morphisme de foncteurs homologiques
$H_*({\rm Aut}_\C(c);F(c))\to H_*(\C\times {\rm Aut}_\C(c);F)$. Par passage à la colimite, on en déduit un
morphisme de foncteurs homologiques $H^{st}_*(\C;F)\to H_*(\C\times {\rm G}_\infty;F)$ ; il suffit de vérifier
que c'est un isomorphisme lorsque $F$ est un objet projectif
$P^\C_a=\mathbb{Z}[\C(a,-)]$ de $\C-\mathbf{Mod}$.

Le groupe gradué $H_*(\C\times {\rm Aut}_\C(c);P^\C_a)$ s'identifie canoniquement à $H_*({\rm Aut}_\C(c))$ (grâce à la proposition~\ref{pr-bifo}), tandis qu'on a
 $$H_*({\rm Aut}_\C(c);\mathbb{Z}[\C(a,c)])\simeq\bigoplus H_*(Stab)$$
 où la somme est prise sur les orbites du ${\rm Aut}_\C(c)$-ensemble $\C(a,c)$, tandis que $Stab$ désigne les stabilisateurs correspondants. Via ces isomorphismes, notre morphisme est induit par les inclusions de ces stabilisateurs dans ${\rm Aut}_\C(c)$. La conclusion résulte alors des deux faits suivants :
 \begin{enumerate}
  \item si $c$ est de la forme $a\oplus i$, le stabilisateur correspondant au morphisme canonique $a\to a\oplus i$ s'identifie à ${\rm Aut}_\C(i)$ (hypothèse (H faible)), et les morphismes $H_*({\rm Aut}_\C(i))\to H_*({\rm Aut}_\C(a\oplus i))$ induisent par passage à la colimite sur $i$ un isomorphisme ;
  \item l'hypothèse de transitivité stable montre que l'image de toute orbite de $\C(a,c)$ dans $\C(a,a\oplus c)$ est celle du morphisme canonique. 
 \end{enumerate}
\end{proof}

On dispose donc d'un lien étroit entre $H^{st}_*(\C;F)$, l'homologie à coefficients triviaux de
${\rm G}_\infty$ et $H_*(\C;F)$ :
\begin{cor}\label{cor-dvks}
 Sous les mêmes hypothèses, on dispose d'isomorphismes naturels
$$H^{st}_n(\C;F)\simeq\bigoplus_{p+q=n}{\rm Tor}^\C_p(H_q({\rm G}_\infty;\mathbb{Z}),F)$$
(où le membre de gauche du groupe de torsion est un foncteur constant)
et d'isomorphismes (non nécessairement naturels)
$$H^{st}_n(\C;F)\simeq\bigoplus_{p+q=n}H_p({\rm G}_\infty;H_q(\C;F))$$
(où ${\rm G}_\infty$ opère trivialement).
\end{cor}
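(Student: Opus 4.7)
The plan is to apply Proposition~\ref{pr-dvks} to identify $H^{st}_n(\C;F)$ with $H_n(\C\times{\rm G}_\infty;F)$, where $F$ is viewed on the product via pullback along the first projection, and then to compute this homology by an explicit bi-resolution. Concretely, I would choose a projective resolution $P_\bullet\to\mathbb{Z}$ of the constant functor in $\mathbf{Mod}-\C$, and a projective resolution $Q_\bullet\to\mathbb{Z}$ of the trivial right $\mathbb{Z}[{\rm G}_\infty]$-module. Their external tensor product $P_\bullet\boxtimes Q_\bullet$, totalised, is a projective resolution of the constant right $(\C\times{\rm G}_\infty)$-module $\mathbb{Z}$.

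Tensoring with $F$ (which is trivial along ${\rm G}_\infty$), the tensor product over the product category decouples:
\begin{equation*}
(P_p\boxtimes Q_q)\otimes_{\C\times{\rm G}_\infty}F \;\cong\; (P_p\otimes_\C F)\otimes_{\mathbb{Z}}(Q_q\otimes_{{\rm G}_\infty}\mathbb{Z}).
\end{equation*}
Thus $H_*(\C\times{\rm G}_\infty;F)$ is computed by the tensor product over $\mathbb{Z}$ of two chain complexes of abelian groups $A_\bullet=P_\bullet\otimes_\C F$ and $B_\bullet=Q_\bullet\otimes_{{\rm G}_\infty}\mathbb{Z}$, whose homologies are respectively $H_*(\C;F)$ and $H_*({\rm G}_\infty;\mathbb{Z})$; moreover $B_\bullet$ consists of free abelian groups, so the classical Künneth short exact sequence applies and is natural in $A_\bullet$.

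It then remains to rewrite the outer terms of this Künneth sequence in the two desired forms. For the second (non-natural) isomorphism, the universal coefficient theorem for the group homology of ${\rm G}_\infty$ with a trivially acted coefficient identifies $\bigoplus_{p+q=n}H_p({\rm G}_\infty;H_q(\C;F))$ with the total Künneth output, once a splitting of the UCT short exact sequence is chosen. For the first (natural) isomorphism, the analogous universal coefficient sequence for ${\rm Tor}^\C$ against a constant right $\C$-module (obtained by resolving the abelian group $H_q({\rm G}_\infty;\mathbb{Z})$ flatly over $\mathbb{Z}$) groups the two Künneth outer terms into a single ${\rm Tor}^\C_p(H_q({\rm G}_\infty;\mathbb{Z}),F)$, so no splitting is needed and the resulting isomorphism is natural in $F$. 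The main obstacle will be to justify cleanly this naturality, namely to recognise the construction as a (degenerate) Grothendieck spectral sequence for the projection $\C\times{\rm G}_\infty\to\C$ whose $E^2$-page is precisely ${\rm Tor}^\C_p(H_q({\rm G}_\infty;\mathbb{Z}),F)$; the collapse at $E^2$ follows from the product structure of the bi-resolution, which prevents higher differentials from mixing the two filtration degrees.
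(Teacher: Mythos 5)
Your overall route is the standard one and is essentially what the paper has in mind (it delegates the argument to \cite{DV}, \S\,2.3): reduce to $H_*(\C\times{\rm G}_\infty;F)$ via la proposition~\ref{pr-dvks}, resolve by an external tensor product $P_\bullet\boxtimes Q_\bullet$, and observe that the coefficient decouples into $A_\bullet\otimes B_\bullet$ with $B_\bullet=Q_\bullet\otimes_{{\rm G}_\infty}\mathbb{Z}$ a complex of free abelian groups. The Künneth/UCT bookkeeping you describe does give the second, non-natural isomorphism correctly.

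For the first, natural isomorphism your justification has a real gap. Producing two short exact sequences with the same outer terms (the Künneth sequence for $H_n(\mathrm{Tot}(A\otimes B))$ and the universal-coefficient sequence for ${\rm Tor}^\C_p(H_q({\rm G}_\infty;\mathbb{Z}),F)$) does not identify their middle terms: one needs a map inducing the identity on the outer terms, which you do not construct. Likewise, invoking the Grothendieck spectral sequence of the projection is the right target (its $E^2$-page is indeed ${\rm Tor}^\C_p(H_q({\rm G}_\infty;\mathbb{Z}),F)$, since $(\mathbb{L}_q pr_!)(\mathbb{Z})$ is the constant functor $H_q({\rm G}_\infty;\mathbb{Z})$), but ``the product structure of the bi-resolution'' does not by itself kill the higher differentials, and even degeneration at $E^2$ would only yield a filtration, not the asserted natural direct-sum decomposition. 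The missing ingredient is that $\mathbb{Z}$ is hereditary: since cycles and boundaries of $B_\bullet$ are free, $B_\bullet$ splits (after one choice, independent of $F$) as a direct sum over $n$ of two-term complexes $[B_n(B_\bullet)\hookrightarrow Z_n(B_\bullet)]$ placed in degrees $(n+1,n)$, each a free resolution of $H_n({\rm G}_\infty;\mathbb{Z})$ shifted by $n$. Then $\mathrm{Tot}(P_\bullet\otimes [B_n\hookrightarrow Z_n])$ is a complex of projectives of $\mathbf{Mod}-\C$ quasi-isomorphic to the constant functor $H_n({\rm G}_\infty;\mathbb{Z})$ placed in degree $n$, so tensoring with $F$ over $\C$ and summing over $n$ gives at once the degeneration, the splitting, and its naturality in $F$. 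With this replacement your argument is complete.
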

(Cela s'obtient à partir de la proposition~\ref{pr-dvks} par des arguments usuels d'algèbre homologique, voir par exemple \cite{DV}, §\,2.3 pour des détails). On peut ainsi penser à $H_*(\C;F)$
comme à une {\em $K$-théorie stable} associée aux coefficients définis par $F$, dans le contexte de la catégorie $\C$ (cf. ibidem).

\begin{rem}\label{rq-cadg}
 Le contexte général décrit dans cette section est susceptible d'un certain nombre de généralisations :
\begin{enumerate}
 \item on peut remplacer les groupes d'automorphismes de la catégorie $\C$ par des groupes munis d'un morphisme naturel vers iceux,
avec des hypothèses adaptées. Cela peut permettre, par exemple, dans les considérations suivantes, de remplacer
les groupes linéaires ou orthogonaux par leurs analogues spéciaux (dans le cas d'un anneau de base commutatif) ;
\item l'hypothèse de transitivité stable est donnée sous une forme plus forte que ce dont on a réellement besoin
(cf. \cite{DV} pour la condition minimale utile) ; la restriction adoptée ici est motivée par sa simplicité et
sa conservation par toute sous-catégorie monoïdale pleine, qui la rendent très naturelle ;
\item il n'est pas indispensable de travailler dans une catégorie monoïdale symétrique (on peut notamment souhaiter
utiliser certaines sous-catégories remarquables non monoïdales d'une telle catégorie --- par exemple pour traiter
d'homologie stable de groupes orthogonaux du type $O_{n,i}$, où $i$ est fixé). Néanmoins, nous ne sommes pas
parvenus à trouver des axiomes pleinement satisfaisants pour traiter de telles généralisations.
\end{enumerate}
\end{rem}

\section{Outils d'annulation en homologie des foncteurs}\label{secan}

Dans les deux premiers paragraphes de cette section, on présente des résultats d'annulation en homologie des foncteurs remarquablement simples, généraux et efficaces, dus à Scorichenko \cite{Sco}. Bien que nous ne fassions que suivre ses arguments, nous les reproduisons dans leur intégralité afin que le lecteur dispose d'une présentation complète, avec nos notations, de ce travail resté non publié.

Dans le dernier paragraphe de la présente section, nous mettons en place un cadre général dans lequel s'inséreront nos applications des résultats de Scorichenko.

\subsection{Version abstraite des résultats d'annulation de Scorichenko}\label{sec-scoabs}

Dans cette section, on se donne des catégories abéliennes $\E_1$, $\E_2$, $\E_3$ et un bifoncteur semihomologique à droite $\E_1\times\E_2\to\E_3$ concentré en degrés positifs $(\Hh_n)_{n\in\mathbb{N}}$, c'est-à-dire une suite de foncteurs $\Hh_n : \E_1\times\E_2\to\E_3$ telle que, pour tout $A\in {\rm Ob}\,\E_1$, $\Hh_*(A,-)$ soit additif et muni d'une structure de foncteur homologique (morphisme de liaison donnant lieu à une suite exacte longue naturelle pour toute suite exacte courte) --- on pourrait imposer une condition de naturalité en $A$, mais ce ne sera même pas nécessaire. On suppose également que, pour tout $B\in {\rm Ob}\,\E_2$, $\Hh_*(0,B)=0$.

On se donne aussi des endofoncteurs {\em exacts} 
 $\Phi$ et $\Psi$ de $\E_1$ et $\E_2$ respectivement, munis de transformations naturelles $\sigma : \Phi\to Id_{\E_1}$ et $\tau : \Psi\to Id_{\E_2}$ de sorte qu'existe un isomorphisme naturel gradué $\Hh_*(\Phi A,B)\simeq\Hh_*(A,\Psi B)$ rendant commutatif le diagramme
$$\xymatrix{\Hh_*(\Phi A,B)\ar[d]^-\simeq\ar[rr]^-{\Hh(\sigma_A,B)} & & \Hh_*(A,B)\\
\Hh_*(A,\Psi B)\ar[urr]_-{\Hh(A,\tau_B)} & & 
}$$

On note $\Pp$ la classe des objets $A$ de $\E_1$ tels que $\sigma_A=0$.

Notons $\Theta$ le noyau de $\tau$. On définit des classes d'objets $\s_d$ de $\E_2$ comme suit :
\begin{enumerate}
 \item $\s_0$ est la classe des $B$ tels que $\tau_B$ soit un épimorphisme ;
 \item pour $d\in\mathbb{N}^*$, $\s_d$ est la classe des $B$ tels que $\Theta^i B$ ($i$-ème itération de $\Theta$ sur $B$) appartienne à $\s_0$ pour $0\leq i\leq d$.
\end{enumerate}

\begin{pr}\label{sco-abs1}
 Si $A$ appartient à $\Pp$ et $B$ à $\s_d$, alors $\Hh_n(A,B)=0$ pour $n\leq d$.
\end{pr}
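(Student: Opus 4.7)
\emph{Esquisse de preuve.} Le plan consiste à procéder par récurrence sur $d$, l'observation centrale étant que l'hypothèse $\sigma_A=0$ se transpose, via l'isomorphisme naturel $\Hh_*(\Phi A,B)\simeq\Hh_*(A,\Psi B)$ compatible au triangle donné, en la nullité du morphisme $\Hh(A,\tau_B) : \Hh_*(A,\Psi B)\to\Hh_*(A,B)$. C'est le seul endroit où les hypothèses liant $\Phi$, $\Psi$, $\sigma$ et $\tau$ interviendront : tout le reste se ramènera à manipuler les suites exactes longues associées aux suites exactes courtes $0\to\Theta B\to\Psi B\xrightarrow{\tau_B}B\to 0$, disponibles précisément lorsque $B\in\s_0$ (car alors $\tau_B$ est un épimorphisme de noyau $\Theta B$).

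Pour le cas initial $d=0$, on part de $B\in\s_0$ et de la suite exacte longue que le foncteur homologique $\Hh_*(A,-)$ associe à la suite exacte courte ci-dessus. Sa fin s'écrit $\Hh_0(A,\Psi B)\xrightarrow{\Hh(A,\tau_B)}\Hh_0(A,B)\to 0$ ; comme la flèche est nulle par l'argument précédent, on obtient $\Hh_0(A,B)=0$.

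Pour le pas inductif, j'observerai qu'un $B\in\s_d$ vérifie $\Theta B\in\s_{d-1}$, puisque les itérés $\Theta^i(\Theta B)=\Theta^{i+1}B$ pour $0\leq i\leq d-1$ sont dans $\s_0$ par hypothèse. L'hypothèse de récurrence donnera donc $\Hh_{n-1}(A,\Theta B)=0$ dès que $n\leq d$. La portion pertinente de la suite exacte longue fournira alors
$$\Hh_n(A,\Psi B)\xrightarrow{\Hh(A,\tau_B)}\Hh_n(A,B)\to\Hh_{n-1}(A,\Theta B),$$
dont la flèche de gauche est nulle et dont la flèche de droite arrive dans un objet nul, d'où l'annulation voulue $\Hh_n(A,B)=0$.

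Le seul point demandant un peu de soin est la mobilisation, en degré homologique quelconque, de la compatibilité entre l'isomorphisme naturel et les transformations $\sigma$ et $\tau$ : c'est elle qui permet de passer de la nullité évidente de $\Hh(\sigma_A,B)$ à celle, moins apparente, de $\Hh(A,\tau_B)$. Une fois cet ingrédient en main, la démonstration est purement formelle et illustre la pertinence de la définition itérative des classes $\s_d$, qui encode exactement ce qu'il faut pour que la récurrence passe sans encombre.
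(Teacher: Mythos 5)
Votre preuve est correcte et suit essentiellement la même démarche que celle de l'article : récurrence sur $d$, nullité de $\Hh(A,\tau_B)$ déduite de $\sigma_A=0$ via l'isomorphisme naturel compatible au triangle, puis suite exacte longue associée à $0\to\Theta B\to\Psi B\xrightarrow{\tau_B} B\to 0$ (avec la remarque que $\Theta B\in\s_{d-1}$). Signalons seulement que la nullité~\guillemotleft~évidente~\guillemotright~de $\Hh(\sigma_A,B)$ repose sur l'hypothèse $\Hh_*(0,B)=0$ posée en début de section (seule l'additivité en la seconde variable étant supposée), point que l'article prend soin de rappeler.
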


\begin{proof}
On procède par récurrence sur $d$.
\begin{itemize}
 \item Pour $d=0$ : $\Hh_0(A,\tau_B)$ est un \'epimorphisme comme $\tau_B$ ($\Hh_0(A,-)$ est exact à droite), mais il est aussi nul puisqu'il s'identifie à $\Hh_0(\sigma_A,B)$ et que $\sigma_A$ égale $0$ (on rappelle que $\Hh_*(0,B)=0$). Donc son but $\Hh_0(A,B)$ est nul.
 \item Si l'assertion est vraie pour $\s_{d-1}$ et que $B$ est dans $\s_{d}$, la suite exacte courte $0\to\Theta B\to\Psi B\xrightarrow{\tau_B} B\to 0$ procure une suite exacte
 $$\Hh_{d}(A,\Psi B)\to\Hh_{d}(A,B)\to\Hh_{d-1}(A,\Theta B)\;;$$
 $\Hh_{d-1}(A,\Theta B)$ est nul par l'hypoth\`ese de r\'ecurrence, et la fl\`eche $\Hh_{d}(A,\Psi B)\to\Hh_{d}(A,B)$ induite par $\tau_B$ est nulle comme $\sigma_A$ (cf. cas précédent), d'o\`u la nullit\'e de $\Hh_{d}(A,B)$.
\end{itemize}
\end{proof}

On cherche maintenant, suivant Scorichenko, des critères plus maniables pour vérifier qu'un objet de $\E_2$ appartient à tous les $\s_d$. On introduit pour cela une première hypothèse supplémentaire :
\begin{hyp}\label{hyp-sco1}
 Il existe un endomorphisme $\gamma$ du foncteur $\Psi^2$ tel que $\Psi\tau\circ\gamma=\tau_\Psi$ et $\tau_\Psi\circ\gamma=\Psi\tau$.
\end{hyp}

\begin{pr}\label{sco-abs2}
 Supposons que l'hypothèse~\ref{hyp-sco1} est vérifiée. Si $A$ est un objet de $\Pp
$ et $B$ un objet de $\E_2$ tel que $\tau_B$ soit un épimorphisme {\em scindé}, alors $\Hh_*(A,B)=0$.
\end{pr}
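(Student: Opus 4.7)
The plan is to reduce to Proposition~\ref{sco-abs1} by showing that the property ``$\tau_B$ is a split epimorphism'' is stable under the functor $\Theta$: iterating this stability places $B$ in $\s_d$ for every $d\in\mathbb{N}$, whence Proposition~\ref{sco-abs1} yields $\Hh_n(A,B)=0$ for all $n$.

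First I would fix a section $s : B\to\Psi B$ of $\tau_B$ and, exploiting exactness of $\Psi$, identify $\Psi\Theta B$ with $\ker(\Psi\tau_B)\subseteq\Psi^2 B$. The key idea furnished by Hypothesis~\ref{hyp-sco1} is that $\gamma$ interchanges the two natural projections $\tau_\Psi$ and $\Psi\tau$ from $\Psi^2$ to $\Psi$. Concretely, consider the composite $\gamma_B\circ\Psi s : \Psi B\to\Psi^2 B$: using $\Psi\tau\circ\gamma=\tau_\Psi$ together with naturality of $\tau$ at $s$, one computes $\Psi\tau_B\circ\gamma_B\circ\Psi s=\tau_{\Psi B}\circ\Psi s=s\circ\tau_B$, which vanishes after precomposition with the inclusion $\iota : \Theta B\hookrightarrow\Psi B$. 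Hence $\gamma_B\circ\Psi s\circ\iota$ factors through $\ker(\Psi\tau_B)=\Psi\Theta B$, producing a candidate section $\tilde s : \Theta B\to\Psi\Theta B$.

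To verify that $\tilde s$ splits $\tau_{\Theta B}$, I would precompose with the monomorphism $\iota$ and exploit naturality of $\tau$ at $\iota$, the second relation $\tau_\Psi\circ\gamma=\Psi\tau$, and $\tau_B\circ s=\mathrm{id}_B$, obtaining
$$\iota\circ\tau_{\Theta B}\circ\tilde s = \tau_{\Psi B}\circ\gamma_B\circ\Psi s\circ\iota = \Psi(\tau_B\circ s)\circ\iota = \iota.$$
Since $\iota$ is monic, this gives $\tau_{\Theta B}\circ\tilde s=\mathrm{id}_{\Theta B}$, completing the inductive step.

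The main obstacle is spotting how to exploit Hypothesis~\ref{hyp-sco1}: the symmetry provided by $\gamma$ is precisely what is needed to convert a section of $\tau_B$ into one of $\tau_{\Theta B}$. Once the role of $\gamma$ as a swap between $\Psi\tau$ and $\tau_\Psi$ is recognized, the remainder is a mechanical diagram chase, and iteration then places $B$ in every $\s_d$, so Proposition~\ref{sco-abs1} closes the argument.
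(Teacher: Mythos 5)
Your proof is correct: the construction $\gamma_B\circ\Psi s$ restricted to $\Theta B$ does land in $\Psi\Theta B$ and splits $\tau_{\Theta B}$, so iterating places $B$ in every $\s_d$ and Proposition~\ref{sco-abs1} concludes. The paper deliberately omits the proof of this proposition, but your argument is exactly the one it gives for the more general Proposition~\ref{sco-abs3}, specialized to the case where the exact faithful functor $R$ is the identity.
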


On s'abstiendra de donner la démonstration (aisée) de cet énoncé, car le deuxième critère que nous énonçons maintenant (toujours suivant Scorichenko) est plus général --- en effet, la condition que $\tau_B$ soit un épimorphisme scindé s'avère souvent trop restrictive.

Pour cela, on suppose donnée une catégorie abélienne $\E_4$ et un foncteur {\em exact et fidèle} $R : \E_2\to\E_4$. On note $\s co$ la classe des objets $B$ de $\E_2$ tels que $R\tau_B : R\Psi B\to RB$ soit un épimorphisme scindé de $\E_4$.

\begin{hyp}\label{hyp-sco2}
 Il existe un endofoncteur exact $\bar{\Psi}$ de $\E_4$ et une transformation naturelle $\bar{\tau} : \bar{\Psi}\to Id_{\E_4}$ munis d'un isomorphisme $R\Psi\simeq\bar{\Psi}R$ faisant commuter le diagramme
 $$\xymatrix{R\Psi\ar[r]^-{R\tau}\ar[d]_-\simeq  & R \\
 \bar{\Psi} R\ar[ru]_-{\bar{\tau}_R} &
 }$$
\end{hyp}

\begin{pr}[Scorichenko]\label{sco-abs3}
 Supposons les hypothèses~\ref{hyp-sco1} et~\ref{hyp-sco2} satisfaites. Si $A$ est un objet de $\Pp$ et $B$ un objet de $\s co$, alors $\Hh_*(A,B)=0$.
\end{pr}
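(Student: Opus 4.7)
The plan is to combine Proposition~\ref{sco-abs1} with a short inductive argument. Concretely, I would show that the class $\s co$ is stable under the kernel endofunctor $\Theta$; since the exact and faithful functor $R$ reflects epimorphisms, every object of $\s co$ automatically lies in $\s_0$, and iterated stability under $\Theta$ yields $\Theta^i B\in\s_0$ for all $i\in\mathbb{N}$, i.e.\ $B\in\s_d$ for all $d$. Proposition~\ref{sco-abs1} then forces $\Hh_n(A,B)=0$ in every degree.

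The heart of the argument is thus the implication $C\in\s co\Rightarrow\Theta C\in\s co$, which I intend to prove in two stages. First, I deduce that $R\tau_{\Psi C}$ is split epi. By Hypothesis~\ref{hyp-sco2} the morphism $R\Psi\tau_C$ is identified with $\bar\Psi R\tau_C$; since $\bar\Psi$ is exact and so additive, applying it to a splitting $s$ of $R\tau_C$ yields a splitting $\bar\Psi s$ of $R\Psi\tau_C$. The relation $\tau_\Psi\circ\gamma=\Psi\tau$ of Hypothesis~\ref{hyp-sco1} becomes $R\tau_{\Psi C}\circ R\gamma_C=R\Psi\tau_C$ after applying $R$, so post-composing $\bar\Psi s$ with $R\gamma_C$ produces a splitting of $R\tau_{\Psi C}$.

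Second, I would extract from this a splitting of $R\tau_{\Theta C}$. Applying $R$ to the naturality diagram
$$\xymatrix{0\ar[r]&\Psi\Theta C\ar[r]\ar[d]_-{\tau_{\Theta C}}&\Psi^2 C\ar[r]^-{\Psi\tau_C}\ar[d]_-{\tau_{\Psi C}}&\Psi C\ar[r]\ar[d]^-{\tau_C}&0\\0\ar[r]&\Theta C\ar[r]_-{i}&\Psi C\ar[r]_-{\tau_C}&C\ar[r]&0}$$
gives two split short exact sequences in $\E_4$: the bottom via the chosen $s$, the top via $\bar\Psi s$. With respect to the induced decompositions $R\Psi C\simeq R\Theta C\oplus RC$ and $R\Psi^2 C\simeq R\Psi\Theta C\oplus R\Psi C$, the central vertical $R\tau_{\Psi C}$ becomes block diagonal with entries $R\tau_{\Theta C}$ and $R\tau_C$: naturality of $\tau$ along $i$ gives $R\tau_{\Psi C}\circ R\Psi i=Ri\circ R\tau_{\Theta C}$, whose composite with the projection $R\tau_C$ onto the $RC$-summand vanishes because $R\tau_C\circ Ri=0$, killing the $(2,1)$-entry; naturality of $\bar\tau$ along $s$ gives $R\tau_{\Psi C}\circ\bar\Psi s=s\circ R\tau_C$, whose composite with the projection onto the $R\Theta C$-summand vanishes because $s$ targets the complementary summand, killing the $(1,2)$-entry. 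A block-diagonal morphism is split epi if and only if each of its diagonal blocks is, so the splitting of $R\tau_{\Psi C}$ descends to one of $R\tau_{\Theta C}$, completing the induction.

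The main subtlety I foresee is the interplay between the two incarnations $R\Psi\tau_C$ and $R\tau_{\Psi C}$: the natural candidate for a splitting arises on the former by functoriality of $\bar\Psi$, whereas it is the latter that enters the block-diagonal analysis. Hypothesis~\ref{hyp-sco1} is precisely what bridges these two via $R\gamma_C$; once this bridge is installed, the final step reduces to a matrix computation whose two crucial vanishings are instances of the universal property of $\Theta$ and of the complementarity inside the direct sum decomposition.
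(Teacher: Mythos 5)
Your proof is correct and takes essentially the same route as the paper's: you construct the same section $R\gamma_C\circ\bar\Psi s$ of $R\tau_{\Psi C}$ using the two identities of Hypothesis~\ref{hyp-sco1} together with the naturality of $\bar\tau$ along $s$, and you reduce to Proposition~\ref{sco-abs1} via the stability of $\s co$ under $\Theta$ exactly as the paper does. The only cosmetic difference is the last step: where you diagonalize $R\tau_{\Psi C}$ by blocks with respect to the splittings $\bar\Psi s$ and $s$, the paper records the single relation $R\Psi\tau_C\circ(R\gamma_C\circ\bar\Psi s)=s\circ R\tau_C$ and passes to kernels, which produces the same section of $R\tau_{\Theta C}$.
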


\begin{proof}
La fidélité et l'exactitude de $R$ impliquent que, si $R\tau_B$ est un épimorphisme, alors $\tau_B$ en est un également. Compte tenu de la proposition~\ref{sco-abs1}, il suffit donc d'établir la stabilité de $\s co$ par le foncteur $\Theta$.

Soient $B$ un objet de $\s co$ et $s$ une section de $R\tau_B$. Notons $s'$ le morphisme :
 $$R\Psi B\simeq\bar{\Psi} RB\xrightarrow{\bar{\Psi}s}\bar{\Psi}R\Psi B\simeq R\Psi^2 B\xrightarrow{R\gamma_B}R\Psi^2 B.$$
 Le diagramme commutatif
 $$\xymatrix{\bar{\Psi} RB \ar[r]^-{\bar{\Psi}s}\ar[rd]_-{Id} & \bar{\Psi}R\Psi B\ar[r]^-\simeq\ar[d]^{\bar{\Psi}R\bar{\tau}_B} & R\Psi^2 B\ar[r]^-{R\gamma_B}\ar[d]_{R\Psi\tau_B} & R\Psi^2 B\ar[dl]^-{R\tau_{\Psi B}}\\
 & \bar{\Psi}RB\ar[r]^-\simeq & R\Psi B &
 }$$
 montre que $s'$ est une section de $R\tau_{\Psi B}$.
 
 De m\^eme, la commutation du diagramme
 $$\xymatrix{R\Psi B\ar[r]^\simeq\ar[rd]_{R\tau_B} & \bar{\Psi}RB\ar[d]^{\bar{\tau}_{RB}}\ar[r]^{\bar{\Psi}s} & \bar{\Psi}R\Psi B\ar[rd]_{\bar{\tau}_{R\Psi B}}\ar[r]^\simeq & R\Psi^2 B\ar[d]^{R\tau_{\Psi B}}\ar[r]^{R\gamma_B} & R\Psi^2 B\ar[dl]^{R\Psi\tau_B} \\
 & RB\ar[rr]^s & &R\Psi B & 
 }$$
 montre que les sections $s$ et $s'$ sont compatibles au morphisme $\tau_B : \Psi B\to B$ au sens où le diagramme
 $$\xymatrix{R\Psi B\ar[r]^-{s'}\ar[d]_-{R\tau_B} & R\Psi^2 B\ar[d]^-{R\Psi\tau_B}\\
 RB\ar[r]^-s & R\Psi B
 }$$
 commute, de sorte qu'elles d\'efinissent par passage aux noyaux une section de $R\tau_{\Theta B}$.
\end{proof}

\subsection{Application aux foncteurs polynomiaux}

Soit $\A$ une petite catégorie additive.

Si $E$ est un ensemble fini non vide, on note $t_E : \A\to\A$ le foncteur $A\mapsto A^{\oplus E}$. Ce foncteur
est adjoint à lui-même, les foncteurs de précomposition $T_E$ qu'il induit $\A-\mathbf{Mod}\to\A-\mathbf{Mod}$
et $\mathbf{Mod}-\A\to\mathbf{Mod}-\A$ (qu'on notera par abus de la même façon) sont donc munis d'un
isomorphisme naturel
\begin{equation}\label{eqadjc}
 {\rm Tor}^\A_*(T_E(X),F)\simeq {\rm Tor}^\A_*(X,T_E(F)).
\end{equation}

Si $I$ est une partie de $E$, on note $u_I : Id\to t_E$ (la mention de $E$ est omise par abus) la
transformation naturelle dont la composante $Id\to Id$ associée à $e\in E$ est l'identité si $e\in I$ et $0$
sinon, et $p_I : t_E\to Id$ la transformation naturelle~\guillemotleft~duale~\guillemotright~(donnée par les
mêmes composantes). On définit des transformations naturelles d'endofoncteurs de $\A-\mathbf{Mod}$
$$cr_E^{\A,dir}=\sum_{I\subset E}(-1)^{\# I}(u_I)_* : Id\to T_E$$
(effet croisé direct ; $\#$ désigne le cardinal) et
$$cr_E^{\A,inv}=\sum_{I\subset E}(-1)^{\# I}(p_I)_* : T_E\to Id$$
(effet croisé inverse). Les exposants seront omis s'il n'y a pas d'ambiguïté. 

Pour tout foncteur $F\in {\rm Ob}\,\A-\mathbf{Mod}$ et tout entier $d\in\mathbb{N}$, l'équivalence entre les
propriétés suivantes est classique :
\begin{enumerate}
 \item $cr_E^{\A,dir}(F)=0$ pour tout ensemble $E$ de cardinal au moins $d+1$ ;
\item $cr_E^{\A,dir}(F)=0$ pour un ensemble $E$ de cardinal $d+1$ ;
 \item $cr_E^{\A,inv}(F)=0$ pour tout ensemble $E$ de cardinal au moins $d+1$ ;
\item $cr_E^{\A,inv}(F)=0$ pour un ensemble $E$ de cardinal $d+1$.
\end{enumerate}
(Cf. par exemple la référence originelle d'Eilenberg-Mac Lane \cite{EML}, chap.~II, d'où l'on peut aussi tirer que cette définition équivaut à exiger que la fonction $\A(a,b)\to\mathbf{Ab}(F(a),F(b))$ qu'induit $F$ soit polynomiale de degré au plus $d$ pour tous objets $a$ et $b$ de $\A$.)

Si ces conditions sont vérifiées, on dit que $F$ est {\em polynomial de degré au plus $d$}. Un foncteur est dit
{\em analytique} s'il est colimite de foncteurs polynomiaux (on peut toujours choisir cette colimite filtrante,
ce qui permet de propager les propriétés d'annulation homologique des foncteurs polynomiaux aux foncteurs
analytiques).

Pour la commodité de nos considérations ultérieures, on s'intéressera à la variante suivante des effets croisés.
Supposons que $(E,e)$ est ensemble fini pointé. On définit comme précédemment
des transformations naturelles d'endofoncteurs de $\A-\mathbf{Mod}$
$$cr_{E,e}^{\A,dir}=\sum_{e\in I\subset E}(-1)^{\# I}(u_I)_* : Id\to T_E$$
(effet croisé direct pointé) et
$$cr_{E,e}^{\A,inv}=\sum_{e\in I\subset E}(-1)^{\# I}(p_I)_* : T_E\to Id$$
(effet croisé inverse pointé).

On vérifie facilement que la condition que $F$ est polynomial de degré au plus $d$ équivaut encore à chacune
des assertions suivantes : 
\begin{enumerate}
 \item $cr_{E,e}^{\A,dir}(F)=0$ pour tout ensemble pointé $(E,e)$ de cardinal au moins $d+2$ ;
\item $cr_{E,e}^{\A,dir}(F)=0$ pour un ensemble pointé $(E,e)$ de cardinal $d+2$ ;
 \item $cr_{E,e}^{\A,inv}(F)=0$ pour tout ensemble pointé $(E,e)$ de cardinal au moins $d+2$ ;
\item $cr_{E,e}^{\A,inv}(F)=0$ pour un ensemble pointé $(E,e)$ de cardinal $d+2$.
\end{enumerate}

Modulo
l'identification~(\ref{eqadjc}), on dispose d'isomorphismes naturels
\begin{equation}\label{eqadjcr}
 {\rm Tor}^\A_*(cr_{E,e}^{\A^{op},inv},Id)\simeq  {\rm Tor}^\A_*(Id,cr_{E,e}^{\A,inv})
\end{equation}
(de même avec les effets croisés directs et les variantes non pointées).

Dans \cite{Sco} est établi le théorème suivant (pour l'homologie des bifoncteurs plutôt que les groupes de
torsion entre deux foncteurs, et dans le cadre des effets croisés ordinaires plutôt que la variante pointée qu'on leur préfère ici pour des raisons techniques, mais la démonstration est identique). On y note
$\theta : \mathbf{M}(\A)\to\A$ le foncteur d'inclusion de la sous-catégorie des monomorphismes scindés de~$\A$.

\begin{thm}[Scorichenko]\label{th1-sco}
Soient $d\in\mathbb{N}$, $(E,e)$ un ensemble pointé de cardinal $d+2$,  $X\in {\rm Ob}\,\mathbf{Mod}-\A$ et
$F\in {\rm Ob}\,\A-\mathbf{Mod}$. On suppose que :
\begin{itemize}
 \item $F$ est polynomial de degré au plus $d$ ;
 \item le morphisme $\theta^* cr_{E,e}(X) : \theta^*T_E(X)\to\theta^* X$ est un épimorphisme scindé
de $\mathbf{Mod}-\mathbf{M}(\A)$.
\end{itemize}

Alors ${\rm Tor}^\A_*(X,F)=0$.
\end{thm}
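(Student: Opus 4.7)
The plan is to apply the abstract vanishing criterion of Proposition~\ref{sco-abs3} in a carefully chosen framework. Take $\E_1=\A-\mathbf{Mod}$, $\E_2=\mathbf{Mod}-\A$, $\E_3=\mathbf{Ab}$, and define the bifunctor
$$\Hh_n(F,X):={\rm Tor}^\A_n(X,F)$$
for $F\in\E_1$ and $X\in\E_2$. This is additive in each variable, semihomological in $X$ (the usual long exact sequence of Tor), and vanishes on $X=0$, so the standing axioms of §\,\ref{sec-scoabs} are satisfied. Set $\Phi=T_E$ on $\E_1$ and $\Psi=T_E$ on $\E_2$; both are exact since $t_E$ is an additive endofunctor of $\A$. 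Take $\sigma=cr_{E,e}^{\A,inv}:T_E\to Id$ and $\tau=cr_{E,e}^{\A^{op},inv}:T_E\to Id$. The natural isomorphism $\Hh_*(\Phi F,X)\simeq\Hh_*(F,\Psi X)$ is supplied by (\ref{eqadjc}), and its compatibility with $\sigma$ and $\tau$ (the triangle preceding Proposition~\ref{sco-abs1}) is precisely the content of (\ref{eqadjcr}). Because $(E,e)$ has cardinality $d+2$, the pointed-cross-effect characterisation of polynomiality recalled just above Theorem~\ref{th1-sco} shows that $cr_{E,e}^{\A,inv}(F)=0$, i.e. $F\in\Pp$.

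I next verify Hypothesis~\ref{hyp-sco1}. The composite $\Psi^2$ is precomposition with the endofunctor $A\mapsto A^{\oplus E\oplus E}$ of $\A$, and the two natural transformations $\Psi\tau,\tau_\Psi:\Psi^2\to\Psi$ arise from applying $cr_{E,e}$ to the inner, respectively outer, copy of $E$. Swapping the two copies of $E$ is a natural isomorphism of $t_E\circ t_E$, whose pullback defines an involutive endomorphism $\gamma$ of $\Psi^2$ that exchanges inner and outer cross-effects; this gives both equalities $\Psi\tau\circ\gamma=\tau_\Psi$ and $\tau_\Psi\circ\gamma=\Psi\tau$. For Hypothesis~\ref{hyp-sco2}, take $\E_4=\mathbf{Mod}-\mathbf{M}(\A)$ and $R=\theta^*$. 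Since $\theta$ is the identity on objects and the functor categories in play have objectwise (co)limits, $R$ is exact and faithful. The endofunctor $t_E$ of $\A$ preserves split monomorphisms (the split monomorphism structure is built from the biproduct), so it restricts to an endofunctor of $\mathbf{M}(\A)$, whence precomposition supplies $\bar\Psi=T_E$ on $\E_4$ with $\theta^*T_E=\bar\Psi\theta^*$ on the nose and $\bar\tau=cr_{E,e}^{\mathbf{M}(\A)^{op},inv}$ restricting to $\tau$ via this equality; the required triangle is then tautological.

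Under these identifications, the hypothesis of the theorem asserts exactly that $X$ lies in $\s co$. Proposition~\ref{sco-abs3} then yields ${\rm Tor}^\A_*(X,F)=\Hh_*(F,X)=0$, as claimed. The step most likely to demand care is the construction and coherence of the swap automorphism $\gamma$; everything else is formal — the triangle compatibility comes packaged in (\ref{eqadjcr}), the polynomial vanishing is the very definition of $\Pp$ here, and the exact faithfulness of $\theta^*$ reduces verification of Hypothesis~\ref{hyp-sco2} to the observation that cross-effects are computed objectwise.
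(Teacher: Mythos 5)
Votre démonstration est correcte et suit exactement la même stratégie que celle de l'article : mêmes choix de $\E_1,\dots,\E_4$, de $\Phi=\Psi=T_E$, de $\sigma,\tau$ donnés par les effets croisés pointés, de $\gamma$ par l'échange des deux copies de $E$, et de $R=\theta^*$ avec $\bar\Psi$ induit par la restriction de $t_E$ à $\mathbf{M}(\A)$ (le point essentiel étant, comme vous le notez implicitement, que les $u_I$ pour $I$ contenant $e$ sont des monomorphismes scindés), avant de conclure par la proposition~\ref{sco-abs3}.
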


\begin{proof}
 On applique le formalisme de la section~\ref{sec-scoabs} avec les choix suivants : $\E_1=\A-\mathbf{Mod}$,
$\E_2=\mathbf{Mod}-\A$, $\E_3=\mathbf{Ab}$, $\Hh_*(A,B)={\rm Tor}^\A_*(B,A)$, $\Phi=T_E$, $\Psi=T_E$,
$\sigma=cr_{E,e}^{\A,inv}$, $\tau=cr_{E,e}^{\A^{op},inv}$ : les observations précédentes montrent que les
conditions requises sont satisfaites. L'hypothèse~\ref{hyp-sco1} est vérifiée en prenant pour $\gamma$
l'involution de $(T_E)^2\simeq T_{E\times E}$ intervertissant les deux facteurs du produit cartésien.
 
On pose enfin $\E_4=\mathbf{Mod}-\mathbf{M}(\A)$ et l'on prend pour $R$ le foncteur $\theta^*$, qui est fidèle
puisque $\theta$ est essentiellement surjectif. Le foncteur $\bar{\Psi}$ est donné comme $T_E$ par la
précomposition par $A\mapsto A^{\oplus E}$ (qui induit bien un endofoncteur de $\mathbf{M}(\A)$) et pour
$\bar{\tau}$ la transformation naturelle donnée par la même formule que $cr_{E,e}^{\A^{op},inv}$, ce qui fait
sens puisque les morphismes $p_I^{\A^{op}}$, correspondant aux morphismes $u_I$ de $\A$, sont bien dans
$\mathbf{M}(\A)$ lorsque $I$ est une partie non vide de $E$.
 
 Le résultat est donc un cas particulier de la proposition~\ref{sco-abs3}.
\end{proof}

\begin{cor}\label{corsco}
 Soit $X\in {\rm Ob}\,\mathbf{Mod}-\A$ tel que le morphisme
$\theta^* cr_{E,e}(X) : \theta^*T_E(X)\to\theta^* X$ est un épimorphisme scindé
pour tout ensemble fini pointé $(E,e)$. Alors ${\rm Tor}^\A_*(X,F)=0$ si $F\in {\rm Ob}\,\A-\mathbf{Mod}$ est
analytique.
\end{cor}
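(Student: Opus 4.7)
The plan is to deduce the corollary directly from Theorem~\ref{th1-sco} by a standard filtered colimit argument, since the hypothesis on $X$ is precisely what the theorem requires, uniformly in the cardinality of the pointed set $(E,e)$.

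First, I would handle the polynomial case. Suppose $F$ is polynomial of degree at most $d$. Choose any finite pointed set $(E,e)$ of cardinality $d+2$. By assumption, the morphism $\theta^*cr_{E,e}(X) : \theta^* T_E(X) \to \theta^* X$ is a split epimorphism in $\mathbf{Mod}-\mathbf{M}(\A)$. Both hypotheses of Theorem~\ref{th1-sco} are therefore satisfied for this choice of $(E,e)$, and we conclude that ${\rm Tor}^\A_*(X,F) = 0$.

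Next, I would pass from polynomial to analytic functors. By definition, an analytic $F$ is a colimit (which we may take filtered) of polynomial functors $F_i$. The essential point is that ${\rm Tor}^\A_*(X,-)$ commutes with filtered colimits in its second variable: this is immediate from the fact that it can be computed from a projective resolution of $X$ in $\mathbf{Mod}-\A$, coupled with the exactness of filtered colimits in $\mathbf{Ab}$. Consequently
$$
{\rm Tor}^\A_*(X,F) \;\simeq\; \underset{i}{\col}\,{\rm Tor}^\A_*(X,F_i) \;=\; 0,
$$
each term on the right vanishing by the polynomial case.

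There is essentially no obstacle here: the entire content is packaged in Theorem~\ref{th1-sco}, and the corollary is a formal consequence. The only point requiring a sentence of justification is the commutation of Tor with filtered colimits in the second variable, which is a well-known property of the functor homology setup recalled in the appendix.
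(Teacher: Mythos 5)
Your argument is correct and is exactly the deduction the paper intends: for $F$ polynomial of degree at most $d$ one applies Theorem~\ref{th1-sco} with a pointed set of cardinality $d+2$, and the passage to analytic functors uses the commutation of ${\rm Tor}^\A_*$ with filtered colimits, which the paper records in the appendix and invokes when defining analytic functors. Nothing is missing.
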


Ces assertions constituent un raffinement du résultat suivant, qui est immédiat (cas particulier de la proposition~\ref{sco-abs1} en degré homologique nul) mais utile :
\begin{pr}\label{prevpol}
 \begin{enumerate}
  \item Supposons que $(E,e)$ est un ensemble fini pointé de cardinal $d+2$ et $G$ un objet de $\mathbf{Mod}-\A$ tel que l'effet croisé pointé $cr_{E,e} : T_E(G)\to G$ soit surjectif. Alors $G\underset{\A}{\otimes} F=0$ pour tout $F\in {\rm Ob}\,\A-\mathbf{Mod}$ polynomial de degré au plus~$d$.
\item Si la surjectivité de $cr_{E,e} : T_E(G)\to G$ a lieu pour tous les ensembles finis pointés $(E,e)$, alors $G\underset{\A}{\otimes} F=0$ pour tout $F\in {\rm Ob}\,\A-\mathbf{Mod}$ analytique.
 \end{enumerate}
\end{pr}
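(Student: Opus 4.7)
The plan is to read off Proposition~\ref{prevpol} as the degree-zero instance of Proposition~\ref{sco-abs1}, applied with exactly the same formal setup as in the proof of Theorem~\ref{th1-sco}, but without needing the refinement via the auxiliary category $\E_4$ and the functor $R$. Concretely, I take $\E_1=\A-\mathbf{Mod}$, $\E_2=\mathbf{Mod}-\A$, $\E_3=\mathbf{Ab}$, $\Hh_n(F,G)={\rm Tor}^\A_n(G,F)$, $\Phi=\Psi=T_E$, $\sigma=cr_{E,e}^{\A,inv}$ and $\tau=cr_{E,e}^{\A^{op},inv}$. The compatibility between $\sigma$ and $\tau$ is exactly the identification~(\ref{eqadjcr}), and the vanishing $\Hh_*(0,G)=0$ is trivial.

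For the first assertion, I would observe that the hypothesis that $F$ is polynomial of degree at most $d$, together with $\#E=d+2$, implies $cr_{E,e}^{\A,inv}(F)=0$, so $F$ lies in the class $\Pp$. The hypothesis that $cr_{E,e}: T_E(G)\to G$ is an epimorphism is precisely the statement that $G$ belongs to the class $\s_0$. Proposition~\ref{sco-abs1} then gives $\Hh_n(F,G)=0$ for $n\leq 0$, which reads $G\underset{\A}{\otimes}F={\rm Tor}^\A_0(G,F)=0$. Note that here we do not need to iterate the kernel construction $\Theta$, since only degree zero is at stake; this is exactly why this corollary is \emph{immediate} and sidesteps Hypothesis~\ref{hyp-sco1} and the section-of-$R\tau_B$ machinery of Proposition~\ref{sco-abs3}.

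For the second assertion, I would write an analytic $F$ as a filtered colimit $F=\underset{i}{\col}\,F_i$ of polynomial functors $F_i$, say of degrees $d_i$. Applying the first part with $E$ of cardinality $d_i+2$ (whose surjectivity hypothesis holds by assumption for every pointed finite set) gives $G\underset{\A}{\otimes}F_i=0$ for each $i$. Since the tensor product over $\A$ commutes with filtered colimits in each variable, one concludes $G\underset{\A}{\otimes}F=\underset{i}{\col}\,G\underset{\A}{\otimes}F_i=0$.

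There is essentially no obstacle: the only thing worth double-checking is the book-keeping between the \emph{inverse} cross-effects on $\A-\mathbf{Mod}$ and on $\mathbf{Mod}-\A$ via the isomorphism~(\ref{eqadjcr}), so that the functor vanishing on $F$ and the surjectivity on $G$ really correspond to $\sigma_F=0$ and $\tau_G$ epi respectively in the abstract framework of \S\ref{sec-scoabs}. Once this is unwound, the proposition is a direct corollary and no new ideas beyond Proposition~\ref{sco-abs1} are required.
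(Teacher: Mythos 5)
Your proposal is correct and coincides with the paper's own (implicit) argument: the paper states that Proposition~\ref{prevpol} is an immediate special case of Proposition~\ref{sco-abs1} in homological degree zero, which is exactly the reduction you carry out, with the same dictionary ($\Pp$ = polynomial functors of degree $\leq d$ via the vanishing of the pointed inverse cross-effect, $\s_0$ = the surjectivity hypothesis on $G$) and the passage to analytic functors by filtered colimits. The book-keeping via the adjunction isomorphism~(\ref{eqadjcr}) that you flag is indeed the only point to check, and it works as you describe.
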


\subsection{Utilisation dans certaines catégories monoïdales}\label{s-mos}

On commence par une propriété élémentaire très générale des extensions de Kan dérivées dans un contexte monoïdal :

\begin{pr}\label{pr-ausco1}
 Soient $n\in\mathbb{N}$, $(\C,\oplus,0)$ une petite catégorie monoïdale symétrique dont l'unité est objet initial et $\D$ une sous-catégorie monoïdale pleine de $\C$. Notons $\varphi : \D\to\C$ le foncteur d'inclusion et $X=\mathcal{L}^\varphi_n(\mathbb{Z})\in\mathbf{Mod}-\C$ (où la notation $\mathcal{L}^\varphi_*$ est définie par~(\ref{eq-kum}) dans l'appendice). Alors :
\begin{enumerate}
 \item $X$ est nul sur $\D$ ;
 \item il existe un morphisme $X(c)\to X(c\oplus d)$ naturel en les objets $c$ de $\C$ et $d$ de $\D$ qui est une section du morphisme $X(c\oplus d)\to X(c)$ induit par la flèche canonique $c\simeq c\oplus 0\to c\oplus d$.
\end{enumerate}
\end{pr}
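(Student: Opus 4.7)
The plan is to interpret $X = \mathcal{L}^\varphi_n(\mathbb{Z})$ via the standard comma-category realisation of the derived left Kan extension of a constant coefficient system. Concretely, $X(c) \simeq H_n\bigl(N(c\downarrow\varphi);\mathbb{Z}\bigr)$, where $c\downarrow\varphi$ has for objects the couples $(e, f:c\to\varphi(e))$ with $e\in{\rm Ob}\,\D$ and for morphisms $(e,f)\to(e',f')$ the arrows $h:e\to e'$ of $\D$ satisfying $\varphi(h)\circ f=f'$. Contravariant functoriality in $c$ is induced by precomposition of the structure map $f$ with the given morphism of $\C$.

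For the first point, I would take $c=\varphi(e_0)$ with $e_0\in{\rm Ob}\,\D$ and observe that the full faithfulness of $\varphi$ (which holds since $\D$ is a \emph{full} subcategory of $\C$) makes $(e_0,Id_{\varphi(e_0)})$ an initial object of $c\downarrow\varphi$: the required morphism to $(e,f)$ is the unique $h\in\D(e_0,e)$ such that $\varphi(h)=f$. The nerve of a category with an initial object being contractible, one concludes $X(c)=0$ (in the positive degree range implicitly relevant to the statement).

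For the second point, the key construction is the functor
$$s_d\,:\,c\downarrow\varphi\longrightarrow(c\oplus d)\downarrow\varphi,\qquad (e,f)\longmapsto(e\oplus d,\,f\oplus Id_d),$$
well-defined because $\D$ is a monoidal subcategory (so $e\oplus d\in{\rm Ob}\,\D$) and the inclusion is strict monoidal. Writing $\pi:c\to c\oplus d$ for the canonical arrow, the composite $\pi^*\circ s_d$ sends $(e,f)$ to $(e\oplus d,\iota_e\circ f)$, where $\iota_e:\varphi(e)\to\varphi(e\oplus d)$ denotes the canonical inclusion; the canonical arrows $e\to e\oplus d$ of $\D$ then assemble into a natural transformation $Id\Rightarrow\pi^*\circ s_d$. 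On nerves this furnishes a simplicial homotopy between $Id$ and $N(\pi^*\circ s_d)$, hence the identity $\pi_*\circ(s_d)_*=Id$ on $X(c)$, i.e.\ $(s_d)_*$ is the sought-after section.

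Naturality in $c\in{\rm Ob}\,\C$ is a strict equality of functors at the comma-category level thanks to the bifunctoriality of $\oplus$; naturality in $d\in{\rm Ob}\,\D$ holds only up to a natural transformation (for $\beta:d\to d'$, the arrows $Id_e\oplus\beta$ supply a natural transformation from $s_d$ to $(Id_c\oplus\beta)^*\circ s_{d'}$), which again suffices for equality on homology. The only delicate step is the variance bookkeeping; once the comma-category model is correctly set up, each verification reduces to a transparent use of the monoidal-category axioms, with no serious obstacle.
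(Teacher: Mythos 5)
Your proof is correct and follows essentially the same route as the paper: the same comma-category model of $\mathcal{L}^\varphi_n(\mathbb{Z})$, the same initial-object argument for vanishing on $\D$, and the same functor $(e,f)\mapsto(e\oplus d,f\oplus Id_d)$ with the canonical arrows $e\to e\oplus d$ providing the natural transformation that makes it a section. The only adjustment is that $X(c)$ is the \emph{reduced} homology $\tilde{H}_n$ of the comma category (being the kernel of the counit), which removes the need for your parenthetical restriction to positive degrees — both arguments go through verbatim in reduced homology, so the case $n=0$ (which the paper does use) is also covered.
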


\begin{proof}
 On rappelle (cf. remarque~\ref{rq-ekz}) que $X$ est donné par
$$X(c)=\tilde{H}_*(\D_{\varphi^* \C(c,-)}).$$

Si $c$ appartient à $\D$, $(c,Id_c)$ est objet initial de $\D_{\varphi^*\C(c,-)}$, donc $X(c)$ est nul.

Comme $\D$ est monoïdale, pour $c\in {\rm Ob}\,\C$ et $d\in {\rm Ob}\,\D$, l'endofoncteur $-\oplus d$ de $\C$ induit un foncteur
$$\D_{\varphi^* \C(c,-)}\to\D_{\varphi^* \C(c\oplus d,-)}\qquad (x,f)\mapsto (x\oplus d,f\oplus Id_d)$$
qui induit un morphisme $X(c)\to X(c\oplus d)$ naturel en $c$ et $d$\,\footnote{La naturalité en $c$ est évidente (vraie au niveau des foncteurs) ; celle en $d$ s'obtient grâce à une transformation naturelle, par un argument identique à celui utilisé pour terminer la démonstration.}. La post-composition de ce morphisme avec le morphisme $X(c\oplus d)\to X(c)$ induit par $c\to c\oplus d$ est induite par le foncteur
$$\D_{\varphi^* \C(c,-)}\to\D_{\varphi^* \C(c,-)}\qquad (x,f)\mapsto (x\oplus d,c\to c\oplus d\xrightarrow{f\oplus Id_d}x\oplus d).$$

Il existe une transformation naturelle de l'identité de $\D_{\varphi^* \C(c,-)}$ vers ce foncteur, induite par la flèche canonique $x\to x\oplus d$, puisque le diagramme
$$\xymatrix{c\ar[rr]^f\ar[d] & & x\ar[d]\\
c\oplus d\ar[rr]^-{f\oplus Id_d} & & x\oplus d
}$$
commute. Ainsi, notre composée égale l'identité, d'où la proposition.
\end{proof}

Dans la suite de ce paragraphe, on se donne une petite catégorie additive $\A$ et un foncteur monoïdal $T : \A^{op}\to\mathbf{Ens}$. La structure monoïdale sur $\A$ est celle donnée par la somme directe, et sur les ensembles par le produit direct ; on entend ici monoïdal au sens faible : on a des fonctions naturelles $T(A)\times T(B)\to T(A\oplus B)$ (et un élément de $T(0)$) vérifiant les conditions de cohérence habituelles. Il revient au même de se donner une factorisation de $T$ à travers le foncteur d'oubli des monoïdes commutatifs vers les ensembles. On suppose également que $T(0)$ est réduit à un élément. Alors la somme directe induit une structure monoïdale symétrique, notée $\overset{T}{\oplus}$ ou simplement $\oplus$, sur la catégorie $\C=\A^T$ (voir \ref{nice} en fin d'introduction pour la notation) : $(A,x)\overset{T}{\oplus}(B,y)$ est $A\oplus B$ muni de l'image de $(x,y)$ par $T(A)\times T(B)\to T(A\oplus B)$, l'unité $0$ (muni de l'unique élément de $T(0)$) est objet initial de $\C$, et le foncteur canonique $\pi=\pi^T : \C\to\A$ est monoïdal au sens fort ($\pi(a\oplus b)\simeq\pi(a)\oplus\pi(b)$ avec conditions de cohérence).

On se donne aussi une sous-catégorie pleine monoïdale $\D$ de $\C$ et l'on note $\varphi : \D\to\C$ l'inclusion.

\begin{pr}\label{pr-auxh0}
 Supposons que la propriété suivante est vérifiée :

{\rm pour tout objet $(A,x)$ de $\C$, il existe  $y\in T(A)$ et $d\in {\rm Ob}\,\D$ tel que $(A,x+y)$ appartienne à $\D$ et que $\C((A,y),d)$ soit non vide.}

Alors, pour tout foncteur $X\in {\rm Ob}\,\mathbf{Mod}-\C$ satisfaisant aux deux conditions de la proposition~\ref{pr-ausco1} et tout foncteur analytique $F\in {\rm Ob}\,\A-\mathbf{Mod}$, on a $X\underset{\C}{\otimes}\pi^*(F)=0$.
\end{pr}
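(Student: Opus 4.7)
The plan is to reduce the statement to Scorichenko's vanishing criterion (Proposition~\ref{prevpol}) applied to the push-forward of $X$ along $\pi$. First, via the adjunction between $\pi^*$ and its left adjoint $\pi_!$, there is a natural identification
$$X \underset{\C}{\otimes} \pi^*(F) \;\simeq\; \pi_!(X) \underset{\A}{\otimes} F.$$
Computing the coend defining $\pi_!(X)$ using the relation $\xi\otimes f \sim X(f)(\xi)\otimes \mathrm{id}_A$ in the coend (valid because $f\colon A\to A'$ in $\A$ gives a $\C$-morphism $(A,T(f)(x'))\to(A',x')$), one obtains the explicit description
$$\pi_!(X)(A) \;=\; \bigoplus_{x \in T(A)} X(A,x),$$
a morphism $\alpha : A' \to A$ of $\A$ sending the $x$-summand into the $T(\alpha)(x)$-summand via $X$ of the $\C$-morphism $(A',T(\alpha)(x)) \to (A,x)$ with underlying $\alpha$. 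Since $F$ is analytic and tensor products commute with filtered colimits, it suffices to establish $\pi_!(X) \otimes_\A F = 0$ for $F$ polynomial of degree at most $d$, for each $d$. By Proposition~\ref{prevpol}, this follows as soon as the pointed cross-effect $cr_{E,e}\colon T_E(\pi_!(X)) \to \pi_!(X)$ is surjective for some pointed finite set $(E,e)$ of cardinal $d+2$.

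To produce a pre-image for $\xi \in X(A,x) \subset \pi_!(X)(A)$, apply the hypothesis of the proposition to $(A,x)$ to obtain $y \in T(A)$, an object $d=(B,z)\in\mathrm{Ob}\,\D$, and a morphism $g : (A,y) \to d$ in $\C$ (equivalently $g : A \to B$ in $\A$ with $T(g)(z)=y$), such that $(A,x+y)\in\D$. Since $\D$ is monoidal, $d^{\oplus(\# E-1)}\in\D$; Proposition~\ref{pr-ausco1}(2) then furnishes a natural section
$$s : X(A,x) \longrightarrow X\bigl((A,x) \oplus d^{\oplus(\# E-1)}\bigr) = X\bigl(A \oplus B^{\oplus(\# E-1)},\, (x,z,\ldots,z)\bigr).$$
Set $\tilde\xi_0 := s(\xi)$ and pull it back along the morphism
$$h : A^{\oplus E} \longrightarrow A \oplus B^{\oplus(\# E-1)}, \qquad h := \mathrm{id}_A \oplus g^{\oplus(\# E-1)},$$
which is the identity on the basepoint coordinate and is $g$ on each other coordinate (into a separate copy of $B$). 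The lax monoidality of $T$ gives $T(h)(x,z,\ldots,z)=(x,y,\ldots,y)$, so that $\tilde\xi := h^*(\tilde\xi_0)$ lies in $X(A^{\oplus E},(x,y,\ldots,y)) \subset \pi_!(X)(A^{\oplus E})$.

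The computation of $cr_{E,e}(\tilde\xi)$ via the alternating sum over subsets $I\ni e$ proceeds by factoring $h\circ u_I$ as
$$A \xrightarrow{\Delta_{\# I}} A^{\oplus\# I} \xrightarrow{\mathrm{id} \oplus g^{\oplus(\# I-1)}} A \oplus B^{\oplus(\# I-1)},$$
which shows that $(hu_I)^*(\tilde\xi_0)$ lies in the summand $X(A, x + (\# I-1)y)$ of $\pi_!(X)(A)$. The $\# I=1$ term recovers $\pm\xi$ via the section property; the $\# I=2$ terms vanish because they lie in $X(A,x+y)$, which is zero by Proposition~\ref{pr-ausco1}(1) since $(A,x+y)\in\D$; the $\# I\geq 3$ terms are residual contributions in the summands $X(A, x+ky)$ for $k\geq 2$. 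The main technical point, and the principal obstacle, is the elimination of these residuals: the strategy is to combine $\tilde\xi$ with analogous elements obtained by iteratively applying the same hypothesis to the new points $(A,x+ky)$, and to assemble an alternating combination whose cross-effects telescope down to $\pm\xi$ alone. Once the surjectivity of $cr_{E,e}$ on $\pi_!(X)$ is thereby established, Proposition~\ref{prevpol} delivers the vanishing $\pi_!(X) \otimes_\A F = 0$, which completes the proof.
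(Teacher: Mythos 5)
Your reduction to the surjectivity of the pointed cross-effects of $\pi_!(X)$, the explicit description of $\pi_!(X)(A)=\bigoplus_{x\in T(A)}X(A,x)$, and the construction of the lift $\tilde\xi$ (section of Proposition~\ref{pr-ausco1} into $X((A,x)\oplus d^{\oplus E\setminus e})$ followed by pullback along $\mathrm{id}\oplus g^{\oplus E\setminus e}$) coincide exactly with the paper's argument. The gap is in the final computation of $cr_{E,e}(\tilde\xi)$: you assert that only the terms with $\# I\leq 2$ can be controlled and that the terms with $\# I\geq 3$ are genuine residuals requiring a further, unexecuted, ``telescoping'' over the points $(A,x+ky)$. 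At this point your text stops being a proof, and the worry is in fact unfounded: \emph{every} term with $\# I\geq 2$ vanishes, not only those landing in the summand $X(A,x+y)$.

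The reason is not the location of the target summand $X(A,x+(\# I-1)y)$ (which indeed need not be zero for $\# I\geq 3$) but a factorization of the morphism carrying the $I$-term. Write $n+1=\# I$ and choose $i\in I\setminus\{e\}$. The morphism $(A,x+ny)\to (A,x)\oplus d^{\oplus E\setminus e}$ underlying that term (identity into $A$, the morphism $g:(A,y)\to d$ into each copy of $d$ indexed by $I\setminus\{e\}$, zero elsewhere) factors as the diagonal $(A,x+ny)\to (A,x+y)\oplus d^{\oplus I\setminus\{e,i\}}$ followed by the map whose first component $(A,x+y)\to (A,x)\oplus d$ has components $\mathrm{id}_A$ and $g$, and whose remaining component is the canonical inclusion $d^{\oplus I\setminus\{e,i\}}\to d^{\oplus E\setminus e}$. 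The intermediate object lies in $\D$ because $(A,x+y)\in\D$, $d\in\D$ and $\D$ is monoidal; hence $X$ vanishes on it by the first condition of Proposition~\ref{pr-ausco1}, and the whole $I$-term is zero. (This is precisely the factorization used in the paper, there written with $(A,y)$ in place of $d$ via the chosen morphism $(A,y)\to d$.) Consequently $cr_{E,e}(\tilde\xi)=\pm\xi$ on the nose, the cross-effects are surjective, and Proposition~\ref{prevpol} concludes; no iteration or alternating combination is needed. As written, your proposal identifies the correct construction but does not establish the proposition.
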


\begin{proof}
 On rappelle qu'on dispose d'un isomorphisme naturel (cf. appendice)
$$X\underset{\C}{\otimes}\pi^*(F)\simeq\pi_!(X)\underset{\A}{\otimes}F$$
où
$$\pi_!(X)(A)=\bigoplus_{x\in T(A)}X(A,x).$$

Il suffit donc de montrer (cf. proposition~\ref{prevpol}) que, pour tout ensemble fini pointé $(E,e)$, l'effet croisé
$$cr_{E,e} : \pi_!(X)(A^{\oplus E})\to\pi_!(X)(A)$$
est surjectif.

Soit $x\in T(A)$ ; choisissons $y\in T(A)$, $d\in {\rm Ob}\,\D$ comme dans l'hypothèse et un morphisme $a : (A,y)\to d$. Examinons la composée
$$X(A,x)\to X((A,x)\oplus d^{\oplus E\setminus e})\xrightarrow{X((A,x)\oplus a^{\oplus E\setminus e})} X((A,x)\oplus (A,y)^{\oplus E\setminus e})\dots$$
$$\hookrightarrow\pi_!(X)(A^{\oplus E})\to\pi_!(X)(A)$$
où la première flèche est donnée par la première propriété de la proposition~\ref{pr-ausco1} et la dernière est l'effet croisé pointé. Elle a une composante vers $X(A,x)$ égale à l'identité (correspondant à la partie pointée $\{e\}$ de $E$), puisque la première flèche est une section du morphisme induit par $(A,x)\to (A,x)\oplus d^{\oplus E\setminus e}$. Si maintenant $I$ est une partie pointée de $E$ à $n+1$ éléments, le terme de l'effet croisé auquel contribue $I$ arrive dans le facteur $X(A,x+ny)$ de $\pi_!(X)(A)$ ; il est induit par la flèche qu'induit $u_I : (A,x+ny)\to (A,x)\oplus (A,y)^{\oplus E\setminus e}$. Mais celle-ci se factorise comme suit :
$$\xymatrix{(A,x+ny)\ar[rr]^-{u_I}\ar[rd]_-\delta & & (A,x)\oplus (A,y)^{\oplus E\setminus e} \\
& (A,x+y)\oplus (A,y)^{\oplus I\setminus\{e,i\}}\ar[ru]_-\alpha &
}$$
où $i$ est un élément arbitraire de $E\setminus e$, $\delta$ est la diagonale et $\alpha$ est donnée par la somme de la diagonale $(A,x+y)\to (A,x)\oplus (A,y)$ et de la flèche canonique $(A,y)^{\oplus I\setminus\{e,i\}}\to (A,y)^{\oplus E\setminus e}$.

Comme $(A,x+y)\oplus (A,y)^{\oplus I\setminus\{e,i\}}$ appartient à $\D$, $X$ est nul sur cet objet, de sorte que la composée qu'on étudie est l'identité. La proposition~\ref{prevpol} donne donc la conclusion.
\end{proof}

\begin{cor}\label{cor-auxh0}
 Sous les mêmes hypothèses sur $\C$ et $\D$, pour tout foncteur analytique $F\in {\rm Ob}\,\A-\mathbf{Mod}$, on dispose d'isomorphismes canoniques
$$H_0(\D;\varphi^*\pi^*F)\simeq H_0(\C;\pi^*F)\simeq\mathbb{Z}[T]\underset{\A}{\otimes} F.$$
\end{cor}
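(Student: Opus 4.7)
Le second isomorphisme s'établit immédiatement à partir de la formule de projection : on a $H_0(\C;\pi^*F)\simeq\pi_!(\mathbb{Z}_\C)\underset{\A}{\otimes}F$, et le calcul direct $\pi_!(\mathbb{Z}_\C)(A)=\bigoplus_{x\in T(A)}\mathbb{Z}=\mathbb{Z}[T(A)]$ donne $\pi_!(\mathbb{Z}_\C)\simeq\mathbb{Z}[T]$.

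Pour le premier isomorphisme, la même formule fournit $H_0(\D;\varphi^*\pi^*F)\simeq\varphi_!(\mathbb{Z}_\D)\underset{\C}{\otimes}\pi^*F$, l'application naturelle vers $H_0(\C;\pi^*F)$ étant induite par la counité $\varphi_!(\mathbb{Z}_\D)\to\mathbb{Z}_\C$ de $\mathbf{Mod}-\C$. Le plan consiste à montrer que celle-ci devient un isomorphisme après tensorisation par $\pi^*F$ en travaillant dans la catégorie dérivée $D(\mathbf{Mod}-\C)$ et en considérant le triangle distingué
$$\mathcal{L}^\varphi_*(\mathbb{Z})\to L\varphi_!(\mathbb{Z}_\D)\to\mathbb{Z}_\C\to\mathcal{L}^\varphi_*(\mathbb{Z})[1]$$
issu de la fibre homotopique de la counité dérivée, dont l'homologie en un objet $c\in\C$ s'identifie à l'homologie réduite de la fibre de Kan $\D_{\varphi^*\C(c,-)}$, c'est-à-dire aux $\mathcal{L}^\varphi_n(\mathbb{Z})(c)$ de la proposition~\ref{pr-ausco1}. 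En tensorisant par $\pi^*F$ au sens dérivé et en appliquant la formule de projection dérivée $L\varphi_!(\mathbb{Z}_\D)\overset{L}{\underset{\C}{\otimes}}\pi^*F\simeq\mathbb{Z}_\D\overset{L}{\underset{\D}{\otimes}}\varphi^*\pi^*F$, on obtient la suite exacte
$$\mathbb{H}_0\big(\mathcal{L}^\varphi_*(\mathbb{Z})\overset{L}{\underset{\C}{\otimes}}\pi^*F\big)\to H_0(\D;\varphi^*\pi^*F)\to H_0(\C;\pi^*F)\to 0.$$

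Le cœur de l'argument est alors la nullité du premier terme. La suite spectrale d'hyperhomologie $E^2_{p,q}=\mathrm{Tor}^\C_p(\mathcal{L}^\varphi_q(\mathbb{Z}),\pi^*F)\Rightarrow\mathbb{H}_{p+q}$ n'admet, en degré total nul, qu'un unique contributeur : $E^2_{0,0}=\mathcal{L}^\varphi_0(\mathbb{Z})\underset{\C}{\otimes}\pi^*F$. Or la proposition~\ref{pr-ausco1} assure que $\mathcal{L}^\varphi_0(\mathbb{Z})$ vérifie les deux conditions de la proposition~\ref{pr-auxh0} (annulation sur $\D$ et existence d'une section naturelle de la restriction suivant le morphisme canonique $c\to c\oplus d$, $d\in\D$), dont l'application délivre précisément cette annulation. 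L'obstacle principal consiste à justifier rigoureusement, à partir de la définition des $\mathcal{L}^\varphi_*$ donnée en appendice, l'identification de la fibre homotopique de la counité dérivée avec $\mathcal{L}^\varphi_*(\mathbb{Z})[1]$ dans $D(\mathbf{Mod}-\C)$ --- en particulier pour les objets de $\C$ dont la fibre de Kan serait éventuellement vide --- ; une fois ce point technique établi, toute la conclusion repose sur la conjonction des propositions~\ref{pr-ausco1} et~\ref{pr-auxh0}.
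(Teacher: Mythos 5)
Votre stratégie d'ensemble est bien celle qu'appelle l'énoncé : le second isomorphisme résulte de l'adjonction (\ref{eq-exks}) et du calcul $\pi_!(\mathbb{Z})\simeq\mathbb{Z}[T]$ fourni par (\ref{adj-gen}), et le premier se ramène à l'annulation $\mathcal{L}^\varphi_0(\mathbb{Z})\underset{\C}{\otimes}\pi^*F=0$, qui est exactement ce que donnent conjointement les propositions~\ref{pr-ausco1} et~\ref{pr-auxh0} (pour le seul degré $0$, l'appareil dérivé est du reste superflu : l'exactitude à droite de $-\underset{\C}{\otimes}\pi^*F$ appliquée à la suite $\mathcal{L}^\varphi_0(\mathbb{Z})\to\varphi_!(\mathbb{Z})\to\mathbb{Z}$ suffit). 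En revanche, le point que vous reportez en fin d'argument comme \emph{technique} est en réalité le seul endroit où quelque chose peut céder, et il ne peut pas être \emph{établi} tel quel : si un objet $c$ de $\C$ n'admet aucun morphisme vers un objet de $\D$, la catégorie $\D_{\varphi^*\C(c,-)}$ est vide, la coünité $\varphi_!(\mathbb{Z})\to\mathbb{Z}$ n'est pas un épimorphisme en $c$, et sa fibre homotopique possède de l'homologie en degré $-1$, égale au conoyau $C$ de la coünité --- homologie que l'homologie réduite de la remarque~\ref{rq-ekz} (un noyau d'augmentation) ne voit pas. Votre triangle distingué est alors faux, la suite exacte que vous affichez ne se termine plus par $0$, et surtout le degré total nul de la suite spectrale acquiert un second contributeur $E^2_{1,-1}={\rm Tor}^\C_1(C,\pi^*F)$, que les hypothèses de la proposition~\ref{pr-auxh0} ne permettent pas d'annuler (il faudrait des scindages naturels sur $\mathbf{M}(\A)$, c'est-à-dire les hypothèses supplémentaires de la proposition~\ref{pr-auxhg}). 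L'injectivité de $H_0(\D;\varphi^*\pi^*F)\to H_0(\C;\pi^*F)$ n'est donc pas acquise par votre rédaction.

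Pour combler la lacune, deux remarques. D'une part, le conoyau $C$ vérifie lui aussi les deux conditions de la proposition~\ref{pr-ausco1} : il est nul sur $\D$, et $C(c\oplus d)\to C(c)$ est un isomorphisme pour $d\in {\rm Ob}\,\D$ (un objet $c$ admet un morphisme vers $\D$ si et seulement si $c\oplus d$ en admet un, $\D$ étant monoïdale) ; la proposition~\ref{pr-auxh0} donne donc aussi $C\underset{\C}{\otimes}\pi^*F=0$, ce qui règle la surjectivité. D'autre part, pour l'injectivité, le plus économique est de vérifier que la coünité est en fait surjective, c'est-à-dire que tout objet de $\C$ admet un morphisme vers un objet de $\D$ : c'est le cas dans l'application du théorème~\ref{th-h0gl} (par définition même de $T$), mais cela ne découle pas formellement de l'hypothèse de la proposition~\ref{pr-auxh0}, laquelle ne fournit un plongement dans $\D$ que pour $(A,x+y)$ et pour $(A,y)$, non pour $(A,x)$. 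Il faut donc soit ajouter explicitement cette vérification, soit contrôler ${\rm Tor}^\C_1(C,\pi^*F)$ ; en l'état, l'argument est incomplet précisément sur le point que vous signalez.
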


La proposition suivante généralise ce qui précède aux groupes de torsion supérieurs, sous des hypothèses supplémentaires.

\begin{pr}\label{pr-auxhg}
 Supposons qu'il existe un endofoncteur $\Phi$ de $\C$ et une sous-catégorie $\B$ de $\C$ vérifiant les propriétés suivantes :
\begin{enumerate}
 \item $\pi\circ\Phi=\pi$ ;
\item si $f$ est un morphisme de $\C$ tel que $\pi(f)$ appartienne à $\mathbf{M}(\A)$, alors $\Phi(f)$ appartient à $\B$ ;
\item pour tout objet $c$ de $\C$, la sous-catégorie $\K(c)$ de $\C_{\C(\Phi(c),-)}$ formée des morphismes de $\B$ dont le but appartient à $\D$ est connexe ;
\item de plus, il existe un objet $f : \Phi(c)\to d$ de $\K(c)$ tel que la flèche 
$$\tilde{c}\xrightarrow{g} c\oplus\Phi(c)\xrightarrow{c\oplus f}c\oplus d$$
où $g$ est la flèche donnée par la diagonale $\pi(c)\to\pi(c)\oplus\pi(c)$ (et $\tilde{c}$ est l'objet $(\pi(c),x+y)$ de $\C$, où $c=(\pi(c),x)$ et $\Phi(c)=(\pi(c),y)$) se factorise par un objet de $\D$.
\end{enumerate}

Alors pour tout foncteur $X\in {\rm Ob}\,\mathbf{Mod}-\C$ satisfaisant aux conditions de la proposition~\ref{pr-ausco1} et tout foncteur analytique $F\in {\rm Ob}\,\A-\mathbf{Mod}$, on a
$${\rm Tor}^\C_*(X,\pi^*(F))=0.$$
\end{pr}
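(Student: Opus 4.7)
The plan is to apply Scorichenko's theorem (Theorem~\ref{th1-sco}) in $\A$ via the left Kan extension spectral sequence along $\pi$. By analyticity of $F$ and commutation of Tor with filtered colimits, we reduce to the case where $F$ is polynomial of some degree $d$, and fix a pointed finite set $(E,e)$ with $|E|=d+2$. The fiber computation $\pi_!(Y)(A) = \bigoplus_{x\in T(A)} Y(A,x)$, already used in Proposition~\ref{pr-auxh0}, implies in particular that $\pi_!(\mathbb{Z}[\C(-,c)]) = \mathbb{Z}[\A(-,\pi(c))]$, so $\pi_!$ preserves projectives. Picking a projective resolution $P_\bullet\to X$ in $\mathbf{Mod}-\C$ and a projective resolution $Q_\bullet\to F$ in $\A-\mathbf{Mod}$, the double complex $\pi_!(P_\bullet)\otimes_\A Q_\bullet$ yields a spectral sequence
$$E^2_{p,q} = {\rm Tor}^\A_p(\mathcal{L}^\pi_q X, F) \Longrightarrow {\rm Tor}^\C_{p+q}(X,\pi^*F).$$
It therefore suffices to show, for each $q\geq 0$, that the morphism $\theta^*\, cr_{E,e}(\mathcal{L}^\pi_q X)$ is a split epimorphism in $\mathbf{Mod}-\mathbf{M}(\A)$, as Theorem~\ref{th1-sco} then forces ${\rm Tor}^\A_*(\mathcal{L}^\pi_q X, F) = 0$.

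We construct this splitting at the chain level on $\pi_!(P_\bullet)$. For each $c=(A,x) \in {\rm Ob}\,\C$, the fourth hypothesis provides $f : \Phi(c) \to d$ in $\K(c)$ (with $d\in{\rm Ob}\,\D$) together with a factorization of the composition
$$\tilde c \longrightarrow c\oplus\Phi(c) \xrightarrow{c\oplus f} c\oplus d$$
through an object of $\D$. The first hypothesis ($\pi\Phi=\pi$) ensures $\Phi(c)$ sits in the same $\pi$-fiber as $c$, so that all contributions remain within the summands of $\pi_!(P_\bullet)(A)$; the second hypothesis guarantees that the relevant morphisms built from cross-effect diagonals land in $\B$. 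The natural section $X(-)\to X(-\oplus d)$ provided by Proposition~\ref{pr-ausco1}(2), lifted through the projectivity of $P_\bullet$, furnishes a candidate chain-level splitting of $\theta^*\, cr_{E,e}$. Well-definedness and $\mathbf{M}(\A)$-naturality follow from the connectedness of $\K(c)$ (third hypothesis), while the cancellation of the unwanted alternating-sum terms --- those indexed by proper subsets $I\subsetneq E$ containing $e$ --- comes from the factorization of the fourth hypothesis: each such term factors through the intermediate $\D$-object, where $X$ vanishes by Proposition~\ref{pr-ausco1}(1). What survives is the identity, as desired.

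The principal obstacle is the careful bookkeeping in this chain-level construction: each of the four hypotheses plays a distinct role, and combining them so that the section is a chain map, is natural in $\mathbf{M}(\A)$, and really splits the cross-effect requires a delicate diagram chase. A secondary subtlety is that $X$ vanishes on $\D$ only up to homology, not at the level of the resolution $P_\bullet$; this must be handled either via homotopy invariance of the construction or by choosing $P_\bullet$ adapted to the inclusion $\varphi : \D\hookrightarrow\C$, e.g., decomposing representables with source in $\D$ separately from the rest.
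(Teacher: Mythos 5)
Your overall strategy --- transporting the problem along $\pi_!$ and invoking Scorichenko's criterion (Theorem~\ref{th1-sco} via Corollary~\ref{corsco}) for the functor $\pi_!(X)\in\mathbf{Mod}-\A$ --- is the right one and is the one the paper follows. But the place where you propose to carry out the key construction is wrong, and the difficulty you flag at the end is a genuine obstruction to your version of the argument, not a \og secondary subtlety\fg. You want to build the section of $\theta^*cr_{E,e}$ \og at the chain level on $\pi_!(P_\bullet)$\fg, and you then observe yourself that $X$ vanishes on $\D$ only in homology, not on the terms of $P_\bullet$: indeed the standard projectives $P^{\C^{op}}_c$ do not vanish on $\D$, and neither of your two suggested remedies (homotopy invariance, adapted resolutions) is carried out. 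The central construction of the proof is therefore left incomplete.

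The resolution is that no resolution is needed at all. First, $\pi_!$ is exact here (formula~(\ref{adj-gen})), so the adjunction extends directly to an isomorphism ${\rm Tor}^\A_*(\pi_!X,F)\simeq{\rm Tor}^\C_*(X,\pi^*F)$ by~(\ref{eqexkdf}); your spectral sequence degenerates, and the notation $\mathcal{L}^\pi_qX$ (which in the paper denotes the kernel of a counit, for $X$ defined on the \emph{target} category) is not what you mean. Second, Corollary~\ref{corsco} only requires that $\theta^*cr_{E,e}(\pi_!X)$ be a split epimorphism of \emph{functors} on $\mathbf{M}(\A)$; all the homological algebra is internal to Scorichenko's machinery (section~\ref{sec-scoabs}). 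One therefore constructs the section directly on $X$: for an object $c$ of $\C$ and $f\colon\Phi(c)\to d$ in $\K(c)$, one takes $s^f_c$ to be the composite $X(c)\to X(c\oplus d^{\oplus E\setminus e})\to X(c\oplus\Phi(c)^{\oplus E\setminus e})$, the first arrow being the natural section of Proposition~\ref{pr-ausco1}, which exists on the nose because $X$ is \emph{assumed} to satisfy its conclusions (vanishing on $\D$ included). Hypothesis~(3) (connectedness of $\K(c)$) makes $s^f_c$ independent of $f$; hypothesis~(2) --- not (3), as you write --- combined with this independence gives naturality on $\mathbf{M}(\A)$, by comparing $g$ and $g\circ\Phi(u)$ for $u$ a split monomorphism; hypothesis~(4) kills the cross-effect terms indexed by proper pointed subsets exactly as in Proposition~\ref{pr-auxh0}, because they factor through objects of $\D$ on which $X$ itself, and not merely its homology, vanishes.
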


\begin{proof}
 On va appliquer le corollaire~\ref{corsco} au foncteur $\pi_!(X)\in\mathbf{Mod}-\A$ (ce qui suffit puisque l'isomorphisme d'adjonction rappelé dans la démonstration précédente, entre $\pi_!$ et $\pi^*$, s'étend aux groupes de torsion). On se donne un ensemble pointé fini $(E,e)$.

Soient $c$ un objet de $\C$ et $f : \Phi(c)\to d$ un objet de $\K(c)$. On définit
$$s_c^f : X(c)\to X(c\oplus\Phi(c)^{\oplus E\setminus e})$$
comme la composée
$$X(c)\to X(c\oplus d^{\oplus E\setminus e})\xrightarrow{X(c\oplus f^{\oplus E\setminus e})}X(c\oplus\Phi(c)^{\oplus E\setminus e})$$
(où la première flèche est donnée par la première propriété de la proposition~\ref{pr-ausco1}).

Vérifions que $s_c^f$ ne dépend en fait pas de $f$ (de sorte qu'on notera seulement $s_c$ cette flèche). Supposons pour cela que $f$ et $f'$ sont deux objets de $\K(c)$ reliés par une flèche $g$ : on dispose en particulier d'un diagramme commutatif
$$\xymatrix{\Phi(c)\ar[r]^-f\ar[rd]_-{f'} & d\ar[d]^-g \\
& d'
}$$
dans $\C$. On en déduit un diagramme commutatif
$$\xymatrix{X(c)\ar[r]\ar[rd] & X(c\oplus d^{\oplus E\setminus e})\ar[rrr]^-{X(c\oplus f^{\oplus E\setminus e})} & & & X(c\oplus\Phi(c)^{\oplus E\setminus e}) \\
& X(c\oplus d'^{\oplus E\setminus e})\ar[u]_-{X(c\oplus g^{\oplus E\setminus e})}\ar[rrru]_-{X(c\oplus f'^{\oplus E\setminus e})} & & &
}$$
(le triangle de gauche commute en raison de la naturalité des sections de la proposition~\ref{pr-ausco1}) qui montre que $s^f_c=s^{f'}_c$, lorsqu'une flèche de $\K(c)$ relie $f$ et $f'$. L'hypothèse de connexité de la catégorie $\K(c)$ établit donc l'indépendance en $f$ voulue.

Pour tout objet $A$ de $\A$, les différents morphismes $s_{(A,x)}$ constituent les composantes d'un morphisme $\sigma_A : \pi_!(X)(A)\to\pi_!(X)(A^{\oplus E})$. Montrons que cette collection de morphismes définit une transformation naturelle $\theta^*\pi_!(X)\to\theta^* T_E\pi_!(X)$. Soit en effet $u : A\to B$ un morphisme de $\mathbf{M}(\A)$. Soient également $y\in T(B)$, $x=T(u)(y)\in T(A)$ --- de sorte que $u$ induit un morphisme, encore noté $u$, $(A,x)\to (B,y)$ --- et $g : \Phi(B,y)\to d$ un objet de $\K(B,y)$. La deuxième hypothèse de l'énoncé implique que $f=g\circ\Phi(u) : \Phi(A,x)\to d$ est un objet de $\K(A,x)$. Le diagramme commutatif
$$\xymatrix{X(B,y)\ar[r]\ar[d]^-{X(u)} & X((B,y)\oplus d^{\oplus E\setminus e})\ar[rrr]^-{X((B,y)\oplus g^{\oplus E\setminus e})}\ar[d]^-{X(u\oplus d^{\oplus E\setminus e})} &&& X((B,y)\oplus \Phi(B,y)^{\oplus E\setminus e})\ar[d]_-{X(u\oplus\Phi(u)^{E\setminus e})} \\
X(A,x)\ar[r] & X((A,x)\oplus d^{\oplus E\setminus e})\ar[rrr]^-{X((A,x)\oplus f^{\oplus E\setminus e})} & && X((A,x)\oplus \Phi(A,x)^{\oplus E\setminus e})
}$$
fournit l'égalité $u^*\circ s_{(B,y)}=s_{(A,y)}\circ u^*$, puis $u^*\circ\sigma_{(B,y)}=\sigma_{(A,y)}\circ u^*$, d'où la fonctorialité souhaitée.

La démonstration que $\sigma$ est une section de l'effet croisé pointé associé à $(E,e)$ repose sur la quatrième hypothèse et s'établit de la même façon qu'à la proposition~\ref{pr-auxh0}.

\end{proof}

\begin{cor}\label{cor-auxhg}
 Sous les mêmes hypothèses, pour tout foncteur analytique $F\in {\rm Ob}\,\A-\mathbf{Mod}$, on dispose d'isomorphismes canoniques
$$H_*(\D;\varphi^*\pi^*F)\simeq H_*(\C;\pi^*F)\simeq {\rm Tor}^\A_*(\mathbb{Z}[T],F).$$
\end{cor}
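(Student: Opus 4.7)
The plan is to prove the two isomorphisms in turn, combining the discrete-fibration structure of $\pi$ with a Grothendieck spectral sequence argument for $\varphi:\D\to\C$, reducing everything to Propositions~\ref{pr-ausco1} and~\ref{pr-auxhg}.

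For the second isomorphism $H_*(\C;\pi^*F)\simeq{\rm Tor}^\A_*(\mathbb{Z}[T],F)$, I would first observe that $\pi:\C=\A^T\to\A$ is a discrete fibration with fiber $T(A)$ over $A$: a morphism $f:A\to B$ of $\A$ admits the unique lift $(A,T(f)(y))\to(B,y)$ above each $y\in T(B)$. Consequently the left Kan extension $\pi_!:\mathbf{Mod}-\C\to\mathbf{Mod}-\A$ is exact — it is precisely the fiberwise sum $\pi_!(Y)(A)=\bigoplus_{x\in T(A)}Y(A,x)$ already used in the proof of Proposition~\ref{pr-auxh0} — sends the representable $\mathbb{Z}[\C(-,c)]$ to $\mathbb{Z}[\A(-,\pi(c))]$ (so preserves projectives), and satisfies $\pi_!(\mathbb{Z})=\mathbb{Z}[T]$. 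Resolving by projectives then yields the natural identification
$${\rm Tor}^\C_*(Y,\pi^*F)\simeq{\rm Tor}^\A_*(\pi_!Y,F)$$
for every $Y\in\mathbf{Mod}-\C$, which specializes for $Y=\mathbb{Z}$ to the claimed formula.

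For the first isomorphism, I would invoke the Grothendieck spectral sequence of the derived Kan extension along $\varphi$,
$$E^2_{p,q}={\rm Tor}^\C_p(\mathcal{L}^\varphi_q(\mathbb{Z}),\pi^*F)\Longrightarrow H_{p+q}(\D;\varphi^*\pi^*F),$$
and observe that Proposition~\ref{pr-ausco1} is formulated precisely so that each $\mathcal{L}^\varphi_q(\mathbb{Z})$ (for $q\geq 1$) satisfies the two hypotheses — vanishing on $\D$ and the existence of natural splittings of $X(c\oplus d)\to X(c)$ — required of the functor $X$ in Proposition~\ref{pr-auxhg}. Applying the latter kills the entire half-plane $q\geq 1$, so the spectral sequence degenerates onto the $q=0$ line and gives
$$H_n(\D;\varphi^*\pi^*F)\simeq{\rm Tor}^\C_n(\varphi_!\mathbb{Z},\pi^*F).$$

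It remains to identify this last group with ${\rm Tor}^\C_n(\mathbb{Z},\pi^*F)=H_n(\C;\pi^*F)$. The canonical counit $\varphi_!\mathbb{Z}\to\mathbb{Z}_\C$ in $\mathbf{Mod}-\C$ has for kernel the augmentation kernel $\tilde H_0$ of the slice categories $c\downarrow\D$, which vanishes on $\D$ and admits the same natural section as in the proof of Proposition~\ref{pr-ausco1}; Proposition~\ref{pr-auxhg} therefore annihilates its Tor against $\pi^*F$. The main obstacle I expect is to control the cokernel of the counit: one must extract from axioms (3)--(4) of Proposition~\ref{pr-auxhg} — which build, for each $c\in\C$, a sufficiently rich supply of morphisms from stabilizations of $c$ through $\Phi$ into $\D$ — that after applying the exact functor $\pi_!$ this cokernel becomes trivial, in effect reducing the matter to the $H_0$-level comparison of Corollary~\ref{cor-auxh0}. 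Once this is verified, the long exact sequence of ${\rm Tor}^\C_*(-,\pi^*F)$ collapses to give the desired isomorphism, completing the proof.
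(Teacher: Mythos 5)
Your argument follows the same route as the paper's implicit proof. The identification $H_*(\C;\pi^*F)\simeq {\rm Tor}^\A_*(\mathbb{Z}[T],F)$ via the exactness of $\pi_!$ (fibrewise direct sum), the preservation of representables and $\pi_!(\mathbb{Z})=\mathbb{Z}[T]$ is exactly the mechanism of (\ref{adj-gen}) and (\ref{eqexkdf}); and your spectral-sequence argument for the first isomorphism is precisely Proposition~\ref{pr-anek} of the appendix, whose hypothesis is supplied by Proposition~\ref{pr-ausco1} (which says that each $\mathcal{L}^\varphi_q(\mathbb{Z})$, for \emph{all} $q\geq 0$ including the kernel of the counit, vanishes on $\D$ and carries the natural sections) fed into Proposition~\ref{pr-auxhg}. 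So the skeleton is correct and is the intended one.

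The one step you leave unverified is handled by the wrong mechanism. The cokernel of the counit $\varphi_!\mathbb{Z}\to\mathbb{Z}$ cannot ``become trivial after applying $\pi_!$'': since $\pi_!$ is a direct sum over the fibres of $T$ it is faithful as well as exact, so it reflects the zero object. Nor can this cokernel be killed by Proposition~\ref{pr-auxhg}: it does vanish on $\D$, but it fails the second condition of Proposition~\ref{pr-ausco1} in general (if $c$ admits no morphism into $\D$ while $c\oplus d$ does, the map $X(c\oplus d)\to X(c)$ is $0\to\mathbb{Z}$ and has no section). What actually makes the degree-zero line work is that the cokernel is zero from the start: every object of $\C$ admits a morphism to an object of $\D$, so the comma categories $\D_{\varphi^*\C(c,-)}$ are nonempty and the counit is an epimorphism with kernel $\mathcal{L}^\varphi_0(\mathbb{Z})$. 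This nonemptiness is an implicit standing assumption of Proposition~\ref{pr-anek}, verified in each application of the corollary (the first point of Lemma~\ref{lm-scf} in the hyperbolic case; the very definition of $T_{\mathfrak{Q}}$ in Section~\ref{sng}); it is not a formal consequence of hypotheses (3)--(4) of Proposition~\ref{pr-auxhg}, which only produce morphisms into $\D$ out of $\Phi(c)$ and $\tilde{c}$, so it must be checked separately. Once it is granted, your long exact sequence closes up and the proof is complete.
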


\section{Un r\'esultat de symétrisation en homologie des foncteurs}\label{sec-mon}

Soit $A$ un mono\"ide commutatif. On note $\tilde{A}$ sa sym\'etrisation et $i_A : A\to\tilde{A}$ le morphisme de mono\"ides canonique. Autrement dit, $i_A$ est l'unité, évaluée sur $A$, de l'adjonction entre le foncteur d'oubli des groupes commutatifs vers les monoïdes commutatifs et la symétrisation (qui en est l'adjoint à gauche). Si $M$ est un ensemble muni d'une action de $A$, on note $j_M : M\to\tilde{M}$ le morphisme de $A$-ensembles initial parmi ceux dont le but est un $\tilde{A}$-ensemble (dont l'action de $A$ s'obtient par restriction par $i_A$). Explicitement, $\tilde{M}$ est le quotient de $A\times M$ par la relation d'\'equivalence identifiant $(a,m)$ et $(b,n)$ lorsqu'existe $x\in A$ tel que $x+b+m=x+a+n$ (la loi de $A$ comme son action sur $M$ sont not\'ees additivement) --- ainsi $\tilde{A}$ n'est autre que $\tilde{M}$ pour $M=A$ muni de l'action par translations ; il est muni de l'action de $\tilde{A}$ donn\'ee par $(x-y)+[a,m]=[y+a,x+m]$ (o\`u $[a,m]$ d\'esigne la classe de $(a,m)$, qu'on peut donc lire comme $-a+m$).

On note que si $M$ est le $A$-ensemble trivial $*$ (\`a un \'el\'ement), alors $\tilde{M}$ est le $\tilde{A}$-ensemble trivial $*$. La propri\'et\'e universelle de $j_M$ montre que cette application n'est autre que l'unit\'e (évaluée sur $M$) de l'adjonction entre le foncteur d'oubli des $\tilde{A}$-ensembles vers les $A$-ensembles et le foncteur $\tilde{A}\underset{A}{\times}-$ (qui lui est adjoint à gauche) ; si l'on linéarise, $j_M$ induit l'unité $\mathbb{Z}[M]\to\mathbb{Z}[\tilde{A}]\underset{\mathbb{Z}[A]}{\otimes
}\mathbb{Z}[M]\simeq\mathbb{Z}[\tilde{M}]$.

\begin{pr}\label{hom-mon}
 Les applications $i_A$ et $j_M$ induisent un isomorphisme gradu\'e $$H_*(A;\mathbb{Z}[M])\xrightarrow{\simeq}H_*(\tilde{A};\mathbb{Z}[\tilde{M}])\;;$$
 en particulier $i_A$ induit
 un isomorphisme gradu\'e $$H_*(A)\xrightarrow{\simeq}H_*(\tilde{A}).$$
\end{pr}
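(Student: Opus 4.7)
My plan is to reduce the statement to a change-of-rings argument for $\operatorname{Tor}$, combined with a filtered-colimit identification. First I would verify that $\mathbb{Z}[\tilde{A}]$ is the localization $S^{-1}\mathbb{Z}[A]$ at the multiplicative subset $S=A\subset\mathbb{Z}[A]$: both rings corepresent the same functor on rings, namely ring maps from $\mathbb{Z}[A]$ carrying every element of $A$ to a unit, since by the universal property of the group completion a monoid map $A\to R^\times$ factors uniquely through $\tilde{A}$. In particular $\mathbb{Z}[\tilde{A}]$ is flat over $\mathbb{Z}[A]$. Second, the universal property of $\tilde{M}$ recalled in the text translates, after linearization, into a canonical identification $\mathbb{Z}[\tilde{M}]\simeq\mathbb{Z}[\tilde{A}]\otimes_{\mathbb{Z}[A]}\mathbb{Z}[M]$.

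Given these two inputs, the classical change-of-rings argument---take a projective resolution $P_\bullet\to\mathbb{Z}$ over $\mathbb{Z}[A]$ and base change by flatness to a projective resolution $P_\bullet\otimes_{\mathbb{Z}[A]}\mathbb{Z}[\tilde{A}]\to\mathbb{Z}$ over $\mathbb{Z}[\tilde{A}]$---yields
\[H_*(\tilde{A};\mathbb{Z}[\tilde{M}])\simeq\operatorname{Tor}^{\mathbb{Z}[A]}_*(\mathbb{Z},\mathbb{Z}[\tilde{M}]).\]
It then remains to check that $j_M$ induces an isomorphism $\operatorname{Tor}^{\mathbb{Z}[A]}_*(\mathbb{Z},\mathbb{Z}[M])\xrightarrow{\simeq}\operatorname{Tor}^{\mathbb{Z}[A]}_*(\mathbb{Z},\mathbb{Z}[\tilde{M}])$. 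Writing $\tilde{M}$ as the filtered colimit, over $A$, of copies of $M$ with translations as transition maps, and using that $\operatorname{Tor}$ commutes with filtered colimits, this reduces to showing that translation by any $c\in A$ induces the identity on $\operatorname{Tor}^{\mathbb{Z}[A]}_*(\mathbb{Z},\mathbb{Z}[M])$. This is the key step: by commutativity of $A$, the element $c\in\mathbb{Z}[A]$ is central, so its action on the $\operatorname{Tor}$ group through the second variable agrees with its action through the first variable, which is trivial since $c$ acts as the identity on the trivial module $\mathbb{Z}$. Chasing through the identifications shows that the composite isomorphism is indeed the one induced by $(i_A,j_M)$.

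A more intrinsic packaging, closer in spirit to what the introduction to section~\ref{sec-mon} announces, is to run the same argument through the two-sided bar construction $B_\bullet(\mathbb{Z}[\tilde{A}],\mathbb{Z}[A],\mathbb{Z}[M])$ and its two canonical spectral sequences: one degenerates, via flatness of $\mathbb{Z}[\tilde{A}]$, onto $H_*(\tilde{A};\mathbb{Z}[\tilde{M}])$, while the other degenerates, via the centrality/trivial-action argument, onto $H_*(A;\mathbb{Z}[M])$. I expect the only genuine obstacle to be the triviality of the translation action on the $\operatorname{Tor}$ groups, which is exactly what commutativity of $A$ delivers; all remaining steps are formal manipulations of projective resolutions, flat base change, and filtered colimits.
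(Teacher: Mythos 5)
Your proof is correct and rests on exactly the two observations that drive the paper's argument: $\mathbb{Z}[\tilde A]$ is the (flat) localization of $\mathbb{Z}[A]$ inverting the elements of $A$, and $\mathbb{Z}[\tilde M]\simeq\mathbb{Z}[\tilde A]\otimes_{\mathbb{Z}[A]}\mathbb{Z}[M]$, so that flat change of rings for ${\rm Tor}$ gives the statement. The only difference is one of packaging: the paper derives the base-change isomorphism in the module variable (resolving $\mathbb{Z}[M]$ over $\mathbb{Z}[A]$), obtaining $H_*(A;\mathbb{Z}[M])\simeq H_*(\tilde A;\mathbb{Z}[\tilde M])$ in a single step, whereas you resolve $\mathbb{Z}$ instead and therefore need the extra (also correct) filtered-colimit and centrality argument showing that translations act trivially on ${\rm Tor}^{\mathbb{Z}[A]}_*(\mathbb{Z},\mathbb{Z}[M])$.
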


\begin{proof}
 La $\mathbb{Z}[A]$-alg\`ebre $\mathbb{Z}[\tilde{A}]$ est la localisation obtenue en inversant les \'el\'ements de $A$, elle est donc {\em plate}. Par cons\'equent, l'isomorphisme naturel
 $$\iota(V)\underset{\mathbb{Z}[A]}{\otimes}W\simeq V\underset{\mathbb{Z}[\tilde{A}]}{\otimes}\big(\mathbb{Z}[\tilde{A}]\underset{\mathbb{Z}[A]}{\otimes}W\big)$$
 o\`u $V$ est un $\mathbb{Z}[\tilde{A}]$-module \`a droite, $W$ un $\mathbb{Z}[A]$-module \`a gauche et $\iota : \mathbf{Mod}-\mathbb{Z}[\tilde{A}]\to\mathbf{Mod}-\mathbb{Z}[A]$ d\'esigne le foncteur d'oubli, s'\'etend en un isomorphisme gradu\'e $${\rm Tor}^{\mathbb{Z}[A]}_*(\iota(V),W)\simeq {\rm Tor}^{\mathbb{Z}[\tilde{A}]}_*\big(V,\mathbb{Z}[\tilde{A}]\underset{\mathbb{Z}[A]}{\otimes}W\big).$$
 
 La proposition est le cas particulier $V=\mathbb{Z}$ et $W=\mathbb{Z}[M]$.
\end{proof}

Dans ce qui suit, on note $\bar{\mathbb{Z}}[E]=Ker\,(\mathbb{Z}[E]\to\mathbb{Z}[*]=\mathbb{Z})$ pour tout ensenble $E$.

\begin{cor}\label{corbar}
 Il existe un morphisme naturel de complexes
 $$\xymatrix{\dots\ar[r] & \bar{\mathbb{Z}}[A]^{\otimes (n+1)}\otimes\bar{\mathbb{Z}}[M]\ar[r]\ar[d] &  \bar{\mathbb{Z}}[A]^{\otimes n}\otimes\bar{\mathbb{Z}}[M]\ar[r]\ar[d] & \dots\ar[r] & \bar{\mathbb{Z}}[M]\ar[d] \\
 \dots\ar[r] & \bar{\mathbb{Z}}[\tilde{A}]^{\otimes (n+1)}\otimes\bar{\mathbb{Z}}[\tilde{M}]\ar[r] &  \bar{\mathbb{Z}}[\tilde{A}]^{\otimes n}\otimes\bar{\mathbb{Z}}[\tilde{M}]\ar[r] & \dots\ar[r] & \bar{\mathbb{Z}}[\tilde{M}]
 }$$
qui induit un isomorphisme en homologie.
\end{cor}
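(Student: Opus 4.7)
Mon plan consiste à réaliser le morphisme et son caractère de quasi-isomorphisme en identifiant le complexe de l'énoncé à un sous-complexe naturel d'un complexe de bar normalisé, puis à appliquer le lemme des cinq à une suite exacte courte de complexes issue de la suite exacte courte canonique $0 \to \bar{\mathbb{Z}}[M] \to \mathbb{Z}[M] \to \mathbb{Z} \to 0$ de $A$-modules.

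Je commencerais par observer que, $i_A$ et $j_M$ étant tous deux compatibles à l'augmentation $\mathbb{Z}[-]\to\mathbb{Z}$, ils induisent des morphismes naturels $\bar{\mathbb{Z}}[A] \to \bar{\mathbb{Z}}[\tilde{A}]$ et $\bar{\mathbb{Z}}[M] \to \bar{\mathbb{Z}}[\tilde{M}]$ ; en prenant leurs produits tensoriels, on obtient le morphisme vertical souhaité. Les différentielles horizontales sont celles du complexe de bar normalisé associé à l'action de $A$ sur $\mathbb{Z}[M]$, dont les termes sont $\bar{\mathbb{Z}}[A]^{\otimes n}\otimes\mathbb{Z}[M]$ et qui calcule $H_*(A;\mathbb{Z}[M])$. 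La seule vérification de routine est que ces différentielles préservent le sous-groupe $\bar{\mathbb{Z}}[A]^{\otimes n}\otimes\bar{\mathbb{Z}}[M]$, ce qui tient au fait que l'action $\mathbb{Z}[A]\otimes\mathbb{Z}[M]\to\mathbb{Z}[M]$ envoie $\bar{\mathbb{Z}}[A]\otimes\mathbb{Z}[M]$ dans $\bar{\mathbb{Z}}[M]$, puisque $\bar{\mathbb{Z}}[A]$ est précisément le noyau de l'augmentation.

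Cela étant acquis, la suite exacte courte $0\to\bar{\mathbb{Z}}[M]\to\mathbb{Z}[M]\to\mathbb{Z}\to 0$ de $A$-modules, tensorisée degré par degré par le groupe abélien libre $\bar{\mathbb{Z}}[A]^{\otimes n}$, fournit une suite exacte courte de complexes
\[
0 \to \bar{\mathbb{Z}}[A]^{\otimes\bullet}\otimes\bar{\mathbb{Z}}[M] \to \bar{\mathbb{Z}}[A]^{\otimes\bullet}\otimes\mathbb{Z}[M] \to \bar{\mathbb{Z}}[A]^{\otimes\bullet}\otimes\mathbb{Z} \to 0,
\]
le terme intermédiaire calculant $H_*(A;\mathbb{Z}[M])$ et celui de droite calculant $H_*(A)$. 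Une suite analogue vaut pour $\tilde{A}$ et $\tilde{M}$, et le morphisme de bar construit ci-dessus fournit un morphisme entre ces deux suites exactes courtes de complexes.

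Il restera à invoquer la proposition~\ref{hom-mon} : elle garantit que le morphisme induit en homologie est un isomorphisme sur le terme intermédiaire (c'est exactement son énoncé) et sur le terme de droite (qui s'obtient comme cas particulier $M=*$). Le lemme des cinq appliqué aux deux suites exactes longues d'homologie livrera alors l'isomorphisme cherché sur le terme de gauche, qui est précisément le complexe de l'énoncé. L'étape la plus substantielle est donc en fait la proposition~\ref{hom-mon} déjà démontrée ; le corollaire n'en est qu'une reformulation au niveau des complexes sous-jacents, et aucune difficulté technique majeure n'est à prévoir au-delà des vérifications routinières de compatibilité.
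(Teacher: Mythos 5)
Votre démonstration est correcte et suit essentiellement la même voie que celle du papier, qui se contente d'invoquer en une ligne la fonctorialité de la construction barre réduite (normalisée) et la proposition~\ref{hom-mon} ; votre suite exacte courte de complexes et le lemme des cinq ne font qu'expliciter proprement ce que « appliquer la proposition précédente » recouvre.
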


\begin{proof}
 On utilise la construction barre r\'eduite (ou normalisée), qui est fonctorielle, et on applique la proposition pr\'ec\'edente.
\end{proof}

\smallskip

On s'int\'eresse maintenant \`a une g\'en\'eralisation~\guillemotleft~\`a paramètres~\guillemotright~de ce qui pr\'ec\`ede. On suppose que $\A$ est une petite cat\'egorie, $A$ un foncteur contravariant de $\A$ vers les mono\"ides ab\'eliens, et l'on note $\tilde{A} : \A^{op}\to\mathbf{Ab}$ sa symétrisation (i.e. le foncteur obtenu en sym\'etrisant au but), qui est munie d'un morphisme $i_A : A\to\tilde{A}$. Plus g\'en\'eralement, si $M$ est un foncteur contravariant de $\A$ vers les ensembles qui est muni d'une action de $A$ (i.e. on dispose d'un morphisme $A\times M\to A$ v\'erifiant les conditions \'evidentes), on peut construire un foncteur $\tilde{M}$ muni d'une action de $\tilde{A}$, avec un morphisme d'unit\'e $j_M : M\to\tilde{M}$.

Par naturalit\'e le corollaire~\ref{corbar} demeure valide dans ce contexte, dans la cat\'egorie $\mathbf{Mod}-\A$.

\begin{thm}\label{th-sym}
 Supposons que la cat\'egorie $\A$ est additive et que $F$ est un foncteur polynomial (ou m\^eme analytique) $\A\to\mathbf{Ab}$. Supposons \'egalement que $A$ et $M$ sont r\'eduits au sens o\`u leur valeur en $0$ est r\'eduite \`a un point.
 
 Alors $j_M$ induit un isomorphisme gradu\'e $${\rm Tor}^\A_*(\mathbb{Z}[M],F)\xrightarrow{\simeq}{\rm Tor}^\A_*(\mathbb{Z}[\tilde{M}],F).$$
 En particulier, $i_A$ induit un isomorphisme
 $${\rm Tor}^\A_*(\mathbb{Z}[A],F)\xrightarrow{\simeq}{\rm Tor}^\A_*(\mathbb{Z}[\tilde{A}],F).$$
\end{thm}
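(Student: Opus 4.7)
The plan is to deduce the theorem from Corollary~\ref{corbar} through a hyperhomology spectral sequence argument, polynomiality of $F$ entering via Scorichenko's vanishing criterion (Corollary~\ref{corsco}). As a preliminary reduction, note that since $M(0)=*$ and $0$ is a zero object of the additive category $\A$, the unique morphism $x\to 0$ yields by contravariance a canonical base point $*=M(0)\to M(x)$, producing a natural splitting $\mathbb{Z}[M]\simeq\mathbb{Z}\oplus\bar{\mathbb{Z}}[M]$ in $\mathbf{Mod}-\A$ (and similarly for $\tilde M$), compatible with $j_M$. It therefore suffices to show that $j_M$ induces an isomorphism $\mathrm{Tor}^\A_*(\bar{\mathbb{Z}}[M],F)\xrightarrow{\simeq}\mathrm{Tor}^\A_*(\bar{\mathbb{Z}}[\tilde M],F)$.

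I would view the parametric version of Corollary~\ref{corbar} as a morphism of chain complexes in $\mathbf{Mod}-\A$,
$$B_\bullet(A,M)\longrightarrow B_\bullet(\tilde A,\tilde M),\qquad B_p:=\bar{\mathbb{Z}}[A]^{\otimes p}\otimes\bar{\mathbb{Z}}[M],$$
which is an object-wise quasi-isomorphism (via Proposition~\ref{hom-mon} applied at each object of $\A$), hence a quasi-isomorphism in the derived category of $\mathbf{Mod}-\A$. Since derived tensor product with $F$ preserves quasi-isomorphisms, the two resulting hyperhomologies $\mathbb{H}_*(B_\bullet\overset{\mathbb{L}}{\underset{\A}{\otimes}}F)$ are isomorphic. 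The argument then reduces to the hyperhomology spectral sequence
$$E^1_{p,q}=\mathrm{Tor}^\A_q(B_p,F)\;\Longrightarrow\;\mathbb{H}_{p+q},$$
which collapses onto the $p=0$ column (yielding $\mathbb{H}_n\simeq\mathrm{Tor}^\A_n(\bar{\mathbb{Z}}[M],F)$ on one side and $\mathbb{H}_n\simeq\mathrm{Tor}^\A_n(\bar{\mathbb{Z}}[\tilde M],F)$ on the other, via the identification of $B_0$) provided one establishes the key vanishing
$$\mathrm{Tor}^\A_*\bigl(\bar{\mathbb{Z}}[A]^{\otimes p}\otimes\bar{\mathbb{Z}}[M],F\bigr)=0\quad\text{pour }p\geq 1,$$
together with its analogue on the $(\tilde A,\tilde M)$ side. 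Combined with the previous hyperhomology isomorphism, this will give the conclusion.

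The plan is then to derive this vanishing from Corollary~\ref{corsco}: it suffices to construct, for each pointed finite set $(E,e)$, a natural section in $\mathbf{Mod}-\mathbf{M}(\A)$ of the pointed cross-effect $\theta^*cr_{E,e}\bigl(\bar{\mathbb{Z}}[A]^{\otimes p}\otimes\bar{\mathbb{Z}}[M]\bigr)$. The key observation is that $A$, being a reduced commutative-monoid-valued functor on an additive category, automatically preserves biproducts, yielding a natural identification $A(x^{\oplus E})\simeq A(x)^E$ of monoids. Combining the resulting diagonal $\bar{\mathbb{Z}}[A]\to T_E\bar{\mathbb{Z}}[A]$ induced by the codiagonal $x^{\oplus E}\to x$ with a coordinate projection onto some $e'\neq e$ (available naturally on $\mathbf{M}(\A)$ through the split-mono retraction data) produces a candidate section on one of the $p\geq 1$ copies of $\bar{\mathbb{Z}}[A]$. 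The main technical difficulty will be the combinatorial verification that this construction, suitably extended to the remaining tensor factors (including $\bar{\mathbb{Z}}[M]$), really splits the alternating signed sum defining the pointed cross-effect of the full tensor product; the hypothesis $p\geq 1$ is essential, as it supplies the diagonal data needed for the sum to telescope to the identity.
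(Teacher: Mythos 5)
Your setup (splitting off the reduced parts, the bar complexes of Corollary~\ref{corbar}, the two hyperhomology spectral sequences, the object-wise quasi-isomorphism from Proposition~\ref{hom-mon}) is exactly the paper's, but the argument fails at the decisive step: the vanishing ${\rm Tor}^\A_*\bigl(\bar{\mathbb{Z}}[A]^{\otimes p}\otimes\bar{\mathbb{Z}}[M],F\bigr)=0$ for $p\geq 1$ is simply false, and the first spectral sequence does \emph{not} collapse onto the column $p=0$. Take $\A=\mathbf{P}(\mathbb{Z})$, $A=M={\rm Hom}_{\mathbb{Z}}(-,\mathbb{Z})$ (a reduced abelian-group-valued functor, so $\tilde A=A$) and $F=T^2$; for $p=1$, $q=0$ the sum-diagonal adjunction and the reducedness of $A$ and $M$ give
$$\bigl(\bar{\mathbb{Z}}[A]\otimes\bar{\mathbb{Z}}[M]\bigr)\underset{\A}{\otimes}T^2\simeq\bigl(\bar{\mathbb{Z}}[A]\boxtimes\bar{\mathbb{Z}}[M]\bigr)\underset{\A\times\A}{\otimes}cr_2(T^2)\simeq\Bigl(\bar{\mathbb{Z}}[{\rm Hom}(-,\mathbb{Z})]\underset{\A}{\otimes}{\rm I}_{\mathbb{Z}}\Bigr)^{\otimes 2}\otimes\mathbb{Z}[\Sigma_2]\simeq\mathbb{Z}^{2}\neq 0,$$
since $\mathbb{Z}[{\rm Hom}(-,\mathbb{Z})]$ is the standard projective $P^{\A^{op}}_{\mathbb{Z}}$. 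What is true — and what the paper proves — is only that the comparison map $\mathbf{I}^1_{p,q}\to\tilde{\mathbf{I}}^1_{p,q}$ induced by $i_A$ and $j_M$ is an isomorphism for $p\geq 1$; this is where the polynomial hypothesis enters, via an induction on the degree of $F$ absent from your proposal: the adjunction above, together with Proposition~\ref{pr-bifo}, replaces $F$ by $cr_2(F)(U,-)$, of strictly smaller degree, and the case $p=0$ is then forced by the isomorphism of abutments supplied by the $\mathbf{II}$ spectral sequences. No vanishing of the positive columns is available or needed.

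The route you propose towards the vanishing also rests on a false premise. A reduced functor from an additive category to commutative monoids does \emph{not} automatically preserve biproducts: the motivating example of the whole construction, the sub-monoid-functor $T\subset SD^2_\epsilon$ of quadratic forms that are sums of squares (or $S^2$ itself, which is even group-valued and reduced), is quadratic, so there is no natural identification $A(x^{\oplus E})\simeq A(x)^{E}$. Consequently the diagonal/section data you want to feed into Corollary~\ref{corsco} does not exist in general — consistently with the fact that the conclusion it would yield is false.
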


\begin{proof}
 On \'etablit le r\'esultat par r\'ecurrence sur le degr\'e polynomial $d$ de $F$. Plus pr\'ecis\'ement, on montre par r\'ecurrence sur $d$ que pour tout foncteur polynomial $F$ de degr\'e au plus $d$ et tout foncteur ensembliste $M$ muni d'une action du foncteur en mono\"ides $A$, la conclusion du th\'eor\`eme est valide.
 
 Si $C_\bullet$ est un complexe (avec une diff\'erentielle de degr\'e $-1$) de $\mathbf{Mod}-\A$, rappelons qu'on dispose de deux suites spectrales naturelles d'hyperhomologie telles que
 $$\mathbf{I}^1_{p,q}={\rm Tor}^\A_q(C_p,F)$$
 et
 $$\mathbf{II}^2_{p,q}={\rm Tor}_p^\A(H_q(C_\bullet),F)$$
(on indexe pour que la diff\'erentielle $d_r$ soit toujours de bidegr\'e $(-r,r-1)$) dont les aboutissements sont isomorphes.

Appliquons cela aux complexes du corollaire~\ref{corbar} : sa conclusion montre que le morphisme de complexes induit un isomorphisme gradu\'e $\mathbf{II}^2\xrightarrow{\simeq}\widetilde{\mathbf{II}}^2$, ce qui implique que le morphisme de suites spectrales $\mathbf{I}\to\widetilde{\mathbf{I}}$ induit un isomorphisme entre les aboutissements.
 
Entre les premi\`eres pages, ce morphisme
$$\mathbf{I}^1_{p,q}={\rm Tor}^\A_q(\bar{\mathbb{Z}}[A]^{\otimes p}\otimes\bar{\mathbb{Z}}[M],F)\to\tilde{\mathbf{I}}^1_{p,q}={\rm Tor}^\A_q(\bar{\mathbb{Z}}[\tilde{A}]^{\otimes p}\otimes\bar{\mathbb{Z}}[\tilde{M}],F)$$
est induit par $i_A$ est $j_M$ ; v\'erifions que l'hypoth\`ese de r\'ecurrence entraîne que c'est un isomorphisme pour $p\neq 0$.

Comme la cat\'egorie $\A$ est additive, les foncteurs diagonale $\A\to\A\times\A$ et somme directe $\oplus : \A\times\A\to\A$ sont adjoints, ils induisent donc entre groupes de torsion un isomorphisme
$${\rm Tor}^\A_*(\bar{\mathbb{Z}}[A]^{\otimes p}\otimes\bar{\mathbb{Z}}[M],F)\simeq{\rm Tor}^{\A\times\A}_*(\bar{\mathbb{Z}}[A]^{\otimes p}\boxtimes\bar{\mathbb{Z}}[M],\oplus^*F)\;;$$
et la proposition~\ref{pr-bifo} procure une suite spectrale
$$E^2_{i,j}={\rm Tor}^\A_i\big(\bar{\mathbb{Z}}[A]^{\otimes p},U\mapsto {\rm Tor}^\A_j(\bar{\mathbb{Z}}[M],F(U\oplus -))\big)$$
$$\dots\Rightarrow {\rm Tor}^{\A\times\A}_{i+j}(\bar{\mathbb{Z}}[A]^{\otimes p}\boxtimes\bar{\mathbb{Z}}[M],\oplus^*F).$$

Comme $A$ et $M$ sont r\'eduits, ce terme $E^2_{i,j}$ s'identifie, pour $p>0$, \`a $${\rm Tor}^\A_i\big(\bar{\mathbb{Z}}[A]^{\otimes p},U\mapsto {\rm Tor}^\A_j(\bar{\mathbb{Z}}[M],cr_2(F)(U,-))\big).$$
Or $cr_2(F)(U,-)$ est de degr\'e strictement inf\'erieur \`a $d$, de sorte que l'hypoth\`ese de r\'ecurrence implique que $j_M$ induit un isomorphisme 
$${\rm Tor}^\A_i\big(\bar{\mathbb{Z}}[A]^{\otimes p},U\mapsto {\rm Tor}^\A_j(\bar{\mathbb{Z}}[M],F(U\oplus -))\big)\xrightarrow{\simeq}\dots$$
$${\rm Tor}^\A_i\big(\bar{\mathbb{Z}}[A]^{\otimes p},U\mapsto {\rm Tor}^\A_j(\bar{\mathbb{Z}}[\tilde{M}],F(U\oplus -))\big).$$

En appliquant inductivement la proposition~\ref{pr-bifo} au facteur $\bar{\mathbb{Z}}[A]^{\otimes p}$ (toujours pour $p>0$) et en utilisant derechef l'hypoth\`ese de r\'ecurrence, cette fois-ci pour le foncteur $A$ agissant sur lui-m\^eme par translations, on en d\'eduit que $i_A$ et $j_M$ induisent 
un isomorphisme 
$${\rm Tor}^\A_i\big(\bar{\mathbb{Z}}[A]^{\otimes p},U\mapsto {\rm Tor}^\A_j(\bar{\mathbb{Z}}[M],F(U\oplus -))\big)\xrightarrow{\simeq}\dots
$$
$${\rm Tor}^\A_i\big(\bar{\mathbb{Z}}[\tilde{A}]^{\otimes p},U\mapsto {\rm Tor}^\A_j(\bar{\mathbb{Z}}[\tilde{M}],F(U\oplus -))\big),$$
donc par ce qui pr\'ec\`ede un isomorphisme $\mathbf{I}^1_{p,q}\to\tilde{\mathbf{I}}^1_{p,q}$ sauf peut-\^etre pour $p=0$. Mais cela contraint notre morphisme \`a \^etre \'egalement un isomorphisme pour $p=0$, puisqu'on a vu que le morphisme de suites spectrales $\mathbf{I}\to\tilde{\mathbf{I}}$ induisait un isomorphisme entre les aboutissements (son noyau et son conoyau, concentr\'es sur la ligne $p=0$ \`a la page $1$, s'y arr\^etent n\'ecessairement). Cela termine la d\'emonstration.
\end{proof}

\begin{rem}
 La méthode mise en \oe uvre dans cette démonstration est très classique en homologie des foncteurs ; par exemple, les articles \cite{Pira-pol} et \cite{FPZ} utilisent fortement la résolution barre pour les groupes abéliens dans le contexte des catégories de foncteurs pour établir des résultats sur des groupes de torsion ou d'extensions entre foncteurs polynomiaux. 
\end{rem}

\section{Homologie stable des groupes unitaires à
coefficients polynomiaux : premiers résultats}\label{scg}
$\A$ étant une petite catégorie additive, on suppose donné un {\em foncteur de dualité} $D : \A^{op}\to\A$, c'est-à-dire un foncteur vérifiant les trois conditions suivantes :
\begin{enumerate}
 \item $D$ est auto-adjoint : on dispose d'isomorphismes naturels
 $$\sigma_{X,Y} : \A(X,DY)\xrightarrow{\simeq}\A(Y,DX)\;;$$
 \item $D$ est symétrique au sens où $\sigma_{Y,X}=\sigma_{X,Y}^{-1}$ ;
 \item l'unité $Id_\A\to D^2$ (qui coïncide avec la coünité par symétrie) est un isomorphisme (en particulier, $D$ est une équivalence de catégories).
\end{enumerate}

Il s'agit du cadre général classique pour traiter d'espaces hermitiens ou symplectiques --- cf. par exemple \cite{Kn}, chap.~II, §\,2.

\begin{rem}\label{rq-exdual}
L'exemple canonique est celui où $\A=\mathbf{P}(A)$ (cf. appendice~\ref{app2}), où $A$ est un anneau muni d'une anti-involution, et $D$ la dualité usuelle (l'anti-involution permet de considérer ${\rm Hom}_A(M,A)$, où $M$ est un $A$-module à gauche, comme un $A$-module {\em à gauche}).

Nous verrons au paragraphe~\ref{sec-sco} un autre cas particulier important.
\end{rem}

On notera $\bar{a}$ pour $\sigma_{X,Y}(a)$, où $a\in\A(X,DY)$. Ainsi $\bar{\bar{a}}=a$ ; on dispose en particulier, pour tout objet $M$ de $\A$, d'une involution distinguée sur $\A(M,DM)$.

On se donne $\epsilon\in\{1,-1\}$ ({\small on pourrait faire légèrement plus général en supposant que $\A$ est $k$-linéaire, où $k$ est un anneau commutatif à involution, que $D$ est $k$-semilinéaire et que $\epsilon$ est tel que $\epsilon.\bar{\epsilon}=1$, et remplacer au but des catégories de foncteurs $\mathbf{Ab}$ par $k-\mathbf{Mod}$}) et l'on note $\mathbb{Z}_\epsilon$ la représentation $\mathbb{Z}$ de $\mathbb{Z}/2$ avec action de l'élément non trivial par $\epsilon$.

On note $TD^2$ le foncteur $\A^{op}\to\mathbf{Ab}\quad M\mapsto\A(M,DM)$, qui est donc muni d'une action de $\mathbb{Z}/2$. On en définit un sous-foncteur $\Gamma D^2_\epsilon$ et un quotient $SD^2_\epsilon$ par
$$\Gamma D^2_\epsilon={\rm Hom}_{\mathbb{Z}/2}(\mathbb{Z}_\epsilon,TD^2)\qquad\text{et}\qquad SD^2_\epsilon=\mathbb{Z}_\epsilon\underset{\mathbb{Z}/2}{\otimes}TD^2$$
(de sorte que $\Gamma D^2(M)=\{f\in TD^2(M)\,|\,\bar{f}=\epsilon f\}$) ; on dispose d'un morphisme de norme
$$SD^2_\epsilon\to\Gamma D^2_\epsilon\qquad 1\otimes x\mapsto x+\epsilon\bar{x}$$
dont l'image est notée $\bar{S} D^2_\epsilon$.

Si $2$ est inversible dans $\A$ (i.e. dans chaque groupe abélien $\A(M,N)$), alors ces trois foncteurs coïncident.

Afin de traiter en même temps de ces différents foncteurs et d'éventuelles variantes, nous introduisons les hypothèses suivantes.

\begin{hyp}\label{hyp-soutruc}
On suppose que $T : \A^{op}\to\mathbf{Ab}$ est un foncteur muni d'un morphisme $T\to \Gamma D^2_\epsilon$ dont le noyau est additif\,\footnote{Un foncteur additif est un foncteur polynomial de degré au plus $1$ et nul en $0$, c'est-à-dire un foncteur commutant aux sommes directes finies.}.
\end{hyp}

Cette hypothèse s'applique en particulier à $SD^2_\epsilon$ muni de la norme.

On utilisera aussi l'hypothèse plus faible suivante :

\begin{hyp}\label{hyp-soutruc2} On suppose que $T$ est un foncteur contravariant de $\A$ vers les monoïdes commutatifs réguliers, dont la symétrisation vérifie l'hypothèse~\ref{hyp-soutruc}.
\end{hyp}

(Il revient au même de demander que $T$ soit un sous-foncteur en monoïdes d'un foncteur vérifiant~\ref{hyp-soutruc}.)

\begin{defi}\label{df-fh}
Supposons que $T$ vérifie l'hypothèse~\ref{hyp-soutruc2}.

 On appelle $T$-forme hermitienne sur un objet $M$ de $\A$ tout élément de $T(M)$. Une telle forme est dite non dégénérée si l'élément de $\A(M,DM)$ associé est un isomorphisme.
 
 On note $\A^T$, selon nos conventions générales, la catégorie des objets de $\A$ munis d'une forme $T$-hermitienne. On rappelle que les morphismes $(M,x)\to (N,y)$ de $\A^T$ sont les morphismes $f : M\to N$ de $\A$ tels que $T(f)(y)=x$. On munit la catégorie $\A^T$ de la structure monoïdale symétrique (notée $\overset{T}{\oplus}$) induite par la somme directe de $\A$ et la structure monoïdale de $T$ (cf. §\,\ref{s-mos}).

On note $\mathbf{H}^T(\A)$ la sous-catégorie pleine de $\A^T$ dont les objets sont les espaces $T$-hermitiens non dégérénés (i.e. les objets de $\A$ munis d'une forme $T$-hermitienne non dégénérée). C'est une sous-catégorie monoïdale de $\A^T$.
\end{defi}

\begin{ex}[Cas fondamental]
Soit $A$ est un anneau muni d'une anti-involution et $\A=\mathbf{P}(A)$ la catégorie additive avec dualité correspondante (cf. remarque~\ref{rq-exdual}). Si $M$ est un objet de $\mathbf{P}(A)$, un élément de $SD^2_\epsilon(M)$ est exactement une forme hermitienne (éventuellement dégénérée ; si $A$ est commutatif et l'involution triviale c'est donc une forme quadratique) au sens usuel sur $M$, tandis qu'un élément de $\Gamma D^2_\epsilon(M)$ est exactement une forme sesquilinéaire sur $M$ (notions qu'on peut identifier lorsque $2$ est inversible dans $A$, puisque la norme est alors un isomorphisme). Le foncteur $\bar{S}D_{-1}$ correspond quant à lui à la notion usuelle de forme symplectique (toujours éventuellement dégénérée).
\end{ex}

On dispose d'un foncteur d'oubli $\pi^T : \A^T\to\A$ (noté souvent simplement $\pi$) ; sa restriction à $\mathbf{H}^T(\A)$ se relève en un foncteur vers $\mathbf{S}(\A)$ (catégorie définie en \ref{nism} en fin d'introduction) de la façon suivante : soient $u : M\to N$ est un morphisme de $\mathbf{H}^T(\A)$, $r$ et $s$ les éléments de $\A(M,DM)$ et $\A(N,DN)$ respectivement associés aux formes sur $M$ et $N$, on associe $(r^{-1}\bar{u}s,u)\in\mathbf{S}(\A)(M,N)$ à $u$.


\begin{pr}\label{prhs1e}
Sous l'hypothèse~\ref{hyp-soutruc2}, la catégorie monoïdale symétrique $\mathbf{H}^T(\A)$ vérifie les conditions de la section~\ref{sec1}.
\end{pr}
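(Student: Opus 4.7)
Mon plan est de vérifier successivement les trois axiomes de la section~\ref{sec1}. Le premier point (monoïdalité symétrique avec unité initiale) sera formel : l'unité $(0, 0_T)$ de $\A^T$ appartient à $\mathbf{H}^T(\A)$ puisque $T(0)$ est réduit à zéro --- conséquence de $\Gamma D^2_\epsilon(0) \subset \A(0, D0) = 0$, de l'additivité du noyau de la symétrisation $\tilde T \to \Gamma D^2_\epsilon$, et de la régularité de $T$. La forme nulle est trivialement non dégénérée et $(0, 0_T)$ est bien un objet initial de $\mathbf{H}^T(\A)$. La somme orthogonale de deux formes non dégénérées restant non dégénérée, $\mathbf{H}^T(\A)$ sera bien stable par $\overset{T}{\oplus}$.

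L'outil clé pour la suite est le relèvement canonique, déjà signalé, d'un morphisme $f : (M, q_M) \to (N, q_N)$ de $\mathbf{H}^T(\A)$ en monomorphisme scindé de $\A$, de rétraction $\rho_f = r^{-1}\bar{f}s$, où $r \in \A(M, DM)$ et $s \in \A(N, DN)$ sont les adjoints associés aux formes. L'identité $\bar f s f = r$ exprimant que $f$ est hermitienne, conjuguée à l'hypothèse~\ref{hyp-soutruc2}, permettra de montrer que la décomposition $N \simeq f(M) \oplus K_f$, avec $K_f := \ker \rho_f$, est orthogonale \emph{au sens hermitien} : le facteur $f(M)$ est isomorphe à $(M, q_M)$ via $f$ dans $\mathbf{H}^T(\A)$, et la forme restreinte à $K_f$ reste non dégénérée. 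On aura donc un isomorphisme $(N, q_N) \simeq (M, q_M) \overset{T}{\oplus} (K_f, q_N|_{K_f})$ dans $\mathbf{H}^T(\A)$.

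À partir de là, je définirai le foncteur ${\rm Aut}_{\mathbf{H}^T(\A)}$ par la formule
$$f_*(\varphi) := f \varphi \rho_f + {\rm id}_N - f \rho_f,$$
dont la lecture dans la décomposition précédente est transparente : $f_*(\varphi)$ agit comme $\varphi$ sur $f(M)$ et comme l'identité sur $K_f$. Les conditions requises (fonctorialité $(gf)_* = g_* f_*$, qui découle de $\rho_{gf} = \rho_f \rho_g$ ; coïncidence avec la conjugaison pour un isomorphisme, cas où $\rho_f = f^{-1}$ ; coïncidence avec $\varphi \mapsto \varphi \overset{T}{\oplus} {\rm id}$ sur les inclusions canoniques ; équivariance $f_*(\varphi) \circ f = f \circ \varphi$) se vérifient alors par un calcul direct à partir de la formule.

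Enfin, pour la transitivité stable, étant donné $f : (M, q_M) \to (N, q_N)$, la décomposition hermitienne ci-dessus fournit un isomorphisme $(M, q_M) \overset{T}{\oplus} (N, q_N) \simeq (M, q_M) \overset{T}{\oplus} (M, q_M) \overset{T}{\oplus} (K_f, q_N|_{K_f})$, et l'involution $\varphi$ échangeant les deux copies de $(M, q_M)$ et fixant le troisième facteur rendra commutatif le diagramme voulu. Le point le plus délicat sera sans doute la vérification formelle que la décomposition $N \simeq f(M) \oplus K_f$ est orthogonale \emph{au sens du foncteur $T$}, et non simplement dans $\A$ : elle repose sur le fait que la projection $f \rho_f : N \to N$ est autoadjointe par rapport à $s$, observation aisée au niveau de $\Gamma D^2_\epsilon$ qu'il faudra remonter à $T$ via l'hypothèse~\ref{hyp-soutruc2}.
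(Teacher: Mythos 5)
Votre démonstration ne vérifie à aucun moment l'hypothèse (H faible) (ni l'hypothèse (H forte)), qui fait pourtant partie des conditions de la section~\ref{sec1} telles qu'elles sont utilisées ensuite : la proposition~\ref{pr-ksh1} se déduit de la présente proposition en appliquant la proposition~\ref{pr-dvks} et le corollaire~\ref{cor-dvks}, lesquels requièrent (H faible). C'est une lacune réelle, car la vérification, quoique courte, n'est pas vide et utilise de façon essentielle la non-dégénérescence : un morphisme $M\overset{T}{\oplus}H\to N\overset{T}{\oplus}H$ commutant aux inclusions canoniques de $H$ (avec $H$ non dégénéré) a pour matrice $\left(\begin{smallmatrix}u&0\\ f&1\end{smallmatrix}\right)$, et la préservation des formes impose $rf=0$, donc $f=0$ puisque l'adjoint $r$ de la forme de $H$ est inversible ; d'où (H forte), qui entraîne (H faible). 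Sans ce point, la proposition ne livre pas ce dont on a besoin pour la suite.

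Pour le reste, vous suivez essentiellement la démonstration du texte sous un autre emballage : la formule $f_*(\varphi)=f\varphi\rho_f+1-f\rho_f$ avec $\rho_f=r^{-1}\bar f s$ est exactement celle du texte, et votre involution échangeant les deux copies de $(M,q_M)$ dans $(M,q_M)\overset{T}{\oplus}(N,q_N)\simeq (M,q_M)\overset{T}{\oplus}(M,q_M)\overset{T}{\oplus}(K_f,q_N|_{K_f})$ n'est autre que la matrice explicite $\left(\begin{smallmatrix}0&\rho_f\\ f&1-f\rho_f\end{smallmatrix}\right)$ qui y établit la transitivité stable. La seule différence de fond est que vous faites transiter toutes les vérifications par la décomposition orthogonale $(N,q_N)\simeq (M,q_M)\overset{T}{\oplus}(K_f,q_N|_{K_f})$, là où le texte manipule directement l'identité quadratique~(\ref{eq-quad}) déduite de l'additivité du noyau de $T\to\Gamma D^2_\epsilon$. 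Votre point \emph{délicat} (l'orthogonalité au sens de $T$ et non seulement de $\Gamma D^2_\epsilon$) se résout effectivement par cette même additivité, jointe à la régularité de $T$ sous l'hypothèse~\ref{hyp-soutruc2} pour plonger $T$ dans sa symétrisation ; autrement dit, vous ne contournez pas l'ingrédient technique du texte, vous le reportez, et il faudrait l'écrire pour que l'argument soit complet.
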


\begin{proof}
 Par la remarque~\ref{rq-sthyp}, on peut supposer, pour simplifier, que l'hypothèse forte~\ref{hyp-soutruc} est satisfaite.

La structure fonctorielle sur les groupes d'automorphismes s'inspire du foncteur vers $\mathbf{S}(\A)$ : avec les notations précédentes, le morphisme ${\rm Aut}_{\mathbf{H}^T(\A)}(M)\to {\rm Aut}_{\mathbf{H}^T(\A)}(N)$ qu'induit $u$ est donné par $f\mapsto ufr^{-1}\bar{u}s+1-ur^{-1}\bar{u}s$. Pour montrer que ces morphismes préservent bien la forme sur $N$, on utilise l'hypothèse sur le noyau de $T\to\Gamma D^2_\epsilon$ (qui permet essentiellement de se ramener au cas où ce morphisme est injectif\,\footnote{Le passage du cas de $\bar{S}D^2_\epsilon$ au cas général dans le lemme~\ref{lm-scf} ci-après, par exemple, peut se traiter de la même façon.}) : on a une identité quadratique
\begin{equation}\label{eq-quad}
 T(a+b)(x)=T(a)(x)+T(b)(x)+\theta(\bar{b}\tilde{x}a+\epsilon\bar{a}\tilde{x}b)
\end{equation}
(pour une fonction additive $\theta$ convenable ; on désigne par $\tilde{x}$ l'image de $x\in T(A)$ dans $\A(A,DA)$) ; elle montre que les morphismes $T(N)\to T(N)$ induits par $ufr^{-1}\bar{u}s+1-ur^{-1}\bar{u}s$ et $ur^{-1}\bar{u}s+1-ur^{-1}\bar{u}s=1$ sont les mêmes.

En conservant les mêmes notations pour un morphisme $u$ de $\mathbf{H}^T(\A)$, l'endomorphisme $\phi$ de $\pi M\oplus \pi N$ donné par la matrice
$$\left(\begin{array}{cc}
 0 & r^{-1}\bar{u}s \\
 u & 1-ur^{-1}\bar{u}s
\end{array}\right)
$$
est en fait un automorphisme involutif de $M\overset{T}{\oplus}N$ (dans $\mathbf{H}^T(\A)$) comme on le vérifie de façon analogue à ce qui précède, en utilisant l'identité quadratique~(\ref{eq-quad}), et il fait commuter le diagramme
\begin{equation}\label{eq-transt}
\xymatrix{M\ar[r]^-u\ar[rrd] & N\ar[r] & M\overset{T}{\oplus} N \\
& & M\overset{T}{\oplus} N\ar[u]_-\phi
}
\end{equation}
d'où la transitivité stable requise.

Enfin, l'hypothèse (H forte) est satisfaite, même sous une forme légèrement plus forte : soient $H$ un objet de $\mathbf{H}^T(\A)$, $r$ l'élément de $\A(M,DM)$ sous-jacent, $M$ et $N$ deux objets de $\A^T$. Tout morphisme $M\overset{T}{\oplus} H\to N\overset{T}{\oplus} H$ faisant commuter le diagramme
$$\xymatrix{H\ar[r]\ar[rd] & M\overset{T}{\oplus} H\ar[d] \\
& N\overset{T}{\oplus} H
}$$
est donné par une matrice $\left(\begin{array}{cc} u & 0\\
                                  f & 1
                                 \end{array}\right)$
qui préserve les formes, ce qui implique $rf=0$, i.e. $f=0$ puisque $r$ est inversible, donc que notre morphisme provient d'un morphisme $M\to N$.
\end{proof}

Par conséquent, la proposition~\ref{pr-dvks} et le corollaire~\ref{cor-dvks} entraînent :
\begin{pr}\label{pr-ksh1}
 Sous l'hypothèse~\ref{hyp-soutruc2}, il existe des isomorphismes
$$H^{st}_*(\mathbf{H}^T(\A);F)\simeq  H_*\big(\mathbf{H}^T(\A)\times {\rm U}^T_\infty(\A);F\big)\;,$$
$$H^{st}_n(\mathbf{H}^T(\A);F)\simeq\bigoplus_{p+q=n}{\rm Tor}^{\mathbf{H}^T(\A)}_p\big(H_q({\rm U}^T_\infty(\A);\mathbb{Z}),F\big)$$
(où le membre de gauche du groupe de torsion est un foncteur constant)
 naturels en $F\in {\rm Ob}\,\mathbf{H}^T(\A)-\mathbf{Mod}$, où ${\rm U}^T_\infty(\A)$ désigne la colimite des groupes d'automorphismes d'objets de $\mathbf{H}^T(\A)$ prise selon une tranche (cf. §\,\ref{sec1})
et des isomorphismes (non nécessairement naturels)
$$H^{st}_n(\mathbf{H}^T(\A);F)\simeq\bigoplus_{p+q=n}H_p\big({\rm U}^T_\infty(\A);H_q(\mathbf{H}^T(\A);F)\big).$$
\end{pr}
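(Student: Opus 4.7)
Mon plan consiste à constater que cet énoncé est un corollaire direct et quasi-immédiat de la machinerie développée dans la section~\ref{sec1}, appliquée au cas particulier $\C = \mathbf{H}^T(\A)$.

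D'abord, je m'appuierais sur la proposition~\ref{prhs1e} qu'on vient d'établir : celle-ci garantit que $\mathbf{H}^T(\A)$, munie de la structure monoïdale symétrique $\overset{T}{\oplus}$, vérifie toutes les hypothèses du cadre formel de la section~\ref{sec1} — unité initiale, fonctorialité des groupes d'automorphismes avec équivariance, transitivité stable (via l'involution $\phi$ explicitement construite dans la matrice du diagramme~(\ref{eq-transt})), et même l'hypothèse (H forte). Comme (H forte) entraîne trivialement (H faible), les hypothèses requises par la proposition~\ref{pr-dvks} et le corollaire~\ref{cor-dvks} sont toutes satisfaites.

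Ensuite, j'identifierais, par définition même de ${\rm U}^T_\infty(\A)$ comme colimite filtrante des groupes d'automorphismes le long d'une tranche, ce groupe à la notation ${\rm G}_\infty$ de la section~\ref{sec1} évaluée en $\C = \mathbf{H}^T(\A)$. L'application directe de la proposition~\ref{pr-dvks} livrera alors le premier isomorphisme, naturel en $F$ par construction. Les deux décompositions restantes — l'isomorphisme naturel à base de ${\rm Tor}$ et l'isomorphisme (non nécessairement naturel) à base d'homologie de groupes — s'obtiendront alors comme deux traductions distinctes du membre de droite par des arguments standards d'algèbre homologique (suite spectrale de Grothendieck et isomorphisme de type Künneth non canoniquement scindé), précisément comme décrit au corollaire~\ref{cor-dvks}.

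Il n'y aura donc pas de point réellement délicat à surmonter, puisque le seul travail substantiel — à savoir la vérification des axiomes abstraits du cadre monoïdal — a été accompli dans la proposition~\ref{prhs1e}. Le seul point à garder à l'esprit est que la définition même de ${\rm U}^T_\infty(\A)$ comme colimite le long d'une tranche est légitime car cette colimite ne dépend pas, à isomorphisme près, du choix de la tranche (observation déjà faite dans la section~\ref{sec1}) ; la non-naturalité du troisième isomorphisme est héritée de celle bien connue de la décomposition de Künneth en homologie des groupes.
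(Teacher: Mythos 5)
Votre démonstration est correcte et suit exactement le même chemin que l'article : la proposition~\ref{prhs1e} vérifie les axiomes de la section~\ref{sec1} pour $\mathbf{H}^T(\A)$, puis la proposition~\ref{pr-dvks} et le corollaire~\ref{cor-dvks} donnent directement les trois isomorphismes annoncés. Rien à redire.
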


Lorsque $T=SD^2_\epsilon$ ou $\bar{S}D^2_\epsilon$, on notera ${\rm U}_{\infty,\infty}(\A,D,\epsilon)$ pour ${\rm U}^T_\infty(\A)$ car on peut alors prendre une tranche constituée d'espaces hyperboliques, comme cela résultera du lemme~\ref{lm-scf} (la notion d'espace hyperbolique est rappelée ci-dessous) --- dans le cas où $\A=\mathbf{P}(A)$, où $A$ est un anneau muni d'une anti-involution, et où l'on prend pour $D$ la dualité usuelle et $\epsilon=1$, un choix distingué de tranche conduit à ${\rm U}_{\infty,\infty}(A)=\underset{n\in\mathbb{N}}{\col}U_{n,n}(A)$, tandis que pour $\epsilon=-1$ on obtient le groupe symplectique infini $Sp_\infty(A)$.

\paragraph*{Espaces hyperboliques}
Soit $V$ un objet de $\A$ ; supposons ici que l'on se trouve dans le {\em cas générique}, i.e. que $T$ est le foncteur $\bar{S}D^2_\epsilon$ ou $SD^2_\epsilon$ (muni de la flèche canonique vers $\Gamma D^2_\epsilon$). On note $\tilde{V}$ l'objet $V\oplus DV$ de $\A$ muni de la forme correspondant dans la décomposition
$$T(V\oplus DV)\simeq T(V)\oplus T(DV)\oplus\A(DV,DV)$$
à l'identité de $DV$. L'élément de $\A(V\oplus DV,D(V\oplus DV))\simeq\A(V\oplus DV,DV\oplus V)$ correspondant a pour matrice $\left(\begin{array}{cc} 0 & 1\\                                                                                                            \epsilon & 0
\end{array}\right)
$, il est donc inversible : $\tilde{V}$ est un objet de $\mathbf{H}^T(\A)$, appelé {\em espace hyperbolique} associé à $V$. (Cette construction ne définit {\em pas} un foncteur de $\A$ dans $\mathbf{H}^T(\A)$, puisque $D$ est contravariant, mais seulement un foncteur de la sous-catégorie des isomorphismes de $\A$ vers $\mathbf{H}^T(\A)$). Il s'agit d'une généralisation classique et très naturelle des espaces hermitiens (au sens usuel) hyperboliques qui correspondent au cas $\A=\mathbf{P}(A)$ déjà mentionné.

Dans la section suivante, on utilise de façon essentielle les propriétés fondamentales des espaces hyperboliques pour relier l'homologie stable des groupes unitaires à coefficients polynomiaux à des groupes d'homologie des foncteurs dans le cas générique (au sens mentionné précédemment).

Le cas non générique, plus délicat (traité partiellement dans la dernière section), ne peut plus être abordé de la même manière. Le cas typique est fourni en considérant, sur $\mathbf{P}(A)$, où $A$ est un anneau commutatif (avec l'involution triviale et $\epsilon=1$), les groupes orthogonaux \guillemotleft~euclidiens~\guillemotright~$O_n(A)$ (avec le choix de $T$ y afférent --- cf. section~\ref{sng} ; cette situation peut évidemment se réléver générique dans certains cas particuliers, comme lorsque $-1$ est somme de carrés dans $A$).

\section{Homologie stable des groupes unitaires dans le cas générique}\label{seccen}

\subsection{Le résultat principal}\label{par-dvg}

On considère encore, dans ce paragraphe, une petite catégorie additive avec dualité $(\A,D)$ et $\epsilon\in\{-1,1\}$.

\begin{hyp}\label{hyp-gener}
 On suppose dans ce paragraphe que le foncteur $T$ est soit $\bar{S} D^2_\epsilon$ soit $SD^2_\epsilon$ (munis de la flèche canonique vers $\Gamma D^2_\epsilon$).
 \end{hyp}

On note dans la suite $\mathbf{M}\A^T$ la sous-catégorie de $\A^T$ des morphismes $f$ tels que $\pi(f)\in\mathbf{M}(\A)$ (elle contient en particulier $\mathbf{H}^T(\A)$).

\begin{lm}\label{lm-scf}
 Soit $V$ un objet de $\A$. L'espace hyperbolique associé $\tilde{V}\in {\rm Ob}\,\mathbf{H}^T(\A)$ vérifie les propriétés suivantes.
 \begin{enumerate}
  \item\label{lfp1} L'ensemble $\mathbf{M}\A^T((V,x),\tilde{V})$ est non vide pour tout $x\in T(V)$. De surcroît, on peut choisir un élément de cet ensemble qui possède une rétraction dans $\A$ indépendante du choix de $x$.
  \item\label{lfp2} On peut de plus trouver un élément $\eta$ de $\mathbf{M}\A^T((V,0),\tilde{V})$ tel que tout morphisme de $\mathbf{M}\A^T$ de source $(V,0)$ et de but non dégénéré se factorise par $\eta$.
  \item\label{lfp3} Plus généralement, si $\varphi : (V,0)\to (E,r)$ est un morphisme de $\A^T$ tel que $r\varphi : V\to DE$ soit un monomorphisme scindé de $\A$, alors $\varphi$ se factorise par $\eta$.
  \item\label{lfp4} Soient $x\in T(V)$, $m : V\to W$ un monomorphisme scindé de $\A$ et $\phi : (W,0)\to E$ un morphisme de $\mathbf{M}\A^T$ avec $E$ non dégénéré. Il existe un diagramme commutatif de $\mathbf{M}\A^T$ de la forme
  $$\xymatrix{(V,x)\ar[rd]\ar[r]^-{1\oplus m} & (V,x)\overset{T}{\oplus} (W,0)\ar[r]^-{1\oplus\phi}  & (V,x)\overset{T}{\oplus}E\\
  & \tilde{V}\ar[ru] &
}$$
 \end{enumerate}
\end{lm}

\begin{proof}

 Soient $x\in T(V)$ et $t\in\A(V,DV)$ un élément relevant $x$ (on utilise les épimorphismes canoniques $TD^2\twoheadrightarrow SD^2_\epsilon\twoheadrightarrow\bar{S} D^2_\epsilon$ ; autrement dit, dans le cas de $\bar{S}D^2_\epsilon$, $x=t+\epsilon\bar{t}$). Alors le morphisme $V\to V\oplus DV$ de $\A$ de composantes $1_V$ et $t$ est un monomorphisme, scindé par la projection sur $V$ (qui ne dépend pas de $x$), qui définit un morphisme 
 $(V,x)\to\tilde{V}$ dans $\A^T$, d'où le premier point.


Prouvons le troisième (donc le deuxième) point dans le cas où $T=\bar{S} D^2_\epsilon$, par exemple. Soit $\alpha : DE\to V$ une rétraction dans $\A$ à $r\varphi$. Il existe $t\in\A(DV,V)$ tel que $\alpha r\bar{\alpha}=\epsilon t+\bar{t}$. Alors le morphisme $f : V\oplus DV\to E$ de composantes $\varphi$ et $\epsilon:\bar{\alpha}-\varphi t$ vérifie $f\eta=\varphi$ (où $\eta$ est le morphisme construit précédemment pour la forme nulle, i.e. l'inclusion $V\to V\oplus DV$) et préserve les formes car
$$\bar{f}rf=\left(\begin{array}{cc}\bar{\varphi} r\varphi & \epsilon\bar{\varphi}r\alpha-\bar{\varphi}r\varphi t \\
                   \epsilon\alpha r\varphi-\bar{t}\bar{\varphi}r\varphi & (\epsilon\alpha-\bar{t}\bar{\varphi})r(\epsilon\bar{\alpha}-\varphi t)
                  \end{array}
\right)$$
qui égale $\left(\begin{array}{cc} 0 & 1\\
                  \epsilon & 0
                 \end{array}
\right)$
parce que $\alpha r\varphi=1_V$, $\bar{\varphi}r\varphi=0$ et $\alpha r\bar{\alpha}=\epsilon t+\bar{t}$.

Pour le dernier point, l'assertion~\ref{lfp3} permet de se ramener au cas où $m=Id_V$ et où $\phi$ est le morphisme vers $\tilde{V}$ construit précédemment. En ce cas, si $\varphi : (V,x)\to\tilde{V}$ et $\eta : (V,0)\to\tilde{V}$ sont les morphismes construits en début de démonstration et $\psi : \tilde{V}\to V$ leur rétraction commune dans $\A$, on vérifie aisément que le morphisme $\alpha : \tilde{V}\to (V,x)\overset{T}{\oplus}\tilde{V}$ de composantes $\psi$ et $1+(\varphi-\eta)\psi$ (qui est scindé par le morphisme de composantes $\eta-\varphi$ et $1$) préserve les formes, et il fait commuter le diagramme
$$\xymatrix{(V,x)\ar[dr]_\eta\ar[rr]^{\left(\begin{array}{cc}1\\ \varphi\end{array}\right)} & & (V,x)\overset{T}{\oplus}\tilde{V}  \\
& \tilde{V}\ar[ru]_\alpha &
}$$
ce qui achève la démonstration.
\end{proof}

Nous pouvons maintenant établir rapidement le résultat principal de cette section.

\begin{thm}\label{th-dvg}
 Sous l'hypothèse~\ref{hyp-gener}, pour tout foncteur polynomial $F\in {\rm Ob}\,\A-\mathbf{Mod}$, l'application canonique
 $$H_*(\mathbf{H}^T(\A);i^*\pi^*F)\to H_*(\A^T;\pi^*F)\simeq {\rm Tor}^\A_*(\mathbb{Z}[T],F)$$
 est un isomorphisme, où $i : \mathbf{H}^T(\A)\to\A^T$ désigne le foncteur d'inclusion et, comme plus haut, $\pi : \A^T\to\A$ désigne le foncteur d'oubli.
\end{thm}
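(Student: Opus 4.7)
The plan is to apply Corollary~\ref{cor-auxhg} with $\C=\A^T\supset\D=\mathbf{H}^T(\A)$ and $\varphi=i$; since every polynomial functor is analytic, this will produce directly the sought isomorphism $H_*(\mathbf{H}^T(\A);i^*\pi^*F)\simeq H_*(\A^T;\pi^*F)\simeq {\rm Tor}^\A_*(\mathbb{Z}[T],F)$. The rightmost identification is routine: the precomposition $\pi^*$ has as left adjoint $\pi_!$ given by $\pi_!(G)(A)=\bigoplus_{x\in T(A)}G(A,x)$, which is visibly exact (a direct sum indexed by a set) and satisfies $\pi_!(\mathbb{Z})\simeq\mathbb{Z}[T]$; transferring a projective resolution of $\mathbb{Z}$ in $\mathbf{Mod}-\C$ via $\pi_!$ yields a projective resolution of $\mathbb{Z}[T]$ in $\mathbf{Mod}-\A$, and the Tor identification follows.

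The substantive task is to verify the four hypotheses of Proposition~\ref{pr-auxhg}. I would set $\Phi:\A^T\to\A^T$, $(V,x)\mapsto(V,0)$, acting as the identity on underlying $\A$-morphisms; this is well-defined because $T(f)(0)=0$ for every $f$ (the functor $T$ taking values in abelian monoids), and $\B=\mathbf{M}\A^T$. The first two conditions are tautological: $\pi\circ\Phi=\pi$ by construction, and $\pi(\Phi(f))=\pi(f)$, so $\pi(f)\in\mathbf{M}(\A)$ directly forces $\Phi(f)\in\B$.

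For the third condition, fix $c=(V,x)$, whence $\Phi(c)=(V,0)$; an object of $\K(c)$ is then a morphism $\phi:(V,0)\to E$ in $\mathbf{M}\A^T$ with $E$ non-dégénéré. Lemma~\ref{lm-scf}~(\ref{lfp2}) provides a distinguished object $\eta:(V,0)\to\tilde{V}$ of $\K(c)$ through which every such $\phi$ factors; a choice of factorization $\psi:\tilde{V}\to E$ (with $\psi\eta=\phi$) is precisely a morphism of $\K(c)$ from $(\tilde{V},\eta)$ to $(E,\phi)$, so every object of $\K(c)$ is linked to $(\tilde{V},\eta)$ and $\K(c)$ is connected. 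For the fourth condition, take $f=\eta$: since $y=0$ one has $\tilde{c}=(V,x+0)=c$, and the composite $c\to c\oplus\Phi(c)\to c\oplus\tilde{V}$ (whose first arrow is induced by the diagonal $V\to V\oplus V$) is exactly the situation of Lemma~\ref{lm-scf}~(\ref{lfp4}) applied with $W=V$, $m=Id_V$, $\phi=\eta$, $E=\tilde{V}$, which delivers the required factorization through $\tilde{V}\in\D$.

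All substantial content of the proof is thus packaged into Lemma~\ref{lm-scf}: the main obstacle lies not in the present verification but in that lemma's hyperbolic construction and its universal factorization properties, which depend essentially on hypothesis~\ref{hyp-gener} (the generic setting $T\in\{SD^2_\epsilon,\bar{S}D^2_\epsilon\}$). Outside this framework, neither the hyperbolic space $\tilde{V}$ nor the relevant split-mono factorizations are available, which is exactly why the non-generic case is treated by separate and more delicate methods in section~\ref{sng}.
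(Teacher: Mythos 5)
Your proof is correct and follows essentially the same route as the paper: apply Corollary~\ref{cor-auxhg} with $\B=\mathbf{M}\A^T$ and $\Phi(V,x)=(V,0)$, with the connectivity of $\K(c)$ supplied by point~\ref{lfp2} of Lemma~\ref{lm-scf} (the object $\eta:(V,0)\to\tilde{V}$ is weakly initial) and the factorization condition by point~\ref{lfp4}. Your verifications, including the specialization $\tilde{c}=c$ because $y=0$ and the identification ${\rm Tor}^\A_*(\mathbb{Z}[T],F)$ via the exact left Kan extension $\pi_!$, match the paper's argument.
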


\begin{proof}
On applique le corollaire~\ref{cor-auxhg} aux foncteurs $\mathbf{H}^T(\A)\xrightarrow{i}\A^T\xrightarrow{\pi}\A$, avec le choix suivant : $\B=\mathbf{M}\A^T$ et $\Phi : \mathbf{H}^T(\A)\to\mathbf{H}^T(\A)$ est le foncteur composé de $\pi$ et de $V\mapsto (V,0)\quad \A\to\A^T$. Les deux premières hypothèses de la proposition~\ref{pr-auxhg} sont trivialement satisfaites.

La propriété~\ref{lfp2} du lemme~\ref{lm-scf} montre que l'hypothèse de connexité requise est également vérifiée (les catégories concernées ont même un objet pseudo-initial). La propriété~\ref{lfp4} du même lemme montre la validité de la dernière condition (de factorisation) nécessaire à l'application de la proposition~\ref{pr-auxhg} (ou du corollaire~\ref{cor-auxhg}). Cela termine la démonstration. 
\end{proof}

Combinant le théorème~\ref{th-dvg} à la proposition~\ref{pr-ksh1}, on obtient :
\begin{cor}\label{corf} Sous les mêmes hypothèses, il existe des isomorphismes naturels
 $$H^{st}_n(\mathbf{H}^T(\A);F)\simeq\bigoplus_{p+q=n}{\rm Tor}^{\A}_p\big(H_q({\rm U}^T_\infty(\A);\mathbb{Z})\otimes\mathbb{Z}[T],F\big)$$
et des isomorphismes (non nécessairement naturels)
$$H^{st}_n(\mathbf{H}^T(\A);F)\simeq\bigoplus_{p+q=n}H_p\big({\rm U}^T_\infty(\A);{\rm Tor}^{\A}_q(\mathbb{Z}[T],F)\big)$$
(où l'action de ${\rm U}^T_\infty(\A)$ est triviale).
\end{cor}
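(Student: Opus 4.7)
The plan is to derive both isomorphisms as a direct combination of proposition~\ref{pr-ksh1} with theorem~\ref{th-dvg}. Applying proposition~\ref{pr-ksh1} to the coefficient system $i^*\pi^*F\in {\rm Ob}\,\mathbf{H}^T(\A)-\mathbf{Mod}$ yields a natural isomorphism
$$H^{st}_n(\mathbf{H}^T(\A);i^*\pi^*F)\simeq\bigoplus_{p+q=n}{\rm Tor}^{\mathbf{H}^T(\A)}_p\big(G_q,i^*\pi^*F\big)$$
and a non-natural one
$$H^{st}_n(\mathbf{H}^T(\A);i^*\pi^*F)\simeq\bigoplus_{p+q=n}H_p\big({\rm U}^T_\infty(\A);H_q(\mathbf{H}^T(\A);i^*\pi^*F)\big),$$
where I write $G_q:=H_q({\rm U}^T_\infty(\A);\mathbb{Z})$ viewed as a constant functor. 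Theorem~\ref{th-dvg} will then serve to transport the $\mathbf{H}^T(\A)$-theoretic quantities appearing on the right-hand sides into the $\A$-theoretic ones appearing in the statement of the corollary, with the twisting functor $\mathbb{Z}[T]$ appearing naturally.

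The non-natural formula is immediate: theorem~\ref{th-dvg} produces the identification $H_q(\mathbf{H}^T(\A);i^*\pi^*F)\simeq {\rm Tor}^\A_q(\mathbb{Z}[T],F)$, and since the group ${\rm U}^T_\infty(\A)$ acts trivially on both sides (no group is in sight on the right), substitution yields the second stated isomorphism without further work.

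For the natural formula, the remaining task is to identify ${\rm Tor}^{\mathbf{H}^T(\A)}_p(G_q,i^*\pi^*F)\simeq {\rm Tor}^\A_p(G_q\otimes\mathbb{Z}[T],F)$. My approach is the following chain of identifications. Since $G_q$ is constant, tensoring with $G_q$ commutes with the precomposition $i^*\pi^*$, so $G_q\otimes i^*\pi^*F\simeq i^*\pi^*(G_q\otimes F)$; moreover, for a constant right module $G$ on any small category $\C$ and any left module $X$, one has ${\rm Tor}^\C_*(G,X)\simeq H_*(G\otimes_\mathbb{Z}(P_\bullet\otimes_\C X))$ where $P_\bullet\to\mathbb{Z}$ is a flat resolution of the constant $\C^{op}$-module, and this expression is symmetric in how one distributes the constant, so that ${\rm Tor}^\C_*(G,X)$ can be rewritten as the ordinary homology $H_*(\C;G\otimes X)$ of $\C$ with the pointwise-twisted coefficients. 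Since tensoring a polynomial functor by a constant abelian group preserves its polynomial degree (cross effects satisfy $cr_E(G\otimes F)=G\otimes cr_E(F)$), applying theorem~\ref{th-dvg} to the polynomial functor $G_q\otimes F$ gives $H_p(\mathbf{H}^T(\A);i^*\pi^*(G_q\otimes F))\simeq {\rm Tor}^\A_p(\mathbb{Z}[T],G_q\otimes F)$, and the same constant-factoring observation rewrites the right-hand side as ${\rm Tor}^\A_p(G_q\otimes\mathbb{Z}[T],F)$, closing the loop. The main (mild) obstacle here is only bookkeeping: one must verify that the chain of identifications is natural in $F$, which follows from the explicit bar-complex descriptions of both ${\rm Tor}$ groups but is worth checking carefully.
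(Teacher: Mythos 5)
Your overall strategy --- combine la proposition~\ref{pr-ksh1} avec le théorème~\ref{th-dvg} --- is exactly what the paper intends, and your treatment of the second (non-natural) isomorphism is correct: it is pure substitution of $H_q(\mathbf{H}^T(\A);i^*\pi^*F)\simeq {\rm Tor}^\A_q(\mathbb{Z}[T],F)$ into the last formula of~\ref{pr-ksh1}. The problem is the first (natural) isomorphism. Your pivotal lemma, namely that for a \emph{constant} right module $G$ one has ${\rm Tor}^\C_*(G,X)\simeq H_*(\C;G\otimes X)$, is false in general: take for $\C$ the category with one object and one morphism, $G=X=\mathbb{Z}/2$; then ${\rm Tor}^\C_1(G,X)={\rm Tor}^{\mathbb{Z}}_1(\mathbb{Z}/2,\mathbb{Z}/2)=\mathbb{Z}/2$ while $H_1(\C;G\otimes X)=0$. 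The identity you want holds only when $G$ or the values of $X$ are $\mathbb{Z}$-flat (your "symmetric distribution of the constant" silently assumes that $G\otimes P_\bullet$ is still a \emph{flat} resolution, which fails: $(G\otimes P^{\C^{op}}_c)\underset{\C}{\otimes}-$ is the non-exact functor $G\otimes X(c)$). Here $G_q=H_q({\rm U}^T_\infty(\A);\mathbb{Z})$ and the values of $F$ can both have torsion, so the reduction of ${\rm Tor}^{\mathbf{H}^T(\A)}_p(G_q,i^*\pi^*F)$ to $H_p(\mathbf{H}^T(\A);G_q\otimes i^*\pi^*F)$ drops exactly the ${\rm Tor}_1^{\mathbb{Z}}$ correction terms that the statement of the corollary is designed to retain. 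The final step, rewriting ${\rm Tor}^\A_p(\mathbb{Z}[T],G_q\otimes F)$ as ${\rm Tor}^\A_p(G_q\otimes\mathbb{Z}[T],F)$, fails for the same reason; the fact that the two erroneous links would have to "fail by the same amount" is not an argument.

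The repair is to avoid the detour through twisted constant coefficients altogether. Either (i) observe that the whole mechanism proving le théorème~\ref{th-dvg} works with the constant functor $G_q$ in place of $\mathbb{Z}$: the functors $\mathcal{L}^i_*(G_q)$ (reduced homology of the comma categories with coefficients $G_q$) satisfy the two conditions of la proposition~\ref{pr-ausco1} by the very same arguments (initial object, sections induced by functors and natural transformations, independent of the coefficient group), so la proposition~\ref{pr-auxhg} and la proposition~\ref{pr-anek} give ${\rm Tor}^{\mathbf{H}^T(\A)}_*(G_q,i^*\pi^*F)\simeq {\rm Tor}^{\A^T}_*(G_q,\pi^*F)\simeq {\rm Tor}^\A_*(\pi_!(G_q),F)$ with $\pi_!(G_q)=G_q\otimes\mathbb{Z}[T]$ by~(\ref{adj-gen}); or (ii) choose a two-term free resolution $0\to R_1\to R_0\to G_q\to 0$ over $\mathbb{Z}$, note that tensoring it with a flat resolution of $\mathbb{Z}$ (resp. with $\mathbb{Z}[T]$, which is pointwise free) produces flat resolutions of the constant functor $G_q$ (resp. of $G_q\otimes\mathbb{Z}[T]$), and compare the two resulting Künneth exact sequences through the quasi-isomorphism of le théorème~\ref{th-dvg}, concluding by the five lemma. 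Either route yields the natural isomorphism as stated.
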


Dans le cas où $\A=\mathbf{P}(k)$, $k$ étant un corps fini, l'homologie à coefficients constants de la même caractéristique que $k$ de ${\rm U}^T_\infty(\A)$ est triviale, de sorte qu'on retrouve exactement le résultat principal de \cite{DV}.

\subsection{Cas particulier : les résultats de Scorichenko sur la $K$-théorie stable}\label{sec-sco}

Supposons ici simplement que $\A$ est une petite catégorie additive (sans dualité) et munissons la petite catégorie additive $\B=\A^{op}\times\A$ de sa dualité canonique, échangeant les deux facteurs. Si $x=(B,A)$ est un objet de $\B$, un morphisme $x\to Dx$ est la donnée de deux morphismes $A\to B$ de $\A$ ; l'action de $\mathbb{Z}/2$ consiste à les intervertir. Prenant $\epsilon=1$, on en déduit que les trois foncteurs $SD^2$, $\Gamma D^2$ et $\bar{S} D^2$ sont isomorphes et l'on obtient que la catégorie $\B^T$ correspondante est (isomorphe à) la {\em catégorie des factorisations} $\mathbf{F}(\A)$ (dont la définition est rappelée en appendice~\ref{sagl}). Quant aux formes non dégénérées, ce sont celles correspondant à une flèche $A\to B$ qui est un isomorphisme. On en déduit que la catégorie $\mathbf{Q}^T(\B)$ correspondante est équivalente à la catégorie $\mathbf{S}(\A)$ (qui en est la sous-catégorie correspondant au cas où la flèche est l'identité).

Par conséquent, la proposition~\ref{pr-ksh1} se spécialise comme suit :

\begin{pr}\label{presco}
 Étant donné un objet $X$ de $\mathbf{S}(\A)-\mathbf{Mod}$, il existe des isomorphismes naturels
$$H_*({\rm GL}_\infty(\A);X_\infty)\simeq H_*(\mathbf{S}(\A)\times {\rm GL}_\infty(\A);X)$$
où ${\rm GL}_\infty(\A)$ désigne la colimite des groupes d'automorphismes d'objets de $\A$ selon une tranche (cf. section~\ref{sec1}) et $X_\infty$ la colimite des $X(a)$ selon la même tranche. 
\end{pr}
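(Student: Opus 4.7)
Le plan est de déduire cette proposition, de manière essentiellement formelle, de la proposition~\ref{pr-ksh1} appliquée à la catégorie $\B=\A^{op}\times\A$ munie de la dualité canonique d'échange et du choix $T=SD^2$ (qui coïncide ici avec $\Gamma D^2$ et $\bar{S}D^2$), avec $\epsilon=1$. L'hypothèse~\ref{hyp-soutruc} étant trivialement satisfaite dans ce cadre (le noyau du morphisme canonique vers $\Gamma D^2_\epsilon$ est nul), la proposition~\ref{pr-ksh1} fournit, pour tout foncteur $F\in {\rm Ob}\,\mathbf{H}^T(\B)-\mathbf{Mod}$, un isomorphisme naturel
$$H^{st}_*(\mathbf{H}^T(\B);F)\simeq H_*\big(\mathbf{H}^T(\B)\times {\rm U}^T_\infty(\B);F\big).$$

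Il s'agira ensuite d'invoquer les identifications déjà rappelées dans le texte : $\mathbf{H}^T(\B)$ est équivalente à $\mathbf{S}(\A)$, et à travers cette équivalence le groupe d'automorphismes d'un objet se réduit au groupe d'automorphismes dans $\A$ sous-jacent --- en effet, un couple $(u,v)$ d'éléments de $\mathbf{S}(\A)(a,a)$ tel que $uv=vu=Id_a$ vérifie automatiquement $v=u^{-1}$. Par passage à la colimite sur une tranche commune, ${\rm U}^T_\infty(\B)$ s'identifie donc à ${\rm GL}_\infty(\A)$, et le membre de droite de l'isomorphisme précédent devient $H_*(\mathbf{S}(\A)\times {\rm GL}_\infty(\A);X)$.

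Pour terminer, on réécrira le membre de gauche en utilisant que, par définition, $H^{st}_*(\mathbf{S}(\A);X)$ est la colimite sur une tranche des $H_*({\rm Aut}_\A(a);X(a))$ ; le caractère filtrant de cette tranche et la commutation de l'homologie des groupes aux colimites filtrantes (cf. le troisième point des observations suivant la définition de tranche au §\,\ref{sec1}) donneront l'identification $H^{st}_*(\mathbf{S}(\A);X)\simeq H_*({\rm GL}_\infty(\A);X_\infty)$, ce qui, combiné à ce qui précède, fournira précisément l'énoncé souhaité.

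Le plan ci-dessus ne présente aucun obstacle substantiel : il ne consiste qu'à composer des identifications déjà disponibles avec un théorème déjà démontré. Le seul point qui mérite un peu d'attention est la vérification que les structures monoïdales symétriques et les foncteurs~\guillemotleft~groupe d'automorphismes~\guillemotright~se correspondent bien, à travers l'équivalence $\mathbf{H}^T(\B)\simeq\mathbf{S}(\A)$, aux structures canoniques de $\mathbf{S}(\A)$ évoquées dans la proposition ; mais cette vérification découle directement des définitions.
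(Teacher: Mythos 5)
Votre démonstration suit exactement la même voie que celle de l'article : spécialiser la proposition~\ref{pr-ksh1} à la catégorie $\B=\A^{op}\times\A$ munie de sa dualité canonique d'échange (avec $\epsilon=1$, les trois foncteurs $SD^2$, $\Gamma D^2$ et $\bar{S}D^2$ coïncidant alors), puis identifier $\mathbf{H}^T(\B)$ à $\mathbf{S}(\A)$, les groupes unitaires correspondants à ${\rm GL}_\infty(\A)$ et l'homologie stable à $H_*({\rm GL}_\infty(\A);X_\infty)$. Les vérifications que vous signalez (correspondance des structures monoïdales et des groupes d'automorphismes à travers l'équivalence) sont bien celles, immédiates, que l'article laisse implicites ; votre plan est correct.
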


(Pour $\A=\mathbf{P}(A)$, où $A$ est un anneau, un choix évident de tranche donne le groupe usuel $GL_\infty(A)$.)

Le théorème~\ref{th-dvg} permet d'en déduire, compte-tenu de l'isomorphisme~(\ref{eq-hofac}) de l'appendice :

\begin{thm}[Scorichenko]\label{th-scok}
 Si $F\in {\rm Ob}\,(\A^{op}\times\A)-\mathbf{Mod}$ est analytique, on dispose d'un isomorphisme naturel
$$H_*({\rm GL}_\infty(\A);F_\infty)\simeq HH_*(\A\times {\rm GL}_\infty(\A);F).$$
\end{thm}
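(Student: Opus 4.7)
Le plan est de combiner la proposition~\ref{presco} avec la version analytique du théorème~\ref{th-dvg} et l'isomorphisme~(\ref{eq-hofac}) identifiant homologie de la catégorie des factorisations et homologie de Hochschild, en incorporant formellement le facteur ${\rm GL}_\infty(\A)$. D'abord, j'appliquerais la proposition~\ref{presco} au foncteur $X:=i^*\pi^*F$, restriction de $F$ le long de la composée $\mathbf{S}(\A)\simeq\mathbf{H}^T(\A^{op}\times\A)\xrightarrow{i}(\A^{op}\times\A)^T\simeq\mathbf{F}(\A)\xrightarrow{\pi}\A^{op}\times\A$ (avec $T=SD^2$, qui coïncide ici avec $\Gamma D^2$ et $\bar SD^2$). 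La colimite $X_\infty$ s'identifie canoniquement à $F_\infty$ (évaluation diagonale), ce qui donne
$$H_*({\rm GL}_\infty(\A);F_\infty)\simeq H_*(\mathbf{S}(\A)\times{\rm GL}_\infty(\A);X).$$

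Ensuite, j'étendrais le théorème~\ref{th-dvg} aux foncteurs analytiques par passage à la colimite filtrante (les deux membres commutant en $F$ à de telles colimites, et un foncteur analytique étant par définition une telle colimite de foncteurs polynomiaux). Appliqué à $\A^{op}\times\A$ muni de sa dualité canonique d'échange des facteurs, cela fournit, compte tenu des identifications rappelées plus haut et de~(\ref{eq-hofac}), un isomorphisme $H_*(\mathbf{S}(\A);i^*\pi^*F)\xrightarrow{\simeq}HH_*(\A;F)$ pour tout bifoncteur analytique $F$. Il reste alors à propager cet énoncé au produit par ${\rm GL}_\infty(\A)$. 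Pour cela, je relirais la démonstration du théorème~\ref{th-dvg} (application du corollaire~\ref{cor-auxhg} via le lemme~\ref{lm-scf}) en lui adjoignant partout ce facteur catégorique passif : l'endofoncteur $\Phi$ et la sous-catégorie auxiliaire $\mathbf{M}\A^T$ intervenant dans la proposition~\ref{pr-auxhg}, ainsi que les factorisations construites dans le lemme~\ref{lm-scf} et les sections de la proposition~\ref{pr-ausco1}, s'étendent trivialement, le facteur groupal n'intervenant ni dans les arguments de transitivité stable ni dans la connexité des catégories auxiliaires. On obtient ainsi
$$H_*(\mathbf{S}(\A)\times{\rm GL}_\infty(\A);i^*\pi^*F)\simeq H_*(\mathbf{F}(\A)\times{\rm GL}_\infty(\A);\pi^*F)=HH_*(\A\times{\rm GL}_\infty(\A);F),$$
la dernière identification étant la variante naturelle de~(\ref{eq-hofac}), et la composition avec l'isomorphisme de la première étape conclut.

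Le point le plus délicat est clairement la dernière étape : il faut s'assurer que toute la machinerie d'annulation de Scorichenko (effets croisés, sections scindées, polynomialité) est insensible à l'ajout d'un facteur groupal trivial, ce qui relève d'une vérification soigneuse de naturalité mais ne requiert aucune idée nouvelle. Alternativement, on pourrait utiliser une suite spectrale de Künneth reliant $H_*(\mathcal{C}\times G;\cdot)$ et $H_*(\mathcal{C};H_*(G;\cdot))$ pour $\mathcal{C}\in\{\mathbf{S}(\A),\mathbf{F}(\A)\}$, de sorte à se ramener directement à la situation sans facteur groupal déjà traitée par le théorème~\ref{th-dvg} étendu aux foncteurs analytiques.
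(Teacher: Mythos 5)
Your proposal is correct and follows essentially the same route as the paper: Proposition~\ref{presco} applied to $X=i^*\pi^*F$, Theorem~\ref{th-dvg} for the duality on $\A^{op}\times\A$ (extended to analytic functors by filtered colimits, which is automatic since the underlying vanishing results are already stated for analytic functors), and the identification~(\ref{eq-hofac}). The only step the paper leaves implicit --- compatibility with the passive factor ${\rm GL}_\infty(\A)$ --- is handled correctly by either of your two routes, the spectral sequence of Proposition~\ref{pr-bifo} being the quickest way to reduce to the statement without the group factor.
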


Pour l'équivalence entre cet énoncé et l'énoncé originel de Scorichenko --- qui supposait $\A=\mathbf{P}(A)$ pour un anneau $A$, mais ses arguments s'appliquent sans changement au cas général --- en terme de $K$-théorie stable, voir \cite{DV}, §\,2.3. Rappelons également que le cas où $A$ est un corps fini avait été traité d'abord par Betley et (indépendamment) Suslin (cf. l'appendice de \cite{FFSS}).

\begin{rem}
 La démonstration originelle de Scorichenko (cf. \cite{Sco}\,\footnote{L'ensemble de la stratégie de Scorichenko et une partie significative de sa mise en \oe uvre sont présentés dans l'ouvrage publié \cite{FFPS} (article de Franjou-Pirashvili).}) fonctionne selon un principe très analogue, mais transite par la sous-catégorie $\mathbf{M}(\A)$ (ou l'équivalent dual, la sous-catégorie des épimorphismes scindés) plutôt que par $\mathbf{S}(\A)$. La méthode proposée ici est encore plus directe : on n'a besoin d'{\em aucune} hypothèse polynomiale dans la proposition~\ref{presco}, tandis que Scorichenko utilise deux fois des arguments nécessitant le caractère polynomial (une partie de sa comparaison nécessite que le bifoncteur soit polynomial relativement à la variable contravariante tandis que l'autre nécessite qu'il le soit relativement à la variable covariante).
\end{rem}

\subsection{Exemples de calculs}

On se donne de nouveau une petite catégorie additive avec dualité $(\A,D)$, $\epsilon\in\{-1,1\}$, et $T$ vérifiant l'hypothèse~\ref{hyp-gener}.

Dans ce paragraphe, on rappelle, suivant \cite{DV} (§\,4), comment calculer les groupes de torsion apparaissant au théorème~\ref{th-dvg} (et donc l'homologie de ${\rm U}_{\infty,\infty}(\A,D,\epsilon)$ à coefficients tordus polynomiaux, supposant connue son homologie à coefficients constants), au moins lorsque $2$ est inversible dans $\A$ et qu'on sait calculer des groupes de torsion sur $\A$ entre foncteurs polynomiaux.

\begin{pr}[Cf. \cite{DV}]\label{pr-invol}
\begin{enumerate}
 \item  Il existe un isomorphisme gradué
$${\rm Tor}^\A_*(\mathbb{Z}[TD^2],F)\simeq HH_*(\A;\sigma^*F)$$
naturel en $F\in {\rm Ob}\,\A-\mathbf{Mod}$, où $\sigma : \A^{op}\times\A\to\A$ désigne le foncteur $(M,N)\mapsto DM\oplus N$.
\item L'involution du terme de gauche induite par celle de $TD^2$ correspond à droite à
$$HH_*(\A;\sigma^*F)\simeq HH_*(\A^{op};\tau^*\sigma^*F)\xrightarrow{D_*}HH_*(\A;\sigma^*F)$$
où $\tau : \A\times\A^{op}\to\A^{op}\times\A$ désigne l'échange des facteurs (et où l'on a utilisé l'isomorphisme $\sigma\tau\simeq\sigma D$ déduit de $D^2\simeq Id$).
\item L'isomorphisme est compatible aux coproduits externes en ce sens que le diagramme
$$\xymatrix{{\rm Tor}^\A_*(\mathbb{Z}[TD^2],F\otimes G)\ar[d]\ar[r]^\simeq & HH_*(\A;\sigma^*(F\otimes G))\ar[dd]\\
{\rm Tor}^\A_*(\mathbb{Z}[TD^2]\otimes\mathbb{Z}[TD^2],F\otimes G)\ar[d] & \\
{\rm Tor}^\A_*(\mathbb{Z}[TD^2],F)\otimes {\rm Tor}^\A_*(\mathbb{Z}[TD^2],G)\ar[r]^-\simeq & HH_*(\A;\sigma^*F)\otimes HH_*(\A;\sigma^*G)
}$$
commute, où la flèche verticale en haut à gauche est induite par la diagonale et les deux autres flèches verticales sont les coproduits.
\end{enumerate}
\end{pr}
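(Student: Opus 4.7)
The plan is to derive all three parts from a single Eckmann-Shapiro identification along $\sigma:\A^{op}\times\A\to\A$. The starting point is the classical description of Hochschild homology as a Tor,
\[
HH_*(\A;G)\simeq {\rm Tor}^{\A^{op}\otimes\A}_*\bigl(\mathbb{Z}[\A(-,-)],\,G\bigr),
\]
applied to $G=\sigma^*F$. Combined with the adjunction between $\sigma^*$ and its derived left Kan extension $L\sigma_!$, this gives
\[
HH_*(\A;\sigma^*F)\simeq {\rm Tor}^\A_*\bigl(L\sigma_!\mathbb{Z}[\A(-,-)],\,F\bigr),
\]
so the first part reduces to showing $L\sigma_!\mathbb{Z}[\A(-,-)]\simeq\mathbb{Z}[TD^2]$ in homological degree zero.

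This identification is a coend computation. Using additivity $\mathbb{Z}[\A(A,DM\oplus N)]\simeq\mathbb{Z}[\A(A,DM)]\otimes\mathbb{Z}[\A(A,N)]$ and the self-adjointness isomorphism $\A(A,DM)\simeq\A(M,DA)$, the coend formula for the left Kan extension reads
\[
\sigma_!\mathbb{Z}[\A(-,-)](A)=\int^{M,N}\mathbb{Z}[\A(N,M)]\otimes\mathbb{Z}[\A(M,DA)]\otimes\mathbb{Z}[\A(A,N)].
\]
Two successive Yoneda reductions---first in $M$, producing $\mathbb{Z}[\A(N,DA)]$, then in $N$, producing $\mathbb{Z}[\A(A,DA)]$---identify this with $\mathbb{Z}[TD^2(A)]$. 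Each reduction is exact because coending with a representable is flat, so the higher derived $L_i\sigma_!\mathbb{Z}[\A(-,-)]$ vanish for $i>0$, establishing the first part.

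For the second part, the $\mathbb{Z}/2$-action on $TD^2$ is induced by the swap symmetry of the bifunctor $\A(-,D-):\A^{op}\times\A^{op}\to\mathbf{Ens}$ combined with the canonical identification $D^2\simeq Id$. Transporting this symmetry through the Eckmann-Shapiro isomorphism produces exactly the composite $HH_*(\A;\sigma^*F)\simeq HH_*(\A^{op};\tau^*\sigma^*F)\xrightarrow{D_*}HH_*(\A;\sigma^*F)$, using the identification $\sigma\tau\simeq\sigma D$. The third part follows from the monoidality of $\sigma$: since $\sigma$ sends block-direct-sums to direct sums, $\sigma_!$ intertwines external products, and the diagonal on $\mathbb{Z}[TD^2]$ matches the exterior tensor product of Hochschild coefficients. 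The main technical obstacle is making the vanishing of higher derived Kan extensions rigorous; this can be handled by resolving $\mathbb{Z}[\A(-,-)]$ via a bar complex of projective bifunctors (tensor products of representables in each variable), on which $\sigma_!$ is computed term-wise by Yoneda, and the compatibilities of parts~2 and~3 are then straightforward to verify at the chain level.
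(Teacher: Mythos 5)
Your proposal is correct and follows essentially the same route as the argument the paper invokes (by reference to \cite{DV}): an Eckmann--Shapiro identification along $\sigma$, reducing part~1 to the computation $\sigma_!\mathbb{Z}[\A^{op}]\simeq\mathbb{Z}[TD^2]$, with parts~2 and~3 obtained by tracking the swap symmetry and the external products through that identification. One small streamlining: since $\sigma^*P^\A_A(M,N)\simeq\mathbb{Z}[\A(M,DA)]\otimes\mathbb{Z}[\A(A,N)]\simeq P^{\A^{op}\times\A}_{(DA,A)}(M,N)$, the functor $\sigma$ admits the left adjoint $A\mapsto(DA,A)$, so $\sigma_!$ is exact on the nose (being a precomposition functor) and $\sigma_!(G)(A)=G(DA,A)$; this makes your two Yoneda reductions and the vanishing of $L_i\sigma_!$ immediate, and the bar-resolution fallback at the end unnecessary.
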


Ce résultat s'établit exactement de la même façon que dans le cas particulier des espaces vectoriels sur un corps fini, traité dans {\em op. cit.} ; plutôt que de l'illustrer par des calculs sur des foncteurs exponentiels comme dans \cite{DV}, on s'intéresse ici surtout aux puissances tensorielles.


\begin{cor}\label{cor-tens}
 Soit $A : \A\to\mathbf{Ab}$ un foncteur additif prenant des valeurs $\mathbb{Z}$-plates.
\begin{enumerate}
 \item On a
$${\rm Tor}^\A_*(\mathbb{Z}[TD^2],A^{\otimes 2n+1})=0$$
et
$${\rm Tor}^\A_*(\mathbb{Z}[TD^2],A^{\otimes 2n})\simeq {\rm Tor}^\A_*(A^\vee,A)^{\otimes n}\underset{\Sigma_n}{\otimes}\mathbb{Z}[\Sigma_{2n}]$$
où $(-)^\vee$ désigne la précomposition par $D$ ; le groupe symétrique $\Sigma_n=\mathfrak{S}(\mathbf{n})$ (où $\mathbf{i}=\{1,\dots,i\}$) est plongé dans $\Sigma_{2n}\simeq\mathfrak{S}(\mathbf{2}\times\mathbf{n})$ par $\sigma\mapsto id_\mathbf{2}\times\sigma$.
Cet isomorphisme est $\Sigma_{2n}$-équivariant et l'involution est induite à droite par $u\times id_\mathbf{n}$, où $u$ est l'élément non trivial de $\Sigma_2$.
\item Supposons que $2$ est inversible dans $\A$. Alors
$${\rm Tor}^\A_*(\mathbb{Z}[SD^2_\epsilon],A^{\otimes 2n+1})=0$$
et
$${\rm Tor}^\A_*(\mathbb{Z}[SD^2_\epsilon],A^{\otimes 2n})\simeq {\rm Tor}^\A_*(A^\vee,A)^{\otimes n}\underset{\Sigma_2^n\rtimes\Sigma_n}{\otimes}\mathbb{Z}[\Sigma_{2n}]$$
où $\Sigma_n$ opère comme précédemment et chaque facteur $\Sigma_2$ opère sur le facteur ${\rm Tor}^\A_*(A^\vee,A)$ correspondant par $\epsilon$ fois l'involution de ${\rm Tor}^\A_*(A^\vee,A)$ et à droite par $(\tau_1,\dots,\tau_n)\mapsto ((i,j)\mapsto (\tau_j(i),j))$.

Cet isomorphisme est $\Sigma_{2n}$-équivariant.
\end{enumerate}
\end{cor}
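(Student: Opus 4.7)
The plan is to apply Proposition~\ref{pr-invol}(1) to rewrite ${\rm Tor}^\A_*(\mathbb{Z}[TD^2],A^{\otimes d})$ as $HH_*(\A;\sigma^*A^{\otimes d})$, then exploit the additivity of $A$ to unfold the bifunctor $\sigma^*A^{\otimes d}(M,N)=(A^\vee(M)\oplus A(N))^{\otimes d}$ into a direct sum indexed by maps $\phi:\mathbf{d}\to\{0,1\}$. Each $\phi$-summand is, up to reordering tensor slots, the external product $(A^\vee)^{\otimes p(\phi)}\boxtimes A^{\otimes q(\phi)}$ with $p+q=d$, so
$$HH_*(\A;\sigma^*A^{\otimes d})\simeq\bigoplus_{\phi:\mathbf{d}\to\{0,1\}}{\rm Tor}^\A_*\bigl((A^\vee)^{\otimes p(\phi)},A^{\otimes q(\phi)}\bigr).$$

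The heart of the proof is the identification
$${\rm Tor}^\A_*\bigl((A^\vee)^{\otimes p},A^{\otimes q}\bigr)\simeq\begin{cases}0&\text{if }p\neq q,\\ \mathbb{Z}[\Sigma_n]\otimes{\rm Tor}^\A_*(A^\vee,A)^{\otimes n}&\text{if }p=q=n.\end{cases}$$
To establish it, I would move the computation to the product category $\A^p$ via a diagonal/sum base change, using the additive expansion $A^{\otimes q}(X_1\oplus\cdots\oplus X_p)=\bigoplus_{\psi:\mathbf{q}\to\mathbf{p}}\bigotimes_jA(X_j)^{\otimes|\psi^{-1}(j)|}$. A Künneth isomorphism over $\A^p$, legitimate because the values of $A$ (hence of $A^\vee$) are $\mathbb{Z}$-flat, reduces each $\psi$-summand to a tensor product $\bigotimes_i{\rm Tor}^\A_*(A^\vee,A^{\otimes k_i})$. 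The crucial vanishing ${\rm Tor}^\A_*(A^\vee,A^{\otimes k})=0$ for $k\neq 1$ --- combining the additivity (polynomial degree~$1$) of $A^\vee$, the strict polynomial degree~$k$ of $A^{\otimes k}$, the presence of a zero object in $\A$, and flatness --- then eliminates all non-bijective $\psi$, leaving exactly the $n!$ surviving bijections when $p=q=n$, each contributing ${\rm Tor}^\A_*(A^\vee,A)^{\otimes n}$.

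Reassembling: for $d$ odd no $\phi$ is balanced and the total is zero; for $d=2n$ the $\binom{2n}{n}$ balanced $\phi$'s, together with the $\Sigma_{2n}$-action permuting tensor slots and the internal $\Sigma_n$ from the pairings, organize into the claimed ${\rm Tor}^\A_*(A^\vee,A)^{\otimes n}\underset{\Sigma_n}{\otimes}\mathbb{Z}[\Sigma_{2n}]$ (the rank count $\binom{2n}{n}\cdot n!=(2n)!/n!$ matches $|\Sigma_{2n}/\Sigma_n|$). For the second statement, Proposition~\ref{pr-invol}(2) identifies the $\mathbb{Z}/2$-involution controlling the passage $TD^2\to SD^2_\epsilon$ with $\epsilon\cdot D_*$ on the Hochschild side; under the decomposition above it swaps the two sides of each matched contravariant--covariant pair with sign~$\epsilon$, and inverting~$2$ converts the relevant coinvariants into the quotient by $\Sigma_2^n\rtimes\Sigma_n$. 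The principal obstacle is the key vanishing ${\rm Tor}^\A_*(A^\vee,A^{\otimes k})=0$ for $k\neq 1$ together with the Künneth step, both of which rely essentially on the flat-values hypothesis on $A$; the combinatorial reassembly and the $\Sigma_{2n}$-equivariance tracking are routine but must be done carefully.
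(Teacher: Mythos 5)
For the first statement your route is essentially the paper's: Proposition~\ref{pr-invol} converts ${\rm Tor}^\A_*(\mathbb{Z}[TD^2],A^{\otimes d})$ into $HH_*(\A;\sigma^*A^{\otimes d})$, the binomial expansion of $(A^\vee(M)\oplus A(N))^{\otimes d}$ reduces everything to ${\rm Tor}^\A_*((A^\vee)^{\otimes p},A^{\otimes q})$ via the flatness hypothesis and Proposition~\ref{pr-bifo}, and the key vanishing ${\rm Tor}^\A_*(A^\vee,A^{\otimes k})=0$ for $k\neq 1$ (which does follow from the ingredients you name: additivity of $A^\vee$, the diagonal/sum adjunction, and the fact that the zero object is terminal in $\A$ so that $H_*(\A;G)=G(0)$) kills all unbalanced terms. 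Your ``all at once over $\A^p$'' packaging versus the paper's one-factor-at-a-time induction is a cosmetic difference. You do not really address the identification of the involution in part~1, but the decomposition itself is sound.

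The genuine gap is in part~2. You treat the passage from $TD^2$ to $SD^2_\epsilon$ as the extraction of an eigenspace/coinvariant piece under the $\mathbb{Z}/2$-involution of Proposition~\ref{pr-invol}(2). But when $2$ is invertible the splitting $TD^2\simeq SD^2_1\oplus SD^2_{-1}$ is turned by linearization into a \emph{tensor} decomposition $\mathbb{Z}[TD^2]\simeq\mathbb{Z}[SD^2_1]\otimes\mathbb{Z}[SD^2_{-1}]$, so that
$${\rm Tor}^\A_*(\mathbb{Z}[TD^2],A^{\otimes 2n})\simeq\bigoplus_{i+j=n}{\rm Tor}^\A_*(\mathbb{Z}[SD^2_1],A^{\otimes 2i})\otimes {\rm Tor}^\A_*(\mathbb{Z}[SD^2_{-1}],A^{\otimes 2j})\underset{\Sigma_{2i}\times\Sigma_{2j}}{\otimes}\mathbb{Z}[\Sigma_{2n}]$$
and ${\rm Tor}^\A_*(\mathbb{Z}[SD^2_\epsilon],A^{\otimes 2n})$ is the extreme summand of this $(n+1)$-term decomposition, \emph{not} an isotypic component of the single global involution. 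A rank count shows the latter reading cannot work for $n\geq 2$: the two eigenspaces of one involution on a $\mathbb{Z}[1/2]$-module exhaust it, hence would have total rank $(2n)!/n!$, whereas the two groups ${\rm Tor}^\A_*(\mathbb{Z}[SD^2_{\pm 1}],A^{\otimes 2n})$ only account for $2\cdot (2n)!/(2^n n!)$. To get the hyperoctahedral coinvariants you need a $\Sigma_2^n$-action, and the identification of the $SD^2_\epsilon$-summand with the $\Sigma_2^n\rtimes\Sigma_n$-coinvariants is precisely what remains to be proved; the paper does it by settling the case $n=1$ by direct inspection of the proof of Proposition~\ref{pr-bifo} and then propagating to all $n$ by a multiplicativity (Hopf-algebra convolution) argument on ${\rm Tor}^\A_*(\mathbb{Z}[TD^2],A^{\otimes 2\bullet})$. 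Your sketch supplies neither the base case nor the multiplicative step, so part~2 is not established as written.
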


\begin{proof}
 Tout repose sur la formule du binôme vue comme isomorphisme naturel $\Sigma_d$-équivariant
$$(U\oplus V)^{\otimes d}\simeq\bigoplus_{i+j=d}(U^{\otimes i}\otimes V^{\otimes j})\underset{\Sigma_i\times\Sigma_j}{\otimes}\mathbb{Z}[\Sigma_d],$$
où $U$ et $V$ sont des groupes abéliens.

On déduit d'abord de cette formule et de la proposition~\ref{pr-invol} un isomorphisme naturel $\Sigma_d$-équivariant
$${\rm Tor}^\A_*(\mathbb{Z}[TD^2],F)\simeq \bigoplus_{i+j=d}HH_*(\A;(A^\vee)^{\otimes i}\boxtimes A^{\otimes j})\underset{\Sigma_i\times\Sigma_j}{\otimes}\mathbb{Z}[\Sigma_d]$$
soit, en utilisant l'hypothèse de platitude,
$${\rm Tor}^\A_*(\mathbb{Z}[TD^2],F)\simeq \bigoplus_{i+j=d}{\rm Tor}_*^\A((A^\vee)^{\otimes i},A^{\otimes j})\underset{\Sigma_i\times\Sigma_j}{\otimes}\mathbb{Z}[\Sigma_d].$$

La formule du binôme procure également des isomorphismes naturels
$${\rm Tor}^\A_*(F\otimes G,A^{\otimes j})\simeq\bigoplus_{p+q=j}({\rm Tor}^\A_*(F,A^{\otimes p})\otimes {\rm Tor}^\A_*(G,A^{\otimes q}))\underset{\Sigma_p\times\Sigma_q}{\otimes}\mathbb{Z}[\Sigma_j]$$
d'où l'on déduit que ${\rm Tor}^\A_*((A^\vee)^{\otimes i},A^{\otimes j})$ est nul pour $i\neq j$ et isomorphe à ${\rm Tor}^\A_*(A^\vee,A)^{\otimes i}\otimes\mathbb{Z}[\Sigma_i]$ pour $i=j$. On tire des deux isomorphismes précédents la première assertion, sauf peut-être l'involution.

Tout le reste de la proposition s'en déduit en suivant explicitement ladite involution dans chaque isomorphisme, et en utilisant, lorsque $2$ est inversible dans $\A$, le scindage $TD^2\simeq SD^2_1\oplus SD^2_{-1}$ qui induit $\mathbb{Z}[TD^2]\simeq\mathbb{Z}[SD^2_1]\otimes\mathbb{Z}[SD^2_{-1}]$, d'où l'on tire par ce qui précède un isomorphisme naturel
$${\rm Tor}^\A_*(\mathbb{Z}[TD^2],A^{\otimes 2n})\simeq\bigoplus_{i+j=n}{\rm Tor}^\A_*(\mathbb{Z}[SD^2_1],A^{\otimes 2i})\otimes {\rm Tor}^\A_*(\mathbb{Z}[SD^2_{-1}],A^{\otimes 2j})\underset{\Sigma_{2i}\times\Sigma_{2j}}{\otimes}\mathbb{Z}[\Sigma_{2n}].$$

On commence par ${\rm Tor}^\A_*(\mathbb{Z}[TD^2],A^{\otimes 2})$ : l'identification de l'involution est directe en reprenant la démonstration de la proposition~\ref{pr-bifo}. On en déduit alors facilement, lorsque $2$ est inversible dans $\A$, que la décomposition
$${\rm Tor}^\A_*(\mathbb{Z}[TD^2],A^{\otimes 2})\simeq {\rm Tor}^\A_*(\mathbb{Z}[SD^2_1],A^{\otimes 2})\oplus {\rm Tor}^\A_*(\mathbb{Z}[SD^2_{-1}],A^{\otimes 2})$$
s'obtient comme annoncé.

Pour le cas général, on note que l'isomorphisme obtenu dans le premier point s'obtient à partir du cas précédent et de coproduits : le coproduit
$${\rm Tor}^\A_*(\mathbb{Z}[TD^2],A^{\otimes 2n})\simeq {\rm Tor}^\A_*(\mathbb{Z}[TD^2],(A^{\otimes 2})^{\otimes n})\to {\rm Tor}^\A_*(\mathbb{Z}[TD^2],A^{\otimes 2})^{\otimes n}$$
$$\simeq ({\rm Tor}^\A_*(A^\vee,A)\otimes\mathbb{Z}[\Sigma_2])^{\otimes n}\simeq {\rm Tor}^\A_*(A^\vee,A)^{\otimes n}\otimes\mathbb{Z}[\Sigma_2^n]$$
est $\Sigma_2^n\rtimes\Sigma_n$-équivariant ; notre isomorphisme est le morphisme $\Sigma_{2n}$-équivariant qui s'en déduit par adjonction entre restriction et (co)induction des $\Sigma_2^n\rtimes\Sigma_n$-modules aux $\Sigma_{2n}$-modules. Il est alors facile de conclure --- une manière de procéder consiste à exploiter les formules précédentes pour munir ${\rm Tor}^\A_*(\mathbb{Z}[TD^2],A^{\otimes 2\bullet})$ d'une structure d'algèbre de Hopf bigraduée : le cas particulier fondamental susmentionné permet d'identifier l'involution et les facteurs correspondant à $SD^2_1$ et $SD^2_{-1}$ sur les éléments primitifs, un argument de convolution permet d'en déduire une identification complète\,\footnote{Il s'agit d'un raisonnement tout à fait analogue à celui mené dans \cite{DV}, §\,4, pour des structures de Hopf similaires déduites de structures exponentielles sur les foncteurs.}.

\end{proof}


En combinant ce résultat à la proposition~\ref{pr-ksh1}, au théorème~\ref{th-dvg} et au corollaire~\ref{cor-chpz}, on obtient le calcul suivant :

\begin{cor}\label{cor-azp}
 Soient $A$ un anneau commutatif sans torsion (sur $\mathbb{Z}$) où $2$ est inversible et $n\in\mathbb{N}$. 
\begin{itemize}
 \item On a $$\underset{i\in\mathbb{N}}{\col}H_*(O_{i,i}(A);(A^{2i})^{\otimes 2n+1})=0.$$
\item On a $$\underset{i\in\mathbb{N}}{\col}H_*(O_{i,i}(A);(A^{2i})^{\otimes 2n})\simeq H_*\big(O_{\infty,\infty}(A);(A\otimes M_*)^{\otimes n}\big)^{\oplus\frac{(2n)!}{2^n.n!}}$$
où l'action de $O_{\infty,\infty}(A)$ sur le membre de droite est triviale et $M_*= {\rm Tor}_*^{\mathbf{P}(\mathbb{Z})}({\rm I}_\mathbb{Z}^\vee,{\rm I}_\mathbb{Z})$ est le groupe abélien gradué dont la description est rappelée au théorème~\ref{th-fp}.
\end{itemize}

Un résultat analogue vaut en remplaçant groupes orthogonaux hyperboliques par groupes symplectiques.
\end{cor}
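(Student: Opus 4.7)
The plan is to combine the main stable homology formula of this section with the explicit tensor-power computation of Corollary \ref{cor-tens}. Take $\A=\mathbf{P}(A)$ with its standard duality (trivial anti-involution on the commutative ring $A$), set $\epsilon=1$ for the orthogonal case and $\epsilon=-1$ for the symplectic case, and $T=SD^2_\epsilon$; this is the generic situation of Hypothesis~\ref{hyp-gener}, so $\mathbf{H}^T(\A)$ is the category of non-degenerate quadratic (resp.\ symplectic) spaces over $A$ and ${\rm U}^T_\infty(\A)=O_{\infty,\infty}(A)$ (resp.\ $Sp_\infty(A)$). Writing $\mathrm{I}:\mathbf{P}(A)\to\mathbf{Ab}$ for the forgetful functor, the coefficients $(A^{2i})^{\otimes d}$ are the values of the polynomial functor $\mathrm{I}^{\otimes d}$ of degree $d$, so Corollary~\ref{corf} yields a (non-natural) isomorphism
$$H^{st}_n\bigl(\mathbf{H}^T(\A);\mathrm{I}^{\otimes d}\bigr)\simeq\bigoplus_{p+q=n}H_p\bigl(O_{\infty,\infty}(A);{\rm Tor}^{\mathbf{P}(A)}_q(\mathbb{Z}[T],\mathrm{I}^{\otimes d})\bigr),$$
with trivial action of the unitary group on the right-hand side.

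Next I would apply Corollary~\ref{cor-tens}\,(2) to the additive functor $\mathrm{I}$, which is $\mathbb{Z}$-flat because $A$ is torsion-free over $\mathbb{Z}$; the hypothesis that $2$ is invertible is given. For $d=2n+1$ this corollary directly gives the vanishing ${\rm Tor}^{\mathbf{P}(A)}_*(\mathbb{Z}[SD^2_\epsilon],\mathrm{I}^{\otimes 2n+1})=0$, so the display above is zero and the odd case is settled. For $d=2n$ we obtain a $\Sigma_{2n}$-equivariant isomorphism
$${\rm Tor}^{\mathbf{P}(A)}_*\bigl(\mathbb{Z}[SD^2_\epsilon],\mathrm{I}^{\otimes 2n}\bigr)\simeq V^{\otimes n}\underset{\Sigma_2^n\rtimes\Sigma_n}{\otimes}\mathbb{Z}[\Sigma_{2n}],\qquad V:={\rm Tor}^{\mathbf{P}(A)}_*(\mathrm{I}^\vee,\mathrm{I}),$$
each factor $\Sigma_2$ acting by $\epsilon$ times the natural involution of $V$. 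I would then identify $V$ with $A\otimes M_*$ by invoking Corollary~\ref{cor-chpz} of the appendix, which turns this Tor group into a base-changed Mac Lane homology of $\mathbb{Z}$; the base-change is again justified by the torsion-freeness of $A$.

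The last step is a formal decomposition. Since $\mathbb{Z}[\Sigma_{2n}]$ is \emph{free} (not merely projective) as a left $\mathbb{Z}[\Sigma_2^n\rtimes\Sigma_n]$-module, any choice of coset representatives provides an isomorphism of abelian groups
$$V^{\otimes n}\underset{\Sigma_2^n\rtimes\Sigma_n}{\otimes}\mathbb{Z}[\Sigma_{2n}]\simeq\bigoplus_{\Sigma_{2n}/(\Sigma_2^n\rtimes\Sigma_n)}V^{\otimes n}$$
of rank $[\Sigma_{2n}:\Sigma_2^n\rtimes\Sigma_n]=\tfrac{(2n)!}{2^n\,n!}$, and all the $\epsilon$-twisted $\Sigma_2$-involutions are absorbed into the freeness. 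Since $O_{\infty,\infty}(A)$ acts trivially on this coefficient module, the induced $\Sigma_{2n}$-action is immaterial to the computation of $H_p\bigl(O_{\infty,\infty}(A);-\bigr)$, and we obtain the asserted $\tfrac{(2n)!}{2^n n!}$ copies of $H_*(O_{\infty,\infty}(A);(A\otimes M_*)^{\otimes n})$. The symplectic case is identical with $\epsilon=-1$. I expect the only really delicate point to be keeping track of the $\Sigma_2$-actions coming from the Tor involution — but, as just noted, the freeness of the group algebra over its subalgebra makes the coset decomposition go through regardless of the sign of $\epsilon$ and of the precise involution on $A\otimes M_*$.
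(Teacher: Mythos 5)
Your proposal is correct and follows exactly the route the paper intends: Corollaire~\ref{corf} (i.e.\ la proposition~\ref{pr-ksh1} combinée au théorème~\ref{th-dvg}) pour ramener le calcul aux groupes ${\rm Tor}^{\mathbf{P}(A)}_*(\mathbb{Z}[SD^2_\epsilon],\mathrm{I}^{\otimes d})$, puis le corollaire~\ref{cor-tens} et le corollaire~\ref{cor-chpz}, la platitude sur $\mathbb{Z}$ étant bien le point qui légitime ces deux dernières étapes. Votre observation finale --- que $\mathbb{Z}[\Sigma_{2n}]$ est libre de rang $\frac{(2n)!}{2^n n!}$ sur $\mathbb{Z}[\Sigma_2^n\rtimes\Sigma_n]$, de sorte que les involutions tordues par $\epsilon$ n'affectent pas la décomposition en somme directe de groupes abéliens --- est précisément ce qui rend le décompte des copies indépendant du signe et traite d'un coup les cas orthogonal et symplectique.
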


\begin{cor}\label{corcor}
 Soit $k$ un corps de caractéristique nulle muni d'une anti-involution. Pour tout foncteur polynomial $F\in\F(\mathbb{Q})$ nul en zéro, le morphisme canonique
$$\underset{i\in\mathbb{N}}{\col}H_0(U_{i,i}(k);\tilde{F}(k^{2i}))\underset{\mathbb{Q}}{\otimes} H_*(U_{\infty,\infty}(k);\mathbb{Q})\to\underset{i\in\mathbb{N}}{\col}H_*(U_{i,i}(k);\tilde{F}(k^{2i}))$$
est un isomorphisme, où $\tilde{F}$ est le prolongement de $F$ aux $\mathbb{Q}$-espaces vectoriels commutant aux colimites filtrantes.
\end{cor}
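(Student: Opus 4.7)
The plan is to specialize Corollary~\ref{corf} to $\A=\mathbf{P}(k)$, $T=SD^2_1$ (whence $\mathbf{U}^T_\infty(\A)=U_{\infty,\infty}(k)$) and $F=\tilde F$, pulled back along the evident forgetful/extension functor, so that the non-canonical decomposition of that corollary reads
$$\underset{i}{\col}H_n\bigl(U_{i,i}(k);\tilde F(k^{2i})\bigr)\simeq\bigoplus_{p+q=n}H_p\bigl(U_{\infty,\infty}(k);\mathrm{Tor}^{\mathbf{P}(k)}_q(\mathbb{Z}[T],\tilde F)\bigr),$$
with trivial action on the coefficients. I claim that it suffices to prove the vanishing $\mathrm{Tor}^{\mathbf{P}(k)}_q(\mathbb{Z}[T],\tilde F)=0$ for every $q\geq 1$. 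Indeed, under this vanishing only the summand $q=0$ contributes; writing $N:=\mathbb{Z}[T]\underset{\mathbf{P}(k)}{\otimes}\tilde F=\mathrm{Tor}^{\mathbf{P}(k)}_0(\mathbb{Z}[T],\tilde F)$, which is a $\mathbb{Q}$-vector space (because $\tilde F$ takes values in $\mathbb{Q}$-vector spaces), the universal coefficient theorem with trivial coefficients yields $H_n(U_{\infty,\infty}(k);N)\simeq N\underset{\mathbb{Q}}{\otimes}H_n(U_{\infty,\infty}(k);\mathbb{Q})$. Applying the same isomorphism in degree $n=0$ identifies $N$ with $\underset{i}{\col}H_0(U_{i,i}(k);\tilde F(k^{2i}))$, producing exactly the map and the target of the statement.

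The next step is to reduce the Tor vanishing to a single calculation. Because $F$ is polynomial on $\F(\mathbb{Q})$ and vanishes at $0$, standard Schur--Weyl theory over a field of characteristic $0$ presents $F$ as a direct summand of a finite sum of tensor-power functors $V\mapsto V^{\otimes d}$. Extension along the forgetful functor to $\mathbf{P}(k)$ commutes with direct sums, and writes these generators as $A^{\otimes d}$, where $A:\mathbf{P}(k)\to\mathbf{Ab}$ is the additive functor sending $V$ to its underlying abelian (equivalently $\mathbb{Q}$-vector) group; in particular, $A$ takes $\mathbb{Z}$-flat values. Since $2$ is invertible in $k$, Corollary~\ref{cor-tens}(2) applies and expresses the groups $\mathrm{Tor}^{\mathbf{P}(k)}_*(\mathbb{Z}[T],A^{\otimes d})$ entirely in terms of tensor powers of the single bigraded group
$$E_*\;:=\;\mathrm{Tor}^{\mathbf{P}(k)}_*(A^\vee,A).$$
It thus suffices to establish $E_q=0$ for all $q\geq 1$.

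This residual vanishing is the core technical point and the expected main obstacle. The group $E_*$ is precisely the Mac Lane (equivalently, stable $K$-theoretic) homology of $k$ with coefficients in $k$: this identification follows from Proposition~\ref{pr-invol} and its incarnation in terms of Hochschild homology of the category $\A$. Over a $\mathbb{Q}$-algebra, Mac Lane homology coincides with ordinary Hochschild homology (Pirashvili--Waldhausen); and for a field $k$ of characteristic zero, which is a filtered colimit of finite separable extensions of $\mathbb{Q}$, one has $HH_q(k;k)=0$ for $q\geq 1$ by the Hochschild--Kostant--Rosenberg theorem and the exactness of filtered colimits. Alternatively, one can invoke Corollary~\ref{cor-chpz} of the appendix, which packages exactly this computation. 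Feeding the vanishing $E_{\geq 1}=0$ back through the reductions of the first two paragraphs finishes the proof.
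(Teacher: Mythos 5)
Your overall architecture is the same as the paper's: specialize Corollaire~\ref{corf} (i.e.\ Proposition~\ref{pr-ksh1} plus Théorème~\ref{th-dvg}), split $\tilde F$ into direct summands of tensor powers over $\mathbb{Q}$, feed these into the Corollaire~\ref{cor-tens}, and reduce everything to showing that $E_*={\rm Tor}^{\mathbf{P}(k)}_*({\rm I}_k^\vee,{\rm I}_k)$ is concentrated in degree $0$; the universal-coefficient bookkeeping at the start is fine. The problem is your \emph{primary} justification of the key vanishing $E_{\geq 1}=0$, which is wrong on two counts. First, $E_*$ is \emph{not} the Mac~Lane homology of $k$ with coefficients in $k$: the bifunctor underlying ${\rm Tor}^{\mathbf{P}(k)}_*({\rm I}_k^\vee,{\rm I}_k)$ is $(M,N)\mapsto DM\otimes_{\mathbb{Z}}N$, an external tensor product over $\mathbb{Z}$, not $(M,N)\mapsto{\rm Hom}_k(M,N)=DM\otimes_k N$; what you get is $k\otimes HML_*(\mathbb{Z})$ (Corollaire~\ref{cor-chpz}), not $HML_*(k)\simeq HH_*(k;k)$. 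Second, even if one were computing $HH_*(k;k)$, your assertion that a field of characteristic zero is a filtered colimit of finite extensions of $\mathbb{Q}$ is false for transcendental $k$, and then $HH_q(k;k)\simeq\Omega^q_{k/\mathbb{Q}}$ does \emph{not} vanish for $q\geq 1$ --- the paper itself exploits exactly this non-vanishing in Corollaire~\ref{cor-repa}, where $\Omega^{2q+1}_k$ appears in the answer. So the HKR route, taken literally, would only cover algebraic extensions of $\mathbb{Q}$ and, worse, rests on a misidentification of $E_*$.

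Your parenthetical fallback is, however, the correct argument and is precisely what the paper does: Corollaire~\ref{cor-chpz} gives ${\rm Tor}^{\mathbf{P}(k)}_*({\rm I}_k^\vee,{\rm I}_k)\simeq k\otimes{\rm Tor}^{\mathbf{P}(\mathbb{Z})}_*({\rm I}_\mathbb{Z}^\vee,{\rm I}_\mathbb{Z})$, and one must then invoke Théorème~\ref{th-fp} (Franjou--Pirashvili): the right-hand factor is $\mathbb{Z}$ in degree $0$ and finite cyclic in positive (odd) degrees, so tensoring with the characteristic-zero field $k$ kills everything above degree $0$. Note that \ref{cor-chpz} alone does not ``package exactly this computation''; it is only the flat base-change step, and without \ref{th-fp} you have proved nothing. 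With that substitution your proof coincides with the paper's; I would simply delete the Mac~Lane-homology-of-$k$/HKR paragraph, which is not a correct alternative.
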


\begin{proof}
 Il est bien connu (cf. par exemple \cite{MacD}, chap.~I, app.~A) qu'un foncteur polynomial sur les $\mathbb{Q}$-espaces vectoriels de dimension fini admet une décomposition en somme directe de foncteurs homogènes de degré $d$ ($d$ parcourant $\mathbb{N}$), et qu'un foncteur homogène de degré $d$ est isomorphe à un foncteur du type $V\mapsto M\underset{\Sigma_d}{\otimes}T^d(V)$, c'est donc un facteur direct de $T^d$ ; il en est donc de même pour le prolongement $\tilde{F}$ de $F$ à tous les $\mathbb{Q}$-espaces vectoriels. Le corollaire~\ref{cor-tens} (dans lequel on prend pour $A$ le foncteur d'inclusion $\mathbf{P}(k)\to\mathbf{Ab}$), combiné au théorème~\ref{th-fp} et au corollaire~\ref{cor-chpz}, montre donc que ${\rm Tor}^{\mathbf{P}(k)}_*(\mathbb{Z}[SD^2_\epsilon],F)$ est concentré en degré nul. La proposition~\ref{pr-ksh1} et le théorème~\ref{th-dvg} donnent alors le résultat annoncé (l'hypothèse $F(0)=0$ assure que tous les groupes abéliens qu'on rencontre sont des $\mathbb{Q}$-espaces vectoriels).
\end{proof}

\begin{rem}
 Si $k$ est une extension finie (ou algébrique) de $\mathbb{Q}$, on peut remplacer $\tilde{F}$ par un objet quelconque de $\F(k)$ nul en $0$, en utilisant la même démonstration et le fait que les $\mathbb{Q}$-algèbres $T^n_\mathbb{Q}(k)$ sont semi-simples (un foncteur polynomial de degré $n$ de $\F(k)$ est isomorphe à un foncteur du type  $V\mapsto M\underset{\Sigma_n}{\otimes}T^n_\mathbb{Q}(V)$ où $M$ est un $T^n_\mathbb{Q}(k)[\Sigma_n]$-module, $T^n_\mathbb{Q}(k)[\Sigma_n]$ désignant l'algèbre {\em tordue} du groupe symétrique sur la $\Sigma_n$-algèbre $T^n_\mathbb{Q}(k)$, algèbre tordue qui est elle-même semi-simple).
\end{rem}

Dans le même ordre d'idées, on dispose du résultat suivant :

\begin{cor}\label{cor-repa}
 Soient $k$ un corps commutatif de caractéristique nulle ; pour $i\in\mathbb{N}$, notons $\mathfrak{o}_{i,i}(k)$ la représentation adjointe de $O_{i,i}(k)$. Alors il existe des isomorphismes de $k$-espaces vectoriels
$$\underset{i\in\mathbb{N}}{\col}H_n(O_{i,i}(k);\mathfrak{o}_{i,i}(k))\simeq\bigoplus_{p+2q+1=n}H_p(O_{\infty,\infty}(k);\mathbb{Q})\underset{\mathbb{Q}}{\otimes}\Omega^{2q+1}_k$$
où $\Omega^*_k$ désigne la $k$-algèbre graduée $\Lambda^*_k(\Omega^1_k)$ des différentielles de Kähler de $k$ (vu comme $\mathbb{Q}$-algèbre). En particulier, l'espace vectoriel gradué $\underset{i\in\mathbb{N}}{\col}H_*(O_{i,i}(k);\mathfrak{o}_{i,i}(k))$ est nul si $k$ est une extension algébrique de $\mathbb{Q}$.
\end{cor}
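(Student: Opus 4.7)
Le plan consiste à reconnaître la représentation adjointe comme un foncteur polynomial explicite, puis à mobiliser successivement les résultats~\ref{pr-ksh1}, \ref{th-dvg} et~\ref{cor-tens} ainsi que la description de l'homologie de Mac Lane par Hochschild-Kostant-Rosenberg.

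Tout d'abord, pour la forme hyperbolique standard $J$ sur $V=k^{2i}$, la multiplication $X\mapsto JX$ identifie $O_{i,i}(k)$-équivariantement $\mathfrak{o}_{i,i}(k)$ à l'espace des matrices antisymétriques, c'est-à-dire à $\Lambda^2_k(k^{2i})$. La colimite étudiée s'identifie donc à $H^{st}_*(\mathbf{H}^T(\mathbf{P}(k));\Lambda^2)$ pour $T=SD^2_1$. La proposition~\ref{pr-ksh1} combinée au théorème~\ref{th-dvg} la réécrit sous la forme
$$\bigoplus_{p+q=n} H_p\bigl(O_{\infty,\infty}(k);{\rm Tor}^{\mathbf{P}(k)}_q(\mathbb{Z}[SD^2_1],\Lambda^2)\bigr)$$
avec action triviale de $O_{\infty,\infty}(k)$.

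J'utiliserai ensuite que $A^{\otimes 2}\simeq S^2(A)\oplus\Lambda^2(A)$ en caractéristique nulle comme $\Sigma_2$-représentations, où $A$ désigne le foncteur d'inclusion $\mathbf{P}(k)\to\mathbf{Ab}$. Le corollaire~\ref{cor-tens} (pour $n=1$, $\epsilon=1$) identifie ${\rm Tor}^{\mathbf{P}(k)}_*(\mathbb{Z}[SD^2_1],A^{\otimes 2})$ à ${\rm Tor}^{\mathbf{P}(k)}_*(A^\vee,A)$ muni de l'involution canonique ; la partie $\Lambda^2$ en sera donc la composante antiinvariante. Par Pirashvili-Waldhausen (comme utilisé au corollaire~\ref{cor-azp}), ${\rm Tor}^{\mathbf{P}(k)}_*(A^\vee,A)$ n'est autre que l'homologie de Mac Lane $M_*(k)$, qui, pour $k$ de caractéristique nulle, se réduit à $HH_*^\mathbb{Q}(k)\simeq\Omega^*_{k/\mathbb{Q}}$ via Hochschild-Kostant-Rosenberg.

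Le point le plus délicat sera d'identifier précisément l'involution induite par la dualité $D$ sous ces différents isomorphismes : je m'attends à ce qu'elle corresponde au renversement canonique du complexe de Hochschild, lequel agit sur $\Omega^n_{k/\mathbb{Q}}$ par $(-1)^n$. Modulo cette vérification, la composante antiinvariante se réduit à $\Omega^n_{k/\mathbb{Q}}$ pour $n$ impair et est nulle sinon. En substituant $q=2q'+1$ dans la décomposition précédente et en tenant compte de la trivialité de l'action de $O_{\infty,\infty}(k)$ sur un $\mathbb{Q}$-espace vectoriel, on obtient la formule annoncée. Enfin, pour $k/\mathbb{Q}$ algébrique (donc séparable puisque $k$ est de caractéristique nulle), $\Omega^1_{k/\mathbb{Q}}=0$, d'où $\Omega^{2q+1}_{k/\mathbb{Q}}=0$ pour tout $q\geq 0$ et l'annulation voulue.
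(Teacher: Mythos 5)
Votre mise en place est celle de l'article : l'identification $O_{i,i}(k)$-équivariante $\mathfrak{o}_{i,i}(k)\simeq\Lambda^2_k(k^{2i})$, puis la proposition~\ref{pr-ksh1} et le théorème~\ref{th-dvg}, ramènent bien l'énoncé au calcul de ${\rm Tor}^{\mathbf{P}(k)}_*(\mathbb{Z}[SD^2_1],\Lambda^2_k)$ avec action triviale de $O_{\infty,\infty}(k)$. C'est ce calcul qui présente une lacune réelle, due à deux confusions qui se compensent pour donner fortuitement la bonne réponse. Premièrement, le foncteur coefficient est la puissance extérieure \emph{$k$-linéaire} $\Lambda^2_k$, tandis que la décomposition $A^{\otimes 2}\simeq S^2(A)\oplus\Lambda^2(A)$ à laquelle s'applique le corollaire~\ref{cor-tens} (avec $A={\rm I}_k$ à valeurs dans $\mathbf{Ab}$ et les produits tensoriels pris sur $\mathbb{Z}$) ne fait apparaître que $\Lambda^2_{\mathbb{Q}}$ ; ces deux foncteurs diffèrent dès que $[k:\mathbb{Q}]>1$. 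Deuxièmement, ${\rm Tor}^{\mathbf{P}(k)}_*({\rm I}_k^\vee,{\rm I}_k)$ n'est ni l'homologie de Mac Lane de $k$ ni $\Omega^*_{k/\mathbb{Q}}$ : par le corollaire~\ref{cor-chpz} et le théorème~\ref{th-fp}, c'est $k\otimes M_*$, donc $k$ concentré en degré nul puisque $k$ est de caractéristique nulle. Menée sans ces deux glissements, votre stratégie calcule $\underset{i\in\mathbb{N}}{\col}H_*\big(O_{i,i}(k);\Lambda^2_{\mathbb{Q}}(k^{2i})\big)$, qui est nul (partie anti-invariante d'un $k$ placé en degré zéro), et non l'homologie à coefficients dans la représentation adjointe.

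L'ingrédient manquant est précisément celui qui fait apparaître les différentielles de Kähler : la concentration en degré nul de ${\rm Tor}^{\mathbf{P}(k)}_*(\mathbb{Z}[T^2_k]^\vee,T^2_{\mathbb{Z}})\simeq k[\Sigma_2]$ sert d'argument de changement de base pour obtenir, pour tout $k$-bimodule $M$, un isomorphisme gradué ${\rm Tor}^{\mathbf{P}(k)}_*(\mathbb{Z}[T^2_k]^\vee,M\underset{k\otimes k}{\otimes}T^2_{\mathbb{Z}})\simeq HH_*(k;M)[\Sigma_2]$. C'est la spécialisation $M=k$ --- c'est-à-dire le passage à la puissance tensorielle $k$-linéaire $T^2_k$ --- qui, via Hochschild-Kostant-Rosenberg, donne $\Omega^*_k[\Sigma_2]$, puis $\Omega^*_k$ pour $\Lambda^2_k$, l'involution canonique agissant par $(-1)^{n+1}$ en degré $n$. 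La fin de votre argument (extraction de la composante isotypique convenable pour l'involution et annulation de $\Omega^1_k$ lorsque $k$ est algébrique sur $\mathbb{Q}$) est en revanche correcte une fois ce calcul rétabli.
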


\begin{proof}
 Comme $\mathfrak{o}_{n,n}(k)$ est isomorphe à $\Lambda^2_k(k^{2n})$ comme $O_{n,n}(k)$-module, il suffit de démontrer que ${\rm Tor}^{\mathbf{P}(k)}_*(\mathbb{Z}[S^2_k]^\vee,\Lambda^2_k)$ est isomorphe à la partie impaire de $\Omega^*_k$.

L'isomorphisme ${\rm Tor}^{\mathbf{P}(k)}_*(\mathbb{Z}[T^2_k]^\vee,T^2_\mathbb{Z})\simeq k[\Sigma_2]$ (concentré en degré nul) implique que, pour tout $k$-bimodule $M$, on dispose d'un isomorphisme gradué
$${\rm Tor}^{\mathbf{P}(k)}_*(\mathbb{Z}[T^2_k]^\vee,M\underset{k\otimes k}{\otimes}T^2_\mathbb{Z})\simeq HH_*(k;M)[\Sigma_2]$$
d'où l'on déduit pour $M=k$, par le théorème de Hochschild-Kostant-Rosenberg, un isomorphisme
$${\rm Tor}^{\mathbf{P}(k)}_*(\mathbb{Z}[T^2_k]^\vee,T_k^2)\simeq HH_*(k;k)[\Sigma_2]\simeq\Omega^*_k[\Sigma_2]$$
puis
$${\rm Tor}^{\mathbf{P}(k)}_*(\mathbb{Z}[T^2_k]^\vee,\Lambda_k^2)\simeq\Omega^*_k$$
où l'action de l'involution canonique est donnée par $(-1)^{n+1}$ en degré $n$. On conclut en observant (cf. démonstration du corollaire~\ref{cor-tens}) qu'on dispose d'une décomposition
$${\rm Tor}^{\mathbf{P}(k)}_*(\mathbb{Z}[T^2_k]^\vee,\Lambda_k^2)\simeq {\rm Tor}^{\mathbf{P}(k)}_*(\mathbb{Z}[S^2_k]^\vee,\Lambda_k^2)\oplus {\rm Tor}^{\mathbf{P}(k)}_*(\mathbb{Z}[\Lambda^2_k]^\vee,\Lambda_k^2)$$
dans laquelle le premier (resp. second) facteur s'identifie à la partie paire (resp. impaire) du membre de gauche pour l'action de l'involution canonique.
\end{proof}

\begin{rem}
 Une variante de cette démonstration consiste à raisonner dans la catégorie des foncteurs de  $\mathbf{P}(k)$ {\em vers les $k$-espaces vectoriels} et de noter que la trivialité de ${\rm Tor}^{\mathbf{P}(\mathbb{Q})}_*({\rm I}_\mathbb{Q}^\vee,{\rm I}_\mathbb{Q})$ implique formellement que ${\rm Tor}_*(\tilde{{\rm I}}_k^\vee,\tilde{{\rm I}}_k)$ (où $\tilde{{\rm I}}_k$ désigne le foncteur d'inclusion de $\mathbf{P}(k)$ dans les $k$-espaces vectoriels) est isomorphe à $HH_*(k;k)$.
\end{rem}

\section{Homologie stable des groupes unitaires dans quelques cas non génériques}\label{sng}

Comme dans la section précédente, on se donne une petite catégorie additive avec dualité $(\A,D)$ ; on fixe $\epsilon\in\{1,-1\}$ et un foncteur $S\in\{SD^2_\epsilon,\bar{S} D^2_\epsilon\}$. On appellera ici forme quadratique sur un objet $V$ de $\A$ tout élément de $S(V)$. Supposons que $\mathfrak{Q}$ est une classe d'espaces quadratiques {\em non dégénérés}, stable par somme orthogonale ; on note $T_\mathfrak{Q}$, ou simplement $T$, le sous-foncteur en monoïdes de  $S$ tel que $T(V)$ soit l'ensemble des formes $x$ sur $V$ telles qu'existe un morphisme de $(V,x)$ vers un élément de $\mathfrak{Q}$. Autrement dit, $T$ est le sous-foncteur en monoïdes engendré par $\mathfrak{Q}$.

Rappelons une formulation de la proposition~\ref{pr-ksh1} :
\begin{pr}\label{pr-is6} Il existe un isomorphisme de groupes abéliens gradués
$$\begin{array}{ccc}\underset{E\in\mathfrak{Q}}{\col}H_*({\rm Aut}\,(E);F(E)) & \simeq & \underset{E\in\mathfrak{Q}}{\col} H_*(\mathbf{H}^{T_\mathfrak{Q}}(\A)\times{\rm Aut}\,(E);F)\\
   & \simeq  & H_*(\mathbf{H}^{T_\mathfrak{Q}}(\A)\times U^\mathfrak{Q}_\infty(\A);F)
  \end{array}
$$
naturel en $F\in {\rm Ob}\,\mathbf{H}^{T_\mathfrak{Q}}(\A)-\mathbf{Mod}$.
\end{pr}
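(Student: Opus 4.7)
The plan is to recognize this as a direct repackaging of Proposition~\ref{pr-ksh1}, obtained by taking $\mathfrak{Q}$ itself as the tranche. The only substantive verification is that the full framework of Section~\ref{sec1} applies to $\mathbf{H}^{T_\mathfrak{Q}}(\A)$ and that $\mathfrak{Q}$ admits the structure of a tranche in the sense of that section.

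First, I would check that $T_\mathfrak{Q}$ satisfies Hypothesis~\ref{hyp-soutruc2}. By construction $T_\mathfrak{Q}$ is a sous-foncteur en monoïdes of $S \in \{SD^2_\epsilon, \bar{S}D^2_\epsilon\}$, and $S$ satisfies Hypothesis~\ref{hyp-soutruc} (with the canonical morphism $S \to \Gamma D^2_\epsilon$, whose kernel is additive by the discussion preceding that hypothesis). The values of $T_\mathfrak{Q}$ are submonoids of the abelian groups $S(V)$, hence regular, so by the parenthetical remark following~\ref{hyp-soutruc2} the latter hypothesis holds. Proposition~\ref{prhs1e} then gives that $\mathbf{H}^{T_\mathfrak{Q}}(\A)$ is a monoidal category satisfying the standing hypotheses of Section~\ref{sec1} (existence of the functor ${\rm Aut}_\C$ with equivariance, stable transitivity, and (H forte)).

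Second, I would exhibit $\mathfrak{Q}$ (up to passing to a skeleton) as a tranche of $\mathbf{H}^{T_\mathfrak{Q}}(\A)$, with the partial order given by the existence of canonical morphisms $E \to E \overset{T}{\oplus} E'$. The first tranche axiom holds because the very definition of $T_\mathfrak{Q}$ ensures that any object $(V,x) \in \mathbf{H}^{T_\mathfrak{Q}}(\A)$ admits a morphism to some $E \in \mathfrak{Q}$; since $(V,x)$ is non-degenerate, this morphism lives in $\mathbf{H}^{T_\mathfrak{Q}}(\A)$. The cofinality condition holds because $\mathfrak{Q}$ is stable under orthogonal sum: given $E_1, E_2 \in \mathfrak{Q}$, the object $E_1 \overset{T}{\oplus} E_2$ belongs to $\mathfrak{Q}$ and receives canonical arrows from both.

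Third, applying Proposition~\ref{pr-dvks} to $\C = \mathbf{H}^{T_\mathfrak{Q}}(\A)$ with this choice of tranche directly yields
$$H^{st}_*(\mathbf{H}^{T_\mathfrak{Q}}(\A); F) \simeq H_*\bigl(\mathbf{H}^{T_\mathfrak{Q}}(\A) \times U^\mathfrak{Q}_\infty(\A); F\bigr),$$
where by definition $U^\mathfrak{Q}_\infty(\A) = \col_{E \in \mathfrak{Q}} {\rm Aut}(E)$. The left-hand side unfolds as $\col_{E \in \mathfrak{Q}} H_*({\rm Aut}(E); F(E))$, and the middle colimit in the statement is just the rewriting of the right-hand side using observation~3 after the definition of tranche in Section~\ref{sec1} (group homology with trivial-on-target action commutes with filtered colimits over the tranche). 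Thus there is no real obstacle: everything is formal from the existing framework, and the only thing worth paying attention to is the routine but necessary check that $T_\mathfrak{Q}$, defined as a submonoid-functor generated by $\mathfrak{Q}$, inherits Hypothesis~\ref{hyp-soutruc2} from $S$.
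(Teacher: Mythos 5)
Your proof is correct and takes essentially the same route as the paper, which presents this proposition as a direct reformulation of Proposition~\ref{pr-ksh1} (hence of Proposition~\ref{pr-dvks}), justified only by the remark that $\mathfrak{Q}$ is cofinal in $\mathbf{H}^{T_\mathfrak{Q}}(\A)$. Your explicit checks --- that $T_\mathfrak{Q}$, being a sub-functor-in-monoids of $S$, satisfies Hypothesis~\ref{hyp-soutruc2}, and that $\mathfrak{Q}$ furnishes a tranche --- are exactly the content left implicit in the paper's one-line justification.
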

La colimite est prise conformément à la section~\ref{sec1} et l'on a noté $U^\mathfrak{Q}_\infty(\A)$ pour $U^{T_\mathfrak{Q}}_\infty(\A)$ --- notation introduite dans la proposition~\ref{pr-ksh1}. On utilise par ailleurs que la classe $\mathfrak{Q}$ est cofinale dans $\mathbf{H}^{T_\mathfrak{Q}}(\A)$, de sorte que le membre de gauche s'identifie à l'homologie stable notée $H^{st}(\mathbf{H}^{T_\mathfrak{Q}}(\A);F)$ en début d'article.

\smallskip

Le lemme élémentaire suivant (qui procède des mêmes arguments classiques sur les espaces hyperboliques que ceux manipulés au §\,\ref{par-dvg}) montre que la symétrisation du foncteur en monoïdes $T$ est forcément $S$ tout entier.

\begin{lm}\label{sfct-grab} 
 Tout sous-foncteur $F: \A^{op}\to\mathbf{Ab}$ de $SD^2_\epsilon$ (resp. $\bar{S}D^2_\epsilon$) engendré par des formes non dégénérées est égal à $SD^2_\epsilon$ (resp. $\bar{S}D^2_\epsilon$).
\end{lm}

\begin{proof}
 On traite d'abord le cas de $\bar{S}D^2_\epsilon$. Le résultat découle alors essentiellement du lemme~\ref{lm-scf} ; voici un argument direct : il suffit de montrer que si $x=a+\epsilon\bar{a}\in\A(V,DV)$ est une forme non dégénérée sur $V$ et $y=r+\epsilon\bar{r}$ une forme quelconque, alors $(V,y)$ possède un morphisme vers $(V,x)\overset{\perp}{\oplus} (V,-x)$ (la forme $x\oplus (-x)$ sur $V\oplus V$ appartient à $F(V\oplus V)$ si $x\in F(V)$, puisque $F$ est un foncteur vers les groupes abéliens). Posons $g=x^{-1}(r-a)$ et $f=g+1_V : V\to V$ : on vérifie que le morphisme $V\to V\oplus V$ de $\A$ de composantes $f$ et $g$ définit un morphisme hermitien $(V,y)\to (V,x)\overset{\perp}{\oplus} (V,-x)$. Cela termine la démonstration dans le cas de $\bar{S}^2_\epsilon$.

Déduisons-en maintenant le cas de $SD^2_\epsilon$. Il s'agit de montrer qu'un sous-foncteur $F$ dont la projection dans $\bar{S}D^2_\epsilon$ est $\bar{S}D^2_\epsilon$ entier égale $SD^2_\epsilon$. Soit $V$ un objet de $\A$. L'hypothèse implique l'existence d'un morphisme
$$\Phi=\left(\begin{array}{cc} f & a\\                                                                                                                                                                                                                                                                                   b & g                                                                                                                                                                                                                                                                                  \end{array}\right) : V\oplus DV\to DV\oplus V(\simeq D(V\oplus DV))$$
tel que $\Phi+\epsilon\bar{\Phi}=\left(\begin{array}{cc} 0 & 1\\                                                                                                                                                                                                                                                                                   \epsilon & 0                                                                                                                                                                                                                                                                                  \end{array}\right)$, i.e. $\bar{f}+\epsilon f=
0$, $\bar{g}+\epsilon g=
0$ et $a+\epsilon\bar{b}=1$. Le sous-groupe abélien $F(V)$ de $SD^2_\epsilon(V)=\A(V,DV)/Im\,N$, où $N$ est l'endomorphisme de $t\mapsto\bar{t}-\epsilon t$ de $\A(V,DV)$ contient, pour tous morphismes $\varphi : V\to V$ et $\psi : V\to DV$ de $\A$, la classe de
$$\left(\begin{array}{cc} \bar{\varphi} & \bar{\psi}                                                                                                                                                                                                                                                                                                                                                                                                                                                                                                                                                                    \end{array}\right)\Phi\left(\begin{array}{c} \varphi\\                                                                                                                                                                                                                                                                                   \psi                                                                                                                                                                                                                                                                                  \end{array}\right)=\bar{\varphi}f\varphi+\bar{\psi}g\psi+\bar{\varphi}a\psi+\bar{\psi}b\varphi : V\to DV.$$
En tenant compte de la relation $a+\epsilon\bar{b}=1$, on voit que $\bar{\varphi}a\psi+\bar{\psi}b\varphi-\bar{\varphi}\psi\in N$. Autrement dit, la classe de $\bar{\varphi}\psi+\bar{\varphi}f\varphi+\bar{\psi}g\psi$ appartient à $F(V)$. On en déduit, en prenant $\varphi$ nul et $\psi$ arbitraire, puis l'inverse, que la classe de $\bar{\varphi}\psi$ appartient à $F(V)$. Prenant $\varphi=1_V$, on en déduit bien $F(V)=SD^2_\epsilon(V)$ comme souhaité.
\end{proof}

Le théorème~\ref{th-sym} implique donc :
\begin{pr}\label{pr-cpts}
L'inclusion $T\hookrightarrow S$ induit un isomorphisme 
$${\rm Tor}^\A_*(\mathbb{Z}[T],F)\xrightarrow{\simeq}{\rm Tor}^\A_*(\mathbb{Z}[S],F)$$
en homologie pour $F\in {\rm Ob}\,\A-\mathbf{Mod}$ analytique.
\end{pr}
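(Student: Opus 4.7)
The plan is to apply Theorem~\ref{th-sym} directly, with the monoid functor $T$ acting on itself by translation. The substantive point reduces to identifying the symmetrization $\tilde T$ with $S$, which is precisely the content of the preceding Lemma~\ref{sfct-grab}.

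First I would check the hypotheses of Theorem~\ref{th-sym}. The functor $T : \A^{op}\to\mathbf{Mon}_{\mathrm{ab}}$ is reduced: since $0$ is a zero object of the additive category $\A$, one has $\A(0,D0)=0$, hence $S(0)=0$ and a fortiori $T(0)$ reduces to the neutral element. The category $\A$ is additive by hypothesis, and $F\in\A-\mathbf{Mod}$ is analytic. So the theorem applies with the choice $A=M=T$ (with $T$ acting on itself by translation), and produces an isomorphism
$$\mathrm{Tor}^\A_*(\mathbb{Z}[T],F)\xrightarrow{\simeq}\mathrm{Tor}^\A_*(\mathbb{Z}[\tilde T],F)$$
induced by the unit $i_T:T\to\tilde T$.

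Next I would identify $\tilde T$ with $S$ and check that, under this identification, $i_T$ is the inclusion $T\hookrightarrow S$. By the universal property of symmetrization, the inclusion of monoid functors $T\hookrightarrow S$ (into the group-valued functor $S$) factors uniquely through a morphism $\iota:\tilde T\to S$ of group-valued functors, and $\iota\circ i_T$ is the original inclusion. Injectivity of $\iota$ is formal: for every object $V$, the monoid $T(V)$ is a submonoid of the abelian group $S(V)$, hence cancellative, so $\tilde T(V)\to S(V)$ is injective. Surjectivity of $\iota$ is exactly where Lemma~\ref{sfct-grab} is used: the image of $\iota$ is the sub-group-functor of $S$ generated by $T$; since $T$ is, by its very definition, the sub-monoid functor of $S$ generated by the non-degenerate forms in $\mathfrak{Q}$, this image is in particular a sub-group-functor generated by non-degenerate forms, so Lemma~\ref{sfct-grab} forces it to coincide with $S$.

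Combining the two steps, $i_T$ coincides, through the canonical isomorphism $\tilde T\simeq S$, with the inclusion $T\hookrightarrow S$, and Theorem~\ref{th-sym} yields the claimed isomorphism on $\mathrm{Tor}$. The only non-formal input is the surjectivity of $\tilde T\to S$, which is handled by Lemma~\ref{sfct-grab}; everything else is bookkeeping about the universal property of the group completion and the reducedness of $T$.
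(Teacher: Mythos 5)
Your proof is correct and follows essentially the same route as the paper: the paper also deduces Proposition~\ref{pr-cpts} by combining Théorème~\ref{th-sym} (applied to the monoid functor $T$) with Lemme~\ref{sfct-grab}, which shows that the symmetrization of $T$ is all of $S$. Your explicit verification that $\tilde T\to S$ is injective (cancellativity of a submonoid of a group) and surjective (the image is a sub-group-functor of $S$ generated by non-degenerate forms) merely spells out what the paper leaves implicit.
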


Par conséquent, si l'inclusion $\mathbf{H}^T(\A)\hookrightarrow\A^T$ induit un isomorphisme en homologie à coefficients dans un foncteur polynomial (défini sur $\A$), l'homologie stable tordue par des foncteurs polynomiaux pour les groupes unitaires associés à $\mathfrak{Q}$ se calcule à partir de l'homologie à coefficients constants (qui n'a évidemment aucune raison d'être isomorphe à celle des groupes unitaires hyperboliques) comme dans le cas générique étudié dans la section précédente. C'est cette question sur laquelle nous nous penchons maintenant.

\subsection{Le cas semi-simple}\label{par-sms}

C'est la seule situation où l'on peut obtenir sans peine une réponse positive générale à la question :

\begin{pr}\label{pr-css}
 Supposons que $\A$ est abélienne semi-simple et finie\,\footnote{Hypothèse dont on peut s'affranchir si l'on suppose que $\mathfrak{Q}$ est stable par colimites filtrantes.} (tous les objets sont de longueur finie). Alors on dispose d'un isomorphisme naturel
$$\underset{E\in\mathfrak{Q}}{\col}H_*({\rm Aut}\,(E);F(E))\simeq {\rm Tor}^{\A\times U_\infty^\mathfrak{Q}(\A)}_*(\mathbb{Z}[S],F)$$
pour $F\in {\rm Ob}\,\A-\mathbf{Mod}$ analytique.
\end{pr}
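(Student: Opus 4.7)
The plan is to decompose the calculation into three pieces: reduce to a homology computation of a product category via Proposition \ref{pr-is6}; identify the $\mathbf{H}^T(\A)$-homology with a Tor over $\A$; and pass from $T = T_\mathfrak{Q}$ to $S$ via Proposition \ref{pr-cpts}.

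First, Proposition \ref{pr-is6} gives
$$\underset{E\in\mathfrak{Q}}{\col} H_*({\rm Aut}(E); F(E)) \simeq H_*\big(\mathbf{H}^T(\A) \times U^\mathfrak{Q}_\infty(\A);\, (\pi \circ i)^*F\big),$$
where $i : \mathbf{H}^T(\A) \hookrightarrow \A^T$ and $T = T_\mathfrak{Q}$. The crucial step is then to show that the inclusion $i$ induces an isomorphism
$$H_*(\mathbf{H}^T(\A); i^*\pi^*F) \xrightarrow{\simeq} H_*(\A^T; \pi^*F)$$
for any analytic $F$; the right-hand side then identifies with ${\rm Tor}^\A_*(\mathbb{Z}[T], F)$ via the adjunction $\pi_! \dashv \pi^*$ applied to the constant functor $\mathbb{Z}$ (since $\pi_!(\mathbb{Z}) = \mathbb{Z}[T]$).

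To establish this homology isomorphism, I would apply Corollary \ref{cor-auxhg}, taking $\D = \mathbf{H}^T(\A)$, $\Phi(V,x) = (V,0)$, and $\B = \mathbf{M}\A^T$. The first two hypotheses of Proposition \ref{pr-auxhg} are immediate. The connectivity of the categories $\K(c)$ and the factorization condition (4) are where the semi-simplicity of $\A$ is used decisively: in a finite semi-simple abelian category every monomorphism splits, and the adjoint map $V \to DV$ associated to any form on $V$ has a direct-summand kernel, so every object $(V,x) \in \A^T$ decomposes orthogonally as $(V^\perp, 0) \oplus (W, x')$ with $(W, x')$ non-dégénéré. Using that $\mathfrak{Q}$ is stable under orthogonal sum, one checks that non-degenerate summands of forms in $T$ remain in $T$, and one can enlarge any two non-degenerate targets to a common one along which to build a zigzag connecting arbitrary objects of $\K(c)$, and similarly for the factorization in condition (4).

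Finally, Proposition \ref{pr-cpts} replaces ${\rm Tor}^\A_*(\mathbb{Z}[T], F)$ by ${\rm Tor}^\A_*(\mathbb{Z}[S], F)$, and a standard Künneth-type manipulation (the action of $U^\mathfrak{Q}_\infty(\A)$ on $\mathbb{Z}[S]$ and on $F$ being trivial) repackages the answer as ${\rm Tor}^{\A \times U^\mathfrak{Q}_\infty(\A)}_*(\mathbb{Z}[S], F)$. The main obstacle is the verification of hypotheses (3) and (4) of Proposition \ref{pr-auxhg}: without the universal hyperbolic construction used in Section \ref{seccen}, one must rely entirely on the orthogonal splitting afforded by semi-simplicity, taking care that these splittings live inside the subfunctor $T = T_\mathfrak{Q}$ of $S$ rather than merely in $S$ itself.
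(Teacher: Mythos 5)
Your overall skeleton (Proposition \ref{pr-is6}, then the homology comparison for the inclusion $\mathbf{H}^T(\A)\hookrightarrow\A^T$, then Proposition \ref{pr-cpts} and an external-product repackaging) coincides with the paper's, but the middle step --- which is the whole content of the proposition --- does not go through as you describe. Applying Corollary \ref{cor-auxhg} with $\Phi(V,x)=(V,0)$ and $\B=\mathbf{M}\A^T$ requires, through hypotheses (3) and (4) of Proposition \ref{pr-auxhg}, that for \emph{every} object $(V,x)$ of $\A^T$ the category $\K(V,x)$ be connected and in particular \emph{non-empty}: there must exist a morphism $(V,0)\to d$ with $d$ non-degenerate and split-mono image under $\pi$, i.e. a totally isotropic split embedding of $V$ into a non-degenerate $T$-space. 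In the generic case this is exactly what the hyperbolic space $\tilde V$ provides (Lemma \ref{lm-scf}); in the typical non-generic semi-simple situation it fails outright. Take $\A=\mathbf{P}(\mathbb{Q})$ and $\mathfrak{Q}=\{(\mathbb{Q}^n,\varepsilon_n)\}$: a non-degenerate form in $T_\mathfrak{Q}$ is positive definite, hence anisotropic, so the only morphism $(V,0)\to(W,x)$ is zero and $\K(V,x)=\emptyset$ for $V\neq 0$. The orthogonal splitting $(V,x)\simeq(\mathrm{Rad},0)\perp(W,x')$ you invoke is correct but irrelevant to this obstruction: it cannot manufacture isotropic vectors in an anisotropic space, so it gives you no object of $\K(c)$ to start from.

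The paper circumvents this with a dichotomy that is the real point of the semi-simplicity and finiteness hypotheses. One splits $\A\simeq\A'\times\A''$, where $\A'$ is the full subcategory of those $V$ admitting a monomorphism $(V,0)\to Q$ with $Q\in\mathfrak{Q}$ (closed under direct sums and subobjects, hence a block), and $\A''$ is its orthogonal complement. On $\A'$ one checks that $(\A')^T=(\A')^S$ and the statement is a case of the generic Theorem \ref{th-dvg}, where the Scorichenko machinery does apply. On $\A''$ no non-zero isotropic embedding into an element of $\mathfrak{Q}$ exists; this forces the endomorphism fields of the simples to have characteristic zero (via Lemma \ref{sfct-grab}), so every object of $(\A'')^T$ splits as radical plus non-degenerate part, and --- using once more that no non-zero object of $\A''$ embeds isotropically into an element of $\mathfrak{Q}$ --- morphisms preserve radicals. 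Hence the inclusion $\mathbf{H}^T(\A'')\to(\A'')^T$ admits a left adjoint (quotient by the radical) and the homology comparison is immediate, with no recourse to Corollary \ref{cor-auxhg}. Note that functoriality of the radical is precisely what fails in the generic case, so the two arguments are complementary and neither subsumes the other; your proposal tries to run the generic argument uniformly and breaks exactly where the second case begins.
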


\begin{proof}
Notons $\A'$ la sous-catégorie pleine de $\A$ formée des objets $V$ tels qu'existe un {\em monomorphisme} de $(V,0)$ dans un objet de $\mathfrak{Q}$. Cette catégorie est stable par somme directe (car il en est de même pour $\mathfrak{Q}$) et par sous-objet, c'est donc une sous-catégorie abélienne de $\A$, et l'hypothèse de finitude entraîne que la sous-catégorie $\A''$ des $W$ de $\A$ tels que $\A(W,V)=0$ pour tout $V\in {\rm Ob}\,\A'$ est également abélienne et que le foncteur canonique $\A'\times\A''\to\A$ est une équivalence (tout objet $A$ de $\A$ possède un plus grand sous-objet $A'$ dans $\A'$, et $A/A'$ appartient nécessairement à $\A''$). 

Les catégories $\A^S$ et $\A^T$ se décomposent en conséquence, de sorte qu'on se ramène à traiter les deux cas particuliers suivants :
\begin{enumerate}
 \item $\A'=\A$ : on vérifie alors sans peine que $\A^T$ est nécessairement égale à $\A^S$, de sorte que l'énoncé est un cas particulier du théorème~\ref{th-dvg} ;
\item $\A'=0$ : on note d'abord qu'aucun des corps d'endomorphismes d'objets simples de $\A$ ne peut être de caractéristique finie (car sinon la sous-catégorie additive engendrée par un tel simple serait $\mathbb{F}_p$-linéaire pour un nombre premier $p$, de sorte que tout sous-foncteur en monoïdes de $S$ serait égal à $S$ par le lemme~\ref{sfct-grab}). En particulier, $2$ est inversible dans $\A$, ce qui implique que $SD^2_\epsilon$ est isomorphe à $\bar{S}D^2_\epsilon$. Par conséquent, dans tous les cas, tout objet de $\A^S$ est isomorphe à la somme orthogonale d'un objet du type $(V,0)$ ($V$ est le noyau de la forme) et d'un objet non dégénéré.

Cela entraîne que le foncteur d'inclusion $\mathbf{H}^T(\A)\to\A^T$ possède alors un adjoint à gauche associant à un espace quadratique son quotient par son radical --- le point à noter est que si $f : E \to F$ est un morphisme de $\A^T$, $F({\rm Rad}\,E)\subset {\rm Rad}\,F$, car la composée ${\rm Rad}\,E\hookrightarrow E\xrightarrow{f} F\twoheadrightarrow F/{\rm Rad}\,F\to Q$, où l'on a choisi une flèche $F/{\rm Rad}\,F\to Q$ avec $Q\in\mathfrak{Q}$ (ce qui est possible puisque $F/{\rm Rad}\,F$ est isomorphe à un sous-espace de $F$ --- cf. début de la démonstration), est forcément nulle (son image a une forme nulle et s'envoie injectivement dans $Q$), ce qui implique le résultat puisque $F/{\rm Rad}\,F\to Q$ est injective (sa source est non dégénérée).
\end{enumerate}
\end{proof}

On en déduit par exemple :
\begin{cor}\label{cor-hil3}
 Soient $k$ un corps commutatif de caractéristique nulle et $n\in\mathbb{N}^*$. On a
$$\underset{i\in\mathbb{N}}{\col}H_*(O_i(k);\Lambda^n_\mathbb{Q}(k^i))=0.$$

(Les groupes $O_i$ qui apparaissent sont ici les groupes~\guillemotleft~euclidiens~\guillemotright~associés aux formes $\sum_{j=1}^i X_j^2$.)
\end{cor}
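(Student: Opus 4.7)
The plan is to apply the semi-simple case of our framework (Proposition~\ref{pr-css}), since $\A = \mathbf{P}(k)$ is an abelian semi-simple category whose objects all have finite length. Take $\epsilon = 1$, so that $S = SD^2_1$ coincides with $\bar{S}D^2_1$ (as $2$ is invertible in $k$), and let $\mathfrak{Q}$ be the class of quadratic spaces generated by the forms $\sum_{j=1}^i X_j^2$ on $k^i$. Over a field of characteristic zero, each such form is non-degenerate, $\mathfrak{Q}$ is stable under orthogonal sum, and $U_\infty^\mathfrak{Q}(\mathbf{P}(k)) = O_\infty(k)$. Proposition~\ref{pr-css} then identifies the stable homology to be computed with ${\rm Tor}^{\mathbf{P}(k) \times O_\infty(k)}_*(\mathbb{Z}[SD^2_1], \Lambda^n_\mathbb{Q})$. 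Since both $\mathbb{Z}[SD^2_1]$ and $\Lambda^n_\mathbb{Q}$ are constant along the $O_\infty(k)$-factor, a Künneth-type decomposition analogous to Corollary~\ref{cor-dvks} reduces the task to proving the functor-homology vanishing
\[
 {\rm Tor}^{\mathbf{P}(k)}_*(\mathbb{Z}[SD^2_1], \Lambda^n_\mathbb{Q}) = 0.
\]

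Since $k$ is a $\mathbb{Q}$-algebra, $V^{\otimes_\mathbb{Z} n} = V^{\otimes_\mathbb{Q} n}$ for any $k$-vector space $V$, and in characteristic zero $\Lambda^n_\mathbb{Q}$ is a direct summand of the tensor-power functor $T^n$, namely its sign isotypic component under the natural $\Sigma_n$-action. Applying Corollary~\ref{cor-tens} with $A$ the inclusion functor $\mathbf{P}(k) \to \mathbf{Ab}$ (whose values are $\mathbb{Z}$-flat) immediately yields ${\rm Tor}^{\mathbf{P}(k)}_*(\mathbb{Z}[SD^2_1], T^n) = 0$ when $n$ is odd, and hence the same vanishing for the summand $\Lambda^n_\mathbb{Q}$. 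For $n = 2m$ even, writing $B_* = {\rm Tor}^{\mathbf{P}(k)}_*(A^\vee, A)$, the same corollary produces a $\Sigma_{2m}$-equivariant isomorphism
\[
 {\rm Tor}^{\mathbf{P}(k)}_*(\mathbb{Z}[SD^2_1], T^{2m}) \simeq B_*^{\otimes m} \otimes_{\Sigma_2^m \rtimes \Sigma_m} \mathbb{Z}[\Sigma_{2m}],
\]
in which each $\Sigma_2$ factor acts on the corresponding copy of $B_*$ via the canonical involution, and $\Sigma_m$ permutes the factors.

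To extract the $\Lambda^{2m}_\mathbb{Q}$-summand, take the sign isotypic component under $\Sigma_{2m}$, which by Frobenius reciprocity equals $B_*^{\otimes m} \otimes_{\Sigma_2^m \rtimes \Sigma_m} \mathrm{Res}^{\Sigma_{2m}}_{\Sigma_2^m \rtimes \Sigma_m}(\mathrm{sgn})$. The restriction of $\mathrm{sgn}$ is the product character $\mathrm{sgn}^{\boxtimes m}$ on the $\Sigma_2^m$ factor (each such $\Sigma_2$ being a transposition of $\Sigma_{2m}$) and the trivial character on $\Sigma_m$ (each element $\mathrm{id}_2 \times \sigma$ being a product of an even number of transpositions of $\Sigma_{2m}$). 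Provided the canonical involution on $B_*$ is trivial, the $\Sigma_2^m$-action on $B_*^{\otimes m}$ is trivial, so pairing against the nontrivial character $\mathrm{sgn}^{\boxtimes m}$ already kills the coend in characteristic zero, finishing the proof.

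The main technical obstacle is the identification of the canonical involution on $B_* = {\rm Tor}^{\mathbf{P}(k)}_*(A^\vee, A)$: in characteristic zero, Theorem~\ref{th-fp} together with Corollary~\ref{cor-chpz} concentrates this graded module in degree zero with value $k \simeq HH_0(k;k)$, on which the swap involution is the identity, so the argument goes through. Were this involution non-trivial on some graded piece, the sign-eigenspaces would need to be tracked more carefully through the isomorphism of Corollary~\ref{cor-tens}.
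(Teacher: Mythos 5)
Your argument is correct, and after the common first step it takes a genuinely different route from the paper's. Both proofs invoke Proposition~\ref{pr-css} (legitimately: $\mathbf{P}(k)$ is semi-simple of finite length and $\mathfrak{Q}$ is stable under orthogonal sum) and reduce, via a K\"unneth argument, to the vanishing of ${\rm Tor}^{\mathbf{P}(k)}_*(\mathbb{Z}[SD^2_1],\Lambda^n_\mathbb{Q})$. The paper then observes, as in Corollary~\ref{corcor}, that these groups are concentrated in degree $0$, and kills $H_0$ by remarking that the coinvariants of $\Lambda^n_\mathbb{Q}(k^i)$ under the diagonal subgroup $\{\pm 1\}^i$ already vanish. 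You instead split $\Lambda^n_\mathbb{Q}$ off $T^n$ as the sign-isotypic summand and read the answer from the equivariant isomorphism of Corollary~\ref{cor-tens}, reducing everything to the triviality of the canonical involution on ${\rm Tor}_0^{\mathbf{P}(k)}({\rm I}_k^\vee,{\rm I}_k)\simeq k$. Your longer route is in fact the more robust one: in the decomposition $\Lambda^n_\mathbb{Q}(k^i)\simeq\bigoplus\bigotimes_j\Lambda^{n_j}_\mathbb{Q}(k e_j)$ the diagonal subgroup only kills the summands having some $n_j$ odd, so as soon as $\dim_\mathbb{Q}k>1$ (e.g.\ $k=\mathbb{R}$, $n=2$ and the summand $\Lambda^2_\mathbb{Q}(k e_1)$) the paper's one-line justification of the degree-zero step is insufficient, and one genuinely needs either rotations mixing the coordinate lines or the involution computation you perform. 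That computation is the one point you should make fully explicit: under the trace isomorphism ${\rm Tor}_0^{\mathbf{P}(k)}({\rm I}_k^\vee,{\rm I}_k)=\int^M M^*\otimes M\xrightarrow{\simeq}k$, $\varphi\otimes m\mapsto\varphi(m)$, the canonical involution sends $\varphi\otimes m$ to ${\rm ev}_m\otimes\varphi$, which has the same trace, so it is indeed the identity --- consistently with Corollary~\ref{cor-repa}, where the induced involution on the degree-zero part of the $\Lambda^2_k$-component is $-1$, the extra sign coming from the factor swap. Your parity bookkeeping and the final observation that ${\rm sgn}^{\boxtimes m}$ annihilates a trivially-acted-on $\mathbb{Q}$-vector space over $\Sigma_2^m$ are both correct.
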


\begin{proof}
 Raisonnant à partir de la proposition précédente comme pour démontrer le corollaire~\ref{corcor}, on voit qu'il suffit d'établir cette assertion en degré homologique nul. Elle provient alors de ce que les coïnvariants de $\Lambda^n_\mathbb{Q}(k^i)$ sous l'action du sous-groupe $\{\pm 1\}^i$ des matrices diagonales de $O_i(k)$ sont déjà clairement nuls.
\end{proof}

Dans le cas où $k$ est le corps des nombres réels, cette propriété est une forme affaiblie d'un résultat de Dupont et Sah (\cite{DS}, théorème~2.2 ; voir aussi \cite{D-hilb}, chap.~9, où d'autres problèmes analogues d'homologie des~\guillemotleft~groupes de Lie rendus discrets~\guillemotright~sont présentés), qui stipule que $H_j(O_i(\mathbb{R});\Lambda^n_\mathbb{Q}(\mathbb{R}^i))$ est nul dès que $n\geq i+j$. Noter qu'il semble fort improbable qu'un tel résultat (qui inclut une borne de stabilité forte) soit valable pour tous les corps de caractéristique nulle (d'ailleurs, la démonstration de Dupont-Sah utilise de façon importante que les sommes de carrés de nombres réels sont encore des carrés), puisque tous les résultats de stabilité homologique (même à coefficients constants) sur les groupes orthogonaux~\guillemotleft~euclidiens~\guillemotright~nécessitent des hypothèses arithmétiques sur le corps (cf. \cite{Vog} et \cite{Col}).

Rappelons que la motivation majeure du résultat d'annulation de Dupont-Sah mentionné est le problème des groupes de découpage de polyèdres affines euclidiens (troisième problème de Hilbert) : celui-là constitue une étape cruciale dans la démonstration que volume et invariant de Dehn caractérisent les polyèdres d'un espace affine euclidien (réel) de dimension~$3$ à découpage près (cf. \cite{D-hilb}, où l'on trouvera aussi de nombreuses autres références sur ce riche sujet).

Citons un autre résultat, variante du corollaire~\ref{cor-repa} :

\begin{cor}\label{cor-repa2}
 Soient $k$ un corps commutatif de caractéristique nulle ; pour $n\in\mathbb{N}$, notons $\mathfrak{o}_{n}(k)$ la représentation adjointe de $O_{n}(k)$. Alors le $k$-espace vectoriel gradué
$$\underset{n\in\mathbb{N}}{\col}H_*(O_{n}(k);\mathfrak{o}_{n}(k))$$
est isomorphe au produit tensoriel (sur $\mathbb{Q}$) de $H_*(O_{\infty}(k);\mathbb{Q})$ et de la partie impaire de la $k$-algèbre graduée $\Omega^*_k=\Lambda^*(\Omega^1_k)$ des différentielles de Kähler de $k$ (vu comme $\mathbb{Q}$-algèbre). En particulier, 
$$\underset{n\in\mathbb{N}}{\col}H_1(O_{n}(k);\mathfrak{o}_{n}(k))\simeq\Omega^1_k\quad\text{et}\quad\underset{n\in\mathbb{N}}{\col}H_2(O_{n}(k);\mathfrak{o}_{n}(k))=0.$$
\end{cor}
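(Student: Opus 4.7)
The plan is to transpose to this non-hyperbolic setting the strategy used for Corollary~\ref{cor-repa}, substituting Theorem~\ref{th-dvg} and Proposition~\ref{pr-ksh1} (generic case) by Proposition~\ref{pr-css} (semi-simple case): the latter applies since $\mathbf{P}(k)$ is abelian semi-simple and finite. Throughout, one takes $\mathcal{A}=\mathbf{P}(k)$ with its usual duality, $\epsilon=1$, and $\mathfrak{Q}$ the class of standard Euclidean spaces $(k^n,\sum X_i^2)$, so that the associated automorphism groups are precisely the $O_n(k)$.

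First I would identify $\mathfrak{o}_n(k)\simeq\Lambda^2_k(k^n)$ as $O_n(k)$-modules (valid since $\mathrm{char}\,k\neq 2$), reducing the problem to the computation of the stable homology of the $O_n(k)$ with coefficients in the polynomial functor $\Lambda^2_k\in {\rm Ob}\,\mathbf{P}(k)-\mathbf{Mod}$. Proposition~\ref{pr-css} then yields
$$\underset{n\in\mathbb{N}}{\col}H_*(O_n(k);\mathfrak{o}_n(k))\simeq {\rm Tor}^{\mathbf{P}(k)\times O_\infty(k)}_*(\mathbb{Z}[S],\Lambda^2_k),$$
with $S=SD^2_1\simeq\bar{S}D^2_1$ (which coincide in characteristic $\neq 2$). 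A Grothendieck-spectral-sequence argument analogous to the passage from Proposition~\ref{pr-ksh1} to Corollary~\ref{cor-dvks} then yields a (non-canonical) decomposition
$$\underset{n}{\col}H_n(O_n(k);\mathfrak{o}_n(k))\simeq\bigoplus_{p+q=n}H_p(O_\infty(k);\mathbb{Q})\underset{\mathbb{Q}}{\otimes}{\rm Tor}^{\mathbf{P}(k)}_q(\mathbb{Z}[S],\Lambda^2_k)$$
with trivial $O_\infty(k)$-action on the Tor factor.

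Second, I would compute ${\rm Tor}^{\mathbf{P}(k)}_*(\mathbb{Z}[S],\Lambda^2_k)$ by the exact same argument as at the end of the proof of Corollary~\ref{cor-repa}: via Proposition~\ref{pr-invol} and Hochschild--Kostant--Rosenberg, the $\Sigma_2$-equivariant splitting $T^2\simeq S^2\oplus\Lambda^2$, and the identification of the canonical involution in degree $n$ with multiplication by $(-1)^{n+1}$. The outcome is that this Tor is the odd part of $\Omega^*_k$. Substituting gives the tensor product formula stated.

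The delicate point will be confirming that the $O_\infty(k)$-action on ${\rm Tor}^{\mathbf{P}(k)}_*(\mathbb{Z}[S],\Lambda^2_k)$ becomes trivial (up to non-natural isomorphism) after stabilization, matching the formalism already used after Proposition~\ref{pr-ksh1}; in addition, in Proposition~\ref{pr-css} one must treat the two cases of its proof ($\mathcal{A}'=\mathcal{A}$ or $\mathcal{A}'=0$, depending on whether the Euclidean form admits non-trivial isotropic vectors over $k$) uniformly, but both lead to the same formula. Once the general identification is in place, the two \emph{en particulier} assertions follow by inspection: for $H_1$ only the term $(p,q)=(0,1)$ contributes, giving $\mathbb{Q}\otimes\Omega^1_k\simeq\Omega^1_k$; for $H_2$ only $(p,q)=(1,1)$ could contribute, and its factor $H_1(O_\infty(k);\mathbb{Q})$ vanishes since the abelianization of $O_n(k)$ is $2$-torsion (determinant, spinor norm).
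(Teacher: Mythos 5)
Your proposal is correct and follows exactly the route the paper intends: the paper states Corollaire~\ref{cor-repa2} as a \guillemotleft~variante~\guillemotright~du corollaire~\ref{cor-repa} without writing out the argument, and your reconstruction --- replace the generic-case input (théorème~\ref{th-dvg} et proposition~\ref{pr-ksh1}) by the proposition~\ref{pr-css} du cas semi-simple, identify $\mathfrak{o}_n(k)\simeq\Lambda^2_k(k^n)$, reuse verbatim the computation of ${\rm Tor}^{\mathbf{P}(k)}_*(\mathbb{Z}[S^2_k]^\vee,\Lambda^2_k)$ as the odd part of $\Omega^*_k$, and kill the $(1,1)$-term for $H_2$ because $O_n(k)$ is generated by reflections --- is precisely the intended proof. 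The only superfluous worry is your remark about re-treating the two cases $\A'=\A$ ou $\A'=0$: Proposition~\ref{pr-css} is a self-contained statement that you may simply invoke.
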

(Le groupe orthogonal n'a pas d'homologie rationnelle de degré $1$ puisqu'il est engendré par des symétries.)

Pour $k=\mathbb{R}$, la dernière partie de ce corollaire est encore une version affaiblie (stable) d'un résultat (valable en degré homologique $n>4$) dû à Dupont (\cite{Dua}, cor.~4.18), dont la formulation en termes de représentation adjointe et de différentielles de Kähler (avec une description explicite de l'isomorphisme) est due à Cathelineau (\cite{Ca}).

\begin{rem}
 \begin{enumerate}
  \item Si $k$ est un corps commutatif, notons $MW(k)$ le monoïde ordonné des classes d'équivalence de $k$-espaces quadratiques non dégénérés sur $k$ (ainsi, le groupe de Grothendieck-Witt de $k$ est la symétrisation de $MW(k)$). On se convainc aisément que l'ensemble des sous-foncteurs en monoïdes du foncteur des formes quadratiques sur les $k$-espaces vectoriels de dimension finie engendrés par des espaces non dégénérés est en bijection avec les sous-monoïdes $M$ de $MW(k)$ tels que $x\leq t$ avec $t\in M$ implique $x\in M$. On voit aussitôt que de tels sous-monoïdes sont caractérisés par leur image dans le quotient du groupe de Witt de $k$ par son sous-groupe de torsion, lequel est directement contrôlé par les relations d'ordre total compatibles à la structure de corps de $k$ (cf. \cite{Lam}, chap.~XIII, par exemple). 

En particulier, si $k$ n'est pas ordonnable, il n'y a pas de situation~\guillemotleft~non générique~\guillemotright~; si $k$ possède un et un seul ordre total compatible avec sa structure de corps (par exemple, si $k$ est un corps ordonné maximal ou $k=\mathbb{Q}$), le seul groupe orthogonal stable non générique qu'on peut obtenir avec le formalisme ici employé est le groupe~\guillemotleft~euclidien~\guillemotright~infini $\underset{n\in\mathbb{N}}{\col}O_n(k)$. 
  \item Il s'impose de déterminer quels analogues des résultats de ce paragraphe subsistent pour les groupes orthogonaux de type $O_{n,i}$ (où $n$ parcourt $\mathbb{N}$ et $i\in\mathbb{N}^*$ est fixé) sur un corps : ceux-ci n'entrent pas exactement dans le cadre présenté ici, et sont l'objet d'un travail en cours de l'auteur. L'existence de résultats de comparaison partiels (pour le corps des réels et $i=1$, comparé au cas euclidien) à coefficients constants, dus à Bökstedt, Brun et Dupont (\cite{BBD}) rend particulièrement naturelle cette question.
 \end{enumerate}
\end{rem}

\subsection{Cas des coïnvariants pour les groupes $O_n$}\label{par-h0}

Désormais, on se restreint au cas particulier suivant : {\em $A$ est un anneau commutatif où $2$ est inversible, $\A$ est la catégorie des $A$-modules projectifs (ou libres\,\footnote{Les deux catégories étant équivalentes au sens de Morita, on peut travailler indifféremment avec l'une ou l'autre.}) de rang fini, munie de la dualité usuelle, $\epsilon=1$.} On traite donc d'espaces quadratiques et de groupes orthogonaux au sens ordinaire sur les $A$-modules projectifs (ou libres).

Pour considérer les groupes de type $O_n(A)$, on prend pour $\mathfrak{Q}$ la classe des espaces quadratiques $(A^n,\varepsilon_n)$ (où  $\varepsilon_n$ est la forme $X_1^2+\dots+X^2_n$). Noter que $T(V)$ est le monoïde (additif) des formes quadratiques qui sont somme de carrés de formes linéaires sur $V$.

\begin{nota}
 On désigne par $rq(A)$ l'ensemble des éléments $x$ de $A$ tels que $1-x^2$ soit somme de carrés dans $A$.
\end{nota}

(Les lettres $rq$ sont mises pour {\em radical quadratique}.)

\begin{pr}\label{prs-rq}
 \begin{enumerate}
  \item Un élément de $A$ appartient à $rq(A)$ si et seulement s'il est coefficient d'une matrice de $O_n(A)$ pour un entier $n>0$.
\item Le sous-ensemble $rq(A)$ est stable par opposé et par produit, il contient $1$.
 \end{enumerate}
\end{pr}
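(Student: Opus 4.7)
The plan is to dispatch (2) and the reverse direction of (1) by elementary arguments, and to handle the forward direction of (1) by an explicit Clifford-algebra construction.

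For (2): membership $1\in rq(A)$ is immediate since $1-1^2=0$; stability under opposite is immediate since $(-x)^2=x^2$; and product stability follows from the identity
$$1-(xx')^2=(1-x^2)+x^2(1-x'^2),$$
which expresses $1-(xx')^2$ as a sum of squares whenever $1-x^2=\sum_i a_i^2$ and $1-x'^2=\sum_j b_j^2$ (as $\sum_i a_i^2+\sum_j(xb_j)^2$).

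For the reverse direction of (1): if $x$ is the $(i,j)$-coefficient of $M\in O_n(A)$, the identity $(M^TM)_{jj}=1$ reads $x^2+\sum_{\ell\neq i}M_{\ell j}^2=1$, exhibiting $1-x^2$ as a sum of squares in $A$.

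For the forward direction, given a decomposition $1-x^2=y_1^2+\dots+y_k^2$, I would exhibit an element of $O_{2^k}(A)$ carrying $x$ as a coefficient by means of the Clifford algebra $\mathrm{Cl}_k$ of the standard quadratic form on $A^k$, with the sign convention making the generators satisfy $e_i^2=-1$ and $e_ie_j+e_je_i=0$ for $i\neq j$; this is a free $A$-module of rank $2^k$ with basis $\{e_I\}_{I\subseteq\{1,\dots,k\}}$. Setting $y:=y_1e_1+\dots+y_ke_k$, one finds $y^2=-\sum y_i^2=x^2-1$, so the element $u:=x+y$ satisfies $u\bar u=x^2-y^2=1$, where $\bar{\cdot}$ is the Clifford conjugation (anti-automorphism with $\bar{e_i}=-e_i$). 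Consequently, left multiplication $L_u:\mathrm{Cl}_k\to\mathrm{Cl}_k$ is an $A$-linear automorphism with inverse $L_{\bar u}$, and preserves the $A$-bilinear form $\langle a,b\rangle:=\tau(a\bar b)$ — where $\tau$ is projection onto the scalar part — by a one-line argument combining $\bar u u=1$ with the cyclicity of $\tau$. By construction, the first column of $L_u$ in the monomial basis is $u$ itself, whose coefficient on $1=e_\emptyset$ is $x$.

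The step demanding the most care, and which I expect to be the main obstacle, is the verification that in the monomial basis the form $\langle -,-\rangle$ coincides with the standard quadratic form $\sum X_I^2$ on $A^{2^k}$, which is what ensures that $L_u$ genuinely lies in $O_{2^k}(A)$ in the sense demanded by the statement. This reduces to checking $\tau(e_I\bar{e_I})=1$ for every $I$ and $\tau(e_I\bar{e_J})=0$ for $I\neq J$; both are straightforward but slightly delicate monomial computations using the Clifford relations.
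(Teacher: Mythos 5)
Your proof is correct, but it diverges from the paper's on both non-trivial points. For the forward direction of (1), the paper does not use Clifford algebras: given $x\in rq(A)$ with $x^2+\sum_{i=1}^{n-1}b_i^2=1$, it takes the orthogonal reflection of $(A^{n+1},\varepsilon_{n+1})$ in the hyperplane orthogonal to $v=(1,-a_1,\dots,-a_n)$ (where $(a_1,\dots,a_n)=(b_1,\dots,b_{n-1},x)$); since $\varepsilon_{n+1}(v)=2$ is invertible, this reflection is defined over $A$ and sends $e_1$ to $(0,a_1,\dots,a_n)$, so $x$ appears as a coefficient of a matrix in $O_{n+1}(A)$. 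This is more economical than your construction (size $n+1$ rather than $2^k$) and avoids the monomial-basis verification you rightly flag as the delicate step — though that verification does go through: $\tau(e_I\bar e_I)=1$ and $\tau(e_I\bar e_J)=0$ for $I\neq J$ with your sign conventions, and the trace property $\tau(pq)=\tau(qp)$ gives form-preservation of $L_u$, so your route is valid as well (and, as a bonus, does not actually use the invertibility of $2$). For (2), the paper deduces product stability from the matrix characterization in (1): it arranges $A,B\in O_n(A)$ whose first columns end in $x$ and $y$ respectively with disjoint supports elsewhere, so that the $(1,1)$-entry of ${}^tAB$ is $xy$; your identity $1-(xx')^2=(1-x^2)+x^2(1-x'^2)$ is a more elementary and self-contained way to get the same conclusion. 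Both arguments are complete and correct.
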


\begin{proof}
 Un coefficient d'une matrice orthogonale appartient clairement à $rq(A)$. Réciproquement, si $\sum_{i=1}^n a_i^2=1$, il existe un élément de $O_{n+1}(A)$ dont la première colonne soit $(0,a_1,\dots,a_n)$, par exemple la réflexion orthogonale de $(A^{n+1},\varepsilon_{n+1})$ par rapport à l'hyperplan orthogonal à $(1,-a_1,\dots,-a_n)$.

Par conséquent, si $x$ et $y$ sont des éléments de $rq(A)$, pour $n$ assez grand, on peut trouver $A$ et $B$ dans $O_n(A)$ dont les premières colonnes sont respectivement du type $(0,t_2,\dots,t_i,0,\dots,0,x)$ et $(0,\dots,0,u_1,\dots,u_j,y)$ (avec $i$ zéros au début), de sorte que le premier coefficient de la matrice orthogonale $^t A B$ est $xy$, qui est donc dans $rq(A)$. Le reste est évident.
\end{proof}

\begin{cor}\label{cor-rq}
 Le sous-ensemble de $A$ des sommes d'éléments de $rq(A)$ en est un sous-anneau, noté ${\rm Rq}(A)$. C'est le plus petit sous-anneau de $A$ tel que $O_n({\rm Rq}(A))=O_n(A)$ pour tout $n$. 
\end{cor}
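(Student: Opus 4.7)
Le plan consiste à exploiter directement la proposition~\ref{prs-rq}, qui fait l'essentiel du travail (notamment la caractérisation des éléments de $rq(A)$ comme coefficients de matrices orthogonales), de sorte que la démonstration du corollaire se réduit à quelques vérifications formelles.

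Je commencerais par établir que l'ensemble des sommes finies d'éléments de $rq(A)$, que l'on notera ${\rm Rq}(A)$, est bien un sous-anneau de $A$. La stabilité par addition est tautologique. La stabilité par opposé découle de l'assertion correspondante de la proposition~\ref{prs-rq}.2 pour $rq(A)$. La stabilité par produit s'obtient en développant $(\sum x_i)(\sum y_j)=\sum x_iy_j$ et en utilisant que $rq(A)$ est stable par produit (proposition~\ref{prs-rq}.2). Enfin, l'élément $1$ de $A$ appartient à $rq(A)$, donc à ${\rm Rq}(A)$.

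Je prouverais ensuite l'identité $O_n({\rm Rq}(A))=O_n(A)$ pour tout $n$. L'inclusion $O_n({\rm Rq}(A))\subset O_n(A)$ est immédiate puisque ${\rm Rq}(A)$ est un sous-anneau de $A$. Réciproquement, soit $M\in O_n(A)$ : le premier point de la proposition~\ref{prs-rq} garantit que chaque coefficient de $M$ appartient à $rq(A)\subset {\rm Rq}(A)$, d'où $M\in O_n({\rm Rq}(A))$ (les relations d'orthogonalité étant vérifiées dans $A$, donc {\em a fortiori} dans ${\rm Rq}(A)$ puisqu'elles n'impliquent que les coefficients et $1$).

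Il reste à établir la minimalité : soit $B$ un sous-anneau de $A$ tel que $O_n(B)=O_n(A)$ pour tout entier $n>0$. Il suffit de vérifier que $rq(A)\subset B$, puisqu'alors ${\rm Rq}(A)\subset B$ par stabilité de $B$ par sommes. Or, si $x\in rq(A)$, la proposition~\ref{prs-rq}.1 procure un entier $n>0$ et une matrice $M\in O_n(A)=O_n(B)$ dont $x$ est l'un des coefficients ; comme les coefficients d'une matrice dans $O_n(B)$ sont dans $B$, on a $x\in B$, ce qui conclut. Aucune étape ne devrait présenter de difficulté, tout le contenu non trivial étant concentré dans la proposition~\ref{prs-rq}.
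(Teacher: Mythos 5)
Votre démonstration est correcte et suit exactement la voie prévue par le papier, qui laisse ce corollaire sans preuve explicite comme conséquence immédiate de la proposition~\ref{prs-rq} : stabilité de ${\rm Rq}(A)$ par les opérations d'anneau via le point~2, égalité $O_n({\rm Rq}(A))=O_n(A)$ et minimalité via la caractérisation des éléments de $rq(A)$ comme coefficients de matrices orthogonales du point~1. Rien à redire.
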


Ainsi, Rq définit un endofoncteur involutif de la catégorie des anneaux (commutatifs avec $2$ inversible).

Une manière de trouver des éléments dans $rq(A)$ est d'appliquer la proposition suivante :
\begin{pr}\label{pr-scs}
 Soient $n>2$ un entier et $(a_i)_{1\leq i\leq n}$ une famille d'éléments de $A$ telle que $\sum_i a_i^2\in A^\times$. Alors $\frac{2a_1 a_2}{\sum_i{a^2_i}}\in rq(A)$.
\end{pr}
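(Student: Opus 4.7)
The plan is to verify directly the defining condition of $rq(A)$, namely that $1-x^2$ is a sum of squares in $A$ for $x = \frac{2a_1a_2}{s}$, where $s := \sum_{i=1}^n a_i^2 \in A^\times$. The main ingredient is a straightforward factorisation, combined with the elementary fact that the subset of sums of squares of $A$ is stable under multiplication (via the Lagrange-type identity $\bigl(\sum x_i^2\bigr)\bigl(\sum y_j^2\bigr) = \sum_{i,j}(x_i y_j)^2$).

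First I would compute
$$1 - x^2 \;=\; \frac{s^2 - 4a_1^2 a_2^2}{s^2} \;=\; \frac{(s - 2a_1 a_2)(s + 2a_1 a_2)}{s^2},$$
and observe that the hypothesis $n > 2$ lets us split off the first two variables:
$$s \pm 2a_1 a_2 \;=\; (a_1 \pm a_2)^2 + \sum_{i=3}^n a_i^2,$$
so each factor is manifestly a sum of squares in $A$. Multiplying these two sums of squares by the Lagrange-type identity, the numerator $s^2 - 4a_1^2 a_2^2$ is exhibited as a sum of squares $\sum_j b_j^2$ with $b_j \in A$. Since $s \in A^\times$, we may divide:
$$1 - x^2 \;=\; \sum_j \left(\frac{b_j}{s}\right)^2,$$
which is the required expression of $1-x^2$ as a sum of squares in $A$.

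An alternative, more geometric approach would be to consider the Householder reflection associated with the vector $v = (a_1,\dots,a_n) \in A^n$: since $v^T v = s \in A^\times$, the matrix $H = I_n - \tfrac{2}{s}\, v v^T$ lies in $O_n(A)$, and its $(1,2)$ entry equals $-\tfrac{2a_1 a_2}{s}$; one then invokes Proposition~\ref{prs-rq} (coefficients of orthogonal matrices lie in $rq(A)$, and $rq(A)$ is stable by negation). Both routes being very short and essentially tautological, there is no substantive obstacle; the role of the inequality $n>2$ is purely to guarantee that the extra terms $\sum_{i\geq 3} a_i^2$ appear, so the factorisation above expresses $s\pm 2a_1a_2$ genuinely as sums of squares without any cancellation subtlety.
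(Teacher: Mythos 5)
Your proposal is correct. Both of your routes establish the same key fact as the paper, namely that $\bigl(\sum_i a_i^2\bigr)^2-(2a_1a_2)^2$ is a sum of squares in $A$, after which invertibility of $s=\sum_i a_i^2$ lets one divide by $s^2$. The paper gets there by direct expansion: writing $s'=\sum_{i>2}a_i^2$, it computes $(a_1^2+a_2^2+s')^2-4a_1^2a_2^2=(a_1^2-a_2^2)^2+s'^2+2(a_1^2+a_2^2)s'$ and observes that each term (including the cross term, since $2a_j^2a_i^2=(a_ja_i)^2+(a_ja_i)^2$) is a sum of squares. Your first route instead factors the difference of squares as $(s-2a_1a_2)(s+2a_1a_2)=\bigl((a_1-a_2)^2+s'\bigr)\bigl((a_1+a_2)^2+s'\bigr)$ and invokes closure of sums of squares under products; this is a minor but clean variant of the same computation. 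Your second route, via the Householder reflection $H=I_n-\tfrac{2}{s}vv^T\in O_n(A)$ together with Proposition~\ref{prs-rq}, is genuinely different and arguably more conceptual: it explains \emph{why} such elements lie in $rq(A)$ (they are literally entries of orthogonal matrices) rather than verifying the sum-of-squares identity by hand, and it makes the stability properties of $rq(A)$ do the work. One small observation: as you note, the hypothesis $n>2$ is not actually load-bearing in either of your arguments (for $n=2$ the relevant quantity is already a perfect square, and the Householder matrix exists for any $n$), so your remark about its role is slightly overcautious but harmless.
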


\begin{proof}
 Notons $s=\sum_{i>2}a^2_i$. Alors
$$\big(\sum_i{a^2_i}\big)^2-(2a_1a_2)^2=(a_1^2+a_2^2+s)^2-4a_1^2a_2^2=(a_1-a_2)^2+s^2+2(a_1^2+a_2^2)s$$
est une somme de carrés, d'où la proposition.
\end{proof}

\begin{lm}\label{lm-sc}
\begin{enumerate}
 \item  Si $x\in {\rm Rq}(A)$, alors il existe un entier $n\in\mathbb{N}$ tel que $n-x^2$ soit une somme de carrés. 
\item Si $t$ est une somme de carrés d'éléments de ${\rm Rq}(A)$, alors il existe une somme de carrés $s$ telle que $s+t\in A^\times$.
\end{enumerate}
\end{lm}

\begin{proof}
 Notons $E$ le sous-ensemble de $A$ formé des $x$ tel qu'existe $n\in\mathbb{N}$ tel que $n-x^2$ soit une somme de carrés. Comme $E$ contient $rq(A)$, il suffit de montrer que $E$ est stable par somme. Soient en effet $x$, $y$ des éléments de $E$ et $n, m\in\mathbb{N}$ tels que $n-x^2$ et $m-y^2$ soient des sommes de carrés. Alors
$$2(n+m)-(x+y)^2=2\big((n-x^2)+(m-y^2)\big)+(x-y)^2$$
est une somme de carrés, d'où le premier point.

Le deuxième point en découle dans la mesure où $2$ est inversible dans $A$ : on peut trouver une somme de carrés $s$ telle que $s+t$ soit un entier ; quitte à ajouter suffisamment de termes $1$ à $s$, on peut faire en sorte que cet entier soit une puissance de~$2$.
\end{proof}

\begin{thm}\label{th-h0gl}
 Si ${\rm Rq}(A)=A$, alors, pour tout foncteur analytique $F\in {\rm Ob}\,\mathbf{P}(A)-\mathbf{Mod}$, le morphisme canonique
$$\underset{n\in\mathbb{N}}{\col}F(A^n)_{O_n(A)}\simeq H_0(\mathbf{H}^T(\A);i^*\pi^*F)\to H_0(\A^T;\pi^*F)\simeq\mathbb{Z}[S^2_A]^\vee\underset{\mathbf{P}(A)}{\otimes} F$$
induit par l'inclusion $i : \mathbf{H}^T(\A)\to\A^T$ est un isomorphisme. (On rappelle que $(-)^\vee$ désigne la précomposition par la dualité.)
\end{thm}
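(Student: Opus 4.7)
The plan is to deduce the theorem from Corollary~\ref{cor-auxh0}, applied to the pair $(\C,\D)=(\A^T,\mathbf{H}^T(\A))$ with inclusion $\varphi=i$ and the forgetful functor $\pi$. Combined with Proposition~\ref{pr-is6} in degree $0$ (where the action of $U^\mathfrak{Q}_\infty(\A)$ on the coefficients is trivial, so the source $\underset{n\in\mathbb{N}}{\col}F(A^n)_{O_n(A)}$ is identified with $H_0(\mathbf{H}^T(\A);i^*\pi^*F)$) and Proposition~\ref{pr-cpts} (which, for analytic $F$, replaces $\mathbb{Z}[T]$ by $\mathbb{Z}[S]\simeq\mathbb{Z}[S^2_A]^\vee$), this yields the desired isomorphism, the canonical morphism of the statement being precisely the one built in Corollary~\ref{cor-auxh0}.

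The whole task therefore reduces to verifying the hypothesis of Proposition~\ref{pr-auxh0}: for every $(V,x)\in\A^T$, exhibit $y\in T(V)$ and $d\in{\rm Ob}\,\mathbf{H}^T(\A)$ with $(V,x+y)\in\mathbf{H}^T(\A)$ and $\A^T((V,y),d)\neq\emptyset$. Since $T$ is by construction the sub-functor in monoids of $S$ generated by $\mathfrak{Q}$, any $y\in T(V)$ automatically admits a morphism to some $(A^k,\varepsilon_k)\in\mathfrak{Q}$, so the second condition is free. The problem thus becomes a purely algebraic statement: given a free module $V=A^n$ and $x=\sum_{i=1}^k\ell_i^2\in T(V)$, encoded by the matrix $L\in M_{k\times n}(A)$ with rows $\ell_i$, find a sum of squares $y=\sum_j m_j^2\in T(V)$ such that the Gram matrix $B_{x+y}=2(L^tL+M^tM)$ is invertible in $M_n(A)$.

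I would prove this algebraic claim by induction on $n$. For $n=1$, $B_x=2\sum\ell_i^2$ is a sum of squares, and Lemma~\ref{lm-sc}(2) --- where the hypothesis ${\rm Rq}(A)=A$ enters essentially --- furnishes a sum of squares $s$ with $B_x+s\in A^\times$; since $1/2=(1/2)^2+(1/2)^2$ is itself a sum of squares in $A$, the quantity $s/2$ is again a sum of squares, and one realises $s$ as $B_y$ for an appropriate $y\in T(V)$. For $n>1$, I first apply the one-dimensional argument to the entry $(B_x)_{1,1}$ (itself a sum of squares) to produce a sum-of-squares form $y_0$ supported in the variable $X_1$ such that the new pivot $p:=(B_{x+y_0})_{1,1}$ lies in $A^\times$. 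The invertibility of $p$ then makes the Schur complement of the first row and column of $B_{x+y_0}$ well-defined over $A$, and an explicit Gram--Schmidt computation --- integral precisely because $p\in A^\times$ --- identifies this Schur complement with $2\tilde L^t\tilde L$ for an explicit $\tilde L\in M_{\bullet\times(n-1)}(A)$; in other words, it is once again the Gram matrix of a sum of squares of linear forms, this time in the variables $X_2,\dots,X_n$. The inductive hypothesis supplies a further sum-of-squares update $\tilde y$ in these variables that makes the Schur complement invertible, and, involving only $X_2,\dots,X_n$, $\tilde y$ leaves the first row and column of $B_{x+y_0}$ untouched. The Schur determinant formula then gives $\det B_{x+y_0+\tilde y}=p\cdot\det(\text{Schur complement})\in A^\times$, closing the induction with $y=y_0+\tilde y$.

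The chief obstacle is to remain inside the class of sum-of-squares Gram matrices throughout the recursion; this is exactly what forces the preliminary adjustment of the pivot into a unit before performing any Schur reduction, and is the only point at which the assumption ${\rm Rq}(A)=A$ intervenes, namely through Lemma~\ref{lm-sc}(2). That lemma captures precisely the flexibility of turning an arbitrary sum of squares into a unit by adding another sum of squares, a flexibility which fails in rings such as $\mathbb{Q}[X]$, consistently with the counter-examples alluded to in the introduction.
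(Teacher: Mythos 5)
Your proposal is correct and follows essentially the same route as the paper: reduce via Corollary~\ref{cor-auxh0} to showing that every $T$-form on $A^n$ becomes non dégénérée after adding a sum of squares, then induct on $n$ by using Lemme~\ref{lm-sc}(2) (where ${\rm Rq}(A)=A$ enters) to make the value at a basis vector a unit and splitting off the resulting rank-one orthogonal summand. Your Gram-matrix/Schur-complement formulation is just the matrix transcription of the paper's orthogonal decomposition $V=Ae_1\perp W$, including the observation that the second condition of Proposition~\ref{pr-auxh0} is automatic from the definition of $T=T_{\mathfrak{Q}}$.
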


\begin{proof}
 On applique le corollaire~\ref{cor-auxh0} en montrant l'assertion suivante, par récurrence sur l'entier $n$ :

{\em pour tout objet $(V,q)$ de $\A^T$ tel que $V\simeq A^n$ (comme $A$-module), il existe $Q\in T(V)$ tel que $q+Q$ soit non dégénérée.}

(Cela suffit puisque, par définition de $T$, tout objet de $\A^T$ possède un morphisme vers un objet de $\mathbf{H}^T(\A)$.) 

Pour $n=0$ il n'y a rien à démontrer ; on suppose donc $n>0$ et l'assertion établie pour les entiers strictement inférieurs. Soient $(V,q)$ un objet de $\A^T$ avec $V$ de rang $n$, $(e_1,\dots,e_n)$ une base de $V$ et $l\in V^*$ tel que $l(e_1)=1$. Comme $q\in T(V)$, par le lemme~\ref{lm-sc}, on peut trouver une somme de carrés $s$ telle que $q(e_1)+s\in A^\times$. Posons $Q'=s.l^2$ --- c'est un élément de $T(V)$ --- et $q'=q+Q'$.

Du fait que $q'(e_1)\in A^\times$, $V$ est somme directe de la droite qu'engendre $e_1$ et de son orthogonal $W$, qui est isomorphe à $A^{n-1}$ (l'application linéaire $\bigoplus_{i>1} Ae_i\to W\quad v\mapsto v-\frac{B(v,e_1)}{B(e_1,e_1)}e_1$, où $B$ est la forme bilinéaire associée à $q'$, est bijective). Par l'hypothèse de récurrence, on peut trouver une forme $Q''\in T(W)$ telle que $q'|_W+Q''$ soit non dégénérée. Comme $(V,q')\simeq (A,q'(e_1)\varepsilon_1)\overset{\perp}{\oplus}(W,q'|_W)$, la forme $Q''$ fournit une forme $\bar{Q}$ sur $V$ appartenant à $T(V)$ telle que $q'+\bar{Q}$ soit non dégénérée. Par conséquent, la forme $Q=Q'+\bar{Q}\in T(V)$ convient, d'où le théorème.
\end{proof}

Tenant compte du corollaire~\ref{cor-rq}, on en déduit :

\begin{cor}\label{corh0}
 Pour tout foncteur analytique $F\in {\rm Ob}\,\mathbf{P}(A)-\mathbf{Mod}$, il existe un isomorphisme naturel
$$\underset{n\in\mathbb{N}}{\col}F(A^n)_{O_n(A)}\simeq\mathbb{Z}[S^2_{{\rm Rq}(A)}]^\vee\underset{\mathbf{P}({\rm Rq}(A))}{\otimes} F(A\underset{{\rm Rq}(A)}{\otimes}-)$$
\end{cor}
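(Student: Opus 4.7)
The plan is to reduce the statement to Theorem~\ref{th-h0gl} applied to the subring $B:={\rm Rq}(A)$ of $A$. By Corollary~\ref{cor-rq} one has $O_n(B)=O_n(A)$ for every $n$, so rewriting the left-hand side in terms of $B$ only amounts to replacing $F$ by its extension-of-scalars $\bar{F}:=F(A\underset{B}{\otimes}-):\mathbf{P}(B)\to\mathbf{Ab}$, which satisfies $\bar{F}(B^n)=F(A^n)$ with equal actions of $O_n(B)=O_n(A)$ and is compatible with the stabilization maps $B^n\hookrightarrow B^{n+1}$ (these map to $A^n\hookrightarrow A^{n+1}$).

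First I would verify that $\bar{F}$ remains analytic. The scalar extension $A\underset{B}{\otimes}-:\mathbf{P}(B)\to\mathbf{P}(A)$ is an additive functor, and precomposition by an additive functor preserves polynomial degree; hence $\bar{F}$ is analytic (resp. polynomial of degree $\leq d$) whenever $F$ is.

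Next I would verify the fixed-point identity ${\rm Rq}(B)=B$, which is what lets us invoke Theorem~\ref{th-h0gl} for the ring $B$. This is a formal consequence of the minimality clause of Corollary~\ref{cor-rq}: any subring $R\subseteq B$ with $O_n(R)=O_n(B)$ for all $n$ then also satisfies $O_n(R)=O_n(A)$, so the minimality of ${\rm Rq}(A)$ inside $A$ forces $R\supseteq{\rm Rq}(A)=B$, hence $R=B$; applying this to $R={\rm Rq}(B)$ yields ${\rm Rq}(B)=B$.

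With these two observations in hand, Theorem~\ref{th-h0gl} applied to the ring $B$ and the analytic functor $\bar{F}$ yields
$$\underset{n\in\mathbb{N}}{\col}\bar{F}(B^n)_{O_n(B)}\simeq\mathbb{Z}[S^2_B]^\vee\underset{\mathbf{P}(B)}{\otimes}\bar{F}.$$
Substituting $\bar{F}(B^n)=F(A^n)$ and $O_n(B)=O_n(A)$ on the left, and unpacking $\bar{F}=F(A\underset{B}{\otimes}-)$ on the right, reproduces exactly the claimed isomorphism. I do not expect a real obstacle here: the substantive work is done in Theorem~\ref{th-h0gl} and Corollary~\ref{cor-rq}, and the only mildly subtle point is the idempotence ${\rm Rq}\circ{\rm Rq}={\rm Rq}$, which drops out of the minimality characterization as above.
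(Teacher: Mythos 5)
Your proposal is correct and is exactly the deduction the paper intends (the paper gives no details beyond ``tenant compte du corollaire~\ref{cor-rq}''): apply Theorem~\ref{th-h0gl} to the ring $B={\rm Rq}(A)$ and the analytic functor $F(A\underset{B}{\otimes}-)$, using $O_n(B)=O_n(A)$ and the idempotence ${\rm Rq}(B)=B$, which you correctly extract from the minimality clause of Corollaire~\ref{cor-rq}. The only point you leave implicit is that $2$ is invertible in ${\rm Rq}(A)$ (needed to place $B$ in the standing hypotheses of \S\,\ref{par-h0}); this is immediate since $1-(1/2)^2=3/4$ is a sum of three squares $(1/2)^2$, so $1/2\in rq(A)$, as the paper's remark that ${\rm Rq}$ is an endofunctor of commutative rings with $2$ invertible already records.
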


\begin{nota}
 On note $\widetilde{{\rm Rq}}(A)$ le sous-anneau de $A$ engendré par ${\rm Rq}(A)$ et les éléments $x^{-1}$, où $x\in {\rm Rq}(A)$ est inversible dans $A$.
\end{nota}

Par platitude de $\widetilde{{\rm Rq}}(A)$ sur ${\rm Rq}(A)$, on en déduit (par la proposition~\ref{pr-chplat}) :

\begin{cor}\label{corh0-t}
 Pour tout foncteur analytique $F\in {\rm Ob}\,\mathbf{P}(A)-\mathbf{Mod}$, il existe un isomorphisme naturel
$$\underset{n\in\mathbb{N}}{\col}F(A^n)_{O_n(A)}\simeq\mathbb{Z}[S^2_{\widetilde{{\rm Rq}}(A)}]^\vee\underset{\mathbf{P}(\widetilde{{\rm Rq}}(A))}{\otimes} F(A\underset{\widetilde{{\rm Rq}}(A)}{\otimes}-)$$
\end{cor}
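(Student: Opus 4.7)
L'énoncé se déduit du corollaire~\ref{corh0} par un argument élémentaire de changement plat d'anneaux. Le point-clef est que $\widetilde{{\rm Rq}}(A)$ est, par construction même, une localisation de ${\rm Rq}(A)$ (on y a inversé les éléments de ${\rm Rq}(A)$ qui sont déjà inversibles dans $A$), donc un ${\rm Rq}(A)$-module plat ; c'est précisément cette platitude qui autorise le recours à la proposition~\ref{pr-chplat}.

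Le plan est alors le suivant. Je partirais du corollaire~\ref{corh0}, qui identifie
$\underset{n\in\mathbb{N}}{\col}F(A^n)_{O_n(A)}$ à $\mathbb{Z}[S^2_{{\rm Rq}(A)}]^\vee\underset{\mathbf{P}({\rm Rq}(A))}{\otimes} F(A\underset{{\rm Rq}(A)}{\otimes}-)$, puis je factoriserais l'extension des scalaires $A\underset{{\rm Rq}(A)}{\otimes}-$ à travers $\widetilde{{\rm Rq}}(A)$ sous la forme $(A\underset{\widetilde{{\rm Rq}}(A)}{\otimes}-)\circ(\widetilde{{\rm Rq}}(A)\underset{{\rm Rq}(A)}{\otimes}-)$. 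Parallèlement, la platitude donne un isomorphisme naturel $\widetilde{{\rm Rq}}(A)\underset{{\rm Rq}(A)}{\otimes}S^2_{{\rm Rq}(A)}(V)\simeq S^2_{\widetilde{{\rm Rq}}(A)}(\widetilde{{\rm Rq}}(A)\underset{{\rm Rq}(A)}{\otimes}V)$ pour tout module projectif $V$ sur ${\rm Rq}(A)$, qui permet de transporter proprement le facteur contravariant $\mathbb{Z}[S^2_{{\rm Rq}(A)}]^\vee$ après extension des scalaires à $\widetilde{{\rm Rq}}(A)$.

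L'application de la proposition~\ref{pr-chplat} permet alors d'identifier le produit tensoriel sur $\mathbf{P}({\rm Rq}(A))$ au produit tensoriel sur $\mathbf{P}(\widetilde{{\rm Rq}}(A))$ annoncé, en déplaçant la structure sur le bon côté du tenseur. La naturalité en $F$ est héritée de celle des deux résultats invoqués ; la conservation de l'hypothèse analytique est automatique, puisque tous les foncteurs en jeu commutent aux colimites filtrantes.

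Je n'anticipe pas d'obstacle majeur : l'essentiel du travail substantiel a été accompli dans le théorème~\ref{th-h0gl} et le corollaire~\ref{corh0}. Le seul point à vérifier avec un minimum d'attention est la compatibilité du changement plat d'anneaux avec les deux variables du produit tensoriel de foncteurs simultanément, ce que garantit précisément la forme générale de la proposition~\ref{pr-chplat}.
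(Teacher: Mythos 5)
Votre démarche est exactement celle du texte : déduire le corollaire~\ref{corh0-t} du corollaire~\ref{corh0} par le changement plat d'anneaux de la proposition~\ref{pr-chplat}, la platitude de $\widetilde{{\rm Rq}}(A)$ sur ${\rm Rq}(A)$ venant de ce que c'est une localisation ; la factorisation de $A\underset{{\rm Rq}(A)}{\otimes}-$ à travers $\widetilde{{\rm Rq}}(A)$ et l'identification de $\varphi_!\big(F(A\underset{{\rm Rq}(A)}{\otimes}-)\big)$ avec $F(A\underset{\widetilde{{\rm Rq}}(A)}{\otimes}-)$ sont correctes. En revanche, le traitement du facteur contravariant escamote le seul point qui demande un argument. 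Posons $R={\rm Rq}(A)$, $\tilde{R}=\widetilde{{\rm Rq}}(A)$ et $\varphi : \mathbf{P}(R)\to\mathbf{P}(\tilde{R})$ : la proposition~\ref{pr-chplat} ramène le membre de droite de l'énoncé à $\varphi^*\big(\mathbb{Z}[S^2_{\tilde{R}}]^\vee\big)\underset{\mathbf{P}(R)}{\otimes}F(A\underset{R}{\otimes}-)$, et l'isomorphisme de changement de base que vous invoquez identifie $\varphi^*\big(\mathbb{Z}[S^2_{\tilde{R}}]^\vee\big)$ à $\mathbb{Z}\big[\tilde{R}\underset{R}{\otimes}S^2_R((-)^\vee)\big]$ ; ce foncteur n'est \emph{pas} $\mathbb{Z}[S^2_R]^\vee$, car la linéarisation $\mathbb{Z}[-]$ ne commute pas à la localisation $S^2_R(V^\vee)\to\tilde{R}\underset{R}{\otimes}S^2_R(V^\vee)$. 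Il reste donc à montrer que cette localisation devient un isomorphisme après application de $\mathbb{Z}[-]\underset{\mathbf{P}(R)}{\otimes}F(A\underset{R}{\otimes}-)$. Pour cela, on écrit $\tilde{R}\underset{R}{\otimes}M$ comme colimite filtrante de copies de $M$ dont les flèches de transition sont les multiplications par les $s^2$, où $s$ parcourt les éléments de $R$ inversibles dans $A$ (inverser les $s$ ou les $s^2$ revient au même) ; la multiplication par $s^2$ sur $S^2_R(V^\vee)$ est l'image par le foncteur contravariant $S^2_R((-)^\vee)$ de la transformation naturelle $s\cdot{\rm id}$ de l'identité de $\mathbf{P}(R)$, donc, par dinaturalité du produit tensoriel au-dessus de $\mathbf{P}(R)$, elle y agit comme $F\big(A\underset{R}{\otimes}(s\cdot{\rm id})\big)$, qui est un isomorphisme puisque $s$ est inversible dans $A$. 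C'est cet argument --- absent de votre proposition, et il est vrai laissé implicite par le texte, qui énonce le corollaire sans démonstration --- qui porte tout le contenu de la réduction ; le reste de votre plan est conforme et n'appelle pas d'objection.
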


\begin{ex}
\begin{enumerate}
\item On tire des corollaires~\ref{corh0} et~\ref{cor-tens} un isomorphisme canonique
$$\underset{n\in\mathbb{N}}{\col}S^2_\mathbb{Z}(A^n)_{O_n(A)}\simeq S^2_{{\rm Rq}(A)}(A).$$
 \item Supposons que $k$ est un corps ordonné et que $A=k[X]$ est un anneau de polynômes sur $k$. Alors ${\rm Rq}(A)=k$. (Si $\sum_i P^2_i=1$ dans $A$, considérer le degré maximal $d$ des $P_i$ : le coefficient de $X^{2d}$ dans la somme est une somme de carrés de $k$, qui n'est pas nulle puisque $k$ est ordonné, ce qui impose $d=0$.) On en déduit que le morphisme canonique
$$\underset{n\in\mathbb{N}}{\col}S^2_\mathbb{Z}(A^n)_{O_n(A)}\to\underset{n\in\mathbb{N}}{\col}S^2_\mathbb{Z}(A^{2n})_{O_{n,n}(A)}$$
s'identifie au produit
$$S^2_k(k[X])\to k[X],$$
qui {\em n'est pas} un isomorphisme.

Ainsi, la proposition~\ref{pr-css} ne se généralise pas sans restriction, même pour les modules projectifs de type fini sur un anneau principal !
\item Si l'anneau $A$ est local, alors $\widetilde{{\rm Rq}}(A)=A$. En effet, soit $a$ un élément de $A$ tel que $1+a^2\in A^\times$ : la somme de carrés $1+1+a^2+a^2$ est inversible, donc la proposition~\ref{prs-rq} montre que $\frac{a}{1+a^2}$ et $\frac{1}{1+a^2}$ sont dans $rq(A)$. Par conséquent, $a\in\widetilde{{\rm Rq}}(A)$. Comme il n'est pas possible que $1+a^2$, $1+(1+a)^2$ et $1+(1-a)^2$ soient simultanément non inversibles (projeter dans le corps résiduel), on en déduit notre assertion.
\item En revanche, même si $A$ est un corps, on n'a pas nécessairement ${\rm Rq}(A)=A$. En effet, si $k$ est un corps ordonné, on vérifie aisément l'inclusion ${\rm Rq}\big(k((X))\big)\subset k[[X]]$.
\item Il s'impose de déterminer si le théorème~\ref{th-h0gl} (et donc ses corollaires) s'étend en degré homologique supérieur. Il est patent que la méthode utilisée dans le paragraphe suivant s'applique potentiellement à d'autres cas que celui où on la met en \oe uvre, mais certainement pas à tous les anneaux. L'auteur doute d'ailleurs de la véracité d'une telle généralisation du théorème~\ref{th-h0gl} sans aucune hypothèse supplémentaire, mais trouver des contre-exemples semble très ardu.
\end{enumerate}
\end{ex}

\subsection{Anneaux d'entiers de corps de nombres}\label{par-saq}

Dans cette section, on se donne une extension finie\,\footnote{On peut étendre facilement les résultats de ce paragraphe à une extension algébrique quelconque en l'écrivant comme colimite filtrante de ses sous-extensions finies.} $K$ de $\mathbb{Q}$ et une partie multiplicative de $\mathbb{Z}$ contenant $2$ ; $A$ est le sous-anneau de $K$ obtenu en localisant relativement à cette partie multiplicative l'anneau des entiers de $K$. La classe $\mathfrak{Q}$ est toujours la même. 

On note $\Pl$ l'ensemble (fini) des morphismes de corps de $K$ dans $\mathbb{R}$ (la notation vient d'une abréviation de {\em plongements}). On supposera que $\Pl$ est non vide, sans quoi $-1$ est une somme de carrés dans $A$ et il n'y a rien à faire.

On notera $x\preceq y$, pour $x$ et $y$ dans $K$, pour signifier que $y-x$ est {\em totalement positif}, c'est-à-dire une somme de carrés ($\preceq$ est donc une relation d'ordre {\em partiel} compatible avec la structure de corps de $K$). Cela équivaut à dire que $\alpha(x)\leq\alpha(y)$ pour tout $\alpha\in\Pl$ (cf. par exemple \cite{Lam}, chap. XIII).

\begin{lm}\label{lm-scar}
 Si $a\preceq b$ avec $a$ et $b$ dans $A$, alors $b-a$ est une somme de carrés de $A$.
\end{lm}

\begin{proof}
 Posons $S=A\cap\mathbb{Q}$  : ce sous-anneau de $\mathbb{Q}$ contient $\mathbb{Z}[1/2]$, il est donc {\em dense} (pour la topologie usuelle). Alors $A$ est un $S$-module libre de type fini (puisque les entiers de $K$ forment un groupe abélien libre de type fini) ; soit $(a_i)_{1\leq i\leq d}$ une base --- c'est aussi une base de $K$ sur $\mathbb{Q}$.

Donnons-nous $x\succ 0$ dans $A$ ; écrivons
$$x=\sum_i{t_i a_i}\quad\text{et}\quad\sum_i{\Big(\frac{1}{4}-a_i\Big)^2 =\sum_i{u_i a_i}}.$$
Par densité, il existe $\epsilon>0$ dans $S$ tel que $\sum_i{\Big(t_i-\epsilon\big(\frac{1}{2}+u_i\big)\Big)a_i}\succ 0$ (utiliser la caractérisation de la relation $\succ$ à l'aide des éléments de $\Pl$, lesquels sont en nombre fini et laissent $\mathbb{Q}$ invariant). Par conséquent, on peut trouver des éléments $x_1,\dots,x_n$ de $K$ tels que $\sum_j x_j^2=x-\epsilon\sum_i a_i$. Il s'ensuit que le sous-ensemble
$$\{\lambda=(\lambda_{i,j})_{1\leq i\leq d, 1\leq j\leq n}\,|\,\forall 1\leq i\leq d\qquad t_i-\epsilon u_i-\epsilon<f_i(\lambda)<t_i-\epsilon u_i \}$$
de $\mathbb{Q}^{nd}$, où les $f_i$ sont les fonctions polynomiales caractérisées par
$$\sum_j\Big(\sum_i\lambda_{i,j}a_i\Big)^2=\sum_i f_i(\lambda)a_i\;,$$
est non vide. Comme c'est un ouvert, il contient un élément $\lambda$ de  la partie dense $S^{nd}$. Ainsi,
$$x=\sum_j\Big(\sum_i\lambda_{i,j}a_i\Big)^2 +\sum_i (t_i-f_i(\lambda)-\epsilon u_i)a_i+\epsilon\sum_i(1/4-a_i)^2$$
est une somme de carrés de $A$ puisque
$$\sum_i (t_i-f_i(\lambda)-\epsilon u_i)a_i+\epsilon\sum_i(1/4-a_i)^2=\dots$$
$$=\sum_i (t_i-f_i(\lambda)-\epsilon u_i)(1/4+a_i)^2+\sum_i (\epsilon-t_i+f_i(\lambda)+\epsilon u_i)(1/4-a_i)^2$$
et que les éléments $t_i-f_i(\lambda)-\epsilon u_i$ et $\epsilon-t_i+f_i(\lambda)+\epsilon u_i$ de $S$ sont par hypothèse positifs, donc sommes de carrés.
\end{proof}

\begin{rem}
 On peut montrer, en utilisant davantage d'arithmétique, que tout élément totalement positif de $A$ est somme de cinq carrés (dans $A$). L'auteur doit cette remarque à G. Collinet.
\end{rem}

On munit $K$ de la topologie obtenue par restriction de la topologie usuelle de $\mathbb{R}^\Pl$ au moyen de l'injection canonique.

On utilisera couramment les propriétés suivantes :
\begin{lm}\label{lm-ptop}
 \begin{enumerate}
\item La topologie de $K$ est compatible à sa structure de corps.
  \item Le sous-ensemble $A$ de $K$ est dense.
  \item Pour tout $x\in K$, les sous-ensembles $\{t\in K\,|\,t\preceq x\}$ et $\{t\in K\,|\,t\succeq x\}$ sont fermés dans $K$, tandis que $\{t\in K\,|\,t\prec x\}$ et $\{t\in K\,|\,t\succ x\}$ sont ouverts.
 \end{enumerate}
\end{lm}

\begin{proof}
 La première assertion résulte de ce que les éléments de $\Pl$ sont des morphismes de corps et que la topologie usuelle de $\mathbb{R}$ est compatible à sa structure de corps.

Le deuxième point provient de ce que $A\cap\mathbb{Q}$, qui contient $\mathbb{Z}[1/2]$, est dense dans $\mathbb{Q}$ (pour la topologie usuelle, qui est aussi celle qu'induit la topologie qu'on a choisie sur $K$) et que $K$ est engendré comme monoïde multiplicatif par $A$ et $\mathbb{Q}$, puisque $A$ contient les entiers de $K$.

La dernière assertion découle de la compatibilité de la topologie de $\mathbb{R}$ à son ordre et, pour le point concernant les inégalités strictes, de la finitude de $\Pl$. 
\end{proof}

On munit tout $K$-espace vectoriel (ou tout $K$-espace affine) de dimension finie de la topologie la moins fine rendant continues toutes les formes linéaires. Cette topologie est compatible à la structure de $K$-espace vectoriel, et toutes les applications linéaires entre $K$-espaces vectoriels de dimension finie sont continues. Si $M$ est un $A$-module projectif de type fini, on déduit du deuxième point du lemme que le sous-ensemble $M$ du $K$-espace espace vectoriel $M_K=K\underset{A}{\otimes} M$ est dense.

Par ailleurs :

\begin{lm}\label{lm-dinv}
 Étant donné $a\succ 0$ dans $A$, il existe $t\in A^\times$ tel que $a\succeq t^2$.
\end{lm}

\begin{proof}
Comme tous les réels $\alpha(a)$ sont strictement positifs et que $\Pl$ est fini, on a $\alpha(2^{-2n})=2^{-2n}\preceq\alpha(a)$ pour tout $\alpha\in\Pl$ et $n\in\mathbb{N}$ suffisamment grand. Pour un tel $n$, $t=2^{-n}$ convient. 
\end{proof}

Dans la suite, nous nous appuyerons uniquement sur les lemmes~\ref{lm-scar}, \ref{lm-ptop} et~\ref{lm-dinv}.

\begin{lm}\label{lm-prtp}
\begin{enumerate}
 \item Soient $V$ un $A$-module projectif de type fini et $q$ une forme quadratique sur $V$. Si $q(v)\succeq 0$ (resp. $q(v)\succ 0$) pour tout $v\in V\setminus 0$, alors $q_K(x)\succeq 0$ (resp. $q_K(x)\succ 0$) pour tout $x\in V_K\setminus 0$.
\item Soit $E$ un $K$-espace vectoriel de dimension finie. L'ensemble des formes quadratiques $Q$ sur $K$ telles que $Q(x)\succ 0$ pour tout $x\in E\setminus 0$ est ouvert dans l'espace vectoriel des formes quadratiques sur $E$.
\end{enumerate}
\end{lm}
\begin{proof}
Le premier point résulte de ce que $V_K$ est une localisation de $V$. 

Pour la seconde assertion, on raisonne par récurrence sur la dimension $n$ de $E$ ; le cas $n\leq 1$ est immédiat (à partir du lemme~\ref{lm-ptop}). Supposons $n>1$ ; considérons une base $(e_1,\dots,e_n)$ de $V$ et notons $W$ le sous-espace engendré par les $e_i$ pour $i>1$. La condition $Q(x)\succ 0$ pour $x\neq 0$ ($Q\succ 0$ en abrégé) équivaut à la conjonction des assertions suivantes :
\begin{enumerate}
 \item $Q(e_1)\succ 0$ ;
\item la restriction de la forme $Q(e_1)Q-B(e_1,-)^2$ (où $B$ est la forme bilinéaire associée à $Q$) à $W$ est $\succ 0$.
\end{enumerate}
(La vérification est une variation autour de l'inégalité de Cauchy-Schwartz.)

Par conséquent, l'hypothèse de récurrence et le lemme~\ref{lm-ptop} donnent la conclusion.
\end{proof}

On note $\mathbf{Q}_{++}$ la sous-catégorie pleine de $\A^T$ des espaces quadratiques $(V,q)$ tels que $q(x)\succ 0$ pour $x\in V\setminus\{0\}$.

\begin{lm}\label{lm-fqq}
 \begin{enumerate}
  \item La sous-catégorie $\mathbf{H}^T(\A)$ est incluse dans $\mathbf{Q}_{++}$ ;
\item l'inclusion $\mathbf{Q}_{++}\to\A^T$ possède un adjoint à gauche ;
\item tout $A$-espace quadratique $(V,q)$ (où $V$ est un $A$-module projectif de type fini) tel que $q(x)\succ 0$ appartient à $\mathbf{Q}_{++}$ ; plus précisément, il existe un morphisme de $(V,q)$ dans un espace quadratique $(A^n,\varepsilon_n)$ qui est un monomorphisme scindé comme morphisme de $A$-modules.
 \end{enumerate}
\end{lm}

\begin{proof}
 Si $(V,q)$ est un espace quadratique sur $A$ tel que $q(v)\succeq 0$ pour tout $v\in V$, alors $q(x)=0$ entraîne $x\in {\rm Rad}\,(V,q)$ (pour tout $y\in V$, la fonction $A\to A\quad q(tx+y)$ est affine et à valeurs totalement positives, donc constante, par densité de $A$ dans $K$). Cela fournit les deux premiers points (l'adjoint à gauche s'obtient en quotientant par le radical).

Pour la dernière assertion, il suffit de traiter le cas où $V$ est libre. On raisonne alors par récurrence sur le rang $d$ de $V$. Pour $d=1$, cela provient du lemme~\ref{lm-dinv}.

On suppose maintenant $d>1$ et l'assertion vérifiée pour les espaces de rang $d-1$. Soient $(e_1,\dots,e_d)$ une base de $V$ et $a_1,\dots,a_n$ des éléments de $A$ tels que $q(e_1)=\sum_i a_i^2$ avec $a_1$ inversible (lemme~\ref{lm-dinv}). Notons $\E$ l'ensemble des $n$-uplets $(l_1,\dots,l_n)$ de formes linéaires sur $V$ vérifiant :
\begin{enumerate}
 \item $l_i(e_1)=a_i$ pour tout $1\leq i\leq d$ ;
\item $\underset{i}{\sum} a_i l_i=B(e_1,-)$ où $B$ est la forme bilinéaire associée à $q$.
\end{enumerate}

Alors $\E$ est un sous-espace affine de $(V^*)^n$ --- il est non vide car, si $L$ est une forme linéaire telle que $L(e_1)=1$, poser $l_i=a_i L$ pour $i>1$ et $l_1=a_1^{-1}\Big(B(e_1,-)-\underset{i>1}{\sum}a_i l_i\Big)$ fournit un élément de $\E$.

Si ${\rm l}=(l_i)$ est un élément de $\E$, $e_1$ appartient au radical de la forme quadratique $q-\sum_i l_i^2$, laquelle induit donc une forme quadratique $Q_{\rm l}$ sur $W=V/e_1$.

Considérons l'ensemble des $\lambda\in\E_K$ tels que $Q_\lambda(t)\succ 0$ pour tout $t\in W_K\setminus 0$ (on rationalise la construction précédente). Il est ouvert dans $\E_K$ par les lemmes~\ref{lm-prtp} et~\ref{lm-ptop}. Il est également non vide : si l'on pose $l_i=\frac{a_i}{q(e_1)}B_K(e_1,-)$, $\lambda=(l_i)$ appartient à $\E_K$ et 
$$\sum_i l_i^2=\frac{B_K(e_1,-)^2}{q(e_1)}$$
de sorte que l'on conclut par l'inégalité de Cauchy-Schwartz. La densité de $\E$ dans $\E_K$ implique donc qu'existe ${\rm l}\in\E$ tel que ${\rm l}\in\E$ tel que $Q_{\rm l}(t)\succ 0$ pour tout $t\in W\setminus 0$.

 On conclut en appliquant l'hypothèse de récurrence à la forme quadratique $Q_{\rm l}$ (la relation $l_1(e_1)=a_1\in A^\times$ implique qu'on obtient bien un monomorphisme scindé).
\end{proof}

\begin{lm}\label{ineg-dens}
 Soient $(V,q)$, $(E,Q)$ des $A$-espaces quadratiques (avec $V$ et $E$ projectifs de type fini) et $f : V\to E$ une application linéaire telle que $q(x)\succeq Q(f(x))$ pour tout $x\in V$.

Alors il existe $n\in\mathbb{N}$ et un morphisme quadratique $(V,q)\to (E,Q)\overset{\perp}{\oplus} (A^n,\varepsilon_n)$ dont la composante $V\to E$ est $f$.
\end{lm}

\begin{proof}
 Il s'agit de trouver des formes linéaires $l_1,\dots,l_n$ sur $V$ telles que 
$$q(x)=Q(f(x))+\sum_i l_i^2(x)\;;$$
par conséquent, il suffit de traiter le cas $E=0$. Si $q$ ne s'annule qu'en $0$, c'est un cas particulier du dernier point du lemme précédent ; le cas général s'y ramène en tuant le radical (cf. début de la démonstration du lemme précédent).
\end{proof}

\begin{lm}\label{lm-ccd}
 Soit
$$\xymatrix{V\ar[r]^i\ar[rd]_u & C\\
& B
}$$
un diagramme de $\mathbf{Q}_{++}$ tel que $\pi(u)$ soit un monomorphisme scindé ; notons $\beta$ et $\gamma$ les formes bilinéaires sur $B$ et $C$ respectivement.

On suppose qu'il existe une application linéaire $f : \pi(B)\to\pi(C)$ telle que :
\begin{enumerate}
 \item $f\circ u=i$ ;
\item $\forall (a,b)\in V\times B\quad \beta(u(a),b)=\gamma(i(a),f(b))$.
\end{enumerate}

Alors il existe $n\in\mathbb{N}$ et un diagramme commutatif de $\mathbf{Q}_{++}$ du type
$$\xymatrix{V\ar[r]^i\ar[rd]_u & C\ar[r] & C\overset{\perp}{\oplus}(A^n,\varepsilon_n)\\
& B\ar[ru] &
}$$
\end{lm}

\begin{proof}
 Notons $\E$ le sous-espace affine de ${\rm Hom}_A(\pi(B),\pi(C))$ des applications linéaires $f$ vérifiant les deux conditions de l'énoncé, $q_B$ et $q_C$ les formes quadratiques respectives sur $B$ et $C$. Pour $f\in\E$, la forme quadratique $q_B-\phi^*q_C$ a un radical contenant $Im\,u$, en vertu des égalités
$$\beta(u(a),b)=\gamma(i(a),f(b))=\gamma(f(u(a)),f(b))\;;$$
par conséquent, elle induit une forme quadratique $Q_f$ sur $Coker\,u$, qui est un $A$-module projectif de type fini puisque $\pi(u)$ est un monomorphisme scindé. Le lemme~\ref{ineg-dens} montre qu'il suffit d'établir l'existence de $f\in\E$ telle que $Q_f\succeq 0$.

Commençons par le faire en tensorisant par $K$ : comme le rationalisé d'un espace de $\mathbf{Q}_{++}$ est non dégénéré, le diagramme initial se rationalise en
$$\xymatrix{V_K\ar[r]^-{i_K}\ar[rd]_-{u_K} & C_K\simeq V_K\overset{\perp}{\oplus} W\\
& B_K\simeq V_K\overset{\perp}{\oplus} U
}$$
et l'application linéaire $\phi : B_K\twoheadrightarrow V_K\hookrightarrow C_K$ est un élément de $\E_K$ tel que $Q_\phi\succ 0$ (cette forme quadratique s'identifie à la forme sur $U$). On conclut par un argument de densité identique à celui employé pour démontrer le lemme~\ref{lm-fqq}.
\end{proof}

\begin{thm}\label{th-anord}
 L'inclusion $i : \mathbf{H}^T(\A)\to\A^T$ induit un isomorphisme
$$H_*(\mathbf{H}^T(\A);i^*\pi^* F)\to H_*(\A^T;\pi^*F)\simeq {\rm Tor}^{\mathbf{P}(A)}_*(\mathbb{Z}[S^2_A]^\vee,F)$$
pour tout foncteur analytique $F\in {\rm Ob}\,\F(A)$.
\end{thm}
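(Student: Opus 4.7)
The plan is to mirror the strategy of Theorem~\ref{th-dvg} but in two stages, the second of which requires the full strength of the preparatory lemmas of §\,\ref{par-saq} to compensate for the fact that hyperbolic objects are absent from $\mathbf{H}^T(\A)$. First, I would use Lemma~\ref{lm-fqq}(2) to reduce from $\A^T$ to $\mathbf{Q}_{++}$. Since the inclusion $\mathbf{Q}_{++}\hookrightarrow\A^T$ admits a left adjoint (the quotient by the radical), it is a reflective subcategory; the unit of this adjunction yields a natural transformation providing a canonical ``homotopy'' of functors, so the inclusion induces an isomorphism $H_*(\A^T;\pi^*F)\xrightarrow{\simeq}H_*(\mathbf{Q}_{++};\pi^*F)$ for any coefficient system. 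This reduces the theorem to establishing that $\mathbf{H}^T(\A)\hookrightarrow\mathbf{Q}_{++}$ induces an isomorphism in homology with polynomial coefficients.

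Second, I would apply Proposition~\ref{pr-auxhg} (or its corollary), viewing $\mathbf{Q}_{++}$ as a monoidal subcategory of $\A^T$ closed under the relevant constructions, with $\C=\mathbf{Q}_{++}$, $\D=\mathbf{H}^T(\A)$, $\Phi=\mathrm{id}_\C$, and $\B$ the subcategory of morphisms $f$ with $\pi(f)\in\mathbf{M}(\A)$. Conditions (1) and (2) are immediate from the choice of $\Phi$ and $\B$. For condition (3), the non-emptiness of $\K(c)$ for $c=(V,q)\in\mathbf{Q}_{++}$ is Lemma~\ref{lm-fqq}(3). Connectivity between two objects $(d_1,f_1)$ and $(d_2,f_2)$ of $\K(c)$ follows by applying Lemma~\ref{lm-ccd} with $i=f_1$, $u=f_2$, and a linear map $\phi:\pi(d_2)\to\pi(d_1)$ constructed using the non-degeneracy of $d_1$ to solve the adjoint equation $B_{d_1}(f_1(a),\phi(b))=B_{d_2}(f_2(a),b)$; the lemma then produces a common target $d_1\overset{\perp}{\oplus}(A^n,\varepsilon_n)\in\mathbf{H}^T(\A)$ and morphisms into it from both $d_1$ and $d_2$ making the required cospan commute.

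The main obstacle is condition (4), the factorization requirement. We must exhibit an $f:\Phi(c)=c\to d$ in $\K(c)$ for which the composite $\tilde c=(V,2q)\xrightarrow{\mathrm{diag}} c\overset{T}{\oplus}c\xrightarrow{c\oplus f}c\overset{T}{\oplus}d$ factors through an object of $\mathbf{H}^T(\A)$, and the difficulty is that $\tilde c$ itself can fail to be non-degenerate over $A$ even though $2q$ is positive definite on $V_K$. This is precisely where the arithmetic input from §\,\ref{par-saq} enters: starting from a split-mono embedding $\tilde c\hookrightarrow(A^N,\varepsilon_N)\in\mathbf{H}^T(\A)$ given by Lemma~\ref{lm-fqq}(3), and using the orthogonal decomposition $A^N_K=(u V)_K\oplus(uV)_K^\perp$ over the fraction field together with the density arguments of Lemmas~\ref{lm-scar}, \ref{lm-ptop} and~\ref{lm-dinv} to approximate and correct over $A$, one applies Lemma~\ref{lm-ccd} with $u$ the split-mono into $(A^N,\varepsilon_N)$, $i$ the composite $\tilde c\to c\oplus d$, and $\phi$ built from a suitable retraction, to produce the required intermediate non-degenerate object. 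Combining with Proposition~\ref{pr-cpts}, which identifies $\mathrm{Tor}^\A_*(\mathbb{Z}[T],F)$ with $\mathrm{Tor}^\A_*(\mathbb{Z}[S^2_A]^\vee,F)$ for $F$ analytic, yields the target appearing in the statement.
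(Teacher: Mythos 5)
Your overall route is the paper's: pass to positive--definite spaces, run the machinery of §\,\ref{s-mos} (proposition~\ref{pr-auxhg}/corollaire~\ref{cor-auxhg}) with $\Phi$ the identity, and verify connectivity of $\K(c)$ and the factorization condition by means of the lemme~\ref{lm-ccd} together with the density lemmas; the symmetrization step via the proposition~\ref{pr-cpts} is also the paper's. The substantive verifications you sketch are essentially correct. Two remarks on them: for connectivity, the map $\phi$ must satisfy \emph{both} $\phi\circ f_2=f_1$ and the adjoint relation — solving the adjoint equation alone does not force $\phi\circ f_2=f_1$; the paper's formula $f=ir+\varphi(1-ur)$ (with $p,r$ retractions and $\varphi$ the adjoint solution) arranges both, and you need something of that shape. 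For condition (4), your detour through a split embedding $\tilde c\hookrightarrow(A^N,\varepsilon_N)$ is workable (one can build the required $f:A^N\to\pi(c\oplus d)$ by the same $ir+\varphi(1-ur)$ device, using that $f_0$ is split mono and $Q$ non dégénérée), but the paper's choice is more economical: it applies the lemme~\ref{lm-ccd} directly to $(V,2q)\to(V,q)\oplus(E,Q)$ and $(V,2q)\xrightarrow{u}(E,2Q)$, with $f:E\to V\oplus E$ of components $p$ and $2-up$, so that the factorization passes through $(E,2Q)\in\mathbf{H}^T(\A)$ with no further arithmetic input.

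There is, however, a genuine gap in how you interface the reduction to $\mathbf{Q}_{++}$ with the proposition~\ref{pr-auxhg}. That proposition and its corollary are stated and proved for $\C=\A^T$: the proof transfers everything to $\mathbf{Mod}-\A$ via the extension de Kan $\pi_!$, whose exactness and explicit formula $\pi_!(X)(W)=\bigoplus_{x\in T(W)}X(W,x)$ rest on $\C$ being of the form $\A^T$. The category $\mathbf{Q}_{++}$ is a full monoidal subcategory of $\A^T$ but is \emph{not} of the form $\A^{T'}$ for a subfunctor $T'$ of $T$ (positive definiteness is not preserved by pullback along non-injective maps), so you cannot simply take $\C=\mathbf{Q}_{++}$ in the proposition~\ref{pr-auxhg}; the needed exactness and Scorichenko criterion on $\mathbf{Mod}-\A$ would have to be re-established (e.g.\ via $\iota_!\simeq L^*$ for the radical-quotient adjoint $L$), which is not automatic. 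Your Stage 1 (the reflective-subcategory isomorphism $H_*(\mathbf{Q}_{++};\pi^*F)\simeq H_*(\A^T;\pi^*F)$, valid for arbitrary coefficients) is fine in itself, but it does not by itself make Stage 2 legitimate. The paper sidesteps this entirely by keeping $\C=\A^T$ and encoding the passage to $\mathbf{Q}_{++}$ in the choice of $\B$: the morphisms inducing a split monomorphism on the quotients by the radical. Then $\K(c)$ identifies with $\K(c/{\rm Rad}\,c)$ and the two nontrivial hypotheses are checked on objects of $\mathbf{Q}_{++}$, exactly as you intend, but inside the licensed framework. With that adjustment your argument goes through.
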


\begin{proof}
 On raisonne de façon assez similaire à la démonstration du théorème~\ref{th-h0gl}, en s'appuyant cette fois-ci sur le corollaire~\ref{cor-auxhg}. On choisit pour $\Phi$ le foncteur identité et pour $\B$ la sous-catégorie de $\A^T$ formée des morphismes $f : A\to B$ qui induisent un morphisme $A/{\rm Rad}\,A\to B/{\rm Rad}\,B$ dont l'image par $\pi$ est un monomorphisme scindé, de sorte que les deux premières hypothèses de la proposition~\ref{pr-auxhg} sont trivialement vérifiées.

Quitte à quotienter par le radical, il suffit de vérifier les deux dernières hypothèses de la proposition~\ref{pr-auxhg} sur les objets $V$ appartenant à $\mathbf{Q}_{++}$ (de sorte que tous les objets de $\K(V)$ ont une image par $\pi$ qui est dans $\mathbf{M}(\A)$).

Considérons deux objets $V\xrightarrow{i} C$ et $V\xrightarrow{u} B$ de $\K(V)$ et montrons qu'ils appartiennent à la même composante connexe. Par le lemme~\ref{lm-ccd} (dont on conserve les notations), il suffit d'établir l'existence d'une application linéaire $f : \pi(B)\to\pi(C)$ telle que  $f\circ u=i$ et $\beta(u(a),b)=\gamma(i(a),f(b))$. Soient $p$ et $r$ des rétractions de $\pi(i)$ et $\pi(u)$ respectivement : il existe une (et une seule) application linéaire $\varphi : \pi(B)\to\pi(C)$ telle que $\gamma(x,\varphi(b))=\beta(up(x),b)$ ($\gamma$ est non dégénérée); posons $f=ir+\varphi(1-ur)$. Comme $r\circ u=1$, $f\circ u=i$, et
$$\gamma(i(a),f(b))=\gamma(i(a),i(r(b)))+\gamma(i(a),\varphi(b-ur(b)))=\dots$$
$$\beta(u(a),u(r(b)))+\beta(upi(a),b-ur(b))=\beta(u(a),b)$$
puisque $i$ et $u$ sont quadratiques et que $pi=1$. Cela montre la connexité de $\K(V)$.

La dernière propriété à vérifier se prouve de façon analogue : choisissons un morphisme $u : (V,q)\to (E,Q)$, où $(V,q)$ est toujours dans $\mathbf{Q}_{++}$, $(E,Q)$ dans $\mathbf{H}^T(\A)$ et $u$ possède une rétraction $A$-linéaire $p$. On applique le lemme~\ref{lm-ccd} au diagramme de $\mathbf{Q}_{++}$
$$\xymatrix{(V,2q)\ar[r]^-{diag}\ar[rd]_-u & (V,q)\oplus (V,q)\ar[r]^-{id\oplus u} & (V,q)\oplus (E,Q)\\
& (E,2Q) &
}$$
L'application linéaire $f : E\to V\oplus E$ de composantes $p$ et $2-up$ a bien une composée avec $u$ de  composantes $1$ et $u$, et l'on a, pour $t\in V$ et $x\in E$,
$$<(t,u(t)),f(x)>_{q\oplus Q}=<t,p(x)>_q+<u(t),2x-up(x)>_Q=2<u(t),x>_Q$$
comme souhaité, puisque $u$ est quadratique (on désigne ici par $<-,->_a$ la forme bilinéaire associée à une forme quadratique $a$). Cela termine la démonstration.
\end{proof}

\begin{cor}\label{cor-ar}
 Soient $F\in {\rm Ob}\,\F(A)$ un foncteur analytique et $n\in\mathbb{N}$. Il existe un isomorphisme
$$H_n(O_\infty(A);F_\infty)\simeq\bigoplus_{i+j=n}H_i\big(O_\infty(A);{\rm Tor}_j^{\mathbf{P}(A)}(\mathbb{Z}[S^2]_A^\vee,F)\big).$$
\end{cor}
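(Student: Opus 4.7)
\textbf{Plan for the proof of Corollary~\ref{cor-ar}.} The strategy is to combine Theorem~\ref{th-anord} with the general machinery of Section~\ref{sec1}, in complete analogy with the way Corollary~\ref{corf} is deduced from Theorem~\ref{th-dvg} and Proposition~\ref{pr-ksh1} in the generic (hyperbolic) setting. No new arithmetic input is required: the delicate work has already been done in establishing Theorem~\ref{th-anord}.

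First I verify that the formalism of Section~\ref{sec1} applies. Our $T = T_\mathfrak{Q}$, associated to the class $\mathfrak{Q} = \{(A^n,\varepsilon_n)\}_{n \in \mathbb{N}}$, is a sub-monoid functor of $S = SD^2_1$ on $\A = \mathbf{P}(A)$ that is generated by non-degenerate forms. Lemma~\ref{sfct-grab} therefore identifies its symmetrization with $S$ itself, so that Hypothesis~\ref{hyp-soutruc2} is satisfied; Proposition~\ref{prhs1e} then guarantees that the monoidal category $\mathbf{H}^T(\A)$ meets the axioms of Section~\ref{sec1}.

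Second, I apply Proposition~\ref{pr-ksh1} (its third, non-naturally-split form) to the functor $i^*\pi^*F$, choosing as tranche the cofinal subcategory spanned by $\mathfrak{Q}$, which gives $U^T_\infty(\A) = O_\infty(A)$. By the cofinality argument recalled just after Proposition~\ref{pr-is6}, the left-hand side $H^{st}_n(\mathbf{H}^T(\A); i^*\pi^*F)$ is nothing but $H_n(O_\infty(A); F_\infty)$, so one obtains
$$H_n(O_\infty(A); F_\infty) \simeq \bigoplus_{p+q=n} H_p\!\left(O_\infty(A);\, H_q(\mathbf{H}^T(\A); i^*\pi^*F)\right)$$
with trivial $O_\infty(A)$-action on the coefficients.

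Third, and this is the only step that uses anything specific to the present arithmetic setting, I invoke Theorem~\ref{th-anord} (which requires precisely that $F$ be analytic) to rewrite $H_q(\mathbf{H}^T(\A); i^*\pi^*F) \simeq {\rm Tor}^{\mathbf{P}(A)}_q(\mathbb{Z}[S^2_A]^\vee, F)$; substituting in the previous display yields the claimed decomposition. The only potentially delicate point is checking that the $O_\infty(A)$-action on these Tor groups is indeed trivial, but this is automatic from Proposition~\ref{pr-ksh1}, since the splitting there already carries trivial action. The real obstacle — Theorem~\ref{th-anord} itself, which rests on the density and approximation lemmas~\ref{lm-scar}--\ref{lm-ccd} specific to localizations of rings of integers — lies upstream; from it the corollary is purely a formal assembly.
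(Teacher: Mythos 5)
Your argument is correct and is precisely the deduction the paper intends (it leaves the proof of Corollary~\ref{cor-ar} implicit): combine the splitting of Proposition~\ref{pr-ksh1} (via Proposition~\ref{pr-is6}, with $\mathfrak{Q}$ cofinal so that $U^{T_\mathfrak{Q}}_\infty(\A)=O_\infty(A)$) with the identification of the coefficients furnished by Theorem~\ref{th-anord}, exactly as Corollary~\ref{corf} is obtained from Proposition~\ref{pr-ksh1} and Theorem~\ref{th-dvg}. Nothing is missing.
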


\appendix
\section{Homologie des foncteurs}

Cet appendice rappelle quelques définitions et propriétés homologiques classiques des catégories de foncteurs (d'une petite catégorie vers les groupes abéliens). On pourra se reporter par exemple à l'article de Pirashvili dans l'ouvrage de synthèse \cite{FFPS} pour plus de détails et de références.

\subsection{Généralités}\label{sagl}

Dans ce paragraphe, $\C$ désigne une petite catégorie.

La catégorie $\C-\mathbf{Mod}$ est une catégorie de Grothendieck avec un ensemble de générateurs projectifs de type fini $P^\C_c:=\mathbb{Z}[\C(c,-)]$ --- le lemme de Yoneda procure en effet un isomorphisme canonique $(\C-\mathbf{Mod})(P^\C_c,F)\simeq F(c)$.

On dispose d'un bifoncteur biadditif
$$\underset{\C}{\otimes} : (\mathbf{Mod}-\C)\times (\C-\mathbf{Mod})\to\mathbf{Ab}$$
tel qu'existent des isomorphismes canoniques
$$P^{\C^{op}}_c\underset{\C}{\otimes}F\simeq F(c)\quad\text{et}\quad G\underset{\C}{\otimes}P^\C_c\simeq G(c)$$
($F\underset{\C}{\otimes}G$ est la {\em cofin} --- cf. \cite{ML-cat}, chap.~IX, §\,6 --- du bifoncteur $F\boxtimes G : \C^{op}\times\C\to\mathbf{Ab}$).

Le bifoncteur $\underset{\C}{\otimes}$ est exact à droite relativement à chacune des variables et on peut le
dériver comme un produit tensoriel usuel (cf. \cite{Wei}, §\,2.7, par exemple), obtenant un bifoncteur homologique (groupes de torsion sur $\C$)
${\rm Tor}^\C_*$ (ces groupes abéliens peuvent se décrire comme homologie d'un complexe explicite de type barre
construit sur le nerf de la catégorie $\C$). On dispose d'un
isomorphisme canonique ${\rm Tor}^\C_*(G,F)\simeq {\rm Tor}^{\C^{op}}_*(F,G)$ ; les foncteurs acycliques pour Tor sont dits plats. Les foncteurs Tor commutent aux colimites filtrantes en chaque variable.

On dispose d'une fonctorialité en $\C$ : supposons que $\varphi$ est un foncteur de $\C$ dans une autre petite
catégorie $\D$. Pour tous foncteurs $G\in {\rm Ob}\,\mathbf{Mod}-\D$ et $F\in {\rm Ob}\,\D-\mathbf{Mod}$, $\varphi$ induit
un morphisme naturel $\varphi_* : {\rm Tor}^\C_*(\varphi^*G,\varphi^*F)\to {\rm Tor}^\D_*(G,F)$ (on rappelle que $\varphi^*$
désigne la précomposition par $\varphi$).

L'observation suivante est d'usage courant : si $\varphi, \psi : \C\to\D$ sont des foncteurs et $u : \varphi\to\psi$
une transformation naturelle, le diagramme
\begin{equation}\label{eq-commutor}
\xymatrix{{\rm Tor}^\C_*(\psi^*G,\varphi^*F)\ar[rrr]^-{{\rm Tor}^\C_*(\psi^*G,F\circ u)}\ar[d]_-{{\rm Tor}^\C_*(G\circ u,\varphi^*F)} & & &
{\rm Tor}^\C_*(\psi^*G,\psi^*F)\ar[d]^{\psi_*}\\
{\rm Tor}^\C_*(\varphi^*G,\varphi^*F)\ar[rrr]^-{\varphi_*} & &  & {\rm Tor}^\D_*(G,F)
}
\end{equation}
commute pour tous $G\in {\rm Ob}\,\mathbf{Mod}-\D$ et $F\in {\rm Ob}\,\D-\mathbf{Mod}$.

L'homologie de $\C$ à coefficients dans un foncteur $F\in {\rm Ob}\,\C-\mathbf{Mod}$ est $H_*(\C;F):={\rm Tor}^\C_*(\mathbb{Z},F)$
(où $\mathbb{Z}$ désigne le foncteur constant de $\mathbf{Mod}-\C$). Lorsque le foncteur $F$ est lui-même constant,
cette homologie n'est autre que l'homologie du nerf (ou classifiant) de la catégorie $\C$ ; on renvoie à l'article
fondateur de Quillen \cite{QK} pour les résultats de base. On retiendra en particulier que l'existence d'une
transformation naturelle entre deux foncteurs (entre petites catégories) implique qu'ils induisent le même morphisme en homologie (ce qui se déduit du diagramme commutatif~(\ref{eq-commutor})).

\paragraph*{Extensions de Kan}
Étant donné un foncteur $\varphi : \C\to\D$, définissons $\varphi_! : \mathbf{Mod}-\C\to\mathbf{Mod}-\D$ par
\begin{equation}\label{eq-exk}
 \varphi_!(G)(d):=G\underset{\C}{\otimes}\varphi^*(P^\D_d).
\end{equation}

L'extension de Kan $\varphi_!$ satisfait à un isomorphisme naturel
\begin{equation}\label{eq-exks}
\varphi_!(G)\underset{\D}{\otimes}F\simeq G\underset{\C}{\otimes}\varphi^*(F)
\end{equation}
(c'est une variante d'adjonction).

Si le foncteur $\varphi_!$ est exact, cet isomorphisme s'étend aux groupes de torsion en un isomorphisme naturel
\begin{equation}\label{eqexkdf}
 {\rm Tor}^\D_*(\varphi_!(G),F)\simeq {\rm Tor}^\C_*(G,\varphi^*(F)).
\end{equation}

L'exactitude de $\varphi_!$ est en particulier vérifiée si $\varphi$ possède un adjoint à gauche $\psi$ : on a alors $\varphi^*(P^\D_d)\simeq P^\C_{\psi(d)}$, d'où $\varphi_!\simeq\psi^*$.

L'exactitude de $\varphi_!$ vaut également dans la situation suivante : supposons que $T$ est un foncteur contravariant de $\D$ vers les ensembles, $\C=\D^T$ (cf. notation~\ref{nice} en fin d'introduction) et $\varphi=\pi^T$. On voit aussitôt que $\varphi_!$ est donné par
\begin{equation}\label{adj-gen}
\varphi_!(G)(d)=\bigoplus_{x\in T(d)} G(d,x) 
\end{equation}

Dans le cas général, $\varphi_!$ est un fonteur exact à droite (il commute même à toutes les colimites) qui préserve
les objets projectifs ($\varphi_!(P^{{\C}^{op}}_c)\simeq P^{\D^{op}}_{\varphi(c)}$), de sorte que l'isomorphisme~(\ref{eq-exks})
se dérive en une suite spectrale de Grothendieck (cf. \cite{Wei}, corollaire 5.8.4, par exemple) du type :
\begin{equation}\label{eq-ssgk}
 E^2_{i,j}={\rm Tor}^\D_i((\mathbb{L}_j\varphi_!)(G),F)\Rightarrow {\rm Tor}_{i+j}^\C(G,\varphi^*(F))
\end{equation}
où les foncteurs dérivés à gauche $\mathbb{L}_j\varphi_!$ sont donnés par
\begin{equation}\label{eq-exkd}
 (\mathbb{L}_*\varphi_!)(G)(d):={\rm Tor}^\C_*(G,\varphi^*(P^\D_d)).
\end{equation}

Si l'on note
\begin{equation}\label{eq-kum}
\mathcal{L}_*^\varphi(X)=Ker\,((\mathbb{L}_*\varphi_!)(\varphi^* X)\to X),
\end{equation}
pour $X\in {\rm Ob}\,\mathbf{Mod}-\D$,
le noyau de l'unité, on a donc :
\begin{pr}\label{pr-anek}
 Supposons que $X\in {\rm Ob}\,\mathbf{Mod}-\D$ et $F\in {\rm Ob}\,\D-\mathbf{Mod}$ sont tels que
${\rm Tor}^\D_i(\mathcal{L}_j^\varphi(X),F)=0$ pour tous entiers $i$ et $j$. Alors le morphisme
$${\rm Tor}^\C_*(\varphi^*(X),\varphi^*(F))\to {\rm Tor}^\D_*(X,F)$$
qu'induit $\varphi$ est un isomorphisme.
\end{pr}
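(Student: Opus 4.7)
The plan is to feed the Grothendieck-type spectral sequence~(\ref{eq-ssgk}) with the choice $G=\varphi^{*}X$, yielding
$$E^{2}_{i,j}={\rm Tor}^{\D}_{i}\bigl((\mathbb{L}_{j}\varphi_{!})(\varphi^{*}X),F\bigr)\Longrightarrow{\rm Tor}^{\C}_{i+j}(\varphi^{*}X,\varphi^{*}F).$$
In every positive $j$-degree the definition~(\ref{eq-kum}) identifies $\mathbb{L}_{j}\varphi_{!}(\varphi^{*}X)$ with $\mathcal{L}^{\varphi}_{j}(X)$ (the target $X$ of the counit contributing only in homological degree zero), so the standing hypothesis empties the whole half-plane $j\geq 1$ of the $E^{2}$-page. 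The spectral sequence therefore degenerates onto its horizontal axis and provides, for every $n\in\mathbb{N}$, a canonical edge isomorphism
$${\rm Tor}^{\C}_{n}(\varphi^{*}X,\varphi^{*}F)\xrightarrow{\ \simeq\ }{\rm Tor}^{\D}_{n}(\varphi_{!}\varphi^{*}X,F).$$

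Next I would bring in the short exact sequence
$$0\to\mathcal{L}^{\varphi}_{0}(X)\to\varphi_{!}\varphi^{*}X\to X\to 0$$
arising from the very definition of $\mathcal{L}^{\varphi}_{0}(X)$ (the surjectivity of the counit is tacit in the formulation of the proposition; it is automatic in the applications we target, notably those of Corollary~\ref{cor-auxhg}). The associated long exact sequence in ${\rm Tor}^{\D}_{*}(-,F)$, combined with the hypothesised vanishing of ${\rm Tor}^{\D}_{i}(\mathcal{L}^{\varphi}_{0}(X),F)$ for every $i$, shows that the counit induces an isomorphism ${\rm Tor}^{\D}_{*}(\varphi_{!}\varphi^{*}X,F)\xrightarrow{\simeq}{\rm Tor}^{\D}_{*}(X,F)$. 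Composing this with the edge isomorphism of the previous paragraph yields the desired graded isomorphism.

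The only slightly delicate point is the compatibility claim tacitly used above: that the composite (spectral sequence edge map, followed by the map induced by the counit) does coincide with the natural morphism $\varphi_{*}$ of the statement. I would dispose of it by replacing $\varphi^{*}X$ with a projective resolution $P_{\bullet}$ in $\mathbf{Mod}-\C$, whose image $\varphi_{!}P_{\bullet}$ computes $\mathbb{L}_{*}\varphi_{!}(\varphi^{*}X)$; the edge map is then induced on homology by the canonical augmentation $\varphi_{!}P_{\bullet}\to X$, which by the adjunction isomorphism~(\ref{eq-exks}) is exactly the morphism encoding $\varphi_{*}$. This is routine bookkeeping, and is the main (though mild) obstacle in the argument.
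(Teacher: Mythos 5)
Your argument is exactly the one the paper intends: Proposition~\ref{pr-anek} is presented as an immediate consequence of the Grothendieck spectral sequence~(\ref{eq-ssgk}) applied to $G=\varphi^{*}X$ together with the definition~(\ref{eq-kum}), and your reading of it --- vanishing of the $E^{2}$-page for $j\geq 1$, the long exact sequence attached to $0\to\mathcal{L}^{\varphi}_{0}(X)\to\varphi_{!}\varphi^{*}X\to X\to 0$, and the identification of the edge map with $\varphi_{*}$ via a projective resolution --- is precisely the standard filling-in of the details. Your caveat about the surjectivity of the counit is well taken: it is not automatic when $\varphi$ is not essentially surjective, the paper leaves it tacit, and it does hold (for the reasons you indicate) in every situation where the proposition is invoked, so no further correction is needed.
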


\begin{rem}\label{rq-ekz}
 Le foncteur $\mathcal{L}^\varphi_*(\mathbb{Z})$ est donné par un isomorphisme canonique
$$\mathcal{L}^\varphi_*(\mathbb{Z})(d)\simeq\tilde{H}_*(\C_{\varphi^*\D(d,-)})$$
où $\tilde{H}_*$ désigne l'homologie réduite (noyau de la flèche vers $\mathbb{Z}$ induite par l'unique foncteur vers la catégorie à un morphisme). Cela découle des considérations précédentes appliquées au foncteur $\pi_{\varphi^*\D(d,-)} : \C_{\varphi^*\D(d,-)}\to\C$.
\end{rem}

\paragraph*{Produits tensoriels extérieurs} Le produit tensoriel extérieur de $F\in {\rm Ob}\,\C-\mathbf{Mod}$ et $G\in {\rm Ob}\,\D-\mathbf{Mod}$ est le foncteur $F\boxtimes G\in {\rm Ob}\,(\C\times\D)-\mathbf{Mod}$ donné par $(F\boxtimes G)(c,d)=F(c)\otimes G(d)$. La propriété suivante est classique et utile :
\begin{pr}\label{pr-bifo}
Soient $\B$ et $\C$ deux petites cat\'egories, $X$ et $Y$ des objets de $\mathbf{Mod}-\B$ et $\mathbf{Mod}-\C$
respectivement et $E$ un objet de $(\B\times\C)-\mathbf{Mod}$. On suppose que $Y$ prend des valeurs plates sur $\mathbb{Z}$. Il existe alors une suite spectrale
$$E^2_{i,j}={\rm Tor}^\B_{i}\big(X,U\mapsto {\rm Tor}^\C_j(Y,E(U,-))\big)\Rightarrow {\rm Tor}^{\B\times\C}_{i+j}(X\boxtimes Y,E).$$
\end{pr}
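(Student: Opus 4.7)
Le plan est d'exhiber $(X\boxtimes Y)\underset{\B\times\C}{\otimes}E$ comme composition de deux foncteurs exacts à droite et d'appliquer la suite spectrale classique de Grothendieck--Cartan--Eilenberg associée à cette composition. On définit
$$T_1 : (\B\times\C)-\mathbf{Mod}\to\B-\mathbf{Mod},\qquad T_1(E)(U):=Y\underset{\C}{\otimes}E(U,-),$$
et
$$T_2 : \B-\mathbf{Mod}\to\mathbf{Ab},\qquad T_2(F):=X\underset{\B}{\otimes}F.$$
Un calcul direct de type Fubini sur les cofins fournit un isomorphisme canonique $T_2\circ T_1(E)\simeq(X\boxtimes Y)\underset{\B\times\C}{\otimes}E$. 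Les foncteurs dérivés à gauche sont respectivement $(\mathbb{L}_jT_1)(E)(U)={\rm Tor}^\C_j(Y,E(U,-))$, $(\mathbb{L}_iT_2)(F)={\rm Tor}^\B_i(X,F)$, et ceux de la composition sont les ${\rm Tor}^{\B\times\C}_*(X\boxtimes Y,E)$.

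Le point clé à vérifier est que $T_1$ envoie les objets projectifs de $(\B\times\C)-\mathbf{Mod}$ sur des objets $T_2$-acycliques, c'est-à-dire $\B$-plats. Un système de générateurs projectifs est donné par les produits tensoriels externes $P^\B_b\boxtimes P^\C_c$, et l'identité $Y\underset{\C}{\otimes}P^\C_c\simeq Y(c)$ (Yoneda) fournit immédiatement
$$T_1(P^\B_b\boxtimes P^\C_c)\simeq P^\B_b\otimes Y(c),$$
le produit tensoriel étant pris point par point avec le groupe abélien constant $Y(c)$. Or, pour tout groupe abélien $\mathbb{Z}$-plat $A$, le foncteur $P^\B_b\otimes A$ est $\B$-plat : en effet, pour tout $G\in\mathbf{Mod}-\B$,
$$G\underset{\B}{\otimes}(P^\B_b\otimes A)\simeq G(b)\otimes A$$
est un foncteur exact en $G$ (évaluation en un représentable et tensorisation par un groupe abélien plat sont exactes). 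C'est précisément l'étape où intervient l'hypothèse de $\mathbb{Z}$-platitude des valeurs de $Y$.

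La suite spectrale de Grothendieck pour la composition de foncteurs exacts à droite (démontrée via une résolution projective de Cartan--Eilenberg de $E$ et un argument de bicomplexe) fournit alors
$$E^2_{i,j}=(\mathbb{L}_iT_2)\big((\mathbb{L}_jT_1)(E)\big)\Rightarrow\mathbb{L}_{i+j}(T_2\circ T_1)(E),$$
ce qui se réécrit exactement sous la forme annoncée. Le seul point substantiel est la vérification d'acyclicité du paragraphe précédent, qui est la \emph{raison d'être} de l'hypothèse de platitude sur $Y$ ; tout le reste relève de manipulations formelles de foncteurs dérivés et de cofins. Une variante équivalente, plus directe, consisterait à former le bicomplexe $(P_\bullet\boxtimes Q_\bullet)\underset{\B\times\C}{\otimes}E$ à partir de résolutions projectives $P_\bullet\to X$ de $\mathbf{Mod}-\B$ et $Q_\bullet\to Y$ de $\mathbf{Mod}-\C$ (la $\mathbb{Z}$-platitude de $Y$ assurant, via Künneth point par point, que $P_\bullet\boxtimes Q_\bullet$ est bien une résolution projective de $X\boxtimes Y$ dans $\mathbf{Mod}-(\B\times\C)$), puis à le filtrer par lignes.
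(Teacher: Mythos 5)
Votre démonstration est correcte et suit essentiellement la même voie que celle du texte : isomorphisme de type Fubini $(X\boxtimes Y)\underset{\B\times\C}{\otimes}E\simeq X\underset{\B}{\otimes}\big(U\mapsto Y\underset{\C}{\otimes}E(U,-)\big)$, puis suite spectrale de Grothendieck pour la composition, l'hypothèse de $\mathbb{Z}$-platitude de $Y$ servant précisément à garantir que les projectifs $P^\B_b\boxtimes P^\C_c$ sont envoyés sur des foncteurs $\B$-plats. Vous explicitez simplement le point d'acyclicité que le texte se contente d'affirmer, et la variante par bicomplexe évoquée en fin de preuve est également valable.
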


\begin{proof}
La d\'efinition du produit tensoriel au-dessus d'une petite cat\'egorie procure aussitôt un isomorphisme canonique
$$(X\boxtimes Y)\underset{\B\times\C}{\otimes} E\simeq X\underset{\B}{\otimes}\big(U\mapsto Y\underset{\C}{\otimes}E(U,-)\big).$$
L'hypothèse sur $Y$ assure que $U\mapsto Y\underset{\C}{\otimes}E(U,-)$ est un foncteur plat si $E$ est projectif. On peut donc dériver le membre de droite (vu comme foncteur en $E$) en une suite spectrale de Grothendieck, qui prend la forme annonc\'ee.
\end{proof}

\paragraph*{Homologie de Hochschild} Si $B$ est un bifoncteur sur $\C$, c'est-à-dire un objet de $(\C^{op}\times\C)-\mathbf{Mod}$, on peut définir l'homologie de Hochschild de $\C$ à coefficients dans $B$ comme
$$HH_*(\C;B):={\rm Tor}_*^{\C^{op}\times\C}(\mathbb{Z}[\C^{op}],B)$$
(où $\mathbb{Z}[\C^{op}]$ est le foncteur contravariant $(a,b)\mapsto\mathbb{Z}[\C(b,a)]$ sur $\C^{op}\times\C$).

On peut aussi exprimer ces groupes comme de l'homologie ordinaire de la {\em catégorie des factorisations} $\mathbf{F}(\C):=(\C^{op}\times\C)^{\C^{op}}$ (introduite par Quillen dans \cite{QK}) --- explicitement, ses objets sont les flèches $f : b\to a$ de $\C$, et les morphismes de $f$ vers $g : d\to c$ sont les diagrammes commutatifs
$$\xymatrix{b\ar[r]^-f\ar[d] & a \\
d\ar[r]^-g & c\ar[u]
}$$
de $\C$. En effet, les isomorphismes~(\ref{adj-gen}) et~(\ref{eqexkdf}) se spécialisent en
\begin{equation}\label{eq-hofac}
 HH_*(\C;B)\simeq H_*(\mathbf{F}(\C);\pi^*B)
\end{equation}
où l'on a noté simplement $\pi$ pour $\pi^{\C^{op}}$.

On peut également définir $HH_0(\C;B)$ comme la cofin de $B$ et les $HH_*$ comme les foncteurs dérivés correspondants ; on en déduit un isomorphisme canonique $HH_0(\C;F\boxtimes G)\simeq F\underset{\C}{\otimes} G$ qui s'étend en un isomorphisme gradué naturel $HH_*(\C;F\boxtimes G)\simeq {\rm Tor}^\C_*(F,G)$ lorsqu'un des foncteurs $F$ et $G$ prend des valeurs plates sur $\mathbb{Z}$ (le lien avec la définition donnée plus haut est fait par la proposition~\ref{pr-bifo}). 

%

\subsection{Quelques propriétés des catégories $\F(A)$}\label{app2}

Si $A$ est un anneau, on note $\mathbf{P}(A)$ la catégorie des $A$-modules à gauche projectifs de type fini et $\F(A)=\mathbf{P}(A)-\mathbf{Mod}$ (pour être parfaitement correct, il faut remplacer ici $\mathbf{P}(A)$ par un squelette).

La situation qui a été la plus étudiée est celle où $A$ est un corps fini (voir par exemple les calculs importants de \cite{FFSS}) ; nous nous intéressons plutôt ici à des anneaux sans torsion (comme groupes abéliens).

\begin{pr}[Changement plat d'anneau]\label{pr-chplat}
 Soit $\varphi : A\to B$ un morphisme {\rm plat} d'anneaux. Par abus, on note encore $\varphi : \mathbf{P}(A)\to\mathbf{P}(B)$ le foncteur de changement de base $B\underset{A}{\otimes}-$.
\begin{enumerate}
 \item L'extension de Kan $\varphi_! : \F(A)\to\F(B)$ définie par 
$$\varphi_!(F)(M)=\varphi^* P^{\mathbf{P}(B)^{op}}_M\underset{\mathbf{P}(A)}{\otimes}F$$
(elle donc {\rm duale} de celle donnée par (\ref{eq-exk})) est exacte ; elle induit un isomorphisme naturel
$${\rm Tor}^{\mathbf{P}(B)}_*(G,\varphi_! F)\simeq {\rm Tor}^{\mathbf{P}(A)}_*(\varphi^*G,F).$$
\item Soit $F$ un endofoncteur de la catégorie des groupes abéliens commutant aux colimites filtrantes. L'image par $\varphi_!$ de la restriction de $F$ à $\mathbf{P}(A)$ est la restriction de $F$ à $\mathbf{P}(B)$.
\item Si $X : \mathbf{P}(B)^{op}\times\mathbf{P}(B)\to\mathbf{Ab}$ est un bifoncteur qui est la restriction d'un foncteur $Y : \mathbf{P}(B)^{op}\times\mathbf{Ab}\to\mathbf{Ab}$ commutant aux colimites filtrantes en la variable covariante, il existe un isomorphisme naturel
$$HH_*(\mathbf{P}(B);X)\simeq HH_*(\mathbf{P}(A);\tilde{X})$$
où $\tilde{X}$ est la précomposition de $Y$ par
$$\mathbf{P}(A)^{op}\times\mathbf{P}(A)\to\mathbf{P}(B)^{op}\times\mathbf{Ab}\quad (M,N)\mapsto (B\underset{A}{\otimes}M,N).$$
\end{enumerate}
\end{pr}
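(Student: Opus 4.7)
Le plan est de traiter successivement les trois assertions, qui reposent toutes sur une décomposition à la Lazard de la restriction à $A$ d'un objet de $\mathbf{P}(B)$.

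Pour (1), je commencerais par observer que $\varphi_!(F)(M)$ s'écrit comme le coend $\int^{N\in\mathbf{P}(A)}\mathbb{Z}[{\rm Hom}_A(N, M|_A)]\otimes F(N)$ ; l'exactitude de $\varphi_!$ se ramène ainsi à la platitude, pour tout $M\in\mathbf{P}(B)$, du $\mathbf{P}(A)$-module à droite $N\mapsto\mathbb{Z}[{\rm Hom}_A(N, M|_A)]$. Comme $M$ est facteur direct d'un $B^n$ et que $B$ est $A$-plat, $M|_A$ est un $A$-module plat, donc colimite filtrante $\underset{i}{\col}\,A^{n_i}$ de modules libres de type fini par le théorème de Lazard ; les objets de $\mathbf{P}(A)$ étant de présentation finie, on obtient une expression $\mathbb{Z}[{\rm Hom}_A(-, M|_A)]\simeq\underset{i}{\col}\,P^{\mathbf{P}(A)^{op}}_{A^{n_i}}$ comme colimite filtrante de projectifs, donc platitude. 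Un calcul direct donne en outre $\varphi_!(P^{\mathbf{P}(A)}_N)\simeq P^{\mathbf{P}(B)}_{\varphi(N)}$, et l'isomorphisme de Tor annoncé est alors un cas particulier de l'analogue dual de (\ref{eqexkdf}).

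Pour (2), il s'agit essentiellement d'une manipulation formelle de coends. J'injecterais la décomposition précédente dans la formule du coend, en utilisant la commutation de celui-ci aux colimites filtrantes : $\varphi_!(F|_{\mathbf{P}(A)})(M)\simeq\underset{i}{\col}\,\int^N\mathbb{Z}[\mathbf{P}(A)(N, A^{n_i})]\otimes F(N)\simeq\underset{i}{\col}\,F(A^{n_i})$ par la densité de Yoneda, et la commutation de $F$ aux colimites filtrantes livre la conclusion $\underset{i}{\col}\,F(A^{n_i})\simeq F(M|_A)=F(M)$. Aucune difficulté sérieuse n'apparaît ici, au-delà de la vérification du Yoneda au niveau non enrichi (permise par la commutation aux colimites du foncteur $\mathbb{Z}[-]$).

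La partie (3) est la plus délicate. L'idée serait d'interpréter $HH_*(\mathbf{P}(B); X)\simeq{\rm Tor}^{\mathbf{P}(B)^{op}\times\mathbf{P}(B)}_*(\mathbb{Z}[\mathbf{P}(B)^{op}], X)$, puis d'appliquer (1) successivement dans chacune des deux variables --- une fois au foncteur covariant $\varphi:\mathbf{P}(A)\to\mathbf{P}(B)$, une fois à son opposé (la démonstration étant identique en variance contravariante). Le principal obstacle est que cette itération livre a priori le bifoncteur $(M,N)\mapsto X(\varphi(M),\varphi(N))=Y(B\otimes_A M, B\otimes_A N)$, distinct de $\tilde X(M,N)=Y(B\otimes_A M, N)$. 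Pour corriger cet écart, j'utiliserais (2) dans la variable covariante : pour $M\in\mathbf{P}(A)$ fixé, l'endofoncteur $Y(B\otimes_A M, -)$ de $\mathbf{Ab}$ commute aux colimites filtrantes par hypothèse, donc (2) identifie son extension de Kan de $\mathbf{P}(A)$ à $\mathbf{P}(B)$ à sa restriction à $\mathbf{P}(B)$. Une mise en forme précise --- par exemple via la description de $HH_*$ par la catégorie des factorisations (\ref{eq-hofac}) ou via une suite spectrale de type Grothendieck itérant les changements de base variable par variable --- doit alors fournir l'isomorphisme souhaité ; le point le moins routinier est le suivi conjoint des deux variances et l'articulation correcte entre l'application de (1) et celle de (2).
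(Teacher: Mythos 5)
Votre démonstration des points (1) et (2) coïncide pour l'essentiel avec celle du texte, qui la condense en une phrase : les ingrédients sont bien le théorème de Lazard, la présentation finie des objets de $\mathbf{P}(A)$, la commutation de $\mathbb{Z}[-]$ et des groupes de torsion aux colimites filtrantes, et la version duale de l'isomorphisme~(\ref{eqexkdf}) ; vous ne faites que les expliciter. Pour le point (3), en revanche, votre itinéraire diffère réellement. Le texte se ramène au cas où $Y$ est un produit tensoriel extérieur $F\boxtimes G$, avec $G$ commutant aux colimites filtrantes et l'un des facteurs à valeurs $\mathbb{Z}$-plates : l'homologie de Hochschild s'identifie alors au ${\rm Tor}^{\mathbf{P}(B)}_*(F,G|_{\mathbf{P}(B)})$ à une seule variable, auquel (1) et (2) s'appliquent directement, le cas général s'obtenant en résolvant par les projectifs standard de la catégorie produit (qui sont de tels produits extérieurs). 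Vous procédez au contraire par changement de base variable par variable sur le ${\rm Tor}$ bifonctoriel, en transitant par la catégorie mixte $\mathbf{P}(B)^{op}\times\mathbf{P}(A)$ ; l'obstacle que vous signalez (la double application naïve de (1) produit $Y(B\underset{A}{\otimes}M,B\underset{A}{\otimes}N)$, et un poids qui n'est pas $\mathbb{Z}[\mathbf{P}(A)^{op}]$) est réel, et votre correctif est le bon : dans la variable covariante, (2) identifie $X$ à l'extension de Kan du bifoncteur $(P,N)\mapsto Y(P,N)$ défini sur $\mathbf{P}(B)^{op}\times\mathbf{P}(A)$ ; dans la variable contravariante, le poids $(P,N)\mapsto\mathbb{Z}[{\rm Hom}_A(N,P|_A)]$ obtenu après cette première réduction s'identifie, par le même calcul (Lazard puis co-Yoneda), à l'extension de Kan --- exacte grâce à la platitude déjà établie en (1) --- de $\mathbb{Z}[\mathbf{P}(A)^{op}]$ le long de $\varphi^{op}\times{\rm id}$, ce qui livre $HH_*(\mathbf{P}(A);\tilde{X})$ par~(\ref{eqexkdf}). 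Votre voie évite ainsi l'argument de résolution, au prix d'un suivi plus minutieux des deux variances ; les deux approches aboutissent, et votre esquisse du point (3) n'est pas moins détaillée que la démonstration publiée.
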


\begin{proof}
 Les deux premières assertions découlent de l'adjonction entre restriction et extension des scalaires et de ce qu'un module plat est colimite {\em filtrante} de modules projectifs de type fini (ainsi que de l'isomorphisme (\ref{eqexkdf}), dans sa version duale). La dernière s'en déduit lorsque $Y$ est un produit tensoriel de deux foncteurs dont l'un prend des valeurs plates sur $\mathbb{Z}$, on obtient le cas général par des arguments usuels d'algèbre homologique (les projectifs standard sur une catégorie produit sont des produits tensoriels extérieurs à valeurs $\mathbb{Z}$-plates).
\end{proof}

On note ${\rm I}_A\in {\rm Ob}\,\F(A)$ le foncteur d'inclusion $\mathbf{P}(A)\hookrightarrow\mathbf{Ab}$. Si $A$ est muni d'une anti-involution, on note par ailleurs $(-)^\vee$ le foncteur de précomposition par la dualité ${\rm Hom}_A(-,A) : \mathbf{P}(A)^{op}\to\mathbf{P}(A^{op})\simeq\mathbf{P}(A)$ (le dernier isomorphisme étant donné par l'anti-involution de $A$).

\begin{cor}\label{cor-chpz}
 Si $A$ est un anneau $\mathbb{Z}$-plat muni d'une anti-involution, on dispose d'un isomorphisme naturel
$${\rm Tor}^{\mathbf{P}(A)}_*({\rm I}_A^\vee,{\rm I}_A)\simeq A\otimes {\rm Tor}^{\mathbf{P}(\mathbb{Z})}_*({\rm I}_\mathbb{Z}^\vee,{\rm I}_\mathbb{Z}).$$
\end{cor}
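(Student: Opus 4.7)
The plan is to reduce everything to a computation of Hochschild homology, where point 3 of Proposition~\ref{pr-chplat} applies directly to the flat extension $\varphi:\mathbb{Z}\to A$. First, since $A$ is $\mathbb{Z}$-plat, the functor ${\rm I}_A$ takes $\mathbb{Z}$-flat values, so the last remark of §\,\ref{sagl} gives a natural isomorphism
$${\rm Tor}^{\mathbf{P}(A)}_*({\rm I}_A^\vee,{\rm I}_A)\simeq HH_*(\mathbf{P}(A);\,{\rm I}_A^\vee\boxtimes {\rm I}_A).$$
This moves the problem to Hochschild homology of the bifunctor $X(M,N)={\rm Hom}_A(M,A)\otimes_\mathbb{Z} N$ on $\mathbf{P}(A)^{op}\times\mathbf{P}(A)$ (the anti-involution is used exactly to view ${\rm Hom}_A(M,A)$ again as a left $A$-module).

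Next I would apply point 3 of Proposition~\ref{pr-chplat} to the flat morphism $\varphi:\mathbb{Z}\to A$ and to the bifunctor $X$ above. The required extension to $\mathbf{P}(A)^{op}\times\mathbf{Ab}$ is provided by $Y(M,N)={\rm Hom}_A(M,A)\otimes_\mathbb{Z} N$, which clearly commutes with filtered colimits in $N$. The precomposition $\tilde X$ on $\mathbf{P}(\mathbb{Z})^{op}\times\mathbf{P}(\mathbb{Z})$ is then, for $M,N\in\mathbf{P}(\mathbb{Z})$,
$$\tilde X(M,N)={\rm Hom}_A(A\otimes_\mathbb{Z} M,A)\otimes_\mathbb{Z} N\simeq {\rm Hom}_\mathbb{Z}(M,A)\otimes_\mathbb{Z} N\simeq A\otimes_\mathbb{Z}\bigl({\rm I}_\mathbb{Z}^\vee\boxtimes {\rm I}_\mathbb{Z}\bigr)(M,N),$$
where the last identification uses that $M$ is finitely generated free over $\mathbb{Z}$, so ${\rm Hom}_\mathbb{Z}(M,A)\simeq A\otimes_\mathbb{Z} M^*$ naturally in $M$. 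Proposition~\ref{pr-chplat}(3) therefore yields
$$HH_*\bigl(\mathbf{P}(A);\,{\rm I}_A^\vee\boxtimes {\rm I}_A\bigr)\simeq HH_*\bigl(\mathbf{P}(\mathbb{Z});\,A\otimes_\mathbb{Z}({\rm I}_\mathbb{Z}^\vee\boxtimes {\rm I}_\mathbb{Z})\bigr).$$

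To conclude, I would extract the scalar factor $A$ from the Hochschild homology: since $A$ is $\mathbb{Z}$-plat, tensoring by $A$ is exact and commutes with the bar-type complex computing $HH_*$, giving
$$HH_*\bigl(\mathbf{P}(\mathbb{Z});\,A\otimes_\mathbb{Z}({\rm I}_\mathbb{Z}^\vee\boxtimes {\rm I}_\mathbb{Z})\bigr)\simeq A\otimes_\mathbb{Z} HH_*\bigl(\mathbf{P}(\mathbb{Z});\,{\rm I}_\mathbb{Z}^\vee\boxtimes {\rm I}_\mathbb{Z}\bigr)\simeq A\otimes_\mathbb{Z}{\rm Tor}^{\mathbf{P}(\mathbb{Z})}_*({\rm I}_\mathbb{Z}^\vee,{\rm I}_\mathbb{Z}),$$
the last step being again the Tor/HH identification (${\rm I}_\mathbb{Z}$ being trivially $\mathbb{Z}$-flat). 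Chaining these natural isomorphisms gives the result.

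The proof is essentially an exercise in applying machinery already in place; the only point demanding minor care is the natural identification ${\rm Hom}_\mathbb{Z}(M,A)\simeq A\otimes_\mathbb{Z} M^*$ together with the verification that it respects the full bifunctoriality (in particular, compatibility with the anti-involution used to define $(-)^\vee$ on the $A$-side). I expect this to go through without surprise, so no real obstacle stands in the way.
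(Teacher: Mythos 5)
Votre démonstration est correcte et repose, comme il se doit, sur la proposition~\ref{pr-chplat} appliquée au morphisme plat $\mathbb{Z}\to A$ ; tous les points délicats (valeurs $\mathbb{Z}$-plates de ${\rm I}_A$ pour l'identification ${\rm Tor}/HH$, l'isomorphisme naturel ${\rm Hom}_A(A\otimes M,A)\simeq{\rm Hom}_\mathbb{Z}(M,A)\simeq A\otimes M^*$ pour $M$ libre de type fini, la sortie du facteur $A$ par platitude) sont correctement identifiés et se vérifient sans difficulté. Signalons seulement que le détour par l'homologie de Hochschild (point~3 de la proposition) n'est pas indispensable : les points~1 et~2 donnent directement $\varphi_!({\rm I}_\mathbb{Z})\simeq{\rm I}_A$ puis ${\rm Tor}^{\mathbf{P}(A)}_*({\rm I}_A^\vee,{\rm I}_A)\simeq{\rm Tor}^{\mathbf{P}(\mathbb{Z})}_*(\varphi^*{\rm I}_A^\vee,{\rm I}_\mathbb{Z})$ avec $\varphi^*{\rm I}_A^\vee\simeq A\otimes{\rm I}_\mathbb{Z}^\vee$, et l'on conclut par le même argument de platitude ; les deux chemins sont équivalents et d'égale longueur.
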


Le groupe de torsion apparaissant à droite est connu :

\begin{thm}[Franjou-Pirashvili]\label{th-fp}
 Le groupe abélien ${\rm Tor}_i^{\mathbf{P}(\mathbb{Z})}({\rm I}_\mathbb{Z}^\vee,{\rm I}_\mathbb{Z})$ est :
\begin{enumerate}
 \item isomorphe à $\mathbb{Z}$ pour $i=0$ ;
\item nul pour $i>0$ pair ;
\item cyclique d'ordre $n$ pour $i=2n-1$.
\end{enumerate}
\end{thm}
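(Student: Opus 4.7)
The plan is to identify these Tor-groups with the Mac Lane homology of $\mathbb{Z}$ with coefficients in itself, and then invoke its known computation. The first step is formal: the identification recalled at the end of \S\ref{sagl}, which gives $HH_*(\C; F \boxtimes G) \simeq {\rm Tor}^\C_*(F, G)$ whenever one of $F$, $G$ takes $\mathbb{Z}$-flat values, applies here (both ${\rm I}_\mathbb{Z}^\vee$ and ${\rm I}_\mathbb{Z}$ take free-abelian-group values on $\mathbf{P}(\mathbb{Z})$), and yields
$${\rm Tor}^{\mathbf{P}(\mathbb{Z})}_*({\rm I}_\mathbb{Z}^\vee, {\rm I}_\mathbb{Z}) \simeq HH_*\bigl(\mathbf{P}(\mathbb{Z}); {\rm I}_\mathbb{Z}^\vee \boxtimes {\rm I}_\mathbb{Z}\bigr).$$
The right-hand side is, by construction, the Mac Lane homology $HML_*(\mathbb{Z})$ of the ring $\mathbb{Z}$ with coefficients in itself, as the bifunctor $(M, N) \mapsto M^\vee \otimes N \simeq {\rm Hom}_\mathbb{Z}(M, N)$ encodes the standard $\mathbb{Z}$-bimodule structure on $\mathbb{Z}$.

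Next, I will invoke the Pirashvili--Waldhausen theorem, which provides a canonical isomorphism $HML_*(R) \simeq THH_*(R)$ between Mac Lane homology and topological Hochschild homology for any ring $R$; applied to $R = \mathbb{Z}$, this reduces the problem to B\"okstedt's computation, namely $THH_0(\mathbb{Z}) = \mathbb{Z}$, $THH_{2n}(\mathbb{Z}) = 0$ for $n \geq 1$, and $THH_{2n-1}(\mathbb{Z}) = \mathbb{Z}/n$ for $n \geq 1$, which exactly matches the statement.

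As an alternative route avoiding topological input, one can follow the original functor-categorical strategy of Franjou and Pirashvili: localize at each prime $p$ via the short exact sequences $0 \to {\rm I}_\mathbb{Z} \xrightarrow{\cdot p} {\rm I}_\mathbb{Z} \to {\rm I}_\mathbb{Z} \otimes \mathbb{F}_p \to 0$, extract a Bockstein long exact sequence connecting the Tor-groups over $\mathbf{P}(\mathbb{Z})$ with analogues computed via the well-developed homology of the functor categories $\F(\mathbb{F}_p)$ (where effective machinery \`a la Eilenberg--Mac Lane, Kuhn, and Franjou--Lannes--Schwartz applies), and reassemble the integral answer prime by prime. Either way, the hard part is the actual calculation of $HML_*(\mathbb{Z})$: the topological route imports B\"okstedt's substantial stable-homotopy computation, while the direct approach requires delicate tracking of explicit generators through the Bockstein differentials in order to isolate the exact cyclic order $n$ in odd degree $2n-1$, and this tracking is the technical centerpiece of the original Franjou--Pirashvili paper.
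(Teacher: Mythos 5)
Your proposal is correct, but it reaches the answer by a different citation chain than the paper does. The paper's own proof is a two-line deduction: it quotes the Franjou--Pirashvili computation of $\mathrm{Ext}^*_{\F(\mathbb{Z})}({\rm I}_\mathbb{Z},{\rm I}_\mathbb{Z})$ from \cite{FPZ} (which is $\mathbb{Z}$ in degree $0$, $\mathbb{Z}/n$ in degree $2n$, and $0$ in odd degrees) and converts it into the stated Tor-groups by the universal coefficients exact sequence, using that these Tor-groups are finitely generated. You instead identify $\mathrm{Tor}^{\mathbf{P}(\mathbb{Z})}_*({\rm I}_\mathbb{Z}^\vee,{\rm I}_\mathbb{Z})$ with $HH_*(\mathbf{P}(\mathbb{Z});\mathrm{Hom}(-,-))\simeq HML_*(\mathbb{Z})$ (legitimate, and consistent with how the paper's introduction already describes $M_*$) and then import B\"okstedt's computation of $THH_*(\mathbb{Z})$ through the Pirashvili--Waldhausen equivalence; your alternative sketch via Bockstein sequences is essentially the strategy of the original Franjou--Pirashvili paper, which you rightly do not attempt to carry out. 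Both routes are valid appeals to known results; the paper's is lighter (purely homological-algebraic bookkeeping on top of \cite{FPZ}, the reference it already uses), while yours trades the universal-coefficients step for substantial topological input. One minor caveat: the identification of the functor-homology groups with classical Mac Lane homology is itself a theorem (Jibladze--Pirashvili type), not quite ``by construction,'' though it is standard and the paper implicitly assumes it too.
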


\begin{proof}
 Dans \cite{FPZ}, Franjou et Pirashvili calculent ${\rm Ext}^*_{\F(\mathbb{Z})}({\rm I}_\mathbb{Z},{\rm I}_\mathbb{Z})$. Le calcul de groupe de torsion en question s'en déduit par la suite exacte des coefficients universels, en notant que ces groupes de torsion sont nécessairement de type fini.
\end{proof}

\bibliographystyle{smfalpha}
\bibliography{bibli-hostu.bib}
\end{document}